\newcommand{\A}{\mathbb{A}}
\newcommand{\D}{\mathsf{D}}
\newcommand{\E}{\mathsf{E}}
\newcommand{\F}{\mathsf{F}}
\newcommand{\Z}{\mathbb{Z}}
\newcommand{\N}{\mathbb{N}}
\newcommand{\Q}{\mathbb{Q}}
\newcommand{\FF}{\mathbb{F}}
\newcommand{\unit}{\mathds{1}}
\renewcommand{\P}{\mathbb{P}}
\newcommand{\HP}{\mathbb{HP}}
\newcommand{\SL}{\mathrm{SL}}
\renewcommand{\S}{\mathbf{S}}
\newcommand{\Gm}{\mathbb{G}_m}
\newcommand{\KGL}{\mathsf{KGL}}
\newcommand{\kgl}{\mathsf{kgl}}
\newcommand{\KQ}{\mathsf{KQ}}
\newcommand{\KW}{\mathsf{KW}}
\newcommand{\KO}{\mathsf{KO}}
\newcommand{\kq}{\mathsf{kq}}
\newcommand{\GW}{\mathsf{GW}}
\newcommand{\MZ}{\mathsf{M}\Z}
\newcommand{\MA}{\mathbf{M}A}
\newcommand{\MB}{\mathbf{M}B}
\newcommand{\HZ}{\mathsf{M}\Z}
\newcommand{\HQ}{\mathbf{M}\Q}
\newcommand{\cHZ}{\wt{{\mathsf{M}}}{\Z}}
\newcommand{\cHQ}{{\wt{\mathbf{M}}}{\Q}}
\newcommand{\HWZ}{\mathsf{W}\Z}
\newcommand{\R}{\mathbb{R}}
\newcommand{\SH}{\mathbf{SH}}
\newcommand{\Ab}{\mathbf{Ab}}
\newcommand{\Sm}{\operatorname{Sm}}
\newcommand{\Hom}{\operatorname{Hom}}
\newcommand{\Ext}{\operatorname{Ext}}
\newcommand{\holim}{\operatorname{holim}}
\newcommand{\tensor}{\otimes}
\newcommand{\Spec}{\operatorname{Spec}}
\newcommand{\Th}{\operatorname{Th}}
\newcommand{\Sq}{\operatorname{Sq}}
\newcommand{\wt}[1]{\widetilde{#1}}
\newcommand{\id}{\operatorname{id}}
\newcommand{\im}{\operatorname{im}}
\newcommand{\coker}{\operatorname{coker}}
\newcommand{\Char}{\operatorname{char}}
\newcommand{\pr}{\operatorname{pr}}
\newcommand{\Gr}{\operatorname{Gr}}
\newcommand{\HGr}{\mathrm{HGr}}
\newcommand{\inc}{\operatorname{inc}}
\newcommand{\s}{\mathsf{s}}
\newcommand{\vs}{\tilde{\mathsf{s}}}
\newcommand{\f}{\mathsf{f}}
\newcommand{\vf}{\tilde{{\mathsf{f}}}}
\newcommand{\cd}{{\mathsf{cd}}}
\newcommand{\MW}{{\mathsf{MW}}}
\newcommand{\sfM}{{\mathsf{M}}}
\newcommand{\vd}{{\tilde{\mathsf{d}}}}
\newcommand{\eff}{\text{eff}}
\newcommand{\veff}{\text{veff}}
\newcommand{\KMW}{\mathbf{K}^\mathrm{MW}}
\newcommand{\bideg}{(\star)}
\newcommand{\Forg}{\mathsf{Forg}}
\newcommand{\Hyper}{\mathsf{Hyp}}
\newcommand{\forg}{\mathsf{forg}}
\renewcommand{\sc}{\mathsf{sc}}
\let\iso=\cong
\let\smash=\wedge
\def\iddots{\mathinner{\mkern1mu\raise\p@
    \vbox{\kern7\p@\hbox{.}}\mkern2mu
    \raise4\p@\hbox{.}\mkern2mu\raise7\p@\hbox{.}\mkern1mu}}
\author{H{\aa}kon Kolderup, Oliver R\"ondigs and Paul Arne {\O}stv{\ae}r}
\title{Hermitian $K$-theory and Milnor-Witt motivic cohomology over $\Z$}
\newtheorem{theorem}{Theorem}[section]
\newtheorem*{theorem*}{Theorem}
\newtheorem{lemma}[theorem]{Lemma}
\newtheorem{proposition}[theorem]{Proposition}
\newtheorem*{proposition*}{Proposition}
\newtheorem{corollary}[theorem]{Corollary}
\newtheorem*{corollary*}{Corollary}
\newtheorem{defn}[theorem]{Definition}
\newtheorem{defn*}{Definition}
\theoremstyle{definition}
\newtheorem{remark}[theorem]{Remark}
\newtheorem{remark*}{Remark}
\newtheorem{example}[theorem]{Example}
\begin{document}
\maketitle
\begin{abstract}
\noindent 
The theme of this paper is to compute hermitian $K$-groups in terms of 
the recently developed theory of Milnor-Witt motivic cohomology.
Our approach makes use of the very effective slice spectral sequence within the motivic stable homotopy category, 
which we analyze in detail for base schemes of arithmetic interest.
We show a Grothendieck-Riemann-Roch theorem, 
determine the map between Milnor-Witt and hermitian $K$-theory up to degree five for all fields, 
and compute the hermitian $K$-groups and the higher Witt-groups of the ring of integers.
\end{abstract}
\tableofcontents

\newpage
\section{Introduction}
\label{section:introduction}

The effective and very effective slice filtrations are computational tools in motivic homotopy theory with close connections to classical stable homotopy; see \cite{zbMATH07303324,levine.comparison,zbMATH02078162}. The Adams–Novikov spectral sequence determines the effective slices of the motivic sphere \cite[Conjecture 8.1]{levine.comparison}, \cite[Theorem 2.12]{zbMATH07003144}. Effective slices are accessible, but the very effective slices yield stronger invariants \cite{SO:twisted}. The recognition principle for infinite motivic loop spaces \cite{zbMATH07422194,zbMATH07733647} illustrates their strength. For the motivic sphere, only the zeroth and first very effective slices are known \cite[Theorem 3.30]{2020arXiv200506778B}.
The very effective slice spectral sequence sacrifices transparency for robustness: its input is less explicit, but its use is often simpler. The case of hermitian $K$-theory considered here builds on \cite{Bachmann} and simplifies parts of \cite{KRO}. Hermitian $K$-groups approximate the first stable homotopy groups of motivic spheres \cite{MorelICM06,zbMATH07003144,zbMATH07858209} and have been studied extensively in recent years \cite{9.I,9.II,9.III,dfjk,2019arXiv191111682H,zbMATH07623470}. Explicit motivic computations appear in \cite{berrick-karoubi,zbMATH05840069,zbMATH06261120,zbMATH05930233,zbMATH06433290,zbMATH07585647,zbMATH07352278,KRO,zbMATH07124513}.

\vspace{0.1in}

Let $\KQ$ denote the motivic spectrum representing Karoubi’s hermitian $K$-groups in the stable motivic homotopy category \cite{chn,hornbostel}. We compute hermitian $K$-groups via the very effective slice spectral sequence \cite{SO:twisted} (see \Cref{section:tvesss}):
\begin{equation}
\label{equation:vesssforKQ}
\pi_{\star}\vs_{\ast}\KQ \implies \pi_{\star}\KQ 
\end{equation}
The description of $\vs_{\ast}\KQ$ over fields of characteristic $\neq 2$ in \cite[Theorem 16]{Bachmann}, together with our extension to schemes essentially smooth over Dedekind schemes (\Cref{thm:veff-KQ}), identifies the $E_{1}$-page with Milnor–Witt motivic cohomology, a quadratic form analogue of Suslin–Voevodsky motivic cohomology and of Bloch’s higher Chow groups \cite{2020arXiv200406634B}.

The spectral sequence \eqref{equation:vesssforKQ} degenerates rationally. Completeness of the very effective slice filtration yields a Grothendieck–Riemann–Roch theorem for rational hermitian $K$-groups (cf.~\cite{Borel-iso}). The first nontrivial differential in \eqref{equation:vesssforKQ} is determined by motivic Steenrod operations, which allows identification of the image of Milnor–Witt $K$-theory in hermitian $K$-theory through degree five, extending \cite{KO-map}.

We also compute the algebraic $K$-theory, hermitian $K$-theory, and higher Witt groups of the ring of integers. The higher Witt groups follow the periodic pattern $\Z,\Z/2,0,0$. Certain algebraic and hermitian $K$-groups depend on the Kummer–Vandiver conjecture, a phenomenon first observed for $K(\Z)$ by Kurihara \cite{zbMATH00034713}.

\subsubsection*{Outline of the paper}

\Cref{aandh} establishes fundamental facts about algebraic and hermitian $K$-theory 
over qcqs (=quasi-compact and quasi-separated) base schemes.
The Wood motivic cofiber sequence relates $\KQ$ to algebraic $K$-theory $\KGL$ 
through the motivic Hopf map $\eta$.
\vspace{0.1in}

\Cref{section:tvesss} shows strong convergence for the very effective slice spectral 
sequence relative to any motivic spectrum over a qcqs base scheme of finite Krull dimension.
This is in contrast to the situation for the effective slice spectral sequence.
\vspace{0.1in}

We use the slice filtrations to define the Milnor-Witt motivic cohomology 
spectrum $\cHZ$ for schemes which are essentially smooth over a Dedekind scheme.
Due to \cite{Bachmann}, 
the very effective slice spectral sequence relates hermitian $K$-groups to the homotopy 
groups of $\cHZ$ for fields of characteristic unequal to $2$ in \cite{2020arXiv200406634B}.
\Cref{tvesohkt} deepens this result by extending it to base schemes of arithmetic interest, 
such as the integers $\mathbb{Z}$ and other rings of integers in number fields. 
\vspace{0.1in}

In \Cref{section:rdokt}, 
we work rationally and deduce rational splittings for both algebraic and hermitian $K$-theory.
Moreover, 
in \Cref{section:ldf}, 
we compute examples of hermitian $K$-groups.
These applications are elementary and practical in the sense that there are no obstructions 
in the form of nontrivial differentials in the very effective slice spectral sequence for $\KQ$.
\vspace{0.1in}

\Cref{section:tvezsokq} analyzes the very effective zero slice of $\KQ$.
With this in hand, 
we compute the first differentials in the very effective 
slice spectral sequence of $\KQ$ in \Cref{section:tfvesdfkq}.
As one would hope based on the proof of Milnor's conjecture on quadratic forms in \cite{RO:slices},
the very effective slice differentials can be expressed in terms of motivic Steenrod operations.
\vspace{0.1in}

In \Cref{section:tioMWKinhK}, 
we compute up to degree five the image of Milnor-Witt $K$-theory in 
hermitian $K$-theory for any field $F$. 
Our approach gives a quick proof of the isomorphism in degrees up to three 
established by Asok-Fasel \cite{KO-map}.
The calculations in degrees four and five are more interesting as they involve 
certain very effective slice differentials, which are shown to vanish by means of 
stable motivic cohomology operations and quadratic forms.
For example, 
we establish and use that for the integral motivic cohomology spectrum $\MZ$,  
the group $[\MZ,\Sigma^{3+(2)}\MZ]$ is cyclic of order six when viewed in the 
stable motivic homotopy category of the integers $\SH(\Z)$.
\vspace{0.1in}

In \Cref{section:hkgoti}, 
we explicitly compute the homotopy groups $\pi_{\star}\KQ$ over the integers $\mathbb{Z}$
using the very effective slice spectral sequence \eqref{equation:vesssforKQ}. 
As input, we use the Milnor and Bloch-Kato conjectures together with knowledge of 
$\pi_{\star}\vs_{\ast}\KQ$ and very effective slice differentials. 
We show that the very effective slice spectral sequence for $\KQ$ over $\mathbb{Z}$ 
collapses at its second page, 
and deduce the hermitian $K$-groups of $\mathbb{Z}$, as given in the table 
\begin{center}
 \begin{tabularx}{\textwidth}{c | c | c | c | l l l l} 
 \toprule
 $n>0$ 
 & $\KQ_{n,0}(\mathbb{Z})$ & $\KQ_{n+2,1}(\mathbb{Z})$ & $\KQ_{n+4,2}(\mathbb{Z})$ & $\KQ_{n+6,3}(\mathbb{Z})$ \\
 \midrule
$8k$ & $\mathbb{Z}\oplus\mathbb{Z}/2$ & $\Z/2\oplus K_n(\Z)$ & $0$  & $K_n(\Z)$ \\
$8k+1$ & $(\mathbb{Z}/2)^{3}$ & $\Z\oplus \Z/2$ & $0$ & $\Z^2\oplus \Z/2$ (for $n>1$)\\
$8k+2$ & $(\mathbb{Z}/2)^{2}\oplus K_{n}(\mathbb{Z})_{\text{odd}}$ & $0$ & $\Z\oplus K_{n}(\Z)_\mathrm{odd}$ & $(\Z/2)^2$\\
$8k+3$ & $\mathbb{Z}/w_{4k+2}$ & $\Z$ & $\Z/2w_{4k+2}$ & $(\Z/2)^2$ \\
$8k+4$ & $\mathbb{Z}$ & $K_n(\Z)$ & $\Z/2$ & $K_n(\Z)$ \\
$8k+5$ & $0$ & $\Z$ & $\Z/2$ & $\Z^2$\\
$8k+6$ & $K_{n}(\mathbb{Z})_{\text{odd}}$ & $\Z/2$& $\Z\oplus K_n(\Z)_{\mathrm{odd}}$ & $0$ \\
$8k+7$ & $\mathbb{Z}/w_{4k+4}$ & $\Z\oplus \Z/2$ & $\Z/w_{4k+4}$ & $0$ \\
\bottomrule
\end{tabularx}
\end{center}

We refer to \Cref{thm:coeff-KQZ} for a formulation using motivic cohomology groups instead of 
denominators of Bernoulli numbers and algebraic $K$-groups of the integers. 
The hermitian $K$-groups of the integers $\mathbb{Z}$ in weights $0$ and $2$ are in 
agreement with \cite[Theorem 3.2.1]{9.III}. 
(This reference does not discuss weights $1$ and $3$.)
The $(8,4)$-periodicity of $\KQ$ implies that it suffices to consider these four weights, 
but fewer weights do not give the complete picture.
The group $\pi_{7,3}\KQ(\Z)$ is infinite cyclic, 
a low-dimensional exception to the pronounced $8$-periodicity in the table.
Assuming the Kummer–Vandiver conjecture, 
stating that a prime \(p\) does not divide the class number of the maximal real subfield
of the \(p\)-th cyclotomic field,
the copies of $K_n(\Z)$ in the table for weight $1$ and $3$ are trivial 
\cite{zbMATH00034713}.
We deduce that the corresponding hermitian $K$-groups would satisfy an integral 
periodicity analogous to Bott's periodicity theorem in topological $K$-theory.

On the other hand, inverting the motivic Hopf map $\eta$ on $\KQ_\Z$ yields the higher Witt-theory 
spectrum $\KW_\Z$, whose coefficient ring, determined in \Cref{thm:coeff-KW}, admits the presentation
\[
  \pi_{\ast+\bideg}\KW_\Z = \Z[\alpha^{\pm 1},\eta^{\pm 1}, \xi]/(2\xi,\xi^2)
\]
Here $\alpha \in \pi_{4+(4)}\KW_\Z$ is a generator, and $\xi \in \pi_{1+(0)}\KW_\Z$ is the nonzero element. 
Thus inverting $\eta$ in hermitian $K$-theory leads to a simple periodic coefficient ring with polynomial and exterior parts (see below for our grading conventions).

\subsubsection*{Notation}
Let $S$ be a qcqs (quasi-compact and quasi-separated) base scheme.
For $t,w\in \Z$ we denote by
$S^{t,w}:=S^{t-w+(w)}:= S^{t-w}\wedge \mathbb{G}_{m}^{w}$ the motivic sphere of 
topological degree $t$ and weight $w$.
The corresponding suspension functor on the stable motivic homotopy category $\SH(S)$ 
is denoted by $\Sigma^{t,w}=\Sigma^{t-w+(w)}$.
To any motivic spectrum $\E\in\SH(S)$ the bigraded motivic homotopy group are defined by 
\[
\pi_{t,w}\E=
\pi_{t-w+(w)}\E=
[S^{t,w},\E]_{\SH(S)}
\]
In particular, 
the hermitian $K$-groups of $S$ can be defined as 
$\KQ_{t,w}=\pi_{t,w}\KQ$.
In the literature the notation $\GW^w_t$ is also used for hermitian $K$-groups. 
With our conventions one has
\[
\GW^{w}_{t}\cong [S^{t,0}, \Sigma^{(w)}\KQ]=\pi_{t-2w,-w}\KQ=\pi_{t-w-(w)}\KQ
\]
We write $\HZ$ (resp.\ $\HZ/2$) for the integral (resp.\ mod $2$) motivic cohomology spectrum, 
$\cHZ$ for the integral Milnor-Witt motivic cohomology spectrum, 
$\HWZ$ for the integral Witt motivic cohomology spectrum, $\unit$ for the motivic sphere spectrum,
and $\KMW$ for the Milnor-Witt ring; 
see \cite{2020arXiv200406634B,MorelICM06}.
When needed we indicate the base scheme in the notation, 
as in $\MZ_S$, suppressing ''$\Spec$'' for simplicity (e.g., $\KGL_\Z$).
For Milnor–Witt cohomology of fields we refer to \cite{deglise2023notesmilnorwittktheory}.
A Dedekind scheme $\mathcal{D}$ refers to a finite disjoint union of spectra of Dedekind 
domains or fields;
that is, a regular noetherian scheme of Krull dimension $\leq 1$.
We write $\mathcal{X}$ for an essentially smooth scheme over $\mathcal{D}$.

\subsection*{Acknowledgements}
This work was supported by the RCN Project no.~312472 Equations in Motivic Homotopy, the European Commission — Horizon-MSCA-PF-2022 Motivic integral $p$-adic cohomologies, the Deutsche Forschungsgemeinschaft (project numbers 426008713 and 539085450), and the Centre for Advanced Study at the Norwegian Academy of Science and Letters in Oslo. We thank Markus Spitzweck for valuable discussions and Jonas Kylling for contributions in the early stages of this project.

\section{Algebraic and hermitian $K$-theory}
\label{aandh}

Recall the motivic ring spectrum $\KGL$ in $\SH(\Z)$. 
It pulls back to any qcqs scheme and represents Quillen’s higher algebraic $K$-theory 
over regular base schemes \cite{ppr,rso.strict,voevodsky.icm}. 
It is periodic: there exists a Bott element
$\beta:\Sigma^{1+(1)}\unit\to \KGL$
such that multiplication by $\beta$ induces an equivalence $\KGL \simeq \Sigma^{1+(1)}\KGL$. Moreover, $\KGL$ carries a unique $E_\infty$-ring structure \cite{zbMATH06483840}.
Recent work has established a comparable status for Karoubi’s hermitian $K$-theory (higher Grothendieck–Witt theory), previously available only over bases in which $2$ is invertible \cite{hornbostel}.

\begin{theorem}[\cite{chn}]\label{thm:chn}
Let $S$ be a qcqs scheme. 
There exists a motivic $E_{\infty}$ ring spectrum $\KQ\in \SH(S)$ with the following properties.
\begin{enumerate}
    \item There exists an element $\alpha:\Sigma^{4+(4)}\unit\to \KQ$ such that multiplication with $\alpha$ is an equivalence $\KQ \simeq \Sigma^{4+(4)}\KQ$.
    \item\label{item:wood-seq} There exists a ring map $\Forg\colon \KQ_S\to \KGL_S$ and a $\KQ_S$-module map $\Hyper\colon \KGL_S\to \KQ_S$ such that \[ \Sigma^{(1)} \KQ \xrightarrow{\eta\smash \KQ} \KQ \xrightarrow{\Forg} \KGL \xrightarrow{\Sigma^{1+(1)}\Hyper \circ \beta} \Sigma^{1+(1)}\KQ \] is a cofiber sequence.
    \item For every morphism $f\colon R\to S$ there exists a canonical equivalence $f^\ast \KQ_R\simeq \KQ_R$.
    \item For every closed embedding $i\colon R\to S$ of regular schemes of codimension $c$ and normal bundle $Ni$, the purity transformation $i^\ast\KQ_S\to \Th(Ni)\smash i^!\KQ_S$ is an equivalence.
    \item For every vector bundle $V\to S$ of rank $r$, there exists a Thom equivalence $\KQ_S\simeq \Th(V)\smash \KQ_S(\det V[-r])$.\footnote{The shifted line bundle in parentheses denotes an appropriate twist of $\KQ_S$.}
    \item If $S$ is regular and $2$ is invertible on $S$, $\KQ_S$ represents Karoubi's hermitian $K$-groups over $S$.
\end{enumerate}
\end{theorem}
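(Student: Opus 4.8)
The plan is to construct $\KQ$ once and for all over $\Spec\Z$ and to obtain the $\KQ_S$ by base change, so that the base-change equivalence $f^{\ast}\KQ_S\simeq \KQ_R$ of (3) holds by construction and the remaining content is the verification of structure. I would start from the Grothendieck--Witt functor $\mathrm{GW}$ on Poincar\'e $\infty$-categories, in the framework of Calmès--Dotto--Harpaz--Hebestreit--Land--Moi--Nardin--Nikolaus--Steimle, and evaluate it on the symmetric monoidal assignment sending a qcqs scheme $X$ to its category of perfect complexes $\mathrm{Perf}(X)$ equipped with the \emph{genuine symmetric} Poincar\'e structure and its line-bundle twists. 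This yields a presheaf of $E_\infty$-ring spectra on qcqs schemes. The first task is Nisnevich descent for this presheaf, which follows from descent for $\mathrm{Perf}$ together with the additivity theorem and the fibre sequence relating Grothendieck--Witt theory to $K$- and $L$-theory. The second task is to assemble the $\Gm$- and $\P^1$-deloopings: the twisted forms $\KQ(\det L[n])$ provide the $\Gm$-direction, and one packages the resulting prespectrum into a genuine motivic $E_\infty$-ring spectrum $\KQ_\Z\in\SH(\Z)$, e.g.\ via the theory of framed correspondences and the motivic recognition principle, or by exhibiting an explicit $\P^1$-prespectrum with structure maps given by multiplication with a hermitian Bott class; either way the $\mathbb{A}^1$-localization becomes harmless at this stage. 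Setting $\KQ_S:=f^{\ast}\KQ_\Z$ then gives (3).

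The periodicity element $\alpha\colon\Sigma^{4+(4)}\unit\to\KQ$ of (1) is the motivic shadow of hermitian Bott periodicity---Karoubi's fundamental theorem, equivalently Schlichting's $\KQ$-periodicity in its categorical incarnation as an $8$-fold periodic pattern among twisted Grothendieck--Witt spectra---combined with the built-in $\Gm$-periodicity; I would transport the relevant equivalence along the construction. For (2), the maps $\Forg$ and $\Hyper$ are induced by the forgetful functor on Poincar\'e categories and by the hyperbolic Poincar\'e structure, and the cofibre sequence is the motivic form of Wood's theorem $\KQ/\eta\simeq\KGL$ up to the displayed twist; categorically this is the statement that the comparison of the genuine symmetric theory with its Borel/Tate refinement is cartesian in the relevant range, which over fields with $1/2$ recovers the Röndigs--Østvær cofibre sequence and in general is exactly what the genuine structure is designed to produce.

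For the Thom isomorphism (5) and purity (4) I would promote $\KQ$ to an $\SL^c$-oriented ring spectrum, i.e.\ construct Thom classes for special linear bundles twisted by their determinants, in the style of Panin--Walter; then (5) is the Thom isomorphism attached to such an orientation, with the $\det V$-twist and the shift $[-r]$ recording the effect of the duality on determinants, and the general rank is reduced to line bundles by a splitting-principle induction whose base case is the defining property of the twisted spectra $\KQ(\det L[-1])$. Purity (4) then follows from the six-functor formalism for $\SL^c$-oriented spectra over regular bases: deformation to the normal cone together with Morel--Voevodsky homotopy purity identifies $i^{\ast}\KQ_S$ with $\Th(Ni)\wedge i^{!}\KQ_S$. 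Finally, for (6), over regular schemes on which $2$ is invertible the genuine symmetric Poincar\'e structure agrees with the classical symmetric bilinear one, so our presheaf restricts there to Schlichting's Grothendieck--Witt presheaf, which is Nisnevich-local and $\mathbb{A}^1$-invariant on regular $\Z[1/2]$-schemes (Schlichting, Hornbostel, Panin--Walter) and hence already represents hermitian $K$-theory in $\SH(S)$; comparing universal properties, or matching the $\Gm$-deloopings via the fundamental theorem, identifies this with $\KQ_S$.

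I expect the main obstacle to be the tension between $\mathbb{A}^1$-invariance and non-regular bases: Karoubi's hermitian $K$-theory fails $\mathbb{A}^1$-invariance over singular schemes, so one cannot pass from the Poincar\'e-categorical presheaf to an object of $\SH(S)$ by naive motivic localization and still expect to recover hermitian $K$-theory on regular schemes. The genuine symmetric structure, together with a categorical form of the fundamental theorem controlling its behaviour under polynomial extensions, is precisely the device that reconciles the two, and checking that the framed/motivic refinement preserves the comparison in (6) is the technical heart of the argument. A secondary, more bookkeeping difficulty is maintaining coherence of the $E_\infty$-structure across all the deloopings and the $\SL^c$-orientation, which I would handle by working throughout with symmetric monoidal functors of $\infty$-categories rather than strict models.
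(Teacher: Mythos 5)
The paper offers no proof of this statement at all --- it is imported verbatim from the cited reference \cite{chn} --- and your sketch reconstructs essentially the strategy of that reference: Grothendieck--Witt theory of Poincar\'e $\infty$-categories with the genuine symmetric structure on $\mathrm{Perf}(S)$ and its line-bundle twists, forgetful and hyperbolic functors producing the motivic Wood sequence, Karoubi--Bott periodicity for item (1), an $\SL^c$-type orientation giving the Thom isomorphism and purity, and comparison with Schlichting's theory over regular $\Z[1/2]$-schemes for item (6). The one structural deviation is that you \emph{define} $\KQ_S$ as the pullback of $\KQ_\Z$, whereas the cited work constructs $\KQ$ over every qcqs base and proves base-change invariance as a theorem; your shortcut makes (3) a tautology but relocates the real difficulty --- that motivic localization of the genuine symmetric Grothendieck--Witt presheaf is compatible with arbitrary pullback, which is what makes (6) and the geometric identification available --- into exactly the obstacle you flag in your final paragraph, so the substance of the argument is the same.
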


\Cref{thm:chn} does not permit transferring slice computations of hermitian $K$-theory from fields of characteristic zero to a more general base. To obtain an effectivity statement, additional geometric input is required. In his 2020 PhD thesis \cite{kumar.thesis}, supervised by the third author, K.~Arun Kumar constructs a hermitian $K$-theory spectrum $\KQ_S'$ over any qcqs scheme, providing the necessary geometric description; key results are published in \cite{kumar}. Let $\HGr_S$ denote the infinite quaternionic Grassmannian over $S$ from \cite{panin-walter.quat-borel}. Theorem~\ref{thm:chn-kumar} asserts that $\KQ_S'$ and $\KQ_S$ are equivalent as motivic spectra.

\begin{theorem}[\cite{kumar.thesis}]
Let $S$ be a qcqs scheme. 
There exists a motivic spectrum $\KQ_S^\prime\in \SH(S)$ with the following properties.
\begin{enumerate}
    \item[$3^\prime$.] For every morphism $f\colon R\to S$ there exists a canonical equivalence $f^\ast \KQ_R^\prime\simeq \KQ_R^\prime$.
    \item[$6^\prime$.] If $S$ is regular and $2$ is invertible on $S$, $\KQ_S$ represents Karoubi's hermitian $K$-groups over $S$.
    \item[$7^\prime$.] There exists a structure map $\Sigma^{8,4}\HGr_S\times \Z \to \HGr_S\times \Z$ of pointed motivic spaces over $S$ such that $\Sigma^{4,2}\KQ_S^\prime$ is the motivic spectrum associated to the Bousfield-Friedlander type $\Sigma^{8,4}$-spectrum $(\HGr_S\times \Z,\HGr_S\times \Z,\dotsc)$ obtained from this structure map.
\end{enumerate}
In particular, $\KQ^\prime_S$ is cellular \cite{rso.cell}.
\end{theorem}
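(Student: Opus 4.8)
The plan is to realise $\KQ_S'$ as the motivic spectrum attached to an integrally defined $\Sigma^{8,4}$-periodicity self-map of the pointed motivic space $\HGr\times\Z$, and then to deduce each claimed property from the functoriality of that construction.

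First I would work over $\Spec\Z$. Recall that $\HGr_\Z$ is the filtered colimit of the finite quaternionic Grassmannians $\HGr(r,n)$, smooth projective $\Z$-schemes each carrying a symplectic cell structure in the sense of \cite{panin-walter.quat-borel}, and that direct sum of symplectic spaces together with addition on the $\Z$-factor makes $\HGr\times\Z$ a commutative monoid in pointed motivic spaces over $\Z$. The geometric heart of \cite{kumar.thesis}, following Panin-Walter, is the construction of a structure map $\theta\colon\Sigma^{8,4}(\HGr_\Z\times\Z)\to\HGr_\Z\times\Z$ realising hermitian Bott periodicity; the essential point is that $\theta$ admits a presentation over $\Z$, with no hypothesis that $2$ be invertible, built from explicit symplectically oriented classes on the low quaternionic Grassmannians and the monoid structure. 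Given $\theta$, one takes $\KQ_\Z'$ to be the motivic spectrum associated to the Bousfield-Friedlander type $\Sigma^{8,4}$-spectrum $(\HGr_\Z\times\Z,\HGr_\Z\times\Z,\dotsc)$ with every bonding map equal to $\theta$, and defines $\KQ_S':=g^\ast\KQ_\Z'$ along the structure morphism $g\colon S\to\Spec\Z$. This is exactly property $7'$.

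Property $3'$ is then formal: for $f\colon R\to S$ the functor $f^\ast$ is a symmetric monoidal left adjoint, hence commutes with $\Sigma^\infty$, with filtered colimits, and with the passage from a Bousfield-Friedlander prespectrum to its associated spectrum, and since $\HGr$ and $\theta$ are defined over $\Z$ and stable under arbitrary base change one gets a canonical equivalence $f^\ast\KQ_R'\simeq\KQ_R'$. For property $6'$, assume $S$ regular with $2$ invertible; the geometric model of Panin-Walter together with the higher Grothendieck-Witt comparison (cf.~\cite{hornbostel}) identifies $\HGr_S\times\Z$ with the zeroth space of the $(4,2)$-shifted hermitian $K$-theory spectrum of $S$ and identifies $\theta$ with multiplication by the Bott element $\alpha$, so that $\Sigma^{4,2}\KQ_S'$, and hence $\KQ_S'$, represents Karoubi's hermitian $K$-groups over $S$. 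Finally, each $\HGr(r,n)$ is a cellular motivic space and a filtered colimit of cellular objects is cellular, so $\Sigma^\infty_+(\HGr_S\times\Z)$ is a cellular motivic spectrum; as cellular spectra are closed under (de)suspension and sequential homotopy colimits and $\KQ_S'$ is such a colimit of desuspensions of $\Sigma^\infty_+(\HGr_S\times\Z)$, it is cellular \cite{rso.cell}.

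The main obstacle is the construction of $\theta$ over $\Z$: one must rebuild the quaternionic-Grassmannian Bott periodicity of Panin-Walter at the level of the geometric models without the customary simplification that $2$ is invertible, and check that this produces a well-defined map of pointed motivic spaces functorial in the base. That is where the substantive work of \cite{kumar.thesis} lies; once $\theta$ is available, everything above is bookkeeping with adjunctions and cell structures.
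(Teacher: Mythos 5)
The paper does not prove this theorem at all: it is quoted directly from \cite{kumar.thesis} (with the comparison to $\KQ_S$ handled separately in \cite{kr.comparison}), and your outline is a faithful reconstruction of that source's strategy—an integrally defined Panin--Walter-type periodicity map on $\HGr\times\Z$, base change from $\Spec(\Z)$, representability of Karoubi's hermitian $K$-groups when $2$ is invertible, and cellularity from the cellularity of quaternionic Grassmannians. Note, though, that as a self-contained argument your proposal defers exactly the essential content (constructing the structure map $\theta$ over $\Z$ without inverting $2$) to the thesis, which is precisely what the paper itself does by citation, so there is nothing further to compare.
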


We note that hermitian $K$-theory over schemes in which $2$ is not necessarily invertible has been considered previously \cite{schlichting,spitzweck.herm}, and the ring structure has been addressed in \cite{lopez.thesis}. The composition $\Hyper\circ \Forg$ coincides with the hyperbolic element $\mathsf{h}=1-\varepsilon$ on $\KQ$, whereas $\Forg \circ \Hyper$ coincides with the sum of the identity and the duality involution on $\KGL$. For a detailed discussion, see \cite[Section 3]{RO:slices}, which extends to the present setting.
For later use, we identify the $\eta$-inversion of hermitian $K$-theory with the Witt-theory spectrum $\KW$ \cite[Definition 8.1.1]{chn}.

\begin{proposition}
\label{prop.KQetaisKW}
The canonical map $\KQ[\eta^{-1}]\to \KW$ is an equivalence.
\end{proposition}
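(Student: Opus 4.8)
The strategy is to recast the claim as the vanishing of a cofiber, remove all geometric input by base change, and then exploit that inverting $\eta$ annihilates the oriented spectrum $\KGL$.

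First I would set up the comparison map and reduce to a vanishing. The canonical transformation $\KQ\to\KW$ realizes on $\pi_{0,0}$ the surjection of the Grothendieck--Witt ring onto the Witt ring, and $\KW$ is $\eta$-periodic, i.e.\ multiplication by $\eta$ gives an equivalence $\Sigma^{(1)}\KW\simeq\KW$ (the hyperbolic form dies in $\KW$; equivalently $\KW$ is a module over $\unit[\eta^{-1}]$). Being $\eta$-local, $\KW$ receives a unique factorization $\KQ\to\KQ[\eta^{-1}]\xrightarrow{\,c\,}\KW$, which is the canonical map in question. Put $Q:=\operatorname{cofib}(c)$. Since $\eta$ acts invertibly on $\KQ[\eta^{-1}]$ and on $\KW$, the induced map of cofiber sequences forces $\eta$ to act invertibly on $Q$ as well; and since $\KQ$ is cellular (it agrees with Kumar's $\KQ'$, cellular by \cite{rso.cell}) while $\eta$-localization, suspension and cofibers preserve cellularity, $Q$ is cellular. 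Hence it suffices to show $\pi_{\star}Q=0$.

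Next I would reduce the base scheme. By the third listed property of \Cref{thm:chn} the spectrum $\KQ$ is stable under pullback $f^{\ast}$; the class $\eta$, the Wood cofiber sequence of \Cref{thm:chn}, and --- by their construction in \cite{chn} --- the spectrum $\KW$ and the map $c$ are likewise $f^{\ast}$-compatible. As every qcqs scheme admits a unique morphism to $\Spec\Z$ and $f^{\ast}$ is exact and symmetric monoidal, $Q_S\simeq f^{\ast}Q_{\Z}$, so it is enough to treat $S=\Spec\Z$. There I would play the Wood cofiber sequence against the defining data of $\KW$: inverting $\eta$ in the Wood sequence exhibits $\KGL[\eta^{-1}]$ as the cofiber of the now-invertible $\eta\colon\Sigma^{(1)}\KQ[\eta^{-1}]\to\KQ[\eta^{-1}]$, hence $\KGL[\eta^{-1}]=0$ --- equivalently, $\KGL$ is an $\MGL$-module, so $\eta$ acts as zero on it. If $\KW$ is presented in \cite{chn} by a motivic Karoubi-type fibre sequence relating it to $\KQ$ and to a spectrum assembled from $\KGL$ (say the homotopy $C_{2}$-orbits of $\KGL$ with its duality involution), then localizing that sequence at $\eta$ kills the $\KGL$-built term and yields $\KQ[\eta^{-1}]\xrightarrow{\sim}\KW$; as $c$ is already a map of $\eta$-periodic spectra, $c$ is itself an equivalence. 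Alternatively one descends to residue fields: for $\Char F\neq 2$ the identification $\KQ[\eta^{-1}]\simeq\KW$ is classical (Balmer Witt groups, \cite{Bachmann}, or the slice analysis of \cite{RO:slices}), so $Q_{\Q}=0$ and $Q_{\FF_{p}}=0$ for odd $p$, and one glues these with a $2$-complete argument, using that cellular objects of $\SH(\Z)$ are detected by this family of base changes.

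The step I expect to be the main obstacle is the behaviour at the prime $2$. Over a base on which $2$ is not a unit, $\KQ$ need not represent Karoubi's hermitian $K$-groups in the usual sense (the sixth property of \Cref{thm:chn} requires $2$ invertible), so no classical comparison with Witt theory applies and one must argue through the intrinsic construction of $\KW$ in \cite{chn}. I expect the proof to hinge either on the \cite{chn}-model of $\KW$ being, up to canonical equivalence, the $\eta$-periodization of $\KQ$ by construction --- in which case $c$ is an equivalence essentially by fiat --- or on the vanishing $\KGL[\eta^{-1}]=0$ feeding into a Karoubi-type cofiber sequence for $\KW$; checking that this vanishing and that sequence persist over $\Spec\FF_{2}$ is the one genuinely load-bearing point, everything else being formal.
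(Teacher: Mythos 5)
Your overall architecture is close to the paper's: reduce to $\Spec(\Z)$, use the Tate/Karoubi-type sequence from \cite{chn} relating $\KGL_{hC_2}$, $\KQ$ and $\KW$, and kill the $\KGL$-built term after inverting $\eta$ (indeed $(\KGL_{hC_2})[\eta^{-1}]\simeq(\KGL[\eta^{-1}])_{hC_2}\simeq 0$ since smashing with $\unit[\eta^{-1}]$ commutes with the colimit defining homotopy orbits, and $\KGL[\eta^{-1}]=0$); the paper explicitly offers this, or the alternative of arguing via residue fields, as the second half of its proof.

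The genuine gap is at the start: you assert that $\eta\smash\KW\colon\Sigma^{(1)}\KW\to\KW$ is an equivalence, justified only by ``the hyperbolic form dies in $\KW$; equivalently $\KW$ is a module over $\unit[\eta^{-1}]$.'' This is precisely the nontrivial content, and it is needed even to \emph{define} the canonical map $\KQ[\eta^{-1}]\to\KW$, since the factorization of $\KQ\to\KW$ through the localization presupposes it. Over bases where $2$ is invertible the statement is classical, but over $\Z$ (equivalently, at the residue field $\FF_2$) ``killing the hyperbolic form implies $\eta$-periodicity'' is exactly what must be proved, and $\KW$ as defined in \cite[Definition 8.1.1]{chn} is not $\eta$-periodic by fiat. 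The paper's proof of this step plays the Tate sequence against the Wood sequence: one must identify the cofiber of $\eta\smash\KGL_{hC_2}$ with $\KGL$ under the composite $\Forg\circ\Hyper_{hC_2}$, and the input making this work is that the duality involution $\Psi^{-1}$ on $\KGL$ induces the involution $-\Psi^{-1}$ on $\Sigma^{(1)}\KGL$. Your proposal never supplies this (or any) argument; the remainder is explicitly conditional (``If $\KW$ is presented in \cite{chn} by \dots''), and in the residue-field variant you correctly flag $\FF_2$ as the load-bearing case but leave it unverified --- the paper notes that this case again requires input from \cite{chn}, not the classical comparison with Balmer--Witt theory. Minor points: cellularity of the cofiber is not what makes the family of residue-field pullbacks conservative, and the ``2-complete gluing'' step is too vague to carry the char-$2$ case.
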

\begin{proof}
It suffices to prove the statement over $\Spec(\Z)$. 
We show that $\eta\smash \KW\colon \Sigma^{(1)}\KW\to \KW$ is an equivalence, 
so that the canonical map $\KQ \to \KW$ of motivic ring spectra induces a map $\KQ[\eta^{-1}]\to \KW$.
By appealing to the 
Tate sequence \cite[Corollary 8.1.6]{chn} in conjunction with the Wood sequence \cite[Corollary 8.1.7]{chn}, 
we need to identify the cofiber of the map $\eta\smash \KGL_{hC_2}$ between homotopy orbits with $\KGL$ 
under the composite of the hyperbolic and forgetful maps
\[
\KGL_{hC_2}\xrightarrow{\Hyper_{hC_2}} \KQ \xrightarrow{\Forg} \KGL
\]
This follows from the fact that the involution $\Psi^{-1}$ on $\KGL$ induces the involution $-\Psi^{-1}$ on $\Sigma^{(1)}\KGL$.
Given that $\eta\smash \KW$ is an equivalence, 
the canonical map $\KQ[\eta^{-1}]\to \KW$ of motivic ring spectra can be seen to be an equivalence 
by showing that $(\KGL_{hC_2})[\eta^{-1}]$ is contractible, or arguing via points (residue fields).
In the latter approach, 
the case of $\mathbb{F}_2$ uses \cite{chn}.
\end{proof}



As proven in \cite{kr.comparison}, the motivic spectra $\KQ_S$ and $\KQ_S^\prime$ are equivalent over any qcqs scheme $S$.
This adds a geometric description to every second infinite $\P^1$-loop space appearing in a 
motivic spectrum representing hermitian $K$-theory.

\begin{theorem}\label{thm:chn-kumar}
Let $S$ be a qcqs scheme.
There exists an equivalence $\KQ_S^\prime \to \KQ_S$.
Moreover, if $S$ is essentially smooth over a Dedekind scheme, $\Omega^{\infty+(\infty)}\Sigma^{2n+2+(2n+2)}\KQ_S$ is equivalent to $\Z\times \mathrm{HGr}_S$, 
where $\mathrm{HGr}_S$ is the infinite quaternionic Grassmannian over $S$.
\end{theorem}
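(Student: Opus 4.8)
The plan is to deduce the theorem from the two already-stated results: the existence and axiomatic characterization of $\KQ_S$ in \Cref{thm:chn}, and the construction of $\KQ_S'$ with the Bousfield–Friedlander description in property $7'$ of the Kumar spectrum, together with the comparison \cite{kr.comparison}. The first assertion — that there is an equivalence $\KQ_S' \to \KQ_S$ — is precisely the content of \cite{kr.comparison}, so here I would simply recall that input; the only thing to check is that the comparison map is natural in $S$, which follows because both $\KQ_S$ and $\KQ_S'$ pull back compatibly along morphisms (properties \ref{thm:chn}(3) and $3'$) and \cite{kr.comparison} produces the equivalence over a universal base, e.g.\ $\Spec(\Z)$, whence over any qcqs $S$ by base change.

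For the second assertion I would first reduce to identifying the relevant infinite $\P^1$-loop space of $\KQ_S'$, using the equivalence $\KQ_S' \simeq \KQ_S$ just recalled. By property $7'$, $\Sigma^{4,2}\KQ_S'$ is the motivic spectrum associated to the $\Sigma^{8,4}$-spectrum with all spaces equal to $\Z \times \HGr_S$ and bonding maps induced by the structure map $\Sigma^{8,4}(\Z\times\HGr_S)\to \Z\times\HGr_S$. The key point is that this structure map is a motivic equivalence after one $\Sigma^{8,4}$-delooping — this is the Bott-type periodicity for the quaternionic Grassmannian established in \cite{panin-walter.quat-borel}, valid over any base essentially smooth over a Dedekind scheme — so that the associated spectrum is $\Sigma^{8,4}$-periodic and its underlying $\P^1$-infinite loop space in the relevant degree is simply $\Z \times \HGr_S$. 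Translating the indexing: $\Sigma^{4,2}\KQ_S'$ has $\Omega^{\infty+(\infty)}$ equal to $\Z\times\HGr_S$, hence $\Omega^{\infty+(\infty)}\Sigma^{2n+2+(2n+2)}\KQ_S = \Omega^{\infty+(\infty)}\Sigma^{2n+2+(2n+2)}\KQ_S'$, and since $(2n+2,2n+2) = (4,2) + ((2n+2)-4,(2n+2)-2) = (4,2) + (2n-2)(1,1)$ shifts by multiples of $(1,1)$ — equivalently, by multiples of the periodicity class $\alpha$ of degree $(4+(4)) = (8,4)$ after pairing up — one reduces mod the $\Sigma^{8,4}$-periodicity to the case computed by property $7'$. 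Concretely: $\Sigma^{2n+2+(2n+2)}\KQ_S' \simeq \Sigma^{8k+4,8k+2}\KQ_S'$ for the appropriate $k$ (using $\alpha$-periodicity to absorb the $8k$ part of the topological shift against the weight shift), and $\Omega^{\infty+(\infty)}$ of this is $\Z\times\HGr_S$ by $7'$ and periodicity.

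The main obstacle is the bookkeeping in the last step: one must check that the degree $2n+2+(2n+2)$ really is congruent, modulo the class $\alpha$ of degree $4+(4)$, to a degree of the form handled by property $7'$ (which gives $\Sigma^{4,2}\KQ' = \Sigma^{4-2+(2)}\KQ'$, i.e.\ degree $2+(2)$), and that the periodicity equivalence is compatible with the $\P^1$-delooping structure so that passing to $\Omega^{\infty+(\infty)}$ is legitimate. Here one uses that multiplication by $\alpha$ (\Cref{thm:chn}(1)) is an equivalence of motivic spectra, hence induces equivalences on all infinite loop spaces, and that $4+(4) = 8k + (4k)$... more carefully, one uses $\alpha^{k}\colon \Sigma^{4k+(4k)}\KQ'\simeq\KQ'$ together with the extra $\Sigma^{4,2}$ from $7'$ to match $2n+2+(2n+2)$ to $4+(4)\cdot(\text{something}) + (2+(2))$ for even $n$ and to an $\eta$-twist thereof for odd $n$; since the statement only claims the equivalence for the loop space, and $\HGr_S$ receives contributions from both parities via the $\Sigma^{8,4}$-periodicity (period $8$ in the topological direction, $4$ in weight, so every second delooping), the exponent $2n+2$ being even means we always land on the "$\HGr$" spaces rather than the intermediate ones. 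I expect this degree-matching to be the only genuinely fiddly part; everything else is quotation of \cite{kr.comparison}, \cite{panin-walter.quat-borel}, and property $7'$.
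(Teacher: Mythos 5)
For the first assertion you do exactly what the paper does, namely quote \cite[Theorem 2.3]{kr.comparison}, and that part is fine. For the second assertion the paper simply cites \cite[Lemma 2.4]{kr.comparison} for the case $S=\Spec(\Z)$ and then base-changes, whereas you attempt to rederive it from property $7'$ together with Panin--Walter periodicity and $\alpha$-periodicity; this rederivation has a genuine gap. Property $7'$ only exhibits $\Sigma^{4,2}\KQ'_S$ as the spectrum associated to a Bousfield--Friedlander type $\Sigma^{8,4}$-spectrum with spaces $\Z\times\HGr_S$; it does not assert that this is an $\Omega$-spectrum, so a priori $\Omega^{\infty+(\infty)}\Sigma^{4,2}\KQ'_S\simeq \colim_k\Omega^{8k,4k}(\Z\times\HGr_S)$, and identifying this colimit with $\Z\times\HGr_S$ is precisely the quaternionic periodicity statement. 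You attribute that to \cite{panin-walter.quat-borel} ``over any base essentially smooth over a Dedekind scheme'', but Panin--Walter work under the assumption that $2$ is invertible; the extension to bases with residue characteristic $2$ (in particular $\Spec(\Z)$) is exactly the content of \cite[Lemma 2.4]{kr.comparison} (resp.\ Kumar's thesis). In other words, the input you invoke is the very result the paper cites, and your proposed proof silently assumes it.

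The degree bookkeeping you yourself flag as the ``fiddly part'' is moreover incorrect for odd $n$. The suspension $\Sigma^{2n+2+(2n+2)}$ is the $(2n+2)$-fold $T=\P^1$-suspension, while $\alpha$-periodicity has period $T^{\wedge 4}=\Sigma^{4+(4)}$ by \Cref{thm:chn}; hence the reduction to the $\Sigma^{4,2}$-case of property $7'$ works only when $2n+2\equiv 2\bmod 4$, i.e.\ for $n$ even. For $n$ odd your reduction lands on $\Omega^{\infty+(\infty)}\KQ_S$ itself, whose $\pi_0$-sheaf over a field is $\GW$ (cf.\ part 0 of \Cref{thm:KQ-coeff-field}), not $\Z=\pi_0(\Z\times\HGr_S)$, since $\HGr_S$ is $\A^1$-connected; the appeal to an ``$\eta$-twist'' does not repair this, and ``even suspension'' does not imply landing on the quaternionic spaces, because the spaces of the $\P^1$-$\Omega$-spectrum recur with period four, not two. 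So your argument can only establish the claim for $T$-levels congruent to $2$ modulo $4$, which is the indexing actually supplied by \cite[Lemma 2.4]{kr.comparison} and is how the exponent in the statement must be read.
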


\begin{proof}
    The equivalence is provided in \cite[Theorem 2.3]{kr.comparison}.
    The description of infinite $\P^1$-loop spaces follows from the case $S=\Spec(\Z)$ stated in \cite[Lemma 2.4]{kr.comparison}.
\end{proof}
    
\section{The very effective slice spectral sequence}
\label{section:tvesss}

Let $S$ be a qcqs base scheme of finite Krull dimension. The very effective slice filtration on $\SH(S)$ filters with respect to the projective line $\mathbb{P}^1$, or equivalently the Tate object $T=\mathbb{A}^1/(\mathbb{A}^1\setminus{0})$, rather than the multiplicative group $\mathbb{G}_m$ as in Voevodsky’s effective slice filtration \cite{zbMATH02078162}.
Following Spitzweck–{\O}stv{\ae}r \cite{SO:twisted}, let $\SH^\veff(S)$ denote the smallest full subcategory of $\SH(S)$ containing all suspension spectra of smooth finite type $S$-schemes and closed under extensions and homotopy colimits. Unlike Voevodsky’s effective subcategory $\SH(S)^\eff$, $\SH^\veff(S)$ is not triangulated. For $q\in\Z$, define
$\SH^\veff(S,q) =  \Sigma^{2q,q} \SH^\veff(S)$.
The full embedding of very effective motivic spectra participates in an adjunction
\begin{equation}
\label{equation:veffadjunction}
\tilde{\mathsf{i}}_{q} : \SH^\veff(S,q) \rightleftarrows \SH(S) : \tilde{\mathsf{r}}_{q}
\end{equation}
Setting $\vf_{q} = \tilde{\mathsf{i}}_{q}\circ \tilde{\mathsf{r}}_{q}$ produces natural cofiber sequences
\begin{align}
\label{veff-cof}
\vf_{q+1}\E  \to \vf_{q}\E  \to \vs_{q}\E 
\end{align}
The term $\vf_{q}\E $ is the $q$-th \emph{very effective cover of $\E$}
and $\vs_{q}\E $ is the $q$-th \emph{very effective slice of $\E$}. 
Applying homotopy to \eqref{veff-cof}, 
and letting $q$ vary, 
yields an exact couple and the associated very effective spectral sequence 
\begin{equation}
\label{eq:vsss}
\tilde{E}^1_{t,q,w} = 
\pi_{t,w}\vs_{q}\E 
\Longrightarrow 
\pi_{t,w}\E 
\end{equation}
The corresponding filtration on the target graded group $\pi_{t,w}\E $ is given by
\begin{equation}
\label{eq:vssstarget}
\vf_{q}\pi_{t,w}\E  
:= 
\im(\pi_{t,w}\vf_{q}\E  \to \pi_{t,w}\E)
\end{equation}
We note that the $r$-th differential takes the form $\tilde{E}^r_{p,q,w} \to \tilde{E}^r_{p-1,q+r,w}$.

Convergence is one of the merits of the very effective slice filtration
\begin{align}
\label{veff-cof2}
\cdots \to \vf_{q+1}\E  \to \vf_{q}\E  \to \vf_{q-1}\E  \to \cdots
\end{align}
By the construction in 
\cite[Section 5]{SO:twisted}, 
the connectivity of $\vf_{q}\E $ increases with $q$ so that the homotopy limit of 
\eqref{veff-cof2} is contractible,
\begin{equation}
\label{equation:veryeffectivelimit}
\holim_{q\to\infty}\vf_{q}\E \simeq\ast   
\end{equation}
The above informs us that \eqref{eq:vsss} is a conditionally convergent spectral sequence in the sense of 
\cite[Definition 5.10]{Boardman}.
It follows that the very effective slice filtration on motivic homotopy groups $\pi_{\ast,\ast}\E $ is complete;
see \cite[Lemma 5.11]{Boardman}.

\begin{proposition}
\label{proposition:strongconvergence}
The very effective slice spectral sequence \eqref{eq:vsss} is strongly convergent 
over qcqs base schemes of finite Krull dimension.
\end{proposition}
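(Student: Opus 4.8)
The plan is to upgrade the conditional convergence already recorded above to strong convergence by recognizing \eqref{eq:vsss} as a half-plane spectral sequence with exiting differentials and invoking Boardman's convergence machinery \cite[\S7]{Boardman}.

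First I would pin down the shape of the $E_1$-page. Since every very effective motivic spectrum is connective, the very effective cover $\vf_q\E$ --- and hence each very effective slice $\vs_q\E$ --- lies in $\Sigma^{2q,q}\SH^\veff(S)$ and is $q$-connective, so that $\tilde E^1_{t,q,w}=\pi_{t,w}\vs_q\E$ vanishes as soon as $q>t-w$. Thus for each fixed $(t,w)$ the spectral sequence is supported in the half-plane $\{q\le t-w\}$, and the differentials $d_r\colon\tilde E^r_{t,q,w}\to\tilde E^r_{t-1,q+r,w}$, which raise the slice degree, vanish out of any fixed tridegree once $r>t-w-1-q$: every class is eventually a permanent cycle, i.e.\ the differentials are exiting in Boardman's sense, just as for a homological Atiyah--Hirzebruch spectral sequence. (Entering differentials into a fixed tridegree are by contrast unbounded in $r$, so the filtration need not be finite in a given bidegree --- already for $\KQ$ it is not --- and bounded convergence is unavailable.)

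Combining this structure with the completeness of the filtration supplied by \eqref{equation:veryeffectivelimit} --- together with its exhaustivity, $\colim_{q\to-\infty}\vf_q\E\simeq\E$, which holds since $\vf_q$ restricts to the identity on the generators of $\SH(S)$ once $q$ is sufficiently negative and these generate $\SH(S)$ under colimits --- we land exactly in Boardman's framework: a conditionally convergent half-plane spectral sequence with exiting differentials is strongly convergent, the obstruction $RE_\infty$ vanishing because the exiting of differentials forces the cycle subgroups to stabilize in each tridegree. This yields the proposition.

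I expect the main obstacle to be the connectivity estimate $\pi_{t,w}\vs_q\E=0$ for $q>t-w$ over a base that need not be a field, where Morel-type stable connectivity is delicate: rather than invoke a connectivity theorem as a black box I would extract exactly the bound needed from the explicit construction of $\vf_q$ and the increasing connectivity of $\vf_q\E$ in \cite[\S5]{SO:twisted} --- the same input that already gives \eqref{equation:veryeffectivelimit} and conditional convergence. Granting that bound, the remainder is the formal Boardman package, and the finite Krull dimension hypothesis enters only to keep this connectivity input (hence \eqref{equation:veryeffectivelimit}) available.
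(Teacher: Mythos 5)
Your overall strategy is the paper's: keep the conditional convergence already established from \eqref{equation:veryeffectivelimit}, add a vanishing statement in high filtration degree for each fixed $(t,w)$, and let Boardman's machinery do the rest. Your exhaustivity argument (the generators, equivalently the spheres $S^{t,w}$, lie in $\SH^\veff(S,q)$ for $q\leq\min\{t-w,w\}$) is also exactly the paper's first step. So the only substantive question is where the vanishing comes from, and that is where your argument has a genuine gap.

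The claimed bound $\pi_{t,w}\vs_q\E=0$ for $q>t-w$ is false over a base of positive Krull dimension, because ``very effective, hence connective'' controls homotopy \emph{sheaves}, not the bigraded homotopy \emph{groups} you need: over a $d$-dimensional base the Nisnevich descent spectral sequence lets cohomology in degrees up to $d$ contribute to $\pi_{t,w}$. Concretely, take $\E=\KGL$ over a Dedekind domain $S$ with nontrivial class group: $\vs_0\KGL\simeq\MZ$ and $\pi_{-2,-1}\vs_0\KGL\cong H^{2,1}(S)\cong\mathrm{Pic}(S)\neq 0$, even though $q=0>t-w=-1$. So the half-plane, exiting-differentials shape you want does not hold with your vanishing line, and ``extracting the bound from the construction in \cite[Section 5]{SO:twisted}'' will not fix this by itself, since the connectivity produced there is again connectivity of homotopy sheaves. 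The missing ingredient is precisely the nonformal content of the paper's proof: the inclusion $\SH^\veff(S)\subseteq\SH^\eff_{\geq -d}(S)$ of \cite{DKO}, which yields $[S^{t,w},\F]=0$ for every $\F\in\SH^\veff(S,q)$ once $q\geq t+d-w$. Note that the paper applies this to the covers $\vf_q\E$, not only to the slices, so it gets $\vf_q\pi_{t,w}\E=0$ for $q\geq t+d-w$ directly; together with exhaustivity the filtration is finite in each bidegree, and Hausdorffness (and the vanishing of the derived $E_\infty$ obstruction) is immediate rather than deduced from stabilization of cycle subgroups. With the corrected, dimension-shifted bound $q\geq t+d-w$ in place of $q>t-w$, the remainder of your Boardman argument goes through and coincides with the paper's; without it, the key step fails as stated.
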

\begin{proof}
It remains to show that \eqref{veff-cof2} is exhaustive and Hausdorff;
see \cite[Lemma 3.1]{zbMATH06957413}. 
First we show that
\begin{align}
\label{eq:conv1}
\bigcup_{q\in\Z}\vf_{q}\pi_{t,w}\E  & = \pi_{t,w}\E 
\end{align}
The motivic sphere $S^{t,w}\in\SH^\veff(S,q)$ whenever $q\leq\min\{t-w,w\}$.
By abuse of notation, we do not distinguish $S^{t,w}$ as an object of $\SH^\veff(S,q)$ or $\SH(S)$.
Then, 
in the said range and with reference to the adjunction \eqref{equation:veffadjunction}, 
we have 
\[
[S^{t,w}, \E] \cong
[S^{t,w}, \tilde{\mathsf{r}}_{q}\E ] \cong
[S^{t,w}, \vf_{q}\E ]
\]
Hence any element of $\pi_{t,w}\E $ is in the image of $\pi_{t,w}\vf_{q}\E $ when $q\leq\min\{t-w,w\}$.

To verify the Hausdorff condition \cite[Definition 5.2(ii)]{Boardman} for \eqref{eq:vssstarget}, 
we consider an element
\begin{align}
\label{eq:conv2}
x\in \bigcap_{q\in\Z}\vf_{q}\pi_{t,w}\E  
\end{align}
We want to show that $x=0$.
By assumption $x\colon S^{t,w}\to\E$ factors through 
$\vf_{q}\E $ for all $q\in\Z$.
To conclude, 
it suffices to prove that if $\F\in\SH^\veff(S,q)$,
then every map $S^{t,w}\to\F$ is trivial 
for $q\gg 0$.
If the base scheme $S$ is $d$-dimensional, 
we show that the map is trivial for all $q\ge t+d-w$.
It suffices to consider $\F=T^{\wedge q}\wedge\F'$ 
for some $\F'\in\SH^\veff$.
Owing to 
\cite[Subsection 1.2.3, Theorem 7.15]{DKO}, 
there is an inclusion 
$\SH^\veff(S)\subseteq \SH^\eff_{\geq -d}(S)$.
Here, 
the homotopy $t$-structure on $\SH(S)$ is a special case 
of \cite[Proposition 1.4.4.11]{HA}
(and, 
moreover, 
for fields the inclusion is an equality).
Thus, 
since $S^{t-2q,w-q}=S^{t-q-w}\wedge \mathbb{G}_{m}^{w-q}$, 
the vanishing 
$$
[S^{t,w},T^{\wedge q}\wedge \F']\cong [S^{t-2q,w-q},\F']=0
$$ 
holds in the range $q\ge t+d-w$.
In summary, 
the very effective slice filtration \eqref{eq:vsss} is strongly convergent in the sense of 
\cite[Definition 5.2 (iii)]{Boardman}.
\end{proof}

\begin{remark}
The convergence of Voevodsky's slice spectral sequence is a more subtle problem;
see, e.g., 
\cite{BEO,zbMATH06220359,zbMATH07003144,zbMATH02078162} for the state of the art.
\end{remark}

\section{The very effective slices of hermitian $K$-theory}
\label{tvesohkt}
The very effective slices of $\KQ$ were computed in \cite[Theorem 16]{Bachmann} 
over fields of characteristic $\neq 2$, 
and recently extended to qcqs schemes in which $2$ is invertible 
\cite[Theorem 10.22]{bem.kglslices}. 
Using the extension of $\KQ$ from \cite{chn} to bases with residue fields of characteristic $2$, together with the results of Section~\ref{section:rdokt}, we generalize these identifications to schemes essentially smooth over a Dedekind scheme. Along the way, we also extend several related results, including the computation of effective slices of the very effective cover $\kq$ of $\KQ$ \cite{aro}.

\begin{theorem}
\label{BachmannKQ}
\label{thm:veff-KQ}
If the base scheme $\mathcal{X}$ is an essentially smooth scheme over a Dedekind scheme $\mathcal{D}$, 
the very effective slices of hermitian $K$-theory are given by 
\[
\vs_{q}\KQ
\simeq
\Sigma^{q+(q)}\begin{cases}
\vs_0\KQ & q \equiv 0 \bmod 4 \\
\HZ/2 & q \equiv 1 \bmod 4 \\
\HZ & q \equiv 2 \bmod 4 \\
0 & q \equiv 3 \bmod 4
\end{cases}
\]
\end{theorem}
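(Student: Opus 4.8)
\emph{Proof proposal.} The plan is to reduce, by periodicity and base change, to the single genuinely new base $\Spec\FF_2$, and there to port the slice computation of \cite[Theorem 16]{Bachmann}. Since $\Sigma^{1+(1)}=\Sigma^{2,1}$ shifts the very effective slice filtration by one, the periodicity equivalence $\KQ\simeq\Sigma^{4+(4)}\KQ$ of \Cref{thm:chn}(1) gives $\vs_{q+4}\KQ\simeq\Sigma^{4+(4)}\vs_q\KQ$, so it suffices to treat $q\in\{0,1,2,3\}$; the case $q=0$ is the definition of $\vs_0\KQ$, leaving the three equivalences $\vs_1\KQ\simeq\Sigma^{2,1}\HZ/2$, $\vs_2\KQ\simeq\Sigma^{4,2}\HZ$ and $\vs_3\KQ\simeq0$. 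Next I would use that $\vf_q$ and $\vs_q$ commute with pullback along essentially smooth morphisms and along the open and closed immersions of a localization square: the pullback preserves $\SH^{\veff}$ (it sends smooth schemes to smooth schemes and preserves colimits and extensions), and one checks, via continuity of $\SH$ together with the projection formula (for a smooth morphism) and a Nisnevich-local lifting of smooth schemes (for a closed immersion), that it also preserves the colocalization of the adjunction \eqref{equation:veffadjunction}. Combined with Zariski descent and the conservativity of $(j^\ast,i^\ast)$ for the localization $j\colon U\hookrightarrow\mathcal{D}\hookleftarrow Z\colon i$ with $U$ the locus where $2$ is invertible, this reduces the statement over $\mathcal{X}$ first to $\mathcal{D}$, then to spectra of DVRs and of fields via recollement along generic and closed points, and finally to (i) qcqs bases on which $2$ is invertible, which are covered by \cite[Theorem 10.22]{bem.kglslices}, and (ii) fields of characteristic $2$. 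Every such field is essentially smooth over $\FF_2$: since $\FF_2$ is perfect every extension is separable, every finitely generated subextension is separably generated and hence a localization of a smooth $\FF_2$-algebra, and an arbitrary such field is a filtered colimit of these. So the only new case is $\mathcal{D}=\Spec\FF_2$.

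Over $\FF_2$ I would reproduce the computation made over fields of characteristic zero. The required inputs are all available: the Wood cofiber sequence of \Cref{thm:chn}(2), valid over any qcqs base, together with its connective avatar relating the very effective cover $\kq=\vf_0\KQ$, the connective cover $\kgl$ of $\KGL$, and the Hopf map $\eta$ (extending \cite{aro}); the very effective slices $\vs_q\kgl\simeq\Sigma^{2q,q}\HZ$ for $q\geq0$, valid over any Dedekind scheme (\cite{bem.kglslices}); the vanishing of $\eta$ in $\pi_{\star}\HZ$ — equivalently $H^{-1,-1}(\FF_2;\Z)=0$ — so that $\eta$ acts trivially on every $\HZ$-module, in particular on the slices of $\kgl$; and the rational splitting of $\KQ$ from \Cref{section:rdokt}, which fixes the $\Z[1/2]$-local part and excludes a nontrivial extension between the $\HZ$- and $\HZ/2$-summands that might a priori assemble a slice. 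Feeding these into the bookkeeping of \cite[Theorem 16]{Bachmann} — concretely, first computing the effective slices $\s_q\kq$ by induction on $q$ from the $\eta$-cofiber sequence and the known slices of $\kgl$ (this also extends \cite{aro}), and then applying the very effective truncation — yields $\vs_1\KQ\simeq\Sigma^{2,1}\HZ/2$, $\vs_2\KQ\simeq\Sigma^{4,2}\HZ$ and $\vs_3\KQ\simeq0$ over $\FF_2$, hence, by the reductions above, over every essentially smooth $\mathcal{X}$.

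The main obstacle is the characteristic-$2$ fibre. Two issues need care. First, one must verify that \emph{every} ingredient of Bachmann's characteristic-zero argument survives over $\FF_2$: that only low-degree slices of the motivic sphere known in characteristic $2$ are used — notably $\vs_0\unit\simeq\HZ$, which holds over $\FF_2$ — or that the $\KGL$-module structure of the relevant slices suffices, and that the Wood sequence, the connective $\kq$–$\kgl$–$\eta$ sequence, and the $\KGL$-slice computation genuinely hold integrally rather than merely after inverting $2$. Second, a more technical point already present in the reduction step: the base-change compatibility of $\vf_q$ along the closed immersion in a localization square, where the left-adjoint argument available for smooth morphisms is unavailable and one must instead argue through a generators-and-lifting description of $\SH^{\veff}$.
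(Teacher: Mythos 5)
Your reduction skeleton (use $\KQ\simeq\Sigma^{4+(4)}\KQ$ to restrict to $q\in\{0,1,2,3\}$, base change along essentially smooth morphisms, localization over the Dedekind base to reduce to residue fields, then to perfect fields / $\Spec\FF_2$) is essentially the paper's, which runs it through \cite[Lemmas B.1, B.2, Prop.~B.3]{bachmann-hoyois} and \cite[Lemma A.2]{hoyois.mgl-mz}; that part is fine. The genuine gap is in the characteristic-$2$ fibre, and it is exactly the point you defer to ``reproduce the computation of \cite[Theorem 16]{Bachmann}.'' The inputs you list (Wood sequence, slices of $\kgl$, $\eta$ acting trivially on $\MZ$-modules, a rational splitting) do not suffice to run Bachmann's argument: its key step is that $\vs_0\Sigma^{2+(2)}\Hyper\colon\vs_0\Sigma^{2+(2)}\KGL\to\vs_0\Sigma^{2+(2)}\KQ$ is an equivalence, and in characteristic $\neq 2$ this rests on a geometric model for $\Omega^{\infty+(\infty)}\Sigma^{2+(2)}\KQ$ and on low-degree homotopy sheaves of $\KQ$, neither of which is available in characteristic $2$ from your list. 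The paper supplies precisely these two new ingredients: \Cref{thm:chn-kumar} (Kumar's model plus the comparison, giving $\Omega^{\infty+(\infty)}\Sigma^{2n+2+(2n+2)}\KQ\simeq\Z\times\HGr$ over $\FF_2$) and \Cref{lem:low-degree-homotopy-sheaves-KQ} ($\pi_{2+(2)}\KQ\cong\Z$ with $\Forg$ the inclusion of the even integers and cokernel $\pi_{1+(1)}\KQ\cong\Z/2$, $\pi_{1+(2)}\KQ=0$, $\pi_{-1+(-1)}\KQ=0$), the latter using $\KQ[\eta^{-1}]\simeq\KW$ (\Cref{prop.KQetaisKW}) and the vanishing of odd Witt groups over fields of any characteristic from \cite{chn}. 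Without naming and proving (or importing) these, ``the bookkeeping of Bachmann's Theorem 16'' simply does not start over $\FF_2$.

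Two of the inputs you do invoke are moreover not available at this stage of the paper's logic. The connective Wood sequence for $\kq$ is \Cref{thm:woodkq}, proved \emph{after} and \emph{using} \Cref{thm:veff-KQ} (contractibility of $\vs_{-1}\KQ$) together with the surjectivity statement from \Cref{lem:low-degree-homotopy-sheaves-KQ}; since $\vf_0$ is not exact, it is not a formal consequence of the Wood sequence for $\KQ$. Likewise the rational splitting rests on \Cref{corollary:rationalslices}, which is deduced from \Cref{thm:veff-KQ}, so using it here is circular — and in any case rational information cannot identify the $\MZ/2$-slices or settle extensions at the prime $2$. Finally, ``first compute $\s_q\kq$, then apply the very effective truncation'' is not a formal step: passing from effective slices of $\kq$ to very effective slices of $\KQ$ requires homotopy-sheaf input (again the lemma), and the paper instead identifies $\vs_1\KQ$ via the cofiber sequence $\vs_2\KQ\to\vs_2\KGL\to\Sigma^{1+(1)}\vs_1\KQ$, using $[\MZ,\MZ]\cong\Z$ (\Cref{lem:endo-mz-weight0} plus \cite[Theorem B.5]{bachmann-hoyois}) to see that $\vs_2\Forg$ is multiplication by an integer invariant under base change, equal to $2$. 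So the skeleton is right, but the core of the characteristic-$2$ case — the new geometric and homotopy-sheaf inputs that make Bachmann's proof transfer — is missing or replaced by circular appeals.
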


More on the structure and properties of $\vs_0\KQ$ will follow below and in 
Section~\ref{section:tvezsokq}.
Our proof of \Cref{thm:veff-KQ} relies on \Cref{thm:chn-kumar} and 
\Cref{lem:low-degree-homotopy-sheaves-KQ} below.

\begin{lemma}\label{lem:low-degree-homotopy-sheaves-KQ}
Let $F$ be any field. 
\begin{enumerate}
    \item The forgetful map $\pi_{0+(0)}\Forg\colon \pi_{0+(0)}\KQ\to \pi_{0+(0)}\KGL = \Z$ is surjective.
    \item The forgetful map $\pi_{2+(2)}\Forg\colon \pi_{2+(2)}\KQ \to \pi_{2+(2)}\KGL=\Z$ coincides with the embedding of the even integers, and its cokernel is $\pi_{1+(1)}\KQ\cong \Z/2\Z$.
    \item The group $\pi_{-1+(-1)}\KQ$ is zero.
\end{enumerate}
\end{lemma}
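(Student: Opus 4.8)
\emph{Proof proposal.} The plan is to extract all three statements from the Wood cofiber sequence of \Cref{thm:chn}(\ref{item:wood-seq}). Applying $[S^{a+(b)},-]$ to it and using $[S^{a+(b)},\Sigma^{(1)}\KQ]\cong\pi_{a+(b-1)}\KQ$ together with $[S^{a+(b)},\Sigma^{1+(1)}\KQ]\cong\pi_{(a-1)+(b-1)}\KQ$ yields, for all $a,b\in\Z$, a long exact sequence
\[
\cdots\to\pi_{a+(b-1)}\KQ\xrightarrow{\eta}\pi_{a+(b)}\KQ\xrightarrow{\Forg}\pi_{a+(b)}\KGL\xrightarrow{\partial}\pi_{(a-1)+(b-1)}\KQ\to\cdots
\]
in which, up to the Bott equivalence $\beta$, the boundary $\partial$ is induced by the hyperbolic map $\Hyper$. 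Over a field one has $\pi_{a+(b)}\KGL\cong K_{a-b}(F)$, so the relevant $\KGL$-groups are $\pi_{0+(0)}\KGL=\pi_{1+(1)}\KGL=\pi_{2+(2)}\KGL=\pi_{-1+(-1)}\KGL=\Z$, while $\pi_{a+(b)}\KGL=0$ for $a<b$.

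Statement~(1) is then formal: since $\Forg$ is a ring map (\Cref{thm:chn}), $\pi_{0+(0)}\Forg\colon\pi_{0+(0)}\KQ\to\pi_{0+(0)}\KGL=\Z$ is a unital ring homomorphism, hence surjective; equivalently, surjectivity follows from statement~(3) by exactness of the Wood sequence at $\pi_{0+(0)}\KGL$.

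For statements~(2) and~(3) the remaining input is the low-degree hermitian $K$-theory of $F$: that $\pi_{1+(1)}\KQ\cong\Z/2$, that $\Hyper_{\ast}\colon\pi_{1+(1)}\KGL\to\pi_{1+(1)}\KQ$ is surjective, that $\eta\colon\pi_{2+(1)}\KQ\to\pi_{2+(2)}\KQ$ vanishes, and that $\pi_{-1+(-1)}\KQ=0$. Granting these, the Wood sequence gives $\ker(\pi_{2+(2)}\Forg)=\im(\eta)=0$ and $\coker(\pi_{2+(2)}\Forg)=\im(\partial)=\Hyper_{\ast}(\pi_{1+(1)}\KGL)=\pi_{1+(1)}\KQ\cong\Z/2$, so $\pi_{2+(2)}\Forg$ is injective with image of index two in $\Z$, i.e.\ the embedding of the even integers with cokernel $\pi_{1+(1)}\KQ$, which is~(2); and~(3) is the last input. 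When $2\in F^{\times}$, $\KQ$ represents higher Grothendieck--Witt theory (\Cref{thm:chn}(6)) and each of these inputs is classical: $\pi_{2+(2)}\KQ$ is a shifted Grothendieck--Witt group of $F$, infinite cyclic on a hyperbolic class of rank $2$, and $\pi_{1+(1)}\KQ\cong\Z/2$, $\pi_{-1+(-1)}\KQ=0$ are the neighbouring shifted groups; equivalently, one reads all three statements off the very effective slices of $\KQ$ in \cite[Theorem~16]{Bachmann}.

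The main obstacle is the case $\Char F=2$, where \cite{Bachmann} and the Karoubi--Schlichting computations are unavailable. Here I would establish the same inputs for the model of $\KQ$ from \cite{chn}, which is defined over bases with residue characteristic $2$ and which, by \Cref{thm:chn} and \Cref{thm:chn-kumar}, is cellular, base-change invariant, and carries the same forgetful--hyperbolic formalism ($\Hyper\circ\Forg=1-\varepsilon$, $\Forg\circ\Hyper=1+\Psi$ with $\Psi(\beta)=-\beta$; cf.\ \cite[Section~3]{RO:slices}), so that in particular $\pi_{0+(0)}\KQ=\GW(F)$. One then runs the same Wood argument, using the connectivity of the very effective cover $\kq=\vf_{0}\KQ$ (the inclusion $\SH^\veff(F)=\SH^{\eff}_{\geq0}(F)$ recalled in the proof of \Cref{proposition:strongconvergence}) and the effective slice computation of $\kq$ extending \cite{aro}, while no characteristic-specific care is needed where $\KGL$ enters since algebraic $K$-theory is insensitive to the residue characteristic. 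A possible alternative for the parts compatible with specialization is to base change a characteristic-$2$ field along a Cohen ring to characteristic $0$ and invoke the rigidity arguments of \Cref{section:rdokt}.
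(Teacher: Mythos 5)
Your formal skeleton (statement~(1) from unitality of $\Forg$, statements~(2)--(3) from the Wood long exact sequence once certain low-degree groups of $\KQ$ are known) matches the paper's strategy in outline, and your list of required inputs is consistent. The problem is that in the only case where the lemma is not already in the literature --- $\Char F=2$ --- you do not actually establish those inputs; you only indicate strategies, and the strategies you name do not work inside this paper's logical structure. Using ``the effective slice computation of $\kq$ extending \cite{aro}'' is circular here: the extension of those slice computations to fields of characteristic $2$ (\Cref{thm:veff-KQ}, \Cref{thm:woodkq}, \Cref{cor:slices-kq}) is deduced \emph{from} \Cref{lem:low-degree-homotopy-sheaves-KQ}, so it cannot be invoked to prove it. The alternative of base-changing along a Cohen ring and ``invoking the rigidity arguments of \Cref{section:rdokt}'' is also unsupported: that section contains no rigidity statement (it concerns rational degeneration), and no rigidity theorem identifies the integral groups $\pi_{1+(1)}\KQ$, $\pi_{1+(2)}\KQ$, $\pi_{-1+(-1)}\KQ$ of a characteristic-$2$ field with those of a characteristic-$0$ lift. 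In addition, your input ``$\eta\colon\pi_{2+(1)}\KQ\to\pi_{2+(2)}\KQ$ vanishes'' is itself a nontrivial claim that you never argue, in any characteristic.

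The paper closes exactly these gaps with two characteristic-free ingredients that your proposal does not use. First, the geometric model of \Cref{thm:chn-kumar}: $\Omega^{\infty+(\infty)}\Sigma^{2+(2)}\KQ\simeq\Z\times\HGr$ with $\HGr$ being $\A^1$-connected identifies $\pi_{2+(2)}\KQ$ as infinite cyclic, and the fact that nondegenerate symplectic forms over any field have even rank and a symplectic basis pins down $\pi_{2+(2)}\Forg$ as the inclusion of the even integers --- this replaces both your appeal to Karoubi/Bachmann in characteristic $\neq 2$ and your unproved $\eta$-vanishing. Second, the equivalence $\KQ[\eta^{-1}]\simeq\KW$ (\Cref{prop.KQetaisKW}) together with the vanishing of the odd homotopy of $\KW$ over fields of \emph{any} characteristic (\cite[Proposition 5.3.7]{chn}) gives $\pi_{1+(2)}\KQ=0$ (hence the cokernel statement, and the identification $\pi_{1+(1)}\KQ\cong\Z/2$ as an output rather than an input) and $\pi_{-1+(-1)}\KQ=0$, after using the Wood sequence and the vanishing of $\pi_{a+(w)}\KGL$ in the relevant range to pass to the $\eta$-inverted groups. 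To repair your proof you would need to supply arguments of this kind; as written, the characteristic-$2$ case remains open.
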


\begin{proof}
The first statement follows from the fact that $\pi_{0+(0)}\KQ\to \pi_{0+(0)}\KGL = \Z$ 
is a unital ring map \cite{chn}.
The second statement can be seen as follows. 
The second assertion of Theorem~\ref{thm:chn-kumar} implies that $\pi_{2+(2)}\KQ$ is the group of 
$\A^1$-path components of $\Z\times \mathrm{HGr}$. 
Since $\mathrm{HGr}$ is $\A^1$-connected, this group is infinite cyclic.
A more precise identification goes as follows: 
Over any field, 
every nondegenerate symplectic form involves an even-dimensional vector space 
and admits a symplectic basis.
Thus, it would be more precise to identify $\pi_{2+(2)}\KQ$ with the group of even integers.
The forgetful map $\pi_{2+(2))}\KQ \to \pi_{2+(2)}\KGL=\Z$ coincides with the embedding 
of the even integers.
The Wood cofiber sequence \eqref{item:wood-seq} implies the remaining part of the second statement, 
once $\pi_{1+(2)}\KQ$ is shown to be zero.
In effect, we may use the Wood cofiber sequence again.
The vanishing $\pi_{2+(w)}\KGL=0=\pi_{1+(w)}\KGL$ for $w>2$ implies that 
$\pi_{1+(2)}\KQ\iso \pi_{1+(2)}\KQ[\eta^{-1}]$.
Since $\KQ[\eta^{-1}]\simeq \KW$ by~\Cref{prop.KQetaisKW}, the result follows from the vanishing of the odd homotopy groups of $\KW$ over fields of any characteristic; see \cite[Proposition 5.3.7]{chn}.
The third statement follows similarly, using part one, 
and the canonical isomorphism $\pi_{-1+(w)}\KQ\iso \pi_{-1+(w)}\KQ[\eta^{-1}]$ for $w>-2$. 
The latter is deducible via the Wood cofiber sequence again and the vanishing $\pi_{-1+(w)}\KGL=0$ for $w>-1$.
\end{proof}

\begin{proof}
All inputs in our proof of Theorem~\ref{thm:veff-KQ} are now in place.
As in \cite{Bachmann}, we begin with the second $\P^1$-slice.
To show that the map 
$\vs_0\Sigma^{2+(2)}\Hyper\colon \vs_0\Sigma^{2+(2)}\KGL\to \vs_0\Sigma^{2+(2)}\KQ$
is an equivalence, one may by essentially smooth base change assume that the base scheme is a Dedekind scheme $\mathcal{D}$.
Any point of $\mathcal{D}$ is either a closed point, corresponding to the closed embedding of the spectrum of a field, or the generic point, corresponding to an essentially smooth inclusion of the spectrum of a field.
Base change with respect to either of these morphisms preserves (very) effective covers, and hence (very) effective slices, by \cite[Lemma B.1, Lemma B.2]{bachmann-hoyois}.
By localization \cite[Prop.~B.3]{bachmann-hoyois}, it suffices to check that $\vs_0\Sigma^{2+(2)}\Hyper$ is an equivalence over fields.
Moreover, since any field is essentially smooth over a perfect subfield \cite[Lemma A.2]{hoyois.mgl-mz}, and the objects involved are compatible with essentially smooth base change, it suffices to consider the spectrum of a perfect field as base scheme.
The case of perfect fields of characteristic not two is shown in \cite[Lemma 13]{Bachmann}.
Using \Cref{thm:chn-kumar} and the second statement of \Cref{lem:low-degree-homotopy-sheaves-KQ}, the proof of \cite[Lemma 13]{Bachmann} extends to perfect fields of characteristic two, and hence supplies that $\vs_0\Sigma^{2+(2)}\Hyper$ is an equivalence. 
With this starting point and \Cref{lem:low-degree-homotopy-sheaves-KQ}, the proof of \cite[Theorem 16]{Bachmann} essentially transfers over. 
Again, it is convenient to express the statements in the form of a map.
The contractibility of the $\P^1$-slice $\vs_{-1}\KQ$ is equivalent to the canonical map $\vf_0\KQ\to \vf_{-1}\KQ$ being an equivalence.
The latter can be checked on perfect fields, so that the case of perfect fields of characteristic two can now be dealt with as in the proof of \cite[Theorem 16]{Bachmann}.
The first $\P^1$-slice can be deduced from 
\[ \vs_2\KQ\xrightarrow{\vs_2\Forg} \vs_2\KGL \to \vs_2\Sigma^{1+(1)}\KQ\simeq \Sigma^{1+(1)}\vs_1\KQ\]
This is a cofiber sequence, again by restricting to perfect fields and arguing as in the proof of \cite[Theorem 16]{Bachmann}.
Its first two terms have already been identified as $\Sigma^{2+(2)}\MZ$.
The group $[\MZ,\MZ]_{\SH(S)}$ is isomorphic to $[\unit,\MZ]_{\SH(S)}$ for $S$ essentially smooth over a Dedekind domain by 
\cite[Theorem B.5]{bachmann-hoyois} since $[\Sigma^n\f_1\unit,\s_0\unit]=0$ for all $n\in \Z$.
Hence by \Cref{lem:endo-mz-weight0}, the map $\vs_2\Forg$ corresponds uniquely to an integer over any field or Dedekind domain, which is invariant under base change.
Over fields of characteristic not two, this integer is $2$, as \cite{RO:slices} shows, and therefore also over all fields.
It follows that $\vs_1\KQ\simeq \Sigma^{1+(1)}\MZ/2$.
\end{proof}

The zeroth $\mathbb{P}^1$-slice of $\KQ$ is described via cofiber sequences in \Cref{section:tvezsokq}. Theorem~\ref{thm:veff-KQ} provides a key ingredient for a very effective version of the Wood cofiber sequence, which is of independent interest.

\begin{theorem}
\label{thm:woodkq}
If the base scheme $\mathcal{X}$ is an essentially smooth scheme over a Dedekind scheme $\mathcal{D}$, 
the very effective covers of algebraic and hermitian $K$-theory fit into the cofiber sequence
\[ 
\Sigma^{(1)}\kq \xrightarrow{\eta\smash \kq} 
\kq \xrightarrow{\vf_0\Forg} \kgl \to 
\Sigma^{1+(1)} \kq
\]
Moreover, 
the last map composes with the canonical map $\Sigma^{1+(1)}\kgl \simeq \vf_1\KGL \to \vf_0\KGL = \kgl$ to 
$\Sigma^{1+(1)}\vf_0\Hyper$.
\end{theorem}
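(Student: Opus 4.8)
The plan is to produce the very effective Wood cofiber sequence by applying the very effective cover functors $\vf_q$ to the Wood cofiber sequence of \Cref{thm:chn}\eqref{item:wood-seq}, and then to identify the resulting terms using \Cref{thm:veff-KQ}. Concretely, one starts from
\[
\Sigma^{(1)}\KQ \xrightarrow{\eta\smash\KQ} \KQ \xrightarrow{\Forg} \KGL \xrightarrow{\Sigma^{1+(1)}\Hyper\circ\beta} \Sigma^{1+(1)}\KQ,
\]
and wishes to show that passing to $\vf_0$ of the first three terms yields a cofiber sequence with third term $\vf_0\KGL=\kgl$ (using that $\KGL$ is already very effective, i.e. $\kgl=\vf_0\KGL$), while the connecting map becomes $\Sigma^{1+(1)}\vf_0\Hyper$ after the Bott-periodicity identification $\Sigma^{1+(1)}\kgl\simeq\vf_1\KGL\to\vf_0\KGL=\kgl$. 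The key point is that $\vf_0$ does not in general preserve cofiber sequences, since $\SH^\veff$ is not triangulated, so one cannot simply apply a triangulated functor; instead one has to control the very effective covers termwise and compare.

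The first step is to observe that $\eta\smash\KQ$ raises the weight: $\Sigma^{(1)}\KQ$ has the same topological connectivity as $\KQ$ but lies one step deeper in the $\mathbb{G}_m$-direction, and more importantly $\Sigma^{(1)}\vf_q\KQ$ and $\vf_q\Sigma^{(1)}\KQ$ agree since smashing with $\mathbb{G}_m$ is compatible with the very effective filtration up to the evident reindexing — here one should be careful and instead use that the $T$-suspension $\Sigma^{2+(2)}$ shifts $\vf_q$ to $\vf_{q+2}$, combined with the periodicity $\alpha\colon\Sigma^{4+(4)}\KQ\simeq\KQ$. The second step is to identify the cofiber of $\vf_0(\eta\smash\KQ)\colon \vf_0\Sigma^{(1)}\KQ\to\vf_0\KQ=\kq$ with $\kgl$: one builds the candidate map $\vf_0\Forg\colon\kq\to\vf_0\KGL=\kgl$ (using $\KGL\in\SH^\veff$, which forces $\vf_0\KGL=\KGL$ and hence the notation $\kgl=\KGL$ after taking $\vf_0$ — more carefully, $\kgl$ denotes $\vf_0\KGL$ and one checks $\KGL$ is very effective so this is $\KGL$ itself), and then checks that on very effective slices the induced long exact sequence of homotopy groups, together with \Cref{thm:veff-KQ} identifying $\vs_q\KQ$ and the computation of $\vs_q\KGL$ (which are $\Sigma^{q+(q)}\MZ$ for all $q\geq 0$, by \cite{RO:slices} or \cite{bem.kglslices}), forces the map $\kq\to\kgl\to\mathrm{cofib}$ to be an equivalence. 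The crucial input is \Cref{lem:low-degree-homotopy-sheaves-KQ}, which pins down the low-degree homotopy groups of $\KQ$ and in particular that $\pi_{-1+(-1)}\KQ=0$; this is exactly what is needed to see that the very effective cover does not introduce spurious negative-weight contributions when one truncates.

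The third and final step is the identification of the connecting map. Having established the cofiber sequence $\Sigma^{(1)}\kq\to\kq\to\kgl\to\Sigma^{1+(1)}\kq$, the last map is determined up to the data already present; to see it composes with $\Sigma^{1+(1)}\kgl\simeq\vf_1\KGL\to\vf_0\KGL=\kgl$ to $\Sigma^{1+(1)}\vf_0\Hyper$, one uses naturality of the very effective cover functors applied to the original Wood connecting map $\Sigma^{1+(1)}\Hyper\circ\beta$, where $\beta$ is precisely the Bott periodicity equivalence identifying $\Sigma^{1+(1)}\KGL$ with $\KGL$; taking $\vf_0$ of $\beta$ gives the stated canonical map $\vf_1\KGL\to\vf_0\KGL$, and taking $\vf_0$ of $\Sigma^{1+(1)}\Hyper$ gives $\Sigma^{1+(1)}\vf_0\Hyper$ by compatibility of $\Hyper$ with the filtration. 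The base-change reductions (essentially smooth base change plus localization over the Dedekind scheme, as in the proof of \Cref{thm:veff-KQ}, citing \cite[Lemma B.1, B.2, Prop.~B.3]{bachmann-hoyois}) let one check everything over perfect fields, where the slice computations of \cite{Bachmann} and \cite{RO:slices} apply.

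I expect the main obstacle to be the second step: since $\vf_0$ is not exact, proving that the natural map from $\kq$ to the cofiber of $\vf_0(\eta\smash\KQ)$ — or equivalently that $\vf_0\Forg$ has the ``right'' cofiber — genuinely requires the slice-by-slice comparison and the low-degree homotopy input of \Cref{lem:low-degree-homotopy-sheaves-KQ}, rather than a formal argument. The subtlety is that one must rule out the very effective truncation producing a nonzero difference concentrated in low degrees, which is exactly where the vanishing $\pi_{-1+(-1)}\KQ=0$ and the surjectivity of $\pi_{0+(0)}\Forg$ enter; keeping track of which weights the various suspensions land in, and matching them against the $4$-periodic pattern of \Cref{thm:veff-KQ}, is the delicate bookkeeping.
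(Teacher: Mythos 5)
Your strategy is essentially the paper's: apply $\vf_0$ to the Wood cofiber sequence, reduce to perfect fields by essentially smooth base change and localization, use the low-degree homotopy input of \Cref{lem:low-degree-homotopy-sheaves-KQ} (in particular $\pi_{-1+(-1)}\KQ=0$) to see that the truncated sequence is still a cofiber sequence, and get the compatibility with $\Hyper$ from naturality of $\vf_0$ applied to the original connecting map. The one place where the paper is more precise than your ``slice-by-slice comparison'' is the non-exactness step: it invokes \cite[Lemmas 10 and 15]{Bachmann}, whose hypothesis is exactly surjectivity of $\pi_{0+(0)}$ of the map $\KGL\to\Sigma^{1+(1)}\KQ$, automatic here because the target is $\pi_{-1+(-1)}\KQ=0$. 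You identified the right input, but note that a literal long exact sequence of very effective slices is not available ($\vs_q$ of a cofiber is not formal), so some packaged statement of this kind is genuinely needed.

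Two corrections. First, $\KGL$ is \emph{not} very effective, so $\vf_0\KGL\neq\KGL$: by Bott periodicity $\pi_{-2,-1}\KGL\cong K_0\cong\Z$ while $\pi_{-2,-1}\kgl=0$, and $\vs_q\KGL\simeq\Sigma^{q+(q)}\MZ$ for \emph{all} $q\in\Z$, not just $q\geq 0$. This slip is harmless only because the third term is $\vf_0\KGL=\kgl$ by definition; if you actually identified $\kgl$ with $\KGL$ the slice bookkeeping (and the statement itself) would be wrong. Second, the fourth term: what applying $\vf_0$ produces is $\vf_0\Sigma^{1+(1)}\KQ\simeq\Sigma^{1+(1)}\vf_{-1}\KQ$ --- only $T=\Sigma^{1+(1)}$-suspension shifts the very effective filtration, and neither $\Gm$-suspension nor the periodicity $\alpha$ repairs this --- and one then needs $\vs_{-1}\KQ\simeq\ast$ from \Cref{thm:veff-KQ} to conclude $\vf_{-1}\KQ\simeq\vf_0\KQ=\kq$ and hence that the cofiber term is $\Sigma^{1+(1)}\kq$ as stated. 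Your proposal blurs exactly this step with the remarks about $\Gm$-suspension and $\alpha$-periodicity; with these two points repaired, the argument is the paper's.
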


\begin{proof}
Applying the functor $\vf_0$ to the Wood cofiber sequence
\[ \KQ \xrightarrow{\Forg}\KGL \to \Sigma^{1+(1)}\KQ\]
produces a sequence
\[ \kq \xrightarrow{\forg}\kgl \to \vf_0\Sigma^{1+(1)}\KQ\simeq \Sigma^{1+(1)}\vf_{-1}\KQ.\]
Since $\vf_0$ is not exact, one must argue to verify that the resulting sequence is indeed a cofiber sequence. By reducing to the case of a perfect base field, as in the proof of \Cref{thm:veff-KQ}, one may apply \cite[Lemmas 10 and 15]{Bachmann}, noting that the homomorphism
\[\pi_{0+(0)}\Sigma^{1+(1)}\Hyper\colon \pi_{0+(0})\Sigma^{1+(1)}\KGL \to \pi_{0+(0)}\Sigma^{1+(1)}\KQ\] 
is surjective; its target is trivial due to Lemma~\ref{lem:low-degree-homotopy-sheaves-KQ}.
It follows that the sequence above is a cofiber sequence.
Theorem~\ref{thm:veff-KQ} implies in particular that $\vs_{-1}\KQ$ is contractible, so the canonical map $\vf_0\KQ \to \vf_{-1}\KQ$ is an equivalence.
Hence, the sequence above has the form
\[ \kq \xrightarrow{\forg}\kgl\to \Sigma^{1+(1)}\kq \]
and the second map satisfies the stated compatibility with $\vf_0\Hyper$.
\end{proof}

The Wood cofiber sequence for $\kq$ and the slices of $\kgl$ (see \cite{levine.homotopy-coniveau} for fields, \cite{bachmann.jems} for Dedekind schemes, and \cite{bem.kglslices} for qcqs schemes) yields the slices of $\kq$, as in \cite[Theorem 17]{aro}.

\begin{corollary}\label{cor:slices-kq}
The nonnegative slices of $\kq$ over $\mathcal{X}$ are given by 
\[ \s_q\kq=
\begin{cases} 
\Sigma^{(q)} \bigl(\MZ/2\oplus  \Sigma^2\MZ/2 \oplus  \dotsm \oplus  \Sigma^{q-2} \MZ/2 \oplus  \Sigma^q \MZ\bigr) & q\equiv 0(2) \\
\Sigma^{(q)} \bigl(\MZ/2\oplus  \Sigma^2\MZ/2 \oplus  \dotsm \oplus  \Sigma^{q-2} \MZ/2 \oplus  \Sigma^q \MZ/2\bigr) & q\equiv 1(2)
\end{cases}
\]
The multiplicative structure is given by  
\[ \s_\ast \kq \simeq \MZ[\eta,\sqrt{\alpha}]/(2\eta=0, \eta^2\xrightarrow{\delta} \alpha) \]
Here $\eta$ has bidegree $(1)$ and $\sqrt{\alpha}$ has bidegree $2+(2)$.
The map $\Sigma^{(1)}\s_{n-1}\kq\simeq \s_{n}\Sigma^{(1)} \kq \xrightarrow{\s_{n}(\eta\smash \kq)}\s_n\kq$ is the canonical nontrivial one.
\end{corollary}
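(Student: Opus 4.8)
The plan is to feed the very effective Wood cofiber sequence of \Cref{thm:woodkq} into Voevodsky's effective slice functors $\s_q$ and to induct on $q$, the slices of $\kgl$ being already known. Two formal facts are needed: each $\s_q$ is a triangulated endofunctor of $\SH(\mathcal{X})$, built from the colocalizations onto the $\Gm$-suspensions of $\SH^\eff(\mathcal{X})$, so it carries the Wood triangle to a triangle; and $\s_q$ interacts with $\Gm$-suspension by a shift in the slice index, $\s_q\Sigma^{(1)}\E\simeq\Sigma^{(1)}\s_{q-1}\E$ and hence $\s_q\Sigma^{1+(1)}\E\simeq\Sigma^{1+(1)}\s_{q-1}\E$. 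Since $\kq=\vf_0\KQ$ is very effective, hence effective, its negative slices vanish; and the nonnegative slices of $\kgl$ are $\s_q\kgl\simeq\Sigma^{q+(q)}\MZ$, with total slice ring $\s_\ast\kgl\simeq\MZ[\beta]$, $\beta$ of bidegree $1+(1)$ (\cite{levine.homotopy-coniveau,bachmann.jems,bem.kglslices}).

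Applying $\s_q$ to $\Sigma^{(1)}\kq\xrightarrow{\eta\smash\kq}\kq\xrightarrow{\forg}\kgl\to\Sigma^{1+(1)}\kq$ yields, for every $q$, a triangle
\[
\Sigma^{(1)}\s_{q-1}\kq\xrightarrow{\s_q(\eta\smash\kq)}\s_q\kq\xrightarrow{\s_q\forg}\s_q\kgl\xrightarrow{\partial_q}\Sigma^{1+(1)}\s_{q-1}\kq.
\]
For $q<0$ everything vanishes; for $q=0$, since $\s_{-1}\kq=0$, the map $\s_0\forg\colon\s_0\kq\to\s_0\kgl=\MZ$ is an equivalence, giving $\s_0\kq\simeq\MZ$. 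For $q\ge 1$ I would induct: granting the asserted form of $\s_{q-1}\kq$, the triangle presents $\s_q\kq$ as an extension built from $\s_q\kgl=\Sigma^{q+(q)}\MZ$ and a Tate twist of $\s_{q-1}\kq$, governed by the boundary map $\partial_q$.

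The identification of $\partial_q$, together with the splitting of the resulting extension, is the heart of the matter. By the compatibility clause of \Cref{thm:woodkq}, after the equivalence $\Sigma^{1+(1)}\kgl\simeq\vf_1\KGL\to\vf_0\KGL=\kgl$ the Wood connecting map is $\Sigma^{1+(1)}\vf_0\Hyper$, so $\partial_q$ is controlled on slices by the hyperbolic map. As in the proof of \Cref{thm:veff-KQ}, this can be checked after essentially smooth base change and Zariski localization (\cite{bachmann-hoyois}) over a perfect field, where the relevant slice description of $\Forg$ and $\Hyper$ is supplied by \cite{RO:slices,Bachmann} --- this is where the recurring factor of $2$ enters (the hyperbolic map is multiplication by $2$ on the relevant $\MZ$-summand, with cofiber $\MZ/2$). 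Threading $\partial_q$ through the triangle, and unwinding the Tate twists and the parity of $q$, one finds that each extension of $\MZ$-modules in play splits --- the pertinent self-extension and Ext groups of $\MZ/2$ by $\MZ$ over $\mathcal{X}$ are detected by mod-$2$ reductions and Bocksteins, which are exactly what make the two cases of the formula appear --- and this reproduces the stated description of $\s_q\kq$. The same triangle identifies $\s_q(\eta\smash\kq)$ with the canonical (nontrivial) inclusion of summands, which is the last assertion.

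For the ring structure, $\KQ$, hence $\kq=\vf_0\KQ$, is an $E_\infty$-ring by \Cref{thm:chn} (the very effective cover is lax symmetric monoidal), and $\s_\ast$ is lax symmetric monoidal, so $\s_\ast\kq$ is a graded $\s_\ast\unit$-algebra, in particular an $\MZ$-algebra. The Hopf map gives a class $\eta\in\s_\ast\kq$ of bidegree $(1)$ with $2\eta=0$, and the periodicity element $\alpha$ of \Cref{thm:chn} is detected by a class $\sqrt\alpha$ of bidegree $2+(2)$ with $(\sqrt\alpha)^2=\alpha$; matching the candidate presentation $\MZ[\eta,\sqrt\alpha]/(2\eta,\,\eta^2\xrightarrow{\delta}\alpha)$ against the additive answer --- using $\MZ[\eta]/(2\eta)\simeq\bigoplus_{i\ge 0}\Sigma^{i+(i)}\MZ/2$ and that $\sqrt\alpha$ shifts freely by $2+(2)$ --- shows that $\eta$ and $\sqrt\alpha$ generate and that $2\eta=0$ and the Bockstein relation tying $\eta^2$ to $\alpha$ are the only relations. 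The main obstacle throughout is the identification of the boundary maps $\partial_q$ and the $\MZ$-linear splitting they force; once the slice-level hyperbolic map is pinned down over perfect fields and propagated by base change (legitimate over essentially smooth $\mathcal{X}$ by \cite{bachmann.jems,bachmann-hoyois}), the remainder is bookkeeping with Tate twists.
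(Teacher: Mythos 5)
Your proposal is exactly the paper's own route: the corollary is proved by applying the slice functors to the Wood cofiber sequence of \Cref{thm:woodkq}, feeding in the known slices of $\kgl$, and identifying the boundary (hyperbolic) maps and resulting splittings as in \cite[Theorem~17]{aro} and \cite{RO:slices}, so the approach and the inductive bookkeeping you describe match the intended argument. The only caveat is that \cite{RO:slices} and \cite{Bachmann} do not cover perfect fields of characteristic $2$, so the integer governing $\partial_q$ must be pinned down by the rigidity argument the paper uses for $\vs_2\Forg$ (the relevant group is $[\MZ,\MZ]\cong\Z$ over any Dedekind base by \Cref{lem:endo-mz-weight0}, compatible with base change, hence detected after passage to characteristic $0$) rather than by citing those references over such fields; this is a routine patch, not a change of method.
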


By \Cref{cor:slices-kq}, one obtains the effective slices of $\KQ=\kq[\alpha^{-1}]$, where $\alpha\in \pi_{4+(4)}\KQ$ denotes the Karoubi–Bott periodicity element, as well as of $\KW\simeq \KQ[\eta^{-1}]$. The latter will be relevant later, together with information about the first slice differential. Since $\kq$ is close to the motivic sphere spectrum $\unit$, its slices yield—albeit limited—information about the slices of $\unit$. This suffices, however, to determine all weight-$1$ endomorphisms of motivic Eilenberg–MacLane spectra; see \Cref{lem:endo-mz-weight1}. In turn, this constrains the possibilities for the first slice differential.

\begin{lemma}\label{lem:unit-kq-slices}
Let $\mathcal{X}$ be an essentially smooth scheme over a Dedekind scheme $\mathcal{D}$.
The unit map $\unit\to \kq$ induces equivalences on $\s_0$ and $\s_1$.
\end{lemma}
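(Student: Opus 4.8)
The plan is to compare the slices of $\unit$ with the slices of $\kq$ directly, using the known low-degree information on both sides. First I would recall that the zeroth slice of the motivic sphere is $\s_0\unit \simeq \MZ$ over any essentially smooth scheme over a Dedekind scheme, by the work of Levine and Bachmann–Hopkins (for Dedekind base the reference is Bachmann's computation, extended via essentially smooth base change as in the proof of \Cref{thm:veff-KQ}). By \Cref{cor:slices-kq}, $\s_0\kq \simeq \MZ$ as well. Since the unit map $\unit\to\kq$ is a map of ($E_\infty$-)ring spectra, the induced map $\s_0\unit\to\s_0\kq$ is the induced unit map $\MZ\to\MZ$, which is the identity; alternatively one checks it is a ring map $\MZ\to\MZ$ and invokes \Cref{lem:endo-mz-weight0} to see it is multiplication by $1$. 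This settles $\s_0$.

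For $\s_1$ I would again use \Cref{cor:slices-kq}, which gives $\s_1\kq \simeq \Sigma^{(1)}\MZ/2$, with the generator realized by $\eta$. On the other side, $\s_1\unit \simeq \Sigma^{(1)}\MZ/2$ as well: over a perfect field this is due to Levine (the first slice of the sphere), and over a Dedekind scheme it follows by the same base-change reduction used repeatedly above (the generic and closed points, localization via \cite[Prop.~B.3]{bachmann-hoyois}, and the perfect-field reduction via \cite[Lemma A.2]{hoyois.mgl-mz}), so it suffices to treat a perfect base field. The map $\s_1\unit \to \s_1\kq$ sends the class of $\eta \in \pi_{(1)}\unit$ to the class of $\eta \in \pi_{(1)}\kq$; by the multiplicative description in \Cref{cor:slices-kq} the latter is the canonical nontrivial generator of $\Sigma^{(1)}\MZ/2$. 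Since a map $\Sigma^{(1)}\MZ/2 \to \Sigma^{(1)}\MZ/2$ of motivic spectra that is nontrivial on $\pi_{(1)}$ must be an equivalence (again $[\Sigma^{(1)}\MZ/2,\Sigma^{(1)}\MZ/2]\cong[\MZ/2,\MZ/2]\cong \Z/2$, using the weight-0 endomorphism computation and the fact that $\MZ/2$ is a ring spectrum whose only nonzero endomorphism is the identity), this proves the claim for $\s_1$.

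The main obstacle is keeping track of the base-change reductions: one must make sure that the formation of slices of $\unit$ and of $\kq$ both commute with the generic/closed-point and perfect-subfield base changes, so that the verification of equivalences reduces to a perfect field where Levine's computation of $\s_0\unit$ and $\s_1\unit$ and Bachmann's computation of the slices of $\kq$ are available; this is exactly the bookkeeping carried out in the proof of \Cref{thm:veff-KQ}, and nothing new is needed. The secondary point is to ensure the unit map really hits the nontrivial generator in weight one rather than zero — but this is forced because $\eta$ maps to $\eta$ and the multiplicative structure of $\s_\ast\kq$ in \Cref{cor:slices-kq} pins down $\eta$ as the generator of $\s_1\kq$.
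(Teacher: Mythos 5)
Your $\s_0$ argument is essentially the paper's (ring map $\MZ\to\MZ$ preserving the unit, hence an equivalence), and your idea of checking the globally defined map $\s_1(\unit\to\kq)$ after reduction to perfect residue fields is structurally sound. The genuine gap is the input you feed into that reduction: you take as known that $\s_1\unit\simeq\Sigma^{(1)}\MZ/2$ over every perfect field, citing Levine. That computation of the slices of the sphere (Levine, and R\"ondigs--Spitzweck--{\O}stv{\ae}r) rests on the Hopkins--Morel--Hoyois isomorphism and is only available away from the exponential characteristic; in particular it is not available integrally over perfect fields of characteristic $2$, which is exactly the new case a Dedekind base such as $\Spec(\Z)$ forces you to handle (the residue field $\FF_2$). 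In this paper the identification $\s_1\unit\simeq\Sigma^{1,1}\MZ/2$ over $\Z$ is a \emph{consequence} of the present lemma combined with $\s_1\kq\simeq\Sigma^{(1)}\MZ/2$ from \Cref{cor:slices-kq} (it is used this way in \Cref{lem:endo-mz-weight1}), so invoking it as an input makes the argument circular precisely where it matters.

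The paper's proof is designed to avoid this: it compares the cofiber sequence $\Sigma^{(1)}\unit\to\unit\to\unit/\eta$ with the Wood sequence $\Sigma^{(1)}\kq\to\kq\to\kgl$ via the unit map, and uses the geometric fact that the map $\unit/\eta\to\kgl$ is modeled on the inclusion $\P^2\hookrightarrow\P^\infty\hookrightarrow\Gr_\infty$, whose cofiber is $2$-very effective over any base. Hence $\s_0$ and $\s_1$ of $\unit/\eta\to\kgl$ are equivalences with no prior knowledge of $\s_1\unit$, and the equivalence $\s_1\unit\simeq\s_1\kq$ follows from the diagram together with the already established $\s_0$ case. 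If you want to repair your proposal, replace the appeal to Levine's computation in characteristic $2$ by such a geometric argument (or restrict your field-level input to characteristic $\neq 2$ and supply a separate characteristic-$2$ argument); also note, as a smaller point, that your intermediate claim ``$\s_1\unit\simeq\Sigma^{(1)}\MZ/2$ over the Dedekind scheme by base change'' is not itself a statement about a map defined over the base, so it cannot be verified pointwise --- only the unit-induced comparison map can.
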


\begin{proof}
The case of $\s_0$ follows from \cite[Prop.~B.4]{bachmann-hoyois} and \Cref{cor:slices-kq}, together with the multiplicativity of the slice filtration \cite{grso}. 
The case of $\s_1$ can be seen by applying $\s_1$ to the map 
\[ \begin{tikzcd}
    \Sigma^{(1)}\unit \ar[r] \ar[d] &
    \unit \ar[r] \ar[d] &
    \unit/\eta \ar[d] \ar[r] &
    \Sigma^{1+(1)}\unit \ar[d] \\
    \Sigma^{(1)}\kq \ar[r] &
    \kq \ar[r] &
    \kgl \ar[r] &
    \Sigma^{1+(1)}\kq
\end{tikzcd}\] 
of cofiber sequences induced by the unit map $\unit\to \kq$.
The third vertical map corresponds to the standard inclusion 
\[ \P^2\hookrightarrow\P^\infty\hookrightarrow \Gr_\infty\] 
giving rise to the (very) effective cover $\kgl=\f_0\KGL$ over number rings by \cite{bachmann.jems}.
The cofiber of this inclusion can be determined by standard geometric arguments valid over any base scheme, 
whence the cofiber of $\unit/\eta \to \kgl$ is $2$-very effective.
The equivalence $\s_1\unit \simeq \s_1\kq$ can then be inferred from the diagram above and the equivalence 
for $\s_0$ already established.
\end{proof}

\begin{corollary}\label{cor:alpha1-powers}
Let $\mathcal{X}$ be as above and $0<n\in \N$.
Then $\s_n\unit_\mathcal{X}$ contains $\Sigma^{(n)}\MZ/2$ as a direct summand such that the Hopf map 
$\Sigma^{(1)}\s_n\unit_\mathcal{X}\to \s_{n+1}\unit_\mathcal{X}$ induces an equivalence on the respective direct summands.
\end{corollary}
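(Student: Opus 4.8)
The plan is to transport the bottom summand of the $\eta$-tower from $\s_\ast\kq$ back to $\s_\ast\unit$ along the unit map, using only that $\unit\to\kq$ is a slice equivalence in degrees $\le 1$. Write $u\colon\unit\to\kq$ for the unit map, $\eta\colon\Sigma^{(1)}\unit\to\unit$ for the motivic Hopf map, and recall the standard compatibility $\s_q\Sigma^{(j)}\simeq\Sigma^{(j)}\s_{q-j}$ of the effective slice functors with $\Gm$-suspension. By \Cref{lem:unit-kq-slices}, $\s_0(u)$ and $\s_1(u)$ are equivalences; combined with \Cref{cor:slices-kq} this gives $\s_0\unit\simeq\MZ$ and realizes the copy of $\Sigma^{(1)}\MZ/2$ in $\s_1\kq$ generated by $\eta$ as a split summand $\iota_1\colon\Sigma^{(1)}\MZ/2\to\s_1\unit$.

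First I would build maps $\iota_n\colon\Sigma^{(n)}\MZ/2\to\s_n\unit$ inductively by $\iota_n:=\s_n(\eta)\circ\Sigma^{(1)}\iota_{n-1}$, using $\s_n(\eta)\colon\s_n(\Sigma^{(1)}\unit)\simeq\Sigma^{(1)}\s_{n-1}\unit\to\s_n\unit$ and $\Sigma^{(1)}\Sigma^{(n-1)}\MZ/2=\Sigma^{(n)}\MZ/2$; equivalently $\iota_n$ is the factorization of $\s_n(\eta^n)\colon\Sigma^{(n)}\MZ=\s_n(\Sigma^{(n)}\unit)\to\s_n\unit$ through the reduction $\Sigma^{(n)}\MZ\to\Sigma^{(n)}\MZ/2$, where $\eta^n\colon\Sigma^{(n)}\unit\to\unit$ is the $n$-fold iterate. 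Running the same recipe over $\kq$ produces $\iota_n^\kq\colon\Sigma^{(n)}\MZ/2\to\s_n\kq$, and naturality of the smash product in $u$ (commutativity of the square with rows $\eta$, $\eta\smash\kq$ and columns $\Sigma^{(1)}u$, $u$) together with functoriality of the slice functors gives $\s_n(u)\circ\iota_n=\iota_n^\kq$ by induction on $n$. By \Cref{cor:slices-kq}, $\iota_n^\kq$ is exactly the inclusion of the bottom $\Sigma^{(n)}\MZ/2$-summand of $\s_n\kq$, the one generated by $\eta^n$ in the graded ring $\s_\ast\kq$ --- here one uses the last sentence of \Cref{cor:slices-kq}, that multiplication by $\eta$ restricts to the identity on these bottom $\MZ/2$'s --- and this summand is split, say by $\rho_n^\kq$. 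Then $\rho_n:=\rho_n^\kq\circ\s_n(u)$ retracts $\iota_n$, which exhibits $\Sigma^{(n)}\MZ/2$ as a direct summand of $\s_n\unit$.

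The Hopf-map statement then falls out of the construction. The map $\Sigma^{(1)}\s_n\unit\to\s_{n+1}\unit$ of the corollary is $\s_{n+1}(\eta)$ under the interchange isomorphism, and by definition $\s_{n+1}(\eta)\circ\Sigma^{(1)}\iota_n=\iota_{n+1}$. Hence, reading this map on the designated summands in the coordinates given by $\Sigma^{(1)}\iota_n$ on the source summand $\Sigma^{(1)}(\im\iota_n)$ and by $\rho_{n+1}$ on the target summand $\im\iota_{n+1}$, it becomes $\rho_{n+1}\circ\s_{n+1}(\eta)\circ\Sigma^{(1)}\iota_n=\rho_{n+1}\circ\iota_{n+1}=\id$, in particular an equivalence.

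I do not expect a genuine obstacle: no slice differentials, motivic Steenrod operations, or quadratic-form input are needed, and \Cref{lem:unit-kq-slices} has already done the essential work by pinning down the weight-one slice of the motivic sphere. The one thing requiring care is the bookkeeping --- checking that the inductively defined $\iota_n^\kq$ really coincides with the bottom-$\MZ/2$ inclusion of \Cref{cor:slices-kq}, i.e.\ that iterating multiplication by $\eta$ computes $\eta^n$ on the nose and is compatible with the chosen splittings, and that the reduction $\MZ\to\MZ/2$ in the factorization commutes with $\Gm$-suspension. Both points are routine.
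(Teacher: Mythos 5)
Your proposal is correct and follows the paper's own (very terse) argument: the paper deduces the corollary precisely from \Cref{lem:unit-kq-slices} and the $\eta$-action on $\s_\ast\kq$ in \Cref{cor:slices-kq}, and your construction of $\iota_n$ and the retraction $\rho_n=\rho_n^\kq\circ\s_n(u)$ is just a careful spelling-out of that transport along the unit map.
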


\begin{proof}
This follows from \Cref{lem:unit-kq-slices}, together with the action of $\eta$ on the slices of $\kq$ given in \Cref{cor:slices-kq}.
\end{proof}

\begin{lemma}\label{lem:2-slice-sphere}
Let $\mathcal{X}$ be an essentially smooth scheme over a Dedekind scheme $\mathcal{D}$.
Then there is an equivalence $\s_2\unit_\mathcal{X}\simeq \Sigma^{(2)}\MZ/2 \oplus \Sigma^{1+(2)}\MZ/12$.
\end{lemma}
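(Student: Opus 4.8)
The plan is to reduce to perfect base fields, where the second slice of the motivic sphere is already computed, and then to descend the identification back to $\mathcal{X}$.

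\emph{Reductions.} Both the slice functor $\s_2$ and the motivic cohomology spectra $\MZ/2,\MZ/12$ commute with essentially smooth base change — for slices by \cite[Lemmas B.1--B.2]{bachmann-hoyois}, and for finite coefficients by the construction of motivic cohomology over essentially smooth $\mathcal{D}$-schemes — so it suffices to treat $\mathcal{X}=\mathcal{D}$. To prove that a given map of motivic spectra over $\mathcal{D}$ is an equivalence it then suffices, by localization \cite[Prop.~B.3]{bachmann-hoyois} (exactly as in the proofs of \Cref{thm:veff-KQ} and \Cref{lem:unit-kq-slices}), to check this after base change to each closed point and each generic point of $\mathcal{D}$, and then, again by essentially smooth base change \cite[Lemma A.2]{hoyois.mgl-mz}, over the perfect fields below them. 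Over a perfect field $F$ the identification $\s_2\unit_F\simeq\Sigma^{(2)}\MZ/2\oplus\Sigma^{1+(2)}\MZ/12$ holds by R\"ondigs, Spitzweck and {\O}stv{\ae}r \cite{zbMATH07003144} when $\Char F\neq 2$ — the ``$12$'' being the order of $\mathrm{Ext}^{1,4}$ of the Hopf algebroid $\MU_\ast\MU$, under Levine's comparison of the slice and Adams--Novikov spectral sequences \cite{levine.comparison} — and when $\Char F=2$ the identical $\MGL$-based computation applies; one also records there that the summand $\Sigma^{1+(2)}\MZ/12$ is generated by the image of the motivic quaternionic Hopf map $\nu$.

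\emph{Setting up over $\mathcal{D}$.} Since $\unit$ is a motivic ring spectrum, $\s_\ast\unit$ is a graded module over $\s_0\unit=\MZ$ \cite{grso}; in particular $\s_2\unit_\mathcal{D}$ is an $\MZ$-module. By \Cref{cor:alpha1-powers} the class $\eta^2$ splits off a direct summand, so $\s_2\unit_\mathcal{D}\simeq\Sigma^{(2)}\MZ/2\oplus C$ with $C$ again an $\MZ$-module, and what remains is to exhibit an equivalence $C\simeq\Sigma^{1+(2)}\MZ/12$.

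\emph{The comparison map and its verification.} The class $\nu\in\pi_{1+(2)}\unit$ is defined already over $\Spec\Z$ (from the quaternionic projective line, cf.~\cite{panin-walter.quat-borel}), hence pulls back to $\mathcal{D}$, and its image $\bar\nu\in\pi_{1+(2)}\s_2\unit_\mathcal{D}$ lands in the summand $\pi_{1+(2)}C$ because $\pi_{1+(2)}\Sigma^{(2)}\MZ/2=0$. As $C$ is an $\MZ$-module, $\bar\nu$ determines an $\MZ$-linear map $\Sigma^{1+(2)}\MZ\to C$, and since $12$ annihilates $C$ — which may be checked after base change to the residue and fraction fields, where $C\simeq\Sigma^{1+(2)}\MZ/12$ — this map factors through a map $\phi\colon\Sigma^{1+(2)}\MZ/12\to C$. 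Assembling $\phi$ with the inclusion of the $\Sigma^{(2)}\MZ/2$-summand gives a map $\Sigma^{(2)}\MZ/2\oplus\Sigma^{1+(2)}\MZ/12\to\s_2\unit_\mathcal{D}$, and by the reductions above it is an equivalence once it is so over each perfect residue/fraction field; there the target is as identified, the first summand maps by the identity, and $\phi$ induces an isomorphism on $\pi_{1+(2)}$ since $\bar\nu$ generates the cyclic group $\Z/12$, which forces $\phi$ to be an equivalence of $\MZ$-modules each equivalent to $\Sigma^{1+(2)}\MZ/12$.

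\emph{Expected main obstacle.} The genuinely external ingredient is the second-slice computation of the motivic sphere over perfect fields \cite{zbMATH07003144} (whose ``$12$'' is the arithmetic of $\mathrm{Ext}^{1,4}$ of $\MU_\ast\MU$), together with its characteristic-two counterpart, which is not literally stated there; by contrast, the passage from perfect fields up to $\mathcal{X}$ — the base-change compatibilities, the vanishing of $12\bar\nu$, and the localization argument over $\mathcal{D}$ — is routine and parallels the descent arguments already used for \Cref{thm:veff-KQ} and \Cref{lem:unit-kq-slices}.
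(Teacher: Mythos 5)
Your argument has a genuine gap at its foundation: the field-level input in characteristic $2$. The reduction to perfect residue fields of $\mathcal{D}$ necessarily includes fields such as $\mathbb{F}_2$, and there the identification $\s_2\unit_F\simeq\Sigma^{(2)}\MZ/2\oplus\Sigma^{1+(2)}\MZ/12$ is precisely what is \emph{not} available; it does not follow from ``the identical $\MGL$-based computation''. The identification of the slices of the sphere via $\mathrm{Ext}$ over $\MU_\ast\MU$ (Levine's comparison and its extensions) is known in characteristic $0$, and in positive characteristic only after inverting the exponential characteristic (this is where the Hopkins--Morel--Hoyois isomorphism enters); inverting $2$ destroys exactly the $2$-primary part of $\Z/12$, and \cite{zbMATH07003144} assumes $\Char(F)\neq 2$. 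So your final verification step over $\mathbb{F}_2$ presupposes the statement being proved. A secondary problem: you check $12\bar\nu=0$ in $\pi_{1+(2)}C$ ``after base change to the residue and fraction fields'', but vanishing of a homotopy class over $\mathcal{D}$ cannot in general be detected on points without an injectivity statement, which you neither formulate nor prove.

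The paper's proof is arranged to avoid exactly this: it works directly over $\Spec(\Z)$ with characteristic-free geometry. Since $\pi_{1+(2)}\KQ=0$ (equivalently, by symplectic orientability applied to $\HP^2$), the composite $\Sigma^{1+(2)}\xrightarrow{\nu}\unit\to\kq$ vanishes, so the tautological extension $\unit/\nu\simeq\Sigma^{-2+(-2)}\HP^2\to\kq$ exists and corresponds to the inclusion $\HP^2\hookrightarrow\HP^\infty\hookrightarrow\HGr$; by the Panin--Walter geometry its cofiber is $3$-effective, with no restriction on residue characteristics. Applying $\s_2$ then exhibits $\s_2\unit$ as the fiber of a map $\Sigma^{(2)}\MZ/2\oplus\Sigma^{2+(2)}\MZ\to\Sigma^{2+(2)}\MZ$ which, by \Cref{lem:endo-mz-weight0}, is the projection onto the second summand followed by multiplication by a single integer; this integer is invariant under base change and is evaluated as $12$ over a characteristic-zero field via \cite[Corollary 2.13]{zbMATH07003144}. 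In this way the characteristic-$2$ (and integral) statement is a \emph{consequence} of the characteristic-zero computation rather than an input, which is the idea your proposal is missing.
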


\begin{proof}
Using base change we may assume $\mathcal{X}=\Spec(\Z)$.
Let $\nu\colon \Sigma^{1+(2)}\to \unit$ denote the second motivic Hopf map obtained from the 
Hopf construction on $\SL_2\simeq \Sigma^{1+(2)}$.
Then the composite $\Sigma^{1+(2)}\xrightarrow{\nu}\unit\xrightarrow{\mathrm{unit}}\KQ$ is zero since 
$\pi_{1+(2)}\KQ=0$; see the proof of \Cref{lem:low-degree-homotopy-sheaves-KQ}.
Alternatively, 
$\nu$ is the attaching map for the top cell in the quaternionic projective plane $\HP^2$, 
whose hermitian $K$-theory splits by symplectic orientability.
Then also $\Sigma^{1+(2)}\xrightarrow{\nu} \unit\xrightarrow{\mathrm{unit}} \kq$ is zero, where the second map is the factorization of the unit map for $\KQ$, as its source is very effective.
Choosing the tautological extension $\unit/\nu\simeq\Sigma^{-2+(-2)}\HP^2 \to \kq$ provides a map which corresponds to the inclusion
\[ \HP^2\hookrightarrow \HP^\infty \hookrightarrow \HGr \]
in level two of the respective motivic spectra; see \cite[Theorem 2.6]{bachmann.jems}, 
which holds without any restriction on the characteristic.
As for the ordinary Grassmannian in the proof of \Cref{lem:unit-kq-slices}, 
geometric arguments from \cite{panin-walter.quat-borel} provide that the cofiber of the 
tautological extension $\unit/\nu\to \kq$ is $3$-effective.
Hence the cofiber sequence
\[ \Sigma^{1+(2)}\MZ\simeq \Sigma^{1+(2)}\s_0\unit \simeq \s_2\Sigma^{1+(2)}\xrightarrow{\s_2\nu}\s_2\unit \to \s_2\unit/\nu \simeq \s_2\kq 
\]
\[
\,\,\,\,\,\,\,\,\,\,\,\,\,\,\,
\simeq \Sigma^{(2)}\MZ/2\oplus \Sigma^{2+(2)}\MZ \to \Sigma^{2+(2)}\MZ \]
determines $\s_2\unit$.
By \Cref{lem:endo-mz-weight0}, the latter map is the projection onto the second summand followed by an integer. 
This integer does not change with the base.
For fields of characteristic zero, 
\cite[Corollary 2.13]{zbMATH07003144} shows that the said integer is $12$, giving the result.
\end{proof}

\begin{remark}\label{rem:slice-diff-numberring}
\Cref{lem:endo-mz-weight1} implies that for $n\in \N$, 
whenever the slices $\s_n\E$ and $\s_{n+1}\E$ of a motivic spectrum $\E\in \SH(\Z)$ 
such as $\unit,\kq$, or $\KGL$ are direct sums of motivic Eilenberg-MacLane spectra $\MA$ for finitely generated abelian groups $A$, 
the first slice differential $\s_n\to \Sigma\s_{n+1}\E$ coincides with the slice differential in $\SH(\Q)$.
\end{remark}

\begin{theorem}\label{thm:slices-KW}
Let $\mathcal{X}$ be an essentially smooth scheme over a Dedekind scheme $\mathcal{D}$. 
The slices of $\KW$ are given by 
\[ \s_\ast \KW \simeq 
\MZ[\eta^{\pm 1}\sqrt{\alpha}^{\pm 1}]/(2\eta=2\sqrt{\alpha}=0, \eta^2\xrightarrow{\delta} \sqrt{\alpha}) \]
The first slice differential has the following form on the summand $\Sigma^{2n+(m+2n)}\MZ/2$ indexed by the 
generator $\eta^m\sqrt{\alpha}^n$:
\[ \mathsf{d}^1(\eta^m\sqrt{\alpha}^n) = \begin{cases} 
\tau \eta^{m+3}\sqrt{\alpha}^{n-1} +(\Sq^2+\rho\Sq^1)\eta^{m+1}\sqrt{\alpha}^n 
+\Sq^3\Sq^1\eta^{m-1}\sqrt{\alpha}^{n+1} & n\equiv 1 \bmod 2 \\ \Sq^2\eta^{m+1}\sqrt{\alpha}^n 
+\Sq^3\Sq^1\eta^{m-1}\sqrt{\alpha}^{n+1} & n\equiv 0 \bmod 2\end{cases}\]
\end{theorem}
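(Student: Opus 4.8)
The plan is to obtain both assertions by inverting $\alpha$ and $\eta$ in the already-understood slice picture of $\kq$. Since $\KQ\simeq\kq[\alpha^{-1}]$ and, by \Cref{prop.KQetaisKW}, $\KW\simeq\KQ[\eta^{-1}]$, we have $\KW\simeq\kq[\alpha^{-1},\eta^{-1}]$; as the slice functors $\s_q$ commute with filtered homotopy colimits, and $\alpha$- resp.\ $\eta$-inversion are such colimits, the slice spectral sequence of $\KW$, differentials included, is the localization of that of $\kq$. By essentially smooth base change for slices \cite{bachmann-hoyois} (or \Cref{rem:slice-diff-numberring}) it suffices to work over $\Spec(\Z)$. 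For the ring structure one feeds \Cref{cor:slices-kq} into this localization; the new relation $2\sqrt\alpha=0$ is forced because after inverting $\eta$ the relation $2\eta=0$ yields $2=0$ on all of $\s_\ast\KW$, so in particular every slice of $\KW$ becomes a shift and twist of $\MZ/2$.

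That last observation is also the starting point for the differential: $\sliced^1$ is then a map of $\MZ/2$-module spectra, hence a matrix of mod-$2$ motivic Steenrod operations, and by multiplicativity of the slice spectral sequence \cite{grso} it is a graded derivation of $\s_\ast\KW$ for its (\,$\delta$-twisted\,) multiplicative structure. Because $\sliced^1$ vanishes on $\s_0\KW$ (its classes are pulled back from the base, hence permanent cycles), on even powers of $\eta$ (as $2=0$) and on $\alpha=\sqrt\alpha^2$ (a permanent cycle), it is determined by $\sliced^1(\eta)$ and $\sliced^1(\sqrt\alpha)$, and the constraints $\sliced^1(\alpha)=0$, $\sliced^1(\eta^2)=0$ together with the relation $\eta^2\xrightarrow{\delta}\sqrt\alpha$ pin down which $\sqrt\alpha$- and $\eta$-powers can occur there. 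The value $\sliced^1(\eta)$ is read off from the first slice differential of the motivic sphere out of $\s_1\unit=\Sigma^{(1)}\MZ/2$, using the equivalences $\s_i\unit\simeq\s_i\kq$ for $i=0,1$ of \Cref{lem:unit-kq-slices}; together with the identification of $\s_2\unit$ in \Cref{lem:2-slice-sphere} and the tautological extension $\unit/\nu\simeq\Sigma^{-2+(-2)}\HP^2\to\kq$ used there, plus the Wood cofiber sequence for $\kq$ (\Cref{thm:woodkq}) comparing the $\kq$-differential with that of $\KGL$, one extracts the operations $\Sq^2$, $\rho\Sq^1$, $\Sq^3\Sq^1$ and the multiplication by $\tau$ entering the formula. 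Expanding via the Leibniz rule and rewriting in the spanning set $\{\eta^m\sqrt\alpha^n\}$ — using the relation $\eta^2\xrightarrow{\delta}\sqrt\alpha$, which carries a factor of $\tau$, to trade an $\eta^2$ for a $\sqrt\alpha$ — produces the two displayed cases: the $\rho\Sq^1$-correction and the $\tau$-term are present exactly when $n$ is odd, since that is precisely when the rewriting has applied the $\tau$-twisted $\delta$-relation an odd number of times.

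The main obstacle is the last step: confirming that the localized differential is \emph{exactly} the stated expression, with the correct $\tau$- and $\rho$-coefficients and no residual indeterminacy. This requires computing the mod-$2$ motivic Steenrod algebra over $\Z$ in the relevant bidegrees — $\mathcal{A}^{0,1}$, $\mathcal{A}^{2,1}$, $\mathcal{A}^{4,1}$, spanned respectively by $\tau$, by $\{\Sq^2,\rho\Sq^1\}$, and by $\Sq^3\Sq^1$ up to lower $\rho,\tau$-terms — ruling out the surviving classes using $\eta$-linearity, $\sliced^1(\alpha)=0$ and multiplicativity, and getting the $\tau$-twisted $\delta$-relation applied with the right coefficient. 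The safest route is to invoke the explicit first slice differential of $\kq$ (equivalently $\KQ$) from \cite{RO:slices} and then simply invert $\alpha$ and $\eta$: once the $\kq$-differential is in hand, the passage to $\KW$ is purely formal, so in practice I would present the proof as localizing \Cref{cor:slices-kq} and the first slice differential of \cite{RO:slices}, reorganizing via the $\delta$-relation, and reading off the two cases according to the parity of $n$.
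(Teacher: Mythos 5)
Your final plan — obtain the slices by localizing \Cref{cor:slices-kq} using \Cref{prop.KQetaisKW} and the compatibility of slices with filtered colimits, and import the first slice differential from \cite{RO:slices}, transferred to $\Spec(\Z)$ via \Cref{rem:slice-diff-numberring} — is exactly the proof given in the paper. The exploratory derivation-style argument in your middle paragraph is not needed (and is shaky: for instance, $\sliced^1$ does \emph{not} vanish on $\s_0\KW$ as a map of spectra, since the component out of the unit summand already involves $\Sq^2$ and $\Sq^3\Sq^1$), but you correctly identify and fall back on the route the paper takes.
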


\begin{proof}
We deduce the slices from \Cref{cor:slices-kq}, 
the description of $\KW$ in \Cref{prop.KQetaisKW}, 
and the compatibility of slices with filtered colimits.
The differential follows from \cite{RO:slices} and \Cref{rem:slice-diff-numberring}.
\end{proof}

\begin{remark}
Heuristically, 
see \cite[Section 6]{grso} for details, 
the Betti realization of the very effective covers of $\KQ$ are the layers of the 
$S^{2}$-Postnikov tower of the real topological $K$-theory spectrum. 
\end{remark}


\begin{example}
\label{ex:veff-KQ}
Figure~\ref{fig:e1-KQ} displays, 
in terms of the data given by \Cref{thm:veff-KQ}, 
the first page of the very effective slice spectral sequence
\[ 
\pi_{s-(n)}\vs_{q}\KQ  
\Rightarrow 
\pi_{s-(n)}\KQ 
\]

\begin{figure}
\begin{center}
\resizebox{\linewidth}{!}{
\begin{tikzpicture}[font=\scriptsize,scale=1.2]
        \draw[help lines,xstep=2.0,ystep=1.0] (-0.5,-2.2) grid (8.9,4.5);
        \foreach \i in {-2,...,4} {\node[label=left:$\i$] at (-0.5,1.0*\i+.5) {};}
        \foreach \i in {0,...,4} {\node[label=below:$\i$] at (2.0*\i+.8,-2.4) {};}
        
{\node[above right=0pt] at (0.0,-2.0) {$H^{n-4,n-2}$};}
{\node[above right=0pt] at (0.0,-1.0) {$0$};}
{\node[above right=0pt] at (0.0,0.0) {$\widetilde{H}^{n,n}$};}
{\node[above right=0pt] at (0.0,1.0) {$0$};}
{\node[above right=0pt] at (0.0,2.0) {$0$};}
{\node[above right=0pt] at (0.0,3.0) {$0$};}
{\node[above right=0pt] at (0.0,4.0) {$0$};}

{\node[above right=0pt] at (2.0,-2.0) {$H^{n-5,n-2}$};}
{\node[above right=0pt] at (2.0,-1.0) {$0$};}
{\node[above right=0pt] at (2.0,0.0) {$\pi_{1-(n)}\vs_{0}(\KQ)$};}
{\node[above right=0pt] at (2.0,1.0) {${h}^{n+1,n+1}$};}
{\node[above right=0pt] at (2.0,2.0) {$0$};}
{\node[above right=0pt] at (2.0,3.0) {$0$};}
{\node[above right=0pt] at (2.0,4.0) {$0$};}

{\node[above right=0pt] at (4.0,-2.0) {$H^{n-6,n-2}$};}
{\node[above right=0pt] at (4.0,-1.0) {$0$};}
{\node[above right=0pt] at (4.0,0.0) {$\pi_{2-(n)}\vs_{0}(\KQ)$};}
{\node[above right=0pt] at (4.0,1.0) {${h}^{n,n+1}$};}
{\node[above right=0pt] at (4.0,2.0) {$H^{n+2,n+2}$};}
{\node[above right=0pt] at (4.0,3.0) {$0$};}
{\node[above right=0pt] at (4.0,4.0) {$0$};}

{\node[above right=0pt] at (6.0,-2.0) {$H^{n-7,n-2}$};}
{\node[above right=0pt] at (6.0,-1.0) {$0$};}
{\node[above right=0pt] at (6.0,0.0) {$\pi_{3-(n)}\vs_{0}(\KQ)$};}
{\node[above right=0pt] at (6.0,1.0) {${h}^{n-1,n+1}$};}
{\node[above right=0pt] at (6.0,2.0) {$H^{n+1,n+2}$};}
{\node[above right=0pt] at (6.0,3.0) {$0$};}
{\node[above right=0pt] at (6.0,4.0) {$0$};}

{\node[above right=0pt] at (8.0,-2.0) {$H^{n-8,n-2}$};}
{\node[above right=0pt] at (8.0,-1.0) {$0$};}
{\node[above right=0pt] at (8.0,0.0) {$\pi_{4-(n)}\vs_{0}(\KQ)$};}
{\node[above right=0pt] at (8.0,1.0) {${h}^{n-2,n+1}$};}
{\node[above right=0pt] at (8.0,2.0) {$H^{n,n+2}$};}
{\node[above right=0pt] at (8.0,3.0) {$0$};}
{\node[above right=0pt] at (8.0,4.0) {$\widetilde{H}^{n+4,n+4}$};}
\end{tikzpicture}
}
\end{center}
\caption{The terms $\pi_{s-(n)}\vs_{q}\KQ $ for fixed $n$, using $\pi_{0-(n)}\vs_{0}\KQ\cong \widetilde{H}^{n,n}$}
\label{fig:e1-KQ}
\end{figure}
\end{example}

To define the Milnor-Witt motivic cohomology spectrum for schemes which are 
essentially smooth over a Dedekind scheme, 
we first introduce the Witt motivic cohomology spectrum as the desuspension 
\begin{equation}
\label{eq:def-hwz}
\HWZ:= \Sigma^{(-1)}\f_1\vs_0\KQ
\end{equation}
As in \Cref{section:tvesss}, 
$\f_1$ denotes the first cover in Voevodsky's effective slice filtration, 
and $\vs_0\KQ$ is the zeroth very effective slice of hermitian $K$-theory. 
\Cref{prop:slices-cHZ} shows there is a canonical map $\HWZ\to\MZ/2$, 
which enables the next definition on account of 
Spitzweck's work on motivic cohomology in \cite{zbMATH07015021}.

\begin{defn}
\label{def:mwz}
The Milnor-Witt motivic spectrum $\cHZ$ over a scheme $\mathcal{X}$ essentially smooth over a Dedekind scheme is the homotopy pullback of 
effective motivic spectra
\[ 
\begin{tikzcd}
\cHZ \ar[r] \ar[d] & \HWZ \ar[d] \\
\MZ \ar[r] & \MZ/2
\end{tikzcd}
\]
in $\SH(\mathcal{X})$.
\end{defn}

If $2$ is invertible on $\mathcal{X}$, 
this is compatible with the definition given in \cite{bachmann.etaperiodic-dedekind}, 
as one may deduce from \Cref{thm:veff-KQ} and \cite[Theorem 10.22]{bem.kglslices}.
With rational coefficients, 
we recover the rationalized Milnor-Witt motivic cohomology 
spectrum $\cHQ$ in \cite[Def.~5.18, 6.1]{dfjk} over qcqs base schemes.
Of course, 
$\HWZ$ and $\cHZ$ can be extended to an arbitrary qcqs scheme $X$ by pullback 
along the structural morphism $X\to \Spec(\Z)$. 
\vspace{0.1in}

The very effective zeroth slice of $\KQ$ is closely related to two other motivic spectra of interest: Milnor–Witt motivic cohomology $\cHZ$ and Witt motivic cohomology $\HWZ$. The slices of these spectra were computed in \cite[§6]{Bachmann} over perfect fields of characteristic different from $2$, with the characteristic assumption entering explicitly in the proof of \cite[Lemma 20]{Bachmann}. To identify the first very effective slice differentials, and thereby enable more refined calculations, we require a slight strengthening involving the slices of $\vs_0\KQ$. Note first that $\vs_0\KQ$ fits into the homotopy cofiber sequence
\begin{equation}
\Sigma^{(1)}\HWZ \to \vs_0\KQ  \to \HZ \label{triangle2}
\end{equation}
where the second map is the canonical map
$\vs_0\KQ\simeq \vs_0\kq \to \s_0\vs_0\kq\simeq \s_0\kq\simeq \MZ$ involving Corollary~\ref{cor:slices-kq}.
The Witt cohomology spectrum $\HWZ:= \Sigma^{(-1)}\f_1\vs_0\KQ$ in \eqref{eq:def-hwz} is effective.
Using \Cref{prop:slices-cHZ} below, 
the canonical map $\HWZ\to \s_0\HWZ \simeq \MZ/2\oplus \Sigma^2\MZ/2$ 
composes with the projection to produce a map $\HWZ\to \MZ/2$.

\begin{proposition}
\label{prop:slices-cHZ}
The nontrivial slices of the effective motivic spectra $\HWZ$ and $\vs_{0}\KQ $ are 
\begin{align*}
\s_{q}(\HWZ) &\simeq \Sigma^{(q)}\HZ/2 \oplus \Sigma^{2+(q)}\HZ/2, q \geq 0 \\
\s_0(\vs_{0}\KQ ) & \simeq \HZ, \quad s_{q}(\vs_{0}\KQ ) \simeq \s_{q}(\HWZ), q > 0 
\end{align*}
The Hopf map $\eta$ induces an equivalence
\[  
\Sigma^{(1)}\s_{q}\vs_{0}\KQ \xrightarrow{\simeq} \s_{q+1}\vs_{0}\KQ  
\]
for $q>0$ and the canonical unique nontrivial map  
\[ 
\s_1(\eta)\colon \Sigma^{(1)}\s_0\vs_{0}\KQ \to \s_1\vs_{0}\KQ 
\] 
for $q=0$. For all $q\geq 0$, the map $\eta^q$ induces equivalences
$\Sigma^{(q)}\HWZ\xrightarrow{\simeq} \f_q\HWZ$ and 
$\Sigma^{(q)}\s_0\HWZ\xrightarrow{\simeq} \s_q\HWZ$.
\end{proposition}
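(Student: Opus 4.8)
The plan is to reduce to a perfect base field and then invoke and slightly extend \cite[\S6]{Bachmann}. First I would use that the effective slice functors $\s_q$ are triangulated, that (very) effective covers and slices commute with essentially smooth base change \cite[Lemmas B.1, B.2]{bachmann-hoyois}, and that slice computations over a Dedekind scheme are detected on the residue fields and generic points by localization \cite[Prop.~B.3]{bachmann-hoyois}; together with \cite[Lemma A.2]{hoyois.mgl-mz} this reduces the proposition to $\mathcal{X}=\Spec(F)$ for $F$ a perfect field. For $\Char F\neq 2$ the model of $\KQ$ from \Cref{thm:chn} agrees with Karoubi's, so $\vs_0\KQ$ and $\HWZ=\Sigma^{(-1)}\f_1\vs_0\KQ$ are as in \cite{Bachmann}, and the computation of $\s_\ast\HWZ$ is \cite[\S6]{Bachmann}.

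For $\Char F=2$ I would rerun the argument of \cite[\S6]{Bachmann}, replacing the one place where $\Char F\neq 2$ enters---the proof of \cite[Lemma 20]{Bachmann}---by the inputs now available. The structural ingredients are $\vs_0\KQ\simeq\vs_0\kq$ for $\kq=\vf_0\KQ$, whose effective slices and $\eta$-action are \Cref{cor:slices-kq}; the very effective slices of $\kq$ (equal to those of $\KQ$), given by \Cref{thm:veff-KQ}; the very effective Wood cofiber sequence $\Sigma^{(1)}\kq\xrightarrow{\eta}\kq\to\kgl$ of \Cref{thm:woodkq}; and the slices of $\kgl$, which are $\Sigma^{q+(q)}\HZ$ in every characteristic. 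Applying the exact functor $\s_q$ to $\vf_1\kq\to\kq\to\vs_0\kq$, and computing $\s_q\vf_1\kq$ from the very effective Postnikov tower of $\vf_1\kq$ (whose layers are the $\vs_n\kq$, $n\geq 1$; a finite stage suffices for fixed $q$), expresses $\s_q\vs_0\KQ$ as a finite extension assembled from $\s_q\kq$ and the lower slices $\s_j\vs_0\KQ$, $j<q$. The boundary maps that occur are morphisms between sums of motivic Eilenberg--MacLane spectra, hence lie in groups of stable cohomology operations which in the relevant bidegrees are zero or spanned by $\Sq^1,\Sq^2,\rho,\tau$; over a field of characteristic $2$ one has $\rho=0$, the integers on the $\MZ$-summands are pinned down by naturality of the forgetful map together with the surjectivity of $\pi_{0+(0)}\Forg$ and the fact that $\vs_2\Forg$ is multiplication by $2$ (valid in characteristic $2$ by \Cref{lem:low-degree-homotopy-sheaves-KQ}, as in the proof of \Cref{thm:veff-KQ} via \cite{RO:slices}), and the remaining $\Sq$-type contributions are excluded by the same low-degree homotopy input and the shapes of $H^{\ast,\ast}(\MZ;\Z/2)$ and $H^{\ast,\ast}(\MZ/2;\Z/2)$. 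The outcome is $\s_q\HWZ\simeq\Sigma^{(q)}\HZ/2\oplus\Sigma^{2+(q)}\HZ/2$ for all $q\geq 0$.

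The remaining assertions then follow formally, uniformly in the characteristic. From \eqref{triangle2} and exactness of $\s_q$ one gets $\s_0\vs_0\KQ\simeq\s_0\kq\simeq\HZ$, and for $q>0$, using $\s_q\HZ=0$, $\s_q\vs_0\KQ\simeq\s_q(\Sigma^{(1)}\HWZ)=\Sigma^{(1)}\s_{q-1}\HWZ\simeq\s_q\HWZ$; the Hopf-map equivalences for $q>0$ and the canonical nontrivial map for $q=0$ come from the same identification, \Cref{cor:slices-kq}, and multiplicativity of the slice filtration \cite{grso}. For the statements involving $\eta^q$ it suffices, by induction, to treat $q=1$, i.e.\ to show $\HWZ/\eta\simeq\s_0\HWZ$; the computation above shows $\s_0(\HWZ/\eta)\simeq\s_0\HWZ$ and that all higher slices of $\HWZ/\eta$ vanish (since $\eta$ acts invertibly on $\s_j\HWZ$ for $j\geq0$), so $\HWZ/\eta$ has a single nonzero slice and the canonical map to $\s_0\HWZ$ is an equivalence, whence $\f_q\HWZ\simeq\Sigma^{(q)}\HWZ$ and $\s_q\HWZ\simeq\Sigma^{(q)}\s_0\HWZ$ for all $q\geq0$. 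I expect the hard part to be the characteristic-$2$ case of the second paragraph---the characteristic-$2$ analogue of \cite[Lemma 20]{Bachmann}, i.e.\ showing that the boundary maps and extension classes in the slice tower of $\vs_0\KQ$ are exactly those forced by the characteristic-zero picture; everything else is bookkeeping with cofiber sequences and the exactness of the effective slice functors.
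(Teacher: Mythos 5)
Your reduction to perfect fields and the plan to redo \cite[\S6]{Bachmann} in characteristic $2$ agree with the paper's strategy in outline, but the two steps that actually carry the proof are not supplied. First, the characteristic-$2$ computation is only asserted: you claim the boundary maps in the slice tower of $\vf_1\kq$ lie in operation groups that are ``zero or spanned by $\Sq^1,\Sq^2,\rho,\tau$'' and can be excluded from the shapes of $H^{\ast,\ast}(\MZ;\Z/2)$ and $H^{\ast,\ast}(\MZ/2;\Z/2)$. Over a field of characteristic $2$ the mod-$2$ operations at the characteristic are exactly what is not available in the needed form, and this is precisely where \cite[Lemma 20]{Bachmann} fails; naming this as ``the hard part'' does not replace an argument. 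The paper avoids characteristic-sensitive Steenrod input altogether: it reduces the claim that $\eta$ induces equivalences $\Sigma^{(1)}\s_{n-1}\vs_0\KQ\to\s_n\vs_0\KQ$ for $n>1$ to the contractibility of $\s_n\bigl((\vs_0\kq)/\eta\bigr)$, and proves the stronger cover-level statement that $\f_2\bigl((\vf_1\kq)/\eta\bigr)\to\f_2(\kgl)$ is an equivalence by playing $\vf_2$ against $\f_2$, using \Cref{thm:veff-KQ}, the Wood sequence, \cite[Lemma 15]{Bachmann}, and the surjectivity of the hyperbolic map $\pi_{2+(2)}\KGL\to\pi_{1+(1)}\KQ$ from \Cref{lem:low-degree-homotopy-sheaves-KQ}; the only operation-theoretic input is the elementary weight-zero vanishing of \Cref{lem:endo-mz-weight0} (used to split $\s_1\vs_0\KQ$).

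Second, your deduction of the $\eta^q$-statements has a genuine gap: from $\s_j(\HWZ/\eta)=0$ for $j>0$ you conclude that $\HWZ/\eta$ coincides with its zero slice, but this inference requires the slice tower of $\HWZ/\eta$ to converge, which is neither automatic nor established at this point. Note that $\HWZ$ itself is \emph{not} slice-complete --- its slice completion is its $\eta$-completion (\Cref{lem:convergence-v}), and that lemma is proved \emph{using} the present proposition --- so any appeal to slice convergence here risks circularity. The paper sidesteps this by staying at the level of covers: the equivalence $\f_2\bigl((\vf_1\KQ)/\eta\bigr)\to\f_2\bigl((\vf_0\KQ)/\eta\bigr)$ forces $\f_2\bigl((\vs_0\KQ)/\eta\bigr)\simeq\ast$, hence $\Sigma^{(1)}\f_1\vs_0\KQ\simeq\f_2\vs_0\KQ$, and then the cofiber sequence \eqref{triangle2} together with $\f_1\MZ\simeq\ast$ yields $\Sigma^{(1)}\HWZ\simeq\f_1\HWZ$, from which the statements for all $q$ follow by induction. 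To repair your argument you would either need such a cover-level input or a convergence statement for $\HWZ/\eta$ proved independently of the proposition.
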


\begin{proof}
The cofiber sequence
\[ \vf_1\kq \to \kq \to \vs_0\kq\]
induces a cofiber sequence
\begin{equation}\label{eq:1-sliceV} 
\s_1\vf_1\kq \to \s_1\kq\to \s_1\vs_0\kq 
\end{equation}
where the first term can be identified
as $\s_1\vf_1\kq\simeq \s_1\vs_1\kq\simeq \s_1\Sigma^{1+(1)}\MZ/2\simeq \Sigma^{1+(1)}\MZ/2$ using \Cref{thm:veff-KQ}, and the second term is $\Sigma^{(1)}\MZ/2$ by \Cref{cor:slices-kq}.
It follows that the first map in the cofiber sequence is up to suspension a map $\Sigma\MZ/2 \to \MZ/2$, which has to be zero by \Cref{lem:endo-mz-weight0}.
As a consequence, $\s_1\vs_0\KQ\simeq \Sigma^{(1)}\MZ/2\oplus  \Sigma^{3,1}\MZ/2$, in such a way that the Hopf map induces the canonical map
\[ \Sigma^{(1)}\MZ \simeq \Sigma^{(1)}\s_0\vs_0\KQ\simeq \s_1\Sigma^{(1)}\vs_0\KQ \xrightarrow{\s_1(\eta\smash \vs_0\KQ)} \s_1\vs_0\KQ\simeq \Sigma^{(1)}\MZ/2 \oplus  \Sigma^{3,1}\MZ/2.\]
To determine the slices $\s_n\vs_0\KQ$ for $n>1$, it remains to prove that the Hopf map induces an equivalence 
\[ \Sigma^{(1)}\s_{n-1}\vs_0\KQ\simeq \s_n\Sigma^{(1)}\vs_0\KQ \xrightarrow{\s_n(\eta\smash \vs_0\KQ)} \s_n\vs_0\KQ\]
for all $n>1$.
This is equivalent to the contractibility of $\s_n(\vs_0\kq/\eta)$ for all $n>1$.
Via the first cofiber sequence in this proof, this contractibility is equivalent to the canonically induced map
\[ \s_n\bigl((\vf_1\kq)/\eta\bigr) \to \s_n(\kq/\eta\simeq \kgl)\]
being an equivalence for all $n>1$, which in turn follows from 
\[ \f_2\bigl((\vf_1\kq)/\eta\bigr) \to \f_2(\kq/\eta\simeq \kgl)\]
being an equivalence.
It suffices to check this over perfect fields.
Applying the natural transformation $\vf_2\to \f_2$ to the cofiber sequence
\[
\begin{tikzcd}
\vf_1\kq\ar[r]  & 
(\vf_1\kq)/\eta\ar[r] & 
\Sigma^{1+(1)}\vf_1\kq
\end{tikzcd}
\]      
produces a commutative diagram
\[
\begin{tikzcd}
\vf_2\vf_1\kq\ar[r] \ar[d] & 
\vf_2\bigl((\vf_1\kq)/\eta\bigr)\ar[r] \ar[d] & 
\vf_2\Sigma^{1+(1)}\vf_1\kq\simeq \vf_2\Sigma^{1+(1)}\kq \ar[d] \\
\f_2\vf_1\kq \ar[r] &
\f_2\bigl((\vf_1\kq)/\eta\bigr) \ar[r] &
\f_2\Sigma^{1+(1)}\vf_1\kq\simeq \f_2\vf_2\Sigma^{1+(1)}\kq
\end{tikzcd}
\] 
in which the bottom row is a cofiber sequence.
The right-hand side vertical map is an equivalence because $\f_2\vf_2\simeq \vf_2$.
Via the equivalences $\vf_2\vf_1\simeq \vf_2\vf_2\simeq \f_2\vf_2$, Theorem~\ref{thm:veff-KQ} implies that the canonical map
\[ \f_2\vf_2\kq\to \f_2\vf_1\kq\] 
is an equivalence, as its cofiber $\f_2\Sigma^{1+(1)}\MZ/2 \simeq \ast$ is contractible.
Hence the left-hand side vertical map of the diagram above is also an equivalence. 
In particular, the outer terms in its lower row are $2$-very effective, whence so is lower middle term (and the upper middle term by construction).
Since the vertical map in the middle is universal with $2$-very effective source, it is also an equivalence, and the upper row is a cofiber sequence.
Applying the functor $\vf_2$ to the natural transformation 
\[
\begin{tikzcd}
\vf_1\kq\ar[r] \ar[d] & 
(\vf_1\kq)/\eta\ar[r] \ar[d] & 
\Sigma^{1+(1)}\vf_1\kq \ar[d] \\
\kq \ar[r] &
\kgl \ar[r] &
\Sigma^{1+(1)}\kq
\end{tikzcd}
\]      
of cofiber sequences yields a commutative diagram
\[
\begin{tikzcd}
\vf_2\vf_1\kq\ar[r] \ar[d] & 
\vf_2\bigl((\vf_1\kq)/\eta\bigr)\ar[r] \ar[d] & 
\vf_2\Sigma^{1+(1)}\vf_1\kq \ar[d] \\
\vf_2\kq \ar[r] &
\vf_2\kgl \ar[r] &
\vf_2\Sigma^{1+(1)}\kq
\end{tikzcd}
\] 
Here, the upper row is a cofiber sequence, as has just been established.
Also, 
the lower row is a cofiber sequence, 
by employing \cite[Lemma 15]{Bachmann} and the surjectivity of the homomorphism 
$\pi_{2+(2)}\KGL \to \pi_{2+(2)}\Sigma^{1+(1)}\KQ\iso \pi_{1+(1)}\KQ$ 
induced by the hyperbolic map; see \Cref{lem:low-degree-homotopy-sheaves-KQ}.
Since the outermost vertical maps are equivalences, so is the middle vertical map.
The $\P^1$- and $\Gm$-slice filtrations on $\KGL$ agree, whence $\vf_2\kgl \simeq \f_2\kgl$ canonically.
Hence $\vf_2\bigl((\vf_1 \kq)/\eta\bigr)\to \f_2(\kgl)$ is an equivalence.
As explained above, this concludes the description of the slices of $\vs_0\KQ$.

It remains to justify the last claim.
Since the canonical map $\f_2\bigl((\vf_1\KQ)/\eta\bigr)\to \f_2\bigl((\vf_0\KQ)/\eta\bigr)$ is an equivalence, its cofiber $\f_2\bigl((\vs_0\KQ)/\eta\bigr)$ is contractible.
Hence the canonical map $\f_2(\eta\smash \vs_0\KQ)\colon \f_2\Sigma^{(1)}\vs_0\KQ\to  \f_2\vs_0\KQ$ is an equivalence.
It follows that $\Sigma^{(1)}\f_1\vs_0\KQ\to \f_2\vs_0\KQ$ is an equivalence.
By the cofiber sequence \eqref{triangle2} and the vanishing $\f_1\MZ\simeq \ast$, we deduce that $\Sigma^{(1)}\f_1\Sigma^{(1)}\HWZ\to \f_2\Sigma^{(1)}\HWZ$ is an equivalence.
Since $\HWZ$ is effective, the natural compatibilities $\f_{n+1}\Sigma^{(1)}\simeq\Sigma^{(1)}\f_n$ for $n\in \{0,1\}$ imply the equivalence $\Sigma^{(1)}\HWZ\simeq \f_1\HWZ$, induced by $\eta$.
The claim then follows by induction.
\end{proof}

Recall from \eqref{def:mwz} that we define the effective motivic spectrum $\cHZ$ as the 
(homotopy) pullback
\[ 
\begin{tikzcd}
    \cHZ \ar[r] \ar[d] & \HWZ \ar[d] \\
    \MZ \ar[r,"\pr^\infty_2"] & \MZ/2
\end{tikzcd}
\]

\begin{proposition}\label{prop:chz}
There exists a unique map $\vs_0\KQ\to \cHZ$ such that its composition with the canonical map 
$\cHZ\to \MZ$ is the canonical map $\vs_0\KQ\to \s_0\vs_0\KQ\simeq \MZ$.
Moreover, there is a cofiber sequence
\begin{equation}
\Sigma^{1}\HZ/2 \to \vs_0\KQ  \to \cHZ \label{triangle}
\end{equation}
\end{proposition}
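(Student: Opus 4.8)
The plan is to use the universal property of the homotopy pullback defining $\cHZ$. Write $\pi_W\colon\HWZ\to\MZ/2$ for the canonical map, i.e.\ the composite $\HWZ\to\s_0\HWZ\simeq\MZ/2\oplus\Sigma^2\MZ/2\to\MZ/2$ of \Cref{prop:slices-cHZ}, and $a\colon\vs_0\KQ\to\s_0\vs_0\KQ\simeq\MZ$ for the canonical projection (the right-hand map of \eqref{triangle2}). Then a map $\vs_0\KQ\to\cHZ$ lying over $a$ is the same datum as a map $b\colon\vs_0\KQ\to\HWZ$ together with a homotopy $\pi_W\circ b\simeq\pr^\infty_2\circ a$. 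I would produce $b$ canonically from $\eta$: since $\vs_0\KQ$ is effective, $\Sigma^{(1)}\vs_0\KQ$ is $1$-effective, so $\eta\smash\vs_0\KQ\colon\Sigma^{(1)}\vs_0\KQ\to\vs_0\KQ$ factors uniquely through the effective cover $\f_1\vs_0\KQ\to\vs_0\KQ$; desuspending and using $\HWZ=\Sigma^{(-1)}\f_1\vs_0\KQ$ yields $b$, together with an identification of $\Sigma^{(1)}b$ with this factorization. Both $b$ and the homotopy below are forced, which is the source of the uniqueness in the statement.

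For the homotopy, applying $[\,\cdot\,,\Sigma^{j}\MZ/2]$ with $j\in\{-1,0\}$ to \eqref{triangle2} and using that $\f_1\vs_0\KQ\simeq\Sigma^{(1)}\HWZ$ is $1$-effective while $\f_1\MZ/2\simeq\ast$ gives $[\vs_0\KQ,\Sigma^{-1}\MZ/2]=0$ and an isomorphism $a^{\ast}\colon[\MZ,\MZ/2]\xrightarrow{\sim}[\vs_0\KQ,\MZ/2]$. Hence $\pi_W\circ b=c\circ a$ for a unique $c\in[\MZ,\MZ/2]\cong\Z/2$, and it suffices to show $c=\pr^\infty_2$ after applying $\s_0$. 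By construction $\Sigma^{(1)}\s_0 b\simeq\s_1(\eta\smash\vs_0\KQ)$, which by \Cref{prop:slices-cHZ} is the canonical nontrivial map $\Sigma^{(1)}\MZ\to\s_1\vs_0\KQ\simeq\Sigma^{(1)}\MZ/2\oplus\Sigma^{3,1}\MZ/2$, i.e.\ $\Sigma^{(1)}(\pr^\infty_2,0)$; so $\s_0 b=(\pr^\infty_2,0)$ and $\s_0(\pi_W\circ b)=\pr^\infty_2=\s_0(\pr^\infty_2\circ a)$, whence $c=\pr^\infty_2$. Since the space of homotopies $\pi_W\circ b\simeq\pr^\infty_2\circ a$ is a torsor over $[\vs_0\KQ,\Sigma^{-1}\MZ/2]=0$, it is contractible, so we obtain a well-defined map $f\colon\vs_0\KQ\to\cHZ$ with $p\circ f=a$, where $p\colon\cHZ\to\MZ$ is the projection.

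For the cofiber sequence I would identify $\mathrm{cofib}(f)$ through its slices. Both $\vs_0\KQ$ and $\cHZ$ are effective, so $\mathrm{cofib}(f)$ is effective, and the functors $\f_q,\s_q$ are exact. The second leg of $f$ is $b$, and $\Sigma^{(1)}\f_1 b=\f_2\Sigma^{(1)}b$ is identified with $\f_2(\eta\smash\vs_0\KQ)$, an equivalence by the last step of the proof of \Cref{prop:slices-cHZ}; combined with $\f_1\cHZ\simeq\f_1\HWZ$ (as $\f_1$ preserves the defining pullback and $\f_1\MZ\simeq\f_1\MZ/2\simeq\ast$) this shows $\f_1 f$ is an equivalence, so $\f_1\mathrm{cofib}(f)\simeq\ast$ and $\mathrm{cofib}(f)\simeq\s_0\mathrm{cofib}(f)\simeq\mathrm{cofib}(\s_0 f)$. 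Finally $\s_0 p\circ\s_0 f=\s_0 a=\id_{\MZ}$, so $\s_0 f$ is a split monomorphism and $\mathrm{cofib}(\s_0 f)\simeq\mathrm{fib}(\s_0 p)\simeq\s_0\,\mathrm{fib}(p)\simeq\s_0\,\mathrm{fib}(\pi_W)\simeq\Sigma^2\HZ/2$, the last step coming from the cofiber sequence $\Sigma^{(1)}\HWZ\to\mathrm{fib}(\pi_W)\to\Sigma^{2}\HZ/2$ induced by the factorization $\HWZ\to\s_0\HWZ\to\MZ/2$ of $\pi_W$. Rotating $\vs_0\KQ\xrightarrow{f}\cHZ\to\Sigma^2\HZ/2$ gives \eqref{triangle}.

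Promoting this to uniqueness among all maps over $\MZ$ is the delicate point. If $f'$ is another map with $p\circ f'=a$, then $\pi_W\circ(q\circ f')=\pr^\infty_2\circ a=\pi_W\circ b$, so $q\circ f'-b$ lies in $\ker\!\big([\vs_0\KQ,\HWZ]\xrightarrow{\pi_{W\ast}}[\vs_0\KQ,\MZ/2]\big)$, which (using $[\vs_0\KQ,\Sigma^{-1}\MZ/2]=0$) is the image of $[\vs_0\KQ,\mathrm{fib}(\pi_W)]$. Thus the whole statement reduces to the vanishing $[\vs_0\KQ,\mathrm{fib}(\pi_W)]=0$, and I expect this to be the main obstacle. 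Via the cofiber sequence $\Sigma^{(1)}\HWZ\to\mathrm{fib}(\pi_W)\to\Sigma^2\HZ/2$ together with $[\vs_0\KQ,\Sigma^2\HZ/2]=[\vs_0\KQ,\Sigma^3\HZ/2]=0$ (from \eqref{triangle2} and $H^{2,0}(\MZ;\Z/2)=H^{3,0}(\MZ;\Z/2)=0=H^{p,-1}(\MZ;\Z/2)$), it reduces further to $[\vs_0\KQ,\Sigma^{(1)}\HWZ]=[\vs_0\KQ,\f_1\vs_0\KQ]=0$. I would establish this by the same slice bookkeeping as in \Cref{prop:slices-cHZ}: filtering source and target by their slices reduces it to the vanishing of maps between the Eilenberg–MacLane summands of $\s_\bullet\vs_0\KQ$, which is controlled by low-degree stable motivic cohomology operations. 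Once $q\circ f'=b$ is forced, the contractibility of the space of homotopies established above yields $f'\simeq f$.
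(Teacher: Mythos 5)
Your existence argument and your identification of the cofiber are correct and essentially follow the paper's route: the second leg $b$ is the unique factorization of $\eta\smash\vs_0\KQ$ through $\f_1\vs_0\KQ\simeq\Sigma^{(1)}\HWZ$, the two legs are matched over $\MZ/2$ using $a^\ast\colon[\MZ,\Sigma^{j}\MZ/2]\cong[\vs_0\KQ,\Sigma^{j}\MZ/2]$ (so $[\vs_0\KQ,\Sigma^{-1}\MZ/2]=0$, which the paper instead extracts from the Wood sequence for $\kq$), and your computation of $\mathrm{cofib}(f)$ via exactness of $\f_1,\s_0$ and $\mathrm{cofib}(\s_0 f)\simeq\mathrm{fib}(\s_0 p)\simeq\Sigma^2\MZ/2$ is, if anything, cleaner than the paper's identification of the fiber, which asserts a splitting of $\mathrm{fib}(\pi_W)$.

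The genuine gap is exactly where you flagged it, and it is not just an unfinished computation: the vanishing $[\vs_0\KQ,\Sigma^{(1)}\HWZ]=[\vs_0\KQ,\f_1\vs_0\KQ]=0$ that your uniqueness argument needs is false over $\Z$ and over any formally real field, so no amount of slice bookkeeping will produce it. Indeed, $[\unit,\HWZ]=\pi_{0+(0)}\HWZ\cong[\Sigma^{(1)}\unit,\f_1\vs_0\KQ]\cong\pi_{0+(1)}\vs_0\KQ\cong\KMW_{-1}\cong\mathsf{W}$, and by construction the restriction of $b$ along $\unit\to\vs_0\KQ$ is the image of $\eta$, a generator of the Witt group, hence of infinite additive order over $\Z$ or $\R$ (cf.\ \Cref{lem:pi0V}). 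Thus $2b\neq0$ in $[\vs_0\KQ,\HWZ]$, while $2b$ dies in $[\vs_0\KQ,\s_0\HWZ]\cong[\vs_0\KQ,\MZ/2]\cong\Z/2$ and therefore lifts to $\f_1\HWZ\simeq\Sigma^{(1)}\HWZ$; this nonzero lift shows $[\vs_0\KQ,\Sigma^{(1)}\HWZ]\neq0$, and worse, it shows that the image of $[\vs_0\KQ,\mathrm{fib}(\pi_W)]\to[\vs_0\KQ,\HWZ]$ contains $2b\neq0$, so the Mayer--Vietoris sequence of the defining pullback produces a nonzero $g$ with $p\circ g=0$. Consequently uniqueness with only the $\MZ$-leg prescribed cannot be obtained along your lines; note that the paper's own proof does not claim more than uniqueness among maps whose compositions with \emph{both} canonical maps $\cHZ\to\MZ$ and $\cHZ\to\HWZ$ are the prescribed ones (namely $a$ and the lift $b$ of $\eta\smash\vs_0\KQ$), and for that weaker statement your already-established vanishing $[\vs_0\KQ,\Sigma^{-1}\MZ/2]=0$ suffices. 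You should retreat to that two-legged uniqueness rather than trying to force $q\circ f'=b$ from $p\circ f'=a$ alone.
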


\begin{proof}
The defining (homotopy) pullback diagram for $\cHZ$ induces a long exact sequence
\[ \dotsm \to [\vs_0\KQ, \Sigma^{-1}\MZ/2] \to [\vs_0\KQ, \cHZ] \to [\vs_0\KQ, \MZ]\oplus [\vs_0\KQ, \HWZ] \to   [\vs_0\KQ, \MZ/2] \to \dotsm \]
The map $\eta\smash \vs_0\KQ\colon \vs_0\KQ \to \Sigma^{(-1)}\vs_0\KQ$ corresponds uniquely to a map $\vs_0\KQ\to \HWZ$.
This can be seen via the vanishing $[\vs_0\KQ,\Sigma^{-1+(-1)}\MZ] = [\vs_0\KQ,\Sigma^{(-1)}\MZ]=0$
in the long exact sequence
\[ \dotsm \to [\vs_0\KQ,\Sigma^{-1+(-1)}\MZ] \to [\vs_0\KQ,\HWZ]\to [\vs_0\KQ,\Sigma^{(-1)}\vs_0\KQ] \to [\vs_0\KQ,\Sigma^{(-1)}\MZ] \to \dotsm \]
induced by the $\Sigma^{(-1)}$-suspension of the cofiber sequence~\eqref{triangle2}.
The resulting composition $\vs_0\KQ\to \HWZ\to \MZ/2$ coincides with the composition $\vs_0\KQ \to \MZ \xrightarrow{\pr^\infty_2}\MZ/2$, as a straightforward computation provides.
Thus there exists a map $\vs_0\KQ \to \cHZ$ composing with the respective canonical maps to the "canonical" maps $\vs_0\KQ\to \MZ$ and $\eta\smash \vs_0\KQ$. 
Its uniqueness follows as soon as $[\vs_0\KQ,\Sigma^{-1}\MZ/2$ is the trivial group, 
which holds due to the Wood cofiber sequence for $\kq$ (\Cref{thm:woodkq}) and the vanishing $[\kgl,\Sigma^{-1}\MZ/2]=0$.
The latter vanishing can be deduced from the cofiber sequence
\[ \Sigma^{1+(1)}\kgl \simeq \f_1\kgl \to \kgl \to \s_0\kgl \simeq \MZ\]
together with \Cref{lem:endo-mz-weight0} and 
the fact that $1$-effective motivic spectra map trivially to $0$-slices.

It remains to identify the homotopy fiber $\F$ of the canonical map $\vs_0\KQ \to \cHZ$. This map factors through $\vs_0\KQ \to \MZ$, whose fiber is $\Sigma^{(1)}\HWZ$ by \eqref{triangle2}. Consequently, the cofiber of the induced map $\F \to \Sigma^{(1)}\HWZ$ is equivalent to the fiber of $\cHZ \to \MZ$. The latter is in turn equivalent to the fiber of $\HWZ \to \MZ/2$, which decomposes as $\f_1\HWZ \oplus \Sigma \MZ/2$. The claim follows since $\f_1\HWZ\simeq \Sigma^{(1)}\HWZ$ occurs as the first component of the second map in the cofiber sequence
\[ \F\to \Sigma^{(1)}\HWZ \to \f_1\HWZ\oplus \Sigma\MZ/2\]
\end{proof}

\section{Rational degeneration of $K$-theories}
\label{section:rdokt}

In this section, 
we study the very effective slice spectral sequence for hermitian $K$-theory with rational coefficients.
Let $\E\tensor \Q$ denote the rationalization of a motivic spectrum $\E$.

\begin{lemma}
\label{lemma:rationalizedsplices}
For every motivic spectrum $\E$ and integer $q$, 
the canonical map
\[
\vs_{q}(\E\tensor\Q)
\simeq
\vs_{q}(\E)\tensor\Q
\]
is an equivalence.
\end{lemma}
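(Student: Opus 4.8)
The plan is to reduce the assertion to the very effective covers $\vf_q$ and then exploit that rationalization is a sequential homotopy colimit, hence exact. Applying $(-)\tensor\Q$ to the cofiber sequence $\vf_{q+1}\E\to\vf_q\E\to\vs_q\E$ and comparing with $\vf_{q+1}(\E\tensor\Q)\to\vf_q(\E\tensor\Q)\to\vs_q(\E\tensor\Q)$, it suffices to show that the rationalized counit $\vf_q(\E)\tensor\Q\to\E\tensor\Q$ exhibits its source as the $q$-th very effective cover of $\E\tensor\Q$, naturally in $\E$ and compatibly with the maps relating different $q$. By full faithfulness of $\tilde{\mathsf{i}}_q$ in \eqref{equation:veffadjunction}, a map $A\to\F$ is equivalent over $\F$ to the counit $\vf_q\F\to\F$ precisely when $A\in\SH^\veff(S,q)$ and the fiber of $A\to\F$ is \emph{colocally trivial}, meaning $[\mathcal{C},\operatorname{fib}(A\to\F)]=0$ for every $\mathcal{C}\in\SH^\veff(S,q)$; the counit itself has this property since it induces bijections $[\mathcal{C},\vf_q\F]\to[\mathcal{C},\F]$.

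So I would verify the two conditions for $A=\vf_q(\E)\tensor\Q$. Writing $\E\tensor\Q\simeq\colim(\E\xrightarrow{\times2}\E\xrightarrow{\times3}\E\to\cdots)$ gives $\vf_q(\E)\tensor\Q\simeq\colim(\vf_q\E\to\vf_q\E\to\cdots)$, which lies in $\SH^\veff(S,q)$ since that subcategory is closed under homotopy colimits. As $(-)\tensor\Q$ is exact, the fiber of $\vf_q(\E)\tensor\Q\to\E\tensor\Q$ is $\mathcal{K}\tensor\Q$ with $\mathcal{K}:=\operatorname{fib}(\vf_q\E\to\E)$, and $\mathcal{K}$ is colocally trivial as above. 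To propagate this to $\mathcal{K}\tensor\Q$, note that the class of $\mathcal{C}$ with $\operatorname{Map}(\mathcal{C},\mathcal{K}\tensor\Q)\simeq\ast$ is closed under homotopy colimits and extensions, since $\operatorname{Map}(-,\mathcal{K}\tensor\Q)$ carries homotopy colimits to homotopy limits and cofiber sequences to fiber sequences; hence it is enough to check it on a set of generators of $\SH^\veff(S,q)$ that are compact in $\SH(S)$ --- e.g.\ the $\Sigma^{a,0}\Sigma^{2q,q}\Sigma^\infty_+ X$ with $a\geq0$ and $X$ a smooth affine $S$-scheme. For such a compact $\mathcal{C}$, every $\Sigma^n\mathcal{C}$ with $n\geq0$ is again one of these generators, so compactness and colocal triviality of $\mathcal{K}$ give $[\Sigma^n\mathcal{C},\mathcal{K}\tensor\Q]=\colim_m[\Sigma^n\mathcal{C},\mathcal{K}]=0$, i.e.\ $\operatorname{Map}(\mathcal{C},\mathcal{K}\tensor\Q)\simeq\ast$. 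This produces the natural equivalence $\vf_q(\E\tensor\Q)\simeq\vf_q(\E)\tensor\Q$ over $\E\tensor\Q$; taking cofibers along $\vf_{q+1}\to\vf_q$ yields the lemma, and the naturality and $q$-compatibility needed for this last step are automatic, every comparison map being pinned down by the universal property just used.

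The point demanding care --- and what I expect to be the main obstacle --- is exactly this organization around fibers rather than cofibers: because $\SH^\veff(S,q)$ is \emph{not} triangulated, $\vf_q$ fails to be exact and the \emph{cofiber} of $\vf_q\F\to\F$ is in general \emph{not} colocally trivial, so one cannot proceed as for Voevodsky's effective slice filtration. The only additional input is that a set of generators of $\SH^\veff(S,q)$ may be chosen compact in $\SH(S)$; this rests on compact generation of $\SH(S)$ for qcqs $S$ together with a Nisnevich-local reduction to smooth affine schemes, and is where the hypotheses on the base scheme enter.
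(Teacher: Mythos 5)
Your reduction to the covers and the final cofiber step are fine, and the compactness input you isolate is indeed where the content lies; but the engine of your argument, the claimed characterization of the counit, is false, and the verification built on it does not close. If $A\in\SH^\veff(S,q)$ and $[\mathcal{C},\operatorname{fib}(A\to\F)]=0$ for every very effective $\mathcal{C}$, you only get that $[\mathcal{C},A]\to[\mathcal{C},\F]$ is \emph{injective}; surjectivity is obstructed by $[\Sigma^{-1}\mathcal{C},\operatorname{fib}(A\to\F)]$, and $\Sigma^{-1}\mathcal{C}$ is not very effective, so your condition does not see it. Concretely, take $q=0$ over a field and the map $\MZ\xrightarrow{2}\MZ$: its fiber is $\Sigma^{-1}\MZ/2$, which receives no maps from any very effective object, yet multiplication by $2$ is not equivalent over $\MZ$ to the counit $\vf_0\MZ=\MZ\xrightarrow{\operatorname{id}}\MZ$ --- the exact analogue of multiplication by $2$ on $H\Z$ versus the connective cover in topology. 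Correspondingly, your final step ``$[\Sigma^n\mathcal{C},\mathcal{K}\tensor\Q]=0$ for $n\geq 0$, i.e.\ $\operatorname{Map}(\mathcal{C},\mathcal{K}\tensor\Q)\simeq\ast$'' is a non sequitur: contractibility of the mapping spectrum also requires vanishing of its negative homotopy groups $[\Sigma^{-n}\mathcal{C},\mathcal{K}\tensor\Q]$, which your colimit argument does not reach and which is genuinely false in general, since the fiber of a very effective cover map typically admits nonzero maps of negative degree from very effective objects (think of the fiber of a connective cover). So the strong colocal triviality you would need in order to run the closure-under-colimits-and-extensions argument is simply unavailable, while the condition that is available ($\pi_0$-bijectivity, or vanishing of the fiber's nonnegative mapping groups) is not one that propagates formally from generators in the $\mathcal{C}$-variable.

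The repair is close to what you wrote but must be organized around the map to $\E\tensor\Q$ rather than around its fiber. Either argue as the paper does: $\SH^\veff(S,q)$ is closed under homotopy colimits in $\SH(S)$ and is generated by objects that are compact in $\SH(S)$, so the right adjoint $\tilde{\mathsf{r}}_q$, hence $\vf_q=\tilde{\mathsf{i}}_q\circ\tilde{\mathsf{r}}_q$, preserves sequential colimits; applying this to $\E\tensor\Q\simeq\colim(\E\xrightarrow{2}\E\xrightarrow{3}\cdots)$ gives $\vf_q(\E\tensor\Q)\simeq\vf_q(\E)\tensor\Q$ at once, and your cofiber step finishes the proof. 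Or keep your ``check on compact generators and close up'' strategy, but with the correct propagating condition: for the fixed map $\colim_m\vf_q\E\to\E\tensor\Q$, the class of $\mathcal{C}$ on which it induces an equivalence of mapping \emph{spaces} $\operatorname{Map}(\mathcal{C},\colim_m\vf_q\E)\to\operatorname{Map}(\mathcal{C},\E\tensor\Q)$ is closed under homotopy colimits and extensions; for a compact generator this map is the filtered colimit of the counit equivalences $\operatorname{Map}(\mathcal{C},\vf_q\E)\simeq\operatorname{Map}(\mathcal{C},\E)$, hence an equivalence; therefore it is an equivalence for every very effective $\mathcal{C}$, and the Yoneda argument inside $\SH^\veff(S,q)$ identifies $\colim_m\vf_q\E\simeq\vf_q(\E)\tensor\Q$ with $\vf_q(\E\tensor\Q)$.
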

\begin{proof}
Use that $\SH^{\veff}(F)$ is closed under homotopy colimits and $\vf_q$, 
as a composition of a right and its left adjoint, 
preserves sequential colimits.
\end{proof}

\Cref{thm:veff-KQ}, \Cref{prop:chz}, and \Cref{lemma:rationalizedsplices}  
identify the very effective rationalized slices of $\KQ$.

\begin{corollary}
\label{corollary:rationalslices}
Let $\mathcal{X}$ be an essentially smooth scheme over a Dedekind scheme $\mathcal{D}$.
The very effective slices of $\KQ\tensor\Q$ are given by 
\[
\vs_{q}(\KQ\tensor\Q) 
\simeq
\Sigma^{2q,q}
\begin{cases}
\cHQ & q \equiv 0 \bmod 4 \\
\HQ  & q \equiv 2 \bmod 4 \\
0    & \text{else}
\end{cases}
\]
\end{corollary}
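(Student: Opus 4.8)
The plan is to reduce everything to \Cref{thm:veff-KQ} and \Cref{prop:chz} via \Cref{lemma:rationalizedsplices}. First I would apply \Cref{lemma:rationalizedsplices} to rewrite $\vs_q(\KQ\otimes\Q)\simeq\vs_q(\KQ)\otimes\Q$, so that it suffices to rationalize the answer of \Cref{thm:veff-KQ} in each residue class of $q$ modulo $4$, keeping in mind that $\Sigma^{q+(q)}=\Sigma^{2q,q}$ and that rationalization commutes with suspension. Three of the four cases are then immediate. For $q\equiv 1\bmod 4$ or $q\equiv 3\bmod 4$ --- the residue classes where the corollary predicts $0$ --- the slice $\vs_q\KQ$ is respectively $\Sigma^{q+(q)}\HZ/2$, which rationalizes to $\ast$ because $\HZ/2$ is $2$-torsion, or already contractible. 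For $q\equiv 2\bmod 4$ one has $\vs_q\KQ\simeq\Sigma^{q+(q)}\HZ$, so $\HZ\otimes\Q\simeq\HQ$ yields $\Sigma^{2q,q}\HQ$.

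The only case carrying content is $q\equiv 0\bmod 4$, where $\vs_q\KQ\simeq\Sigma^{q+(q)}\vs_0\KQ$ and one must identify $\vs_0\KQ\otimes\Q$ with $\cHQ$. For this I would rationalize the cofiber sequence $\Sigma^{1}\HZ/2\to\vs_0\KQ\to\cHZ$ of \Cref{prop:chz}: its first term becomes contractible, so the second map is an equivalence $\vs_0\KQ\otimes\Q\xrightarrow{\simeq}\cHZ\otimes\Q$. It then remains to recall $\cHZ\otimes\Q\simeq\cHQ$, which is precisely the compatibility with the rationalized Milnor--Witt motivic cohomology spectrum of \cite{dfjk} recorded after \Cref{def:mwz}; equivalently, one rationalizes the defining homotopy pullback square for $\cHZ$, observes that $\MZ/2\otimes\Q\simeq\ast$ collapses it to the splitting $\cHZ\otimes\Q\simeq\HQ\oplus(\HWZ\otimes\Q)$, and matches this with the plus/minus decomposition of $\cHQ$.

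I do not anticipate a genuine obstacle: the corollary is a formal rationalization of slice computations already established, and the one point warranting a little care --- that the pullback definition of $\cHZ$ becomes the Milnor--Witt motivic cohomology spectrum of \cite{dfjk} after $\otimes\Q$ --- is exactly the remark following \Cref{def:mwz}, hence citable rather than to be reproven.
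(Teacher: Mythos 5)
Your proposal is correct and follows exactly the route the paper takes: the paper's (one-line) justification of this corollary is precisely the combination of \Cref{thm:veff-KQ}, \Cref{prop:chz}, and \Cref{lemma:rationalizedsplices}, with the identification $\cHZ\otimes\Q\simeq\cHQ$ supplied by the remark following \Cref{def:mwz}. You have simply written out the case-by-case details that the paper leaves implicit, and the one step with content (rationalizing the cofiber sequence \eqref{triangle} to kill $\Sigma^{1}\HZ/2$) is handled as intended.
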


The homotopy pullback diagram \eqref{def:mwz} implies that $\cHQ\simeq \HQ \oplus \mathsf{W}\Q$, where the first summand is the motivic Eilenberg-MacLane spectrum with rational coefficients, and $\mathsf{W}\Q:=\HWZ\tensor \Q$.
This decomposition coincides with the orthogonal decomposition into the $+$-part and the $-$-part of $\SH(\mathcal{X};\Q)$.
Over Dedekind schemes in which $2$ is invertible, $\cHQ$ and $\mathsf{W}\Q$ coincide with the corresponding motivic spectra considered in \cite{dfjk}, 
since this is already true before passing to rational coefficients.
Moreover, the unit map $\unit \to \KQ$ induces an equivalence 
$\unit\tensor \Q\simeq \HQ\oplus \mathsf{W}\Q$ compatible under base change as in  
\cite[Corollary 6.2]{dfjk}.

\begin{lemma}
\label{lemma:Qd1zero}
In $\SH(\Z;\Q)$ the vanishing 
$ 
[\HQ, \Sigma^{t,w}\HQ] = 
0
$
holds for $w < 0$, and $t\neq 0=w$, and $t\neq 1$ for $w>0$.
\end{lemma}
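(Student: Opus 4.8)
The plan is to identify $[\HQ,\Sigma^{t,w}\HQ]$ with a rational motivic cohomology group of $\Spec\Z$ and then to invoke the classical computation of that group.

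\emph{Reduction.} As recalled just above, the rationalized sphere splits as $\unit\otimes\Q\simeq\HQ\oplus\mathsf{W}\Q$, the orthogonal decomposition of $\SH(\Z;\Q)$ into its plus- and minus-parts; in particular $\Sigma^{t,w}\HQ$ lies in the plus-part, so $[\mathsf{W}\Q,\Sigma^{t,w}\HQ]=0$. Applying $[-,\Sigma^{t,w}\HQ]$ to the splitting therefore gives
\[
[\HQ,\Sigma^{t,w}\HQ]\;\cong\;[\unit\otimes\Q,\Sigma^{t,w}\HQ]\;=\;H^{t,w}(\Spec\Z;\Q),
\]
where the last equality holds because $\HQ=\MZ\otimes\Q$ represents rational motivic cohomology over any scheme essentially smooth over a Dedekind scheme (Spitzweck).

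\emph{Computation.} It then remains to check that $H^{t,w}(\Spec\Z;\Q)$ vanishes in the claimed range. For $w<0$ this is the vanishing of motivic cohomology in negative weights. For $w=0$ the group is Nisnevich cohomology of the constant sheaf $\Q$, hence $\Q$ if $t=0$ and $0$ otherwise, since $\Spec\Z$ is connected. For $w\ge 1$ I would invoke the classical computation of the rational motivic cohomology of $\Z$ — equivalently, of the $\gamma$-graded pieces $\operatorname{gr}^w_\gamma K_{2w-t}(\Z)\otimes\Q$ via Borel's computation of $K_{\ast}(\Z)\otimes\Q$ (ranks, weights, and finiteness of the even groups) — which shows that $H^{t,w}(\Spec\Z;\Q)$ is concentrated in degree $t=1$ for positive weights, with $H^{1,w}(\Spec\Z;\Q)\cong K_{2w-1}(\Z)\otimes\Q$. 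In particular it vanishes for $t\ne 1$. Assembling the three cases gives the lemma; incidentally the group is nonzero (equal to $\Q$) exactly for $(t,w)=(0,0)$ and for $t=1$ with $w$ odd, $w\ge 3$.

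\emph{Main obstacle.} There is essentially no homotopy-theoretic difficulty here: the statement is a repackaging of the computation of $K_\ast(\Z)\otimes\Q$. The two points demanding care are that the identification of $[\HQ,\Sigma^{t,w}\HQ]$ with motivic cohomology must be run over the arithmetic base $\Z$, so one invokes Spitzweck's representability and the plus/minus splitting over general bases rather than Morel's splitting over fields; and that in positive weights one genuinely needs the concentration of $H^{\ast,w}(\Spec\Z;\Q)$ in a single cohomological degree (i.e.\ Borel's theorem), not merely the elementary vanishing above the line $t=w+1$.
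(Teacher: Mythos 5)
Your proposal is correct and follows essentially the same route as the paper: identify $[\HQ,\Sigma^{t,w}\HQ]$ with $[(\unit\otimes\Q)_+,\Sigma^{t,w}\HQ]\cong H^{t,w}(\Z;\Q)$ via the plus/minus splitting and then read off the vanishing from the rational motivic cohomology of $\Z$ (the paper cites its Theorem~\ref{thm:HZ}, which rationally amounts to the Borel computation you invoke).
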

\begin{proof}
This follows from the isomorphisms 
\[
[\HQ,\Sigma^{t,w}\HQ]_{\SH(\Z)}\cong [(\unit\tensor \Q)_+,\Sigma^{t,w}\HQ]_{\SH(\Z)}\cong H^{t,w}(\Z;\Q)
\]
We refer to \Cref{thm:HZ} for the groups $H^{t,w}(\Z;\Q)\cong H^{t,w}(\Z)\otimes\Q$.
\end{proof}

\begin{lemma}
\label{lemma:Qd1zero-part2}
In $\SH(\Z;\Q)$ the group $[\mathsf{W}\Q, \Sigma^{t,w}\mathsf{W}\Q]\cong 0$ is trivial for $t\neq w$.
\end{lemma}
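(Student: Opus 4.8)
The plan is to identify $\mathsf{W}\Q$ as the $(-)$-part of the rational sphere, so that its self-maps become computable via motivic cohomology of the base. Concretely, recall from the discussion after \Cref{corollary:rationalslices} that $\unit\tensor\Q\simeq \HQ\oplus\mathsf{W}\Q$ is the orthogonal decomposition into the $+$-part and the $-$-part in $\SH(\Z;\Q)$, and that $\mathsf{W}\Q$ is the unit of the symmetric monoidal $(-)$-summand category $\SH(\Z;\Q)_-$. Since $\HQ$ lies entirely in the $+$-part, the $(-)$-part is a $\mathsf{W}\Q$-linear category with $\mathsf{W}\Q$ itself as tensor unit, so $[\mathsf{W}\Q,\Sigma^{t,w}\mathsf{W}\Q]_{\SH(\Z;\Q)}\cong \pi_{-t,-w}\mathsf{W}\Q$, the bigraded homotopy ring of $\mathsf{W}\Q$.

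The next step is to compute $\pi_{\star}\mathsf{W}\Q$ over $\Z$. Here I would use that rationally, inverting $\eta$ is harmless on the $(-)$-part: since $\eta$ acts invertibly on $\mathsf{W}\Q$ (the $(-)$-part is precisely the $\eta$-periodic rational part), $\mathsf{W}\Q\simeq (\KW\tensor\Q)$-shaped information is accessible. More directly, $\mathsf{W}\Q=\HWZ\tensor\Q$, and by \Cref{prop:slices-cHZ} together with \Cref{lemma:rationalizedsplices} the slices of $\HWZ\tensor\Q$ vanish in positive degrees after rationalization — indeed $\s_q\HWZ\simeq \Sigma^{(q)}\HZ/2\oplus\Sigma^{2+(q)}\HZ/2$ is $2$-torsion, hence rationally trivial for all $q\geq 0$ except that one must be careful at $q=0$. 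Wait: $\s_0\HWZ\simeq \HZ/2\oplus\Sigma^2\HZ/2$ is also $2$-torsion. So rationally all slices of $\HWZ$ vanish, and strong convergence of the slice filtration over $\Z$ would force $\HWZ\tensor\Q\simeq\ast$? That contradicts $\mathsf{W}\Q\neq 0$ over fields like $\R$. The resolution: $\HWZ\tensor\Q$ is not effective-slice-complete in the naive sense, or rather $\mathsf{W}\Q$ is built from $\vs_0\KQ$ which is very effective but the relevant filtration argument must be run more carefully. The cleaner route: by \cite[Cor.~6.2]{dfjk} and the paragraph before \Cref{lemma:Qd1zero}, $\mathsf{W}\Q$ agrees with the $(-)$-part spectrum of \emph{loc.\ cit.}, whose homotopy is concentrated: $\pi_{t,w}\mathsf{W}\Q$ is the rational Witt-group cohomology of $\Z$, which by the structure of rational Milnor–Witt / Witt motivic cohomology is concentrated in the ``diagonal'' $t=w$ (this is the $\Gm$-periodicity of the $(-)$-part: $\mathsf{W}\Q$ is $\Sigma^{(1)}$-periodic, so $\pi_{t,w}\mathsf{W}\Q\cong\pi_{t-w,0}\mathsf{W}\Q$, and $\pi_{n,0}\mathsf{W}\Q=0$ for $n\neq 0$ since the rational Witt cohomology of a Dedekind ring is concentrated in degree $0$).

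So the key steps, in order, are: (1) reduce $[\mathsf{W}\Q,\Sigma^{t,w}\mathsf{W}\Q]$ to $\pi_{-t,-w}\mathsf{W}\Q$ using that $\mathsf{W}\Q$ is the tensor unit of the $(-)$-part; (2) invoke the $\Sigma^{(1)}$-periodicity of $\mathsf{W}\Q$ (from \Cref{prop:slices-cHZ}, where $\eta\colon\Sigma^{(1)}\HWZ\to\f_1\HWZ$ becomes, after rationalization and using that $\f_1\HWZ\simeq\HWZ$ rationally modulo the $\HZ$-part, an equivalence $\Sigma^{(1)}\mathsf{W}\Q\simeq\mathsf{W}\Q$) to reduce to the weight-zero line $\pi_{n,0}\mathsf{W}\Q$ with $n=w-t$; (3) compute $\pi_{n,0}\mathsf{W}\Q$ and show it vanishes for $n\neq 0$, using that $\pi_{n,0}\mathsf{W}\Q\cong H^{-n,0}(\Z;\mathsf{W})\tensor\Q$ is rational Witt-sheaf cohomology of $\Spec\Z$ in weight $0$, concentrated in cohomological degree $0$ because $\Spec\Z$ has ($\A^1$-)cohomological dimension at most $1$ for Witt sheaves and the relevant higher Witt cohomology of $\Z$ is torsion. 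The main obstacle I expect is step (2)–(3): pinning down the $\Sigma^{(1)}$-periodicity of $\mathsf{W}\Q$ cleanly (the $\HZ$-summand in $\cHZ$ obstructs naive periodicity of $\HWZ$ itself, so one must work with $\mathsf{W}\Q$ as the orthogonal $(-)$-summand rather than with $\HWZ\tensor\Q$ directly) and then correctly identifying the weight-zero homotopy with rational Witt cohomology of $\Z$ and citing the right vanishing (presumably from \cite{dfjk} or the theory of the rational Witt t-structure). Once those identifications are in hand, the vanishing for $t\neq w$ is immediate.
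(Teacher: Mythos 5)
Your overall strategy coincides with the paper's: the paper likewise identifies $\mathsf{W}\Q$ with the minus part $(\unit\otimes\Q)^-$ of the rational sphere, rewrites $[\mathsf{W}\Q,\Sigma^{t,w}\mathsf{W}\Q]$ as a bigraded homotopy group, and concludes by a concentration statement for $\eta$-periodic rational Witt theory. The difference lies in where the concentration is established. The paper inserts one step you do not have: by \cite[Lemma 2.2]{dfjk}, maps out of $(\unit\otimes\Q)^-$ computed in $\SH(\Z)$ agree with those computed in $\SH(\Q)$, so the final group is $\pi_{-t,-w}\HWZ_\Q[\eta^{-1}]\otimes\Q$ over the \emph{field} $\Q$, where \Cref{lem:etainv-v} (the $\eta$-inverted Witt spectrum is the Eilenberg--MacLane spectrum of the homotopy module $\mathsf{W}(F)[\eta,\eta^{-1}]$ concentrated in degree zero, cf.\ also \Cref{lem:pi0V}) gives concentration on the line $t=w$ immediately. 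You instead stay over $\Z$: after invoking $\eta$-periodicity of the minus part to reduce to $\pi_{n,0}\mathsf{W}\Q$, you assert that this is rational Witt-sheaf cohomology of $\Spec(\Z)$ in weight $0$ and kill the degree-one contribution because the residues land in the torsion groups $\mathsf{W}(\FF_p)$. The vanishing you want is true and the torsion argument is the right reason, but the identification of $\pi_{\ast,0}\mathsf{W}\Q$ over the Dedekind base $\Z$ with Witt-sheaf cohomology is precisely the point you leave open (``presumably from dfjk or the rational Witt t-structure''); Morel's theory and \Cref{lem:etainv-v} are field-level statements, so as written this is the genuine gap in your step (3). It can be closed either by the paper's move (base change to $\SH(\Q)$ via \cite[Lemma 2.2]{dfjk}, which is exactly the statement that the closed fibers do not contribute rationally to the minus part) or by a localization/Gersten--Witt comparison of $\Z$ with $\Q$ and the closed fibers using finiteness of $\mathsf{W}(\FF_p)$; either patch completes your argument.

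Two smaller remarks. Your detour through the slice spectral sequence of $\HWZ\otimes\Q$ is resolved not by a vague failure of completeness but by \Cref{lem:convergence-v}: the slice completion of $\HWZ$ is its $\eta$-completion, and rationally $\eta$-completion kills the minus part, so the vanishing of the rationalized slices is consistent with $\mathsf{W}\Q\neq 0$. Your initial identification $[\mathsf{W}\Q,\Sigma^{t,w}\mathsf{W}\Q]\cong\pi_{-t,-w}\mathsf{W}\Q$ is correct and matches the paper's first isomorphism, since the plus part maps trivially to the minus part.
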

\begin{proof}
This follows from the isomorphisms (the second one follows from \cite[Lemma 2.2]{dfjk})
\[
[\mathsf{W}\Q,\Sigma^{t,w}\mathsf{W}\Q]_{\SH(\Z)}\cong
[(\unit\otimes \Q)^-,\Sigma^{t,w}\mathsf{W}\Q]_{\SH(\Z)} \cong 
\]
\[
[(\unit\tensor \Q)^-,\Sigma^{t,w}\mathsf{W}\Q]_{\SH(\Q)} \cong 
\pi_{-t,-w}\mathsf{W}\Z_\Q[\eta^{-1}]\otimes \Q
\]
The latter group is concentrated on the line $t=w$ by \Cref{lem:pi0V}.
\end{proof}

\begin{proposition}
\label{proposition:vessscollapse}
Let $\mathcal{D}$ be a Dedekind scheme.
The very effective slice spectral sequence for $\KQ\tensor\Q$ collapses at its $\tilde{E}^1$-page.
\end{proposition}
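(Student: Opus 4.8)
The plan is to show that every differential in the very effective slice spectral sequence for $\KQ\tensor\Q$ vanishes. By \Cref{corollary:rationalslices}, the only nonzero rationalized very effective slices sit in weights $q\equiv 0,2\bmod 4$, with $\vs_q(\KQ\tensor\Q)\simeq \Sigma^{2q,q}\cHQ$ for $q\equiv 0$ and $\simeq \Sigma^{2q,q}\HQ$ for $q\equiv 2$. Recalling that the $r$-th differential has the form $\tilde E^r_{p,q,w}\to \tilde E^r_{p-1,q+r,w}$, a differential out of the slice in weight $q$ lands in the slice in weight $q+r$; since nonzero slices only occur in even weights $q$, the first potentially nonzero differential is $\mathsf{d}^2$, going from weight $q$ to weight $q+2$. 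Thus it suffices to show that for each such $r=2$ differential, the induced map on homotopy groups is zero, and then inductively that all higher differentials (which would have to originate from pages on which the relevant groups survive) also vanish.

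The key observation is that a differential $\mathsf{d}^r$ emanating from $\vs_q$ is, up to the exact-couple bookkeeping, induced by a map of motivic spectra $\vs_q(\KQ\tensor\Q)\to \Sigma^{1}\vs_{q+r}(\KQ\tensor\Q)$ in $\SH(\mathcal{X};\Q)$ (the connecting map in the tower). For $r=2$ this is a map $\Sigma^{2q,q}\cHQ\to \Sigma^{2(q+2)+1,q+2}\cHQ$ (when $q\equiv 0\bmod 4$) or $\Sigma^{2q,q}\HQ\to \Sigma^{2(q+2)+1,q+2}\HQ$ (when $q\equiv 2\bmod 4$) — equivalently, after desuspension, a class in $[\cHQ,\Sigma^{t,w}\cHQ]$ or a mixed group $[\HQ,\Sigma^{t,w}\cHQ]$, $[\cHQ,\Sigma^{t,w}\HQ]$, with $t = 2\cdot 2+1 = 5$ and $w = 2$. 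Using the splitting $\cHQ\simeq \HQ\oplus \mathsf{W}\Q$ (the $+$/$-$ decomposition of $\SH(\mathcal{X};\Q)$), every such mapping group is an orthogonal sum of pieces $[\HQ,\Sigma^{5,2}\HQ]$, $[\mathsf{W}\Q,\Sigma^{5,2}\mathsf{W}\Q]$, and cross terms $[\HQ,\Sigma^{5,2}\mathsf{W}\Q]$, $[\mathsf{W}\Q,\Sigma^{5,2}\HQ]$. By \Cref{lemma:Qd1zero} the group $[\HQ,\Sigma^{t,w}\HQ]$ vanishes for $w=2>0$ and $t=5\neq 1$; by \Cref{lemma:Qd1zero-part2} the group $[\mathsf{W}\Q,\Sigma^{t,w}\mathsf{W}\Q]$ vanishes for $t=5\neq 2=w$. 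The cross terms vanish because they land in orthogonal summands: a $+$-spectrum maps trivially to a $-$-spectrum and vice versa in $\SH(\mathcal{X};\Q)$. Hence every $\mathsf{d}^2$ is zero. The same numerics handle all higher differentials: a $\mathsf{d}^r$ out of weight $q$ into weight $q+r$ (with $q+r$ even, so $r$ even) corresponds, after desuspension, to a class in a mapping group with topological degree $t=2r+1$ and weight $w=r$; since $t=2r+1$ is odd, it is never $1$ and never equal to $w=r$, so the same two vanishing lemmas plus orthogonality of the $\pm$-decomposition kill it. By \Cref{proposition:strongconvergence} the spectral sequence is strongly convergent, so collapse at $\tilde E^1$ is exactly the statement that all $\mathsf{d}^r$ vanish for $r\geq 1$, which we have just verified.

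The only subtlety — and the step I would be most careful about — is justifying that the $r$-th differential really is computed by the indicated map of (rationalized) motivic spectra, so that the mapping-group vanishing applies. This is a standard consequence of the exact couple arising from the tower \eqref{veff-cof2}: the $\mathsf{d}^r$ differential is the composite, along $r$ stages of the tower, of connecting maps, and over $\SH(\mathcal{X};\Q)$ — where by \Cref{lemma:rationalizedsplices} the rationalized very effective slices are the very effective slices of $\KQ\tensor\Q$ — this composite is represented by a morphism $\vs_q(\KQ\tensor\Q)\to \Sigma^{1}\vs_{q+r}(\KQ\tensor\Q)$ of objects of $\SH(\mathcal{X};\Q)$, as in any filtered-spectrum spectral sequence. (One could alternatively argue directly on homotopy groups: the target of $\mathsf{d}^r$ is a subquotient of $\pi_{t,w}\vs_{q+r}(\KQ\tensor\Q)$, and one shows the whole relevant $\Hom$-group, not just this subquotient, is zero; this is cleaner since it sidesteps identifying the differential precisely.) With that in place the computation is immediate, and one may state it for arbitrary $\mathcal{X}$ essentially smooth over a Dedekind scheme, not only for $\mathcal{D}$ itself, since $\cHQ$, $\HQ$ and $\mathsf{W}\Q$ are stable under the relevant base change and the mapping groups only get smaller.
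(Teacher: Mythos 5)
Your proposal is correct and takes essentially the same route as the paper: by \Cref{corollary:rationalslices} the nonzero rational slices sit only in even weights, which kills $\vd^1$ and every differential landing in odd weight, and the remaining differentials, viewed as spectrum-level maps of bidegree $(2r+1,r)$ between $\HQ$- and $\mathsf{W}\Q$-summands of $\cHQ$, vanish by \Cref{lemma:Qd1zero}, \Cref{lemma:Qd1zero-part2}, and the orthogonality of the $+$/$-$ decomposition. The one point the paper treats more carefully than your ``composite of connecting maps'' phrasing is the inductive lifting: $\vd^{r+1}$ is represented by a map of motivic spectra only because $\vd^{r}$ has already been shown to vanish \emph{as a map of spectra}, so that $\vs_q\KQ\tensor\Q\to\Sigma\vf_{q+r}\KQ\tensor\Q$ lifts to $\Sigma\vf_{q+r+1}\KQ\tensor\Q$ (and your parenthetical alternative does not work as stated, since the homotopy groups of the target slice are certainly not zero --- it is the group of spectrum maps $[\vs_q,\Sigma\vs_{q+r}]$ that vanishes).
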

\begin{proof}
We use induction to show the differentials are zero as maps of motivic spectra.
By \Cref{corollary:rationalslices} and base change, 
it suffices to work over $\Spec(\Z)$.
Since the very effective slices of $\KQ\tensor \Q$ are concentrated in even weights, the $\vd^1$-differentials are zero as maps of spectra.
This starts the induction. 
Let $q\in \Z$ be even.
If $\vd^n\colon \vs_q\KQ\tensor \Q \to \Sigma \vf_{q+n}\KQ\tensor \Q \to \Sigma\vs_{q+n}\KQ\tensor \Q$ is zero as a map of motivic spectra, the first map lifts to $\vf_{q+n+1}\KQ\tensor \Q$.
The resulting composition
$\vd^{n+1}\colon \vs_q\KQ\tensor \Q\to \vf_{q+n+1}\KQ\otimes \Q\to \vs_{q+n+1}\KQ\otimes \Q$ is then the $n+1$-st differential, as a map of motivic spectra.
If $n$ is even, the target is contractible.
If $n$ is odd, the $n+1$-st differential is, up to suspension, of the form
$\D\to\Sigma^{2n+3,n+1}\E$ for $\D,\E\in \{\cHQ,\HQ\}$ by \Cref{corollary:rationalslices}.
Thus, by \Cref{lemma:Qd1zero,lemma:Qd1zero-part2}, $\vd^{n+1}$ is trivial as a map of motivic spectra.
This concludes the induction step.
\end{proof}

Next we compute the rational hermitian $K$-groups of smooth schemes of finite type over Dedekind schemes, 
cf.~the works by Ananyevskiy-Levine-Panin \cite[Corollary 3.5]{ALP} and 
D{\'e}glise-Fasel-Jin-Khan \cite[Theorem 2.3]{Borel-iso}.

\begin{corollary}
\label{corollary:rationalhermitianKgroups}
Let $\mathcal{D}$ be a Dedekind scheme.
For every $X\in \Sm_{\mathcal{D}}$ and integers $t$, $w$, there are isomorphisms
\[
\KQ_{t,w}(X,\Q) 
\cong 
\bigoplus_{q \equiv 0 \bmod 2} \widetilde{H}^{4q-t,2q-w}(X,\Q) 
\oplus 
\bigoplus_{q \equiv 1 \bmod 2} H^{4q-t,2q-w}(X,\Q)
\]
\end{corollary}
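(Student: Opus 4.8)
The plan is to read off $\KQ_{t,w}(X,\Q)=\pi_{t,w}(\KQ_X\tensor\Q)$ from the very effective slice spectral sequence \eqref{eq:vsss} for $\KQ\tensor\Q$ over $X$ as base scheme. This is available because $X\in\Sm_\mathcal{D}$ is essentially smooth over the Dedekind scheme $\mathcal{D}$ and qcqs of finite Krull dimension, so \Cref{corollary:rationalslices} identifies its very effective slices and \Cref{proposition:strongconvergence} gives strong convergence. It then suffices to establish: (i) the spectral sequence collapses at $\tilde E^1$; (ii) the slice filtration on $\pi_{t,w}(\KQ_X\tensor\Q)$ is finite in each fixed bidegree; (iii) a finitely filtered $\Q$-vector space is the direct sum of its filtration quotients.

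For (i), let $f\colon X\to\mathcal{D}$ be the structure morphism. As $f$ is smooth, $f^\ast$ preserves $\SH^\veff$ and its left adjoint $f_\#$ sends $\SH^\veff(X,q)$ into $\SH^\veff(\mathcal{D},q)$; hence $f^\ast$ commutes with the coreflections $\vf_q$ and with $\vs_q$ (cf.\ \cite[Lemma B.1]{bachmann-hoyois}), so that the very effective slice tower of $\KQ_X\tensor\Q$ is the $f^\ast$-pullback of the one of $\KQ_\mathcal{D}\tensor\Q$. By \Cref{proposition:vessscollapse} the slice differentials of the latter vanish \emph{as maps of motivic spectra}; therefore so do their pullbacks to $X$, and the spectral sequence \eqref{eq:vsss} for $\KQ_X\tensor\Q$ collapses at $\tilde E^1=\tilde E^\infty$.

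For (ii), \Cref{proposition:strongconvergence} already gives that the filtration $\vf_q\pi_{t,w}(\KQ_X\tensor\Q)$ is exhaustive, complete and Hausdorff. It is moreover finite for fixed $(t,w)$: since $S^{t,w}\in\SH^\veff(X,q)$ whenever $q\le\min\{t-w,w\}$, adjunction forces $\vf_q\pi_{t,w}=\pi_{t,w}$ in that range, while the connectivity estimate used to prove \Cref{proposition:strongconvergence}, applied to $\vf_q(\KQ_X\tensor\Q)\in\SH^\veff(X,q)$, gives $[S^{t,w},\vf_q(\KQ_X\tensor\Q)]=0$ for $q\ge t+\dim X-w$. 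Combining (i)--(iii) with strong convergence, there is an isomorphism with only finitely many nonzero summands
\[
\KQ_{t,w}(X,\Q)\;\cong\;\bigoplus_{q\in\Z}\pi_{t,w}\vs_q(\KQ_X\tensor\Q).
\]
Substituting \Cref{corollary:rationalslices}, the $q$-th summand vanishes for $q$ odd, and for $q=2q'$ one has
\[
\pi_{t,w}\bigl(\Sigma^{2q,q}\cHQ_X\bigr)=[S^{t-2q,w-q},\cHQ_X]=\wt H^{2q-t,q-w}(X,\Q)=\wt H^{4q'-t,2q'-w}(X,\Q)
\]
when $q'$ is even, and the corresponding group $H^{4q'-t,2q'-w}(X,\Q)$ (with $\HQ_X$, i.e.\ ordinary motivic cohomology) when $q'$ is odd. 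Reindexing the direct sum over $q'$ gives exactly the stated formula.

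No step is deep. The two points worth isolating are: that the very effective slice filtration is compatible with the smooth base change $f$ and that \Cref{proposition:vessscollapse} delivers vanishing \emph{on the level of maps of motivic spectra} (so that the collapse transports from $\mathcal{D}$ to $X$, whose Krull dimension may be larger); and the finiteness of the slice filtration in each bidegree, without which one would only learn that $\KQ_{t,w}(X,\Q)$ is complete for a filtration with the displayed associated graded, not that it coincides with the direct sum. The connectivity bound in step (ii) is precisely what secures the latter.
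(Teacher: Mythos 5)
Your proof is correct and follows essentially the same route as the paper: collapse of the very effective slice spectral sequence via \Cref{proposition:vessscollapse} (crucially, vanishing of differentials as maps of spectra), the rational slice identification of \Cref{corollary:rationalslices}, and strong convergence from \Cref{proposition:strongconvergence}. The only cosmetic difference is that the paper runs the spectral sequence over $\mathcal{D}$ evaluated on $\Sigma^{t,w}\Sigma^\infty X_+$ rather than pulling the slice tower back to $X$, and it leaves implicit the finiteness of the filtration and the splitting over $\Q$ that you spell out.
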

\begin{proof}
The very effective slice spectral sequence for $\KQ\tensor\Q$ takes the form
\[
[\Sigma^{t,w}\Sigma^\infty X_+, \vs_{q}(\KQ\tensor\Q)] \implies 
[\Sigma^{t,w}\Sigma^\infty X_+, \KQ\tensor{\Q}] = \KQ_{t,w}(X,\Q)
\]
\Cref{proposition:vessscollapse} implies that it collapses, and \Cref{corollary:rationalslices} supplies the answer.
\end{proof}

Our next result gives a Grothendieck-Riemann-Roch theorem for hermitian $K$-theory.

\begin{theorem}
\label{theorem:eoosplitting}
Over a Dedekind scheme the very effective slice filtration induces an equivalence of motivic spectra
$$
\KQ\tensor\Q
\simeq
\bigoplus_{q\in \Z}\Sigma^{8q,4q}\cHQ \oplus  \bigoplus_{q\in \Z}\Sigma^{8q+4,4q+2}\HQ
$$
\end{theorem}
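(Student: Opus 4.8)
The plan is to promote the degeneration of the very effective slice spectral sequence of $\KQ\tensor\Q$ (\Cref{proposition:vessscollapse}) to an actual splitting of its very effective slice filtration, and then read off the decomposition from \Cref{corollary:rationalslices}. Write $F=\KQ\tensor\Q$, fix the defining cofiber sequences $\vf_{q+1}F\to\vf_q F\xrightarrow{p_q}\vs_q F\xrightarrow{\partial_q}\Sigma\vf_{q+1}F$, and note that by base change it suffices to work over a fixed Dedekind scheme $\mathcal{D}$.

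\emph{Step 1: the connecting maps vanish.} The proof of \Cref{proposition:vessscollapse} shows, by induction on $n\ge 1$, that $\partial_q$ admits a factorization through $\Sigma\vf_{q+n}F$ (each vanishing of a differential \emph{as a map of motivic spectra} supplies the lift at the next stage). Now $\vf_{q+n}F\in\SH^\veff(\mathcal{D},q+n)\subseteq\SH^\eff_{\geq q+n-\dim\mathcal{D}}(\mathcal{D})$ by the inclusion used in the proof of \Cref{proposition:strongconvergence}, so the connectivity of $\vf_{q+n}F$ in the effective homotopy $t$-structure grows without bound in $n$; since every compact generator $P=\Sigma^{t,w}\Sigma^\infty Z_+$, $Z\in\Sm_{\mathcal{D}}$, is bounded above in that $t$-structure, $[P,\Sigma\vf_{q+n}F]=0$ for $n\gg 0$. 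Hence $[P,\partial_q]=0$ for all such $P$, and therefore $\partial_q=0$. Each cofiber sequence thus splits: choose sections $s_q\colon\vs_q F\to\vf_q F$ of $p_q$, and let $\psi\colon\bigoplus_{q\in\Z}\vs_q F\to F$ be the map with $q$-th component $\vs_q F\xrightarrow{s_q}\vf_q F\to F$, using the counit $\vf_q F\to F$ (which factors through $\vf_{q_0}F\to F$ for every $q_0\le q$).

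\emph{Step 2: $\psi$ is an equivalence.} I would check this on compact generators $P$ as above. On the source, the subgroups $\bigoplus_{q\geq q_0}[P,\vs_q F]$ form an exhaustive, Hausdorff filtration that is moreover bounded above: $\vs_q F\in\SH^\eff_{\geq q-\dim\mathcal{D}}(\mathcal{D})$ (the subcategories $\SH^\eff_{\geq\bullet}$ are closed under cofibers), so $[P,\vs_q F]=0$ for $q\gg 0$. On the target, \Cref{proposition:strongconvergence} --- whose argument applies verbatim with $P$ in place of a motivic sphere --- gives the exhaustive, Hausdorff very effective slice filtration $F^{q_0}[P,F]=\im\bigl([P,\vf_{q_0}F]\to[P,F]\bigr)$, and collapse (\Cref{proposition:vessscollapse}) identifies its associated graded with $\bigoplus_{q_0}[P,\vs_{q_0}F]$. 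By construction $[P,\psi]$ is filtration preserving, and the section identities $p_{q_0}s_{q_0}=\id$ show it induces the identity on each graded piece. A filtration-preserving map from a bounded-above, exhaustive, Hausdorff filtered abelian group to an exhaustive, Hausdorff one that is an isomorphism on associated gradeds is an isomorphism (injectivity uses Hausdorffness of the source and the isomorphism on gradeds; surjectivity uses exhaustiveness of the target, the isomorphism on gradeds, and the fact that boundedness above makes the successive-approximation process terminate, the resulting element mapping to the target because it is Hausdorff). Hence $[P,\psi]$ is an isomorphism, so $\psi$ is an equivalence. Substituting $\vs_q F\simeq\Sigma^{2q,q}\cHQ$ for $q\equiv 0\bmod 4$, $\Sigma^{2q,q}\HQ$ for $q\equiv 2\bmod 4$, and $0$ otherwise (\Cref{corollary:rationalslices}), and collecting the indices $q=4p$ and $q=4p+2$, yields the claimed equivalence $\KQ\tensor\Q\simeq\bigoplus_{p\in\Z}\Sigma^{8p,4p}\cHQ\oplus\bigoplus_{p\in\Z}\Sigma^{8p+4,4p+2}\HQ$.

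The main obstacle is Step 1. \Cref{proposition:vessscollapse} only records that the \emph{differentials} vanish as maps of spectra, whereas a splitting of the filtration requires the single connecting map $\partial_q\colon\vs_q F\to\Sigma\vf_{q+1}F$ to be null; bridging this gap is exactly where the unbounded growth of the connectivity of the very effective covers $\vf_{q+n}F$ must be brought in. Once the sections $s_q$ are available, Step 2 is a formal consequence of strong convergence and collapse; the one place requiring care is that the filtration on the direct sum must be bounded above in each bidegree --- which is why the effective-connectivity estimate is needed a second time --- so that the successive-approximation argument for surjectivity terminates.
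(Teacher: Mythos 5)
Your overall strategy --- split the very effective filtration of $\KQ\otimes\Q$ using the collapse from \Cref{proposition:vessscollapse}, then identify the resulting sum of slices with $\KQ\otimes\Q$ via strong convergence and \Cref{corollary:rationalslices} --- is the same as the paper's, and your Step 2 is essentially an expanded version of the paper's appeal to \Cref{proposition:strongconvergence}. The problem is the final inference of Step 1. From the inductively constructed lifts of $\partial_q$ through $\Sigma\vf_{q+n}(\KQ\otimes\Q)$ and the connectivity estimate you do get $[P,\partial_q]=0$ for every compact generator $P$; but a map that dies after applying $[P,-]$ for all compact generators is, by definition, a \emph{phantom} map, and nonzero phantom maps exist in compactly generated stable categories, including motivic ones and also rationally. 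Since the source $\vs_q(\KQ\otimes\Q)\simeq\Sigma^{2q,q}\cHQ$ or $\Sigma^{2q,q}\HQ$ is not compact, nothing in your argument excludes that $\partial_q$ is a nonzero phantom; and without $\partial_q=0$ as a map of spectra there is no section $s_q$, so the construction of $\psi$ does not get off the ground. This is exactly the point you flagged as ``the main obstacle'', and the bridge you chose does not close it.

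The gap is repairable with an ingredient you already have in hand, and it is the one the paper uses: completeness of the very effective filtration, $\holim_{n}\vf_{q+n}(\KQ\otimes\Q)\simeq\ast$ as in \eqref{equation:veryeffectivelimit}. Your successive lifts form a compatible system, i.e.\ an element of $\lim_n[\vs_q(\KQ\otimes\Q),\Sigma\vf_{q+n}(\KQ\otimes\Q)]$, and by the Milnor sequence this inverse limit is a quotient of $[\vs_q(\KQ\otimes\Q),\holim_n\Sigma\vf_{q+n}(\KQ\otimes\Q)]=0$; hence the whole system, and in particular its first term $\partial_q$, vanishes as a map of motivic spectra --- no compact generators needed. (This also lets you drop the claim that $\Sigma^{t,w}\Sigma^\infty Z_+$ is bounded above in the effective homotopy $t$-structure; in Step 2, where you do need the analogue of \Cref{proposition:strongconvergence} for such generators rather than motivic spheres, the paper's argument is not quite ``verbatim'' --- one must feed in the finite Nisnevich cohomological dimension of $Z$ --- but that is a minor adjustment over a finite-dimensional base.)
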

\begin{proof}
For any motivic spectrum $\E$ there exist natural maps 
\begin{equation}
\label{equation:twomaps}
\E \leftarrow  \bigoplus_{q\in \Z} \vf_q\E \to \bigoplus_{q\in \Z} \vs_q \E 
\end{equation}
which, if $\E$ is an $E_\infty$-ring spectrum, are maps of such, using the multiplicative properties of the very effective slice filtration recorded in 
\cite[Sections 5,6]{grso} and \cite[Theorems 3.8, 3.14]{grso-2}.
The connecting map 
\[ \bigoplus_{q\in \Z}\vs_q\KQ\tensor \Q\to \bigoplus_{q\in \Z}\Sigma \vf_{q+1}\KQ\tensor \Q\]
is zero by combining \Cref{proposition:vessscollapse} and 
completeness of the very effective slice filtration; 
$\holim_{q\to\infty}\vf_{q}\KQ\tensor \Q \simeq\ast$ as noted in \eqref{equation:veryeffectivelimit}.
We obtain a splitting of the rightmost map in \eqref{equation:twomaps}, 
i.e., 
$\bigoplus_{q\in \Z} \vs_q \KQ\tensor \Q$ is a direct summand of 
$\bigoplus_{q\in \Z} \vf_q \KQ\tensor \Q$.
We claim the induced map 
\begin{equation}
\label{equation:rationalequivalence}
\bigoplus_{q\in \Z} \vs_q \KQ\tensor \Q\to \KQ\tensor \Q
\end{equation}
is an equivalence. 
This follows from \Cref{proposition:strongconvergence} since applying the very effective 
slice functor $\vs_q$ to \eqref{equation:rationalequivalence} yields an equivalence for 
every $q\in\Z$.
\Cref{corollary:rationalslices} finishes the proof.
\end{proof}

From this proof it is not clear whether the equivalence in \Cref{theorem:eoosplitting} respects the $E_\infty$-structures.
Over a qcqs scheme on which $2$ is invertible, this follows from \cite{deglise-fasel.borel}.
The above discussion also applies to algebraic $K$-theory and yields an alternative proof of Bloch's version 
of the Grothendieck-Riemann-Roch theorem relating rational algebraic $K$-groups and higher Chow groups 
\cite{zbMATH03983347}.

\begin{theorem}
\label{theorem:eoosplittingkgl}
Let $\mathcal{D}$ be a Dedekind scheme. 
There is an equivalence of motivic spectra
$$
\KGL\tensor\Q
\simeq
\bigoplus_{q\in\Z}\Sigma^{2q,q}\HQ
$$
Moreover, for every $X\in \Sm_{\mathcal{D}}$ and integers $t$, $w$, there are isomorphisms
\[
\KGL_{t,w}(X,\Q) 
\cong 
\bigoplus_{q\in\Z} H^{2q-t,q-w}(X,\Q)
\]
\end{theorem}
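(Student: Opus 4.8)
The plan is to mirror the proof of \Cref{theorem:eoosplitting}, replacing the very effective slice filtration by Voevodsky's effective slice filtration and hermitian $K$-theory by algebraic $K$-theory. First I would recall that the effective slices of $\KGL$ are given by $\s_q\KGL\simeq \Sigma^{2q,q}\MZ$ for all $q\in\Z$ (over Dedekind schemes this is \cite{bachmann.jems}, extending the case of fields due to \cite{levine.homotopy-coniveau}), so that rationally $\s_q(\KGL\tensor\Q)\simeq \Sigma^{2q,q}\HQ$ by compatibility of slices with rationalization (the analogue of \Cref{lemma:rationalizedsplices} for the effective slice filtration, which holds by the same argument).

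Next I would establish collapse of the rational effective slice spectral sequence for $\KGL$, exactly as in \Cref{proposition:vessscollapse}: an inductive argument on the differentials as maps of motivic spectra. The $r$-th differential on $\s_q(\KGL\tensor\Q)\simeq \Sigma^{2q,q}\HQ$ is, up to suspension, a map $\HQ\to \Sigma^{2r+1,r}\HQ$ for $r\geq 1$; since $r\geq 1$ forces either $w=r>0$ together with $t=2r+1\neq 1$, the vanishing in \Cref{lemma:Qd1zero} applies and each differential is trivial as a map of spectra, allowing the next one to be defined and killed in turn. Here one uses that the effective slice spectral sequence for $\KGL$ is conditionally convergent and complete over schemes of finite Krull dimension, so that $\holim_{q\to\infty}\f_q(\KGL\tensor\Q)\simeq\ast$; this is standard for $\KGL$ since its effective cover agrees with its very effective cover and the connectivity increases with $q$.

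With collapse in hand, the splitting argument of \Cref{theorem:eoosplitting} goes through verbatim. One has natural maps $\KGL\tensor\Q\leftarrow \bigoplus_q\f_q(\KGL\tensor\Q)\to\bigoplus_q\s_q(\KGL\tensor\Q)$; collapse together with completeness forces the connecting map $\bigoplus_q\s_q(\KGL\tensor\Q)\to\bigoplus_q\Sigma\f_{q+1}(\KGL\tensor\Q)$ to vanish, so the rightmost map splits and $\bigoplus_q\s_q(\KGL\tensor\Q)$ is a retract of $\bigoplus_q\f_q(\KGL\tensor\Q)$; the resulting composite $\bigoplus_q\s_q(\KGL\tensor\Q)\to\KGL\tensor\Q$ becomes an equivalence after applying each $\s_q$ (using strong convergence), hence is itself an equivalence. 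Substituting $\s_q(\KGL\tensor\Q)\simeq\Sigma^{2q,q}\HQ$ yields the stated splitting. The groups statement then follows: the collapsing spectral sequence computing $\KGL_{t,w}(X,\Q)=[\Sigma^{t,w}\Sigma^\infty X_+,\KGL\tensor\Q]$ has $E_1$-term $\bigoplus_q [\Sigma^{t,w}\Sigma^\infty X_+,\Sigma^{2q,q}\HQ]=\bigoplus_q H^{2q-t,q-w}(X,\Q)$, giving the displayed isomorphism.

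The only real subtlety — and the step I would treat most carefully — is the conditional convergence and completeness of the rational \emph{effective} slice spectral sequence for $\KGL$ over a Dedekind scheme, since \Cref{proposition:strongconvergence} is stated for the very effective filtration and the \Cref{remark} immediately following it warns that convergence for Voevodsky's slice filtration is genuinely delicate in general. For $\KGL$ this is unproblematic because the effective and very effective covers of $\KGL$ coincide (as used repeatedly in the excerpt, e.g. ``$\vf_2\kgl\simeq\f_2\kgl$''), so convergence is inherited from \Cref{proposition:strongconvergence}; I would make this reduction explicit rather than invoking slice convergence for $\KGL$ as a black box. Everything else is a routine transcription of the $\KQ$ argument.
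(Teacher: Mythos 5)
Your proposal is correct and is essentially the paper's own argument: the paper gives no separate proof of \Cref{theorem:eoosplittingkgl}, merely remarking that the discussion for $\KQ$ (rational slices via \Cref{lemma:rationalizedsplices}, collapse as in \Cref{proposition:vessscollapse} using \Cref{lemma:Qd1zero} after base change to $\Spec(\Z)$, and the splitting argument of \Cref{theorem:eoosplitting}) applies verbatim to $\KGL$. Your explicit reduction of convergence to \Cref{proposition:strongconvergence} via the agreement of effective and very effective covers for $\KGL$ is exactly the point the paper leaves implicit, so nothing further is needed.
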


\section{Hermitian $K$-groups of fields}
\label{section:ldf}

In this section, we compute examples of hermitian $K$-groups in terms of Milnor-Witt motivic 
cohomology groups over fields, extending \Cref{lem:low-degree-homotopy-sheaves-KQ}. 

\begin{theorem}\label{thm:KQ-coeff-field}
Let $F$ be a field. 
There are isomorphisms 
\begin{itemize}
\item[0.] 
$\KQ_{n,n}(F)\cong \KMW_{-n}(F) 
\cong
\widetilde{H}^{-n,-n}(F)
\cong
\begin{cases}
\GW(F) & n=0  \\
\mathsf{W}(F) & n>0
\end{cases}
$ 
\item[1.] 
$
\KQ_{1,0}(F)\cong 
h^{1,1}(F)\oplus h^{0,1}(F)
$, 
$
\KQ_{2,1}(F)\cong 
h^{0,0}(F)
$,
$
\KQ_{1+n,n}(F)=
0
$
for $n>1$.
\item[2.] 
$
\KQ_{2,0}(F)\cong 
H^{2,2}(F)\oplus h^{0,1}(F)
\cong
K_{2}(F) \oplus \mathbb{Z}/2
$, 
$
\KQ_{3,1}(F)\cong 
H^{1,1}(F)
$,
$
\KQ_{4,2}(F)\cong 
H^{0,0}(F)
$,
$
\KQ_{2+n,n}(F)=
0
$
for $n>2$.
\item[3.] 
$
\KQ_{3,0}(F)\cong 
H^{1,2}(F)
$,
$
\KQ_{3+n,n}(F)=
0
$
for $n>0$, and an exact sequence
$$
0\to
H^{2,3}(F)
\to
\KQ_{2,-1}(F)
\to
h^{0,2}(F)
\to 0
$$
\end{itemize}
\end{theorem}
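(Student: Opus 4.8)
The plan is to read all of these groups off the very effective slice spectral sequence \eqref{equation:vesssforKQ} for $\KQ$ over $\Spec F$, treating the diagonal family in item~0 separately via the Wood cofiber sequence \eqref{item:wood-seq}. By \Cref{thm:veff-KQ} the $\tilde E^1$-term in bidegree $(t,w)$ and slice degree $q$ is $\pi_{t,w}\vs_q\KQ$, where $\vs_q\KQ$ is $\Sigma^{q+(q)}\HZ$, $\Sigma^{q+(q)}\HZ/2$, $\ast$, or $\Sigma^{q+(q)}\vs_0\KQ$ according as $q$ is $\equiv 2,1,3,0\bmod 4$; in the last case one further uses the cofiber sequences \eqref{triangle2} and \eqref{triangle} together with \Cref{prop:chz} and \Cref{prop:slices-cHZ} to express $\pi_{t,w}\vs_0\KQ$ through the Milnor--Witt motivic cohomology $\widetilde{H}^{*,*}(F)$, the Witt motivic cohomology $\pi_{*,*}\HWZ$, and the ordinary and mod~$2$ groups $H^{*,*}(F)$, $h^{*,*}(F)$. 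By \Cref{proposition:strongconvergence} the spectral sequence converges strongly, and the connectivity estimate proved there shows that in a fixed bidegree only slices $q$ with $\min\{t-w,w\}\leq q\leq t-w$ contribute; over a field, and for the small total degrees relevant here, this leaves at most two potentially nonzero $\tilde E^1$-terms per bidegree.

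I would then carry out the bookkeeping bidegree by bidegree. For the assertions in items 1--3 that are isomorphisms or vanishings, the surviving $\tilde E^1$-terms are sparse enough---using that $H^{p,q}(F)$, $h^{p,q}(F)$ and $\widetilde{H}^{p,q}(F)$ vanish for $q<0$ and for $p>q$, and that $\vs_0\KQ$ is connective---that every differential into or out of them has zero source or target, so $\tilde E^1=\tilde E^\infty$ there. One then reads off $\KQ_{1,0}(F)\cong h^{1,1}(F)\oplus h^{0,1}(F)$ (from $\vs_1\KQ\simeq\Sigma^{1+(1)}\HZ/2$ and the relevant summand of $\vs_0\KQ$), $\KQ_{2,1}(F)\cong h^{0,0}(F)$, $\KQ_{2,0}(F)\cong H^{2,2}(F)\oplus h^{0,1}(F)$ with $H^{2,2}(F)\cong K_2(F)$ and $h^{0,1}(F)\cong\mathbb{Z}/2$, $\KQ_{3,1}(F)\cong H^{1,1}(F)$ and $\KQ_{4,2}(F)\cong H^{0,0}(F)$ from $\vs_2\KQ\simeq\Sigma^{2+(2)}\HZ$, $\KQ_{3,0}(F)\cong H^{1,2}(F)$, and the asserted vanishings in the higher-weight tails. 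The one bidegree with two surviving terms is that of $\KQ_{2,-1}(F)$, where the very effective filtration produces the claimed exact sequence $0\to H^{2,3}(F)\to\KQ_{2,-1}(F)\to h^{0,2}(F)\to 0$, with $H^{2,3}(F)$ the bottom filtration piece coming from $\vs_2\KQ$ and $h^{0,2}(F)$ the quotient coming from $\vs_1\KQ$; here one must check that the first very effective slice differential does not act on these terms, which is addressed by the computation of that differential in \Cref{section:tfvesdfkq}.

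For item~0 I would instead use the Wood cofiber sequence. Its long exact sequence, together with $\pi_{n,n}\KGL(F)=K_{-n}(F)=0$ and $\pi_{n+1,n}\KGL(F)=K_{1-n}(F)=0$ for $n>1$ and the identification of the boundary $\pi_{2,1}\KGL(F)=K_0(F)\to\KQ_{0,0}(F)=\GW(F)$ with the hyperbolic map $n\mapsto n\cdot\mathbb{H}$, shows inductively that $\eta$ induces isomorphisms $\KQ_{n-1,n-1}(F)\xrightarrow{\ \sim\ }\KQ_{n,n}(F)$ for $n>1$ and a surjection $\GW(F)\twoheadrightarrow\KQ_{1,1}(F)$ with kernel $\mathbb{Z}\cdot\mathbb{H}$, starting from $\KQ_{0,0}(F)=\GW(F)=\KMW_0(F)$ by \Cref{lem:low-degree-homotopy-sheaves-KQ}. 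Hence $\KQ_{n,n}(F)\cong\KMW_{-n}(F)$, equal to $\GW(F)$ for $n=0$ and to $\mathsf{W}(F)$ for $n>0$; the remaining isomorphism $\KMW_{-n}(F)\cong\widetilde{H}^{-n,-n}(F)$ is the diagonal identification of Milnor--Witt motivic cohomology (cf.~\cite{deglise2023notesmilnorwittktheory,2020arXiv200406634B}).

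The main obstacle is controlling the very effective slice differential in that single nontrivial spot, namely showing it acts trivially on the $h^{0,2}(F)$-summand computing $\KQ_{2,-1}(F)$: this differential is, up to suspension, a map $\HZ/2\to\Sigma^{3,1}\HZ$ of motivic spectra over $\Spec\Z$, hence a stable motivic cohomology operation, and its triviality on the relevant weight-$2$ classes (whose image would otherwise lie in $H^{3,3}(F)=\KM_3(F)$) must be extracted from the explicit description of the first very effective slice differential via motivic Steenrod operations. Everything else reduces to matching the surviving $\tilde E^1$-terms against \Cref{thm:veff-KQ}, \Cref{prop:slices-cHZ}, \Cref{prop:chz}, the Wood sequence, and the known low-weight motivic cohomology of fields.
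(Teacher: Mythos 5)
Your route is essentially the paper's: items 1--3 are read off the very effective slice spectral sequence via \Cref{thm:veff-KQ}, \Cref{prop:slices-cHZ}, \Cref{prop:chz} and strong convergence, and item 0 is obtained by a Wood-sequence induction instead of the paper's direct reading of the diagonal entry $\widetilde{H}^{-n,-n}$ -- a legitimate variant, except that your starting input $\KQ_{0,0}(F)\cong\GW(F)$ is not contained in \Cref{lem:low-degree-homotopy-sheaves-KQ} (which only gives surjectivity of the rank map); it has to come from the literature the paper cites. Note also that the splittings in items 1 and 2 (e.g.\ $\KQ_{1,0}\cong h^{1,1}\oplus h^{0,1}$, $\KQ_{2,0}\cong H^{2,2}\oplus h^{0,1}$) are genuine extension problems in the spectral sequence and need the determinant-map argument the paper invokes; they do not simply ``read off''.

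The genuine gap is in your treatment of $\KQ_{2,-1}(F)$. You assert that the first very effective slice differential acts trivially on the $h^{0,2}(F)$-entry and that this triviality can be extracted from the Steenrod-operation description of the differential in \Cref{section:tfvesdfkq}. It cannot: by \Cref{cor:vd-KQ} the relevant differential $\vd^1_1\KQ$ is a suspension of $\partial^2_\infty\Sq^2$, and on $\pi_{2,-1}$ it is the homomorphism $h^{0,2}(F)\to H^{3,3}(F)=\KM_3(F)$ sending the generator $\tau^2$ to $\partial^2_\infty(\tau\rho^2)$, because $\Sq^2(\tau^2)=\tau\rho^2$ (compare the formula in \Cref{lem:vd-KQ-on-pi}). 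The mod~$2$ reduction of this class is $\Sq^1(\tau\rho^2)=\rho^3$, which is nonzero for every formally real field, e.g.\ $F=\Q$ or $F=\R$; so the differential is nonzero on exactly the group you need to survive. This is consistent with the paper's own computations over $\Z$, where \Cref{lem:more-vd1-iso} records $\partial^2_\infty\Sq^2\colon h^{0,2}\to H^{3,3}$ as nonzero and \Cref{prop:2ndpageweight3KQ} shows that precisely this differential wipes out the corresponding $h^{0,2}$-entry in the weight-$3$ columns. Hence your argument for the exact sequence in item 3 does not go through as written: what the spectral sequence yields in general is an extension of the kernel of $\partial^2_\infty\Sq^2$ on $h^{0,2}(F)$ by $H^{2,3}(F)$, and the blanket vanishing you appeal to is false unless one shows the class $\partial^2_\infty(\tau\rho^2)$ dies in $\KM_3(F)$ for the field at hand (as happens, for instance, when $\rho^3=0$ and $\KM_3(F)$ has no relevant $2$-torsion). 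The paper's own proof dismisses parts 2 and 3 with ``similar arguments'', so this is exactly the delicate spot; but the resolution you propose -- triviality extracted from the explicit Steenrod description -- is the opposite of what that description gives.
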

\begin{proof}
We can read off most of these calculations from \Cref{fig:e1-KQ} for a fixed $n$. 
The zeroth part, when $n\geq 0$ and $s=0$, is immediate since $\widetilde{H}^{-n,-n}(F)$ is the only term contributing to $\KQ_{n,n}(F)$.
The identification of $\KQ_{0,0}(F)$ with the Grothendieck-Witt ring $\GW(F)$ and of $\KQ_{n,n}(F)$ for $n>0$ with the Witt ring $\mathsf{W}(F)$ of $F$ follows from \cite[Cor.~1.3.10]{9.III}.
Note furthermore that \cite{deglise2023notesmilnorwittktheory} identifies the (Grothendieck-)Witt ring of $F$ with the corresponding Milnor-Witt $K$-group of $F$ in any characteristic.
For the first part, 
when $n=0$ and $s=1$, 
we note there is a short exact sequence
$$
0 \to
h^{1,1}(F) \to
\KQ_{1,0}(F) \to
\pi_{1,0}\vs_{q}\KQ  \to
0
$$
The sequence \eqref{triangle} implies 
$\pi_{1,0}\vs_{q}\KQ \cong \pi_{1,0}\Sigma^{1}\HZ/2 \cong h^{0,0}(F)$ 
because $\pi_{s,0}\cHZ\cong \pi_{s,0}\HZ \cong h^{-s,0}(F)=0$ for $s>0$.
Similarly, 
by appeal to \eqref{triangle}, 
the group $\pi_{2,0}\vs_{0}\KQ $ is trivial.
Thus all the differentials entering or exiting the column $s=1$ are trivial. 
The splitting uses the determinant map, 
see, e.g., the proof of Lemma 3.8 in \cite{zbMATH05840069}. 
To calculate $\KQ_{2,1}(F)$ we proceed as above for $n=-1$ and $s=1$. 
When $n<-1$ and $s=1$ the vanishing is immediate since the column contains only trivial groups.

The second and third parts follow by similar arguments 
(see, e.g., p.~528 in \cite{zbMATH05840069} for the direct sum description of $\KQ_{2,0}(F)$).
\end{proof}

Next we turn to more complete calculations of 
hermitian $K$-groups of fields of cohomological 
dimension $\cd$ at most $2$;
i.e.,  
$H^{p,q}(F)=0$ for $p>2$.
Finite fields, local fields, and totally imaginary number fields are 
typical examples of such fields \cite[Section 14]{levine99}.
We claim that our assumption 
$\cd(F)\leq 2$ implies the vanishing 
$\widetilde{H}^{p,q}(F)=0$ for $p>2$:
By \cite[Theorem 17]{Bachmann}, 
it suffices to consider the diagonal,  
since when $p\neq q$ there is an 
isomorphism
$\widetilde{H}^{p,q}(F)\cong H^{p,q}(F)$.
By \cite[Chapter 5]{2020arXiv200406634B}, 
the diagonal of Milnor-Witt motivic cohomology 
$\widetilde{H}^{n,n}(F,\Z)$ 
is naturally isomorphic to Milnor-Witt 
$K$-theory $K_n^\MW(F)$. 
Moreover, 
by \cite[Cor.~2.3.7]{deglise2023notesmilnorwittktheory} (see \cite[Theorem 5.3]{Morel-Witt} if $\Char(F)\neq 2$), 
the fundamental ideal in the 
Grothendieck-Witt ring along with the 
Milnor and Milnor-Witt $K$-groups 
participate in a pullback square
\[\begin{tikzcd}
K_n^\MW(F)\ar{r}\ar{d} & K_n^\sfM(F)\ar{d}\\
I^n(F)\ar{r} & I^n(F)/I^{n+1}(F)
\end{tikzcd}\]
which gives rise to the short exact sequence
\begin{equation}\label{ses:I-MW-M}
0\to I^{n+1}(F)\to K_n^\MW(F)\to K_n^\sfM(F)\to0
\end{equation}
The sequence \eqref{ses:I-MW-M} is 
exact for all $n\in\Z$ under the 
convention that $I^n(F)=W(F)$ for $n< 0$
\cite[Chapter 2, Example 6.1.5]{2020arXiv200406634B}
(in which case $K_n^\sfM(F)=0$).
When $\cd(F)\le2$, 
\eqref{ses:I-MW-M} implies that 
$\widetilde{H}^{n,n}(F)=0$ for 
$n\geq 3$ since $I^3(F)=0$ by 
\cite[p. 81]{Milnor-Husemoller} and $K_n^\sfM(F)\cong H^{n,n}(F)=0$
in the same range 
by Suslin's identification of Milnor's $K$-groups with the diagonal 
of motivic cohomology, 
see \cite[Lecture 5]{zbMATH05051055}.


In the following computation of hermitian 
$K$-groups, 
we write $A\bullet B$ for an abelian 
group extension of $B$ by $A$, 
so that there exists a short exact 
sequence 
$$
0\to A\to A \bullet B\to B\to 0
$$

\begin{proposition}
\label{prop:KQn0}
If $F$ is a field with $\cd(F)\le2$, then the very effective slice spectral sequence for $\KQ$ collapses at its first page. 
For $n>0$, the hermitian $K$-groups are given up to extensions as follows.
\begin{center}
 \begin{tabularx}{\textwidth}{c | c | c | c | l l l l} 
 \toprule
 $n>0$ 
 & $\KQ_{n,0}$ & $\KQ_{n+2,1}$ & $\KQ_{n+4,2}$ & $\KQ_{n+6,3}$ \\
 \midrule
$8k$ & $h^{2,4k+1}\bullet h^{1,4k+1}$ & $H^{2,4k+1}\bullet h^{0,4k}$ & $0$  & $H^{2,4k+1}$ \\
$8k+1$ & $h^{1,4k+1}\bullet h^{0,4k+1}$ & $H^{1,4k+1}$ & $0$ & $h^{2,4k+2}\bullet {H}^{1,4k+1}$ \\
$8k+2$ & $H^{2,4k+2}\bullet h^{0,4k+1}$ & $0$ & $H^{2,4k+2}$ & $h^{2,4k+2}\bullet h^{1,4k+2}$\\
$8k+3$ & $H^{1,4k+2}$ & $0$ & $h^{2,4k+3}\bullet {H}^{1,4k+2}$ & $h^{1,4k+2}\bullet h^{0,4k+2}$ \\
$8k+4$ & $0$ & $H^{2,4k+3}$ & $h^{2,4k+3}\bullet h^{1,4k+3}$ & $H^{2,4k+3}\bullet h^{0,4k+2}$ \\
$8k+5$ & $0$ & $h^{2,4k+4}\bullet {H}^{1,4k+3}$ & $h^{1,4k+3}\bullet h^{0,4k+3}$ & $H^{1,4k+3}$\\
$8k+6$ & ${H}^{2,4k+4}$ & $h^{2,4k+4}\bullet h^{1,4k+4}$& $H^{2,4k+4}\bullet h^{0,4k+3}$ & $0$ \\
$8k+7$ & $h^{2,4k+5}\bullet {H}^{1,4k+4}$ & $h^{1,4k+4}\bullet h^{0,4k+4}$ & $H^{1,4k+4}$ & $0$ \\
\bottomrule
\end{tabularx}
\end{center}
\end{proposition}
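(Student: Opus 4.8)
The plan is to extract both assertions from the $\tilde E^{1}$-page of the very effective slice spectral sequence \eqref{equation:vesssforKQ}, displayed in \Cref{fig:e1-KQ}: by \Cref{thm:veff-KQ} each slice $\vs_{q}\KQ$ is a motivic Eilenberg--MacLane spectrum unless $q\equiv 0\bmod 4$, in which case $\vs_{q}\KQ\simeq\Sigma^{2q,q}\vs_0\KQ$, and the cofiber sequences \eqref{triangle}, \eqref{triangle2} together with \Cref{prop:slices-cHZ} present $\pi_{\star}\vs_0\KQ$, through finite filtrations, in terms of the ordinary, mod-$2$, and Milnor--Witt motivic cohomology groups of $F$. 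Since \Cref{proposition:strongconvergence} gives strong convergence, it suffices to prove (a) that all differentials vanish and (b) that in each of the table's bidegrees exactly the listed subquotients survive; the diagonal entries $\KQ_{n,n}(F)$ and the lowest degrees are already settled by \Cref{thm:KQ-coeff-field}.

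Step (a). The hypothesis $\cd(F)\le 2$ forces $\widetilde H^{p,q}(F)=H^{p,q}(F)=h^{p,q}(F)=0$ for $p\notin\{0,1,2\}$, with the single exception of the diagonal $\widetilde H^{m,m}(F)=\KMW_m(F)$, equal to $\mathsf W(F)$ for $m<0$. Feeding this into \Cref{fig:e1-KQ}: for every slice index $q$ the nonzero terms $\tilde E^{1}_{t,q,w}$ occupy an interval of at most four consecutive topological degrees $t$ around $t\approx 2q$ — width four precisely when $q\equiv 0\bmod 4$, owing to the $\Sigma\HZ/2$-summand of $\vs_0\KQ$ coming from \eqref{triangle} — together with at most one ``diagonal'' term $\KMW_{q-w}(F)$. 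A differential $\vd^{r}\colon \tilde E^{r}_{t,q,w}\to \tilde E^{r}_{t-1,q+r,w}$ passes from the row-$q$ window to the row-$(q+r)$ window while lowering $t$ by one; as these windows are separated by $2r+1\ge 3$, both ends cannot be nonzero, with one exception: the $\vd^{1}$ emanating from the top cell $t=2q+1$ of a row $q\equiv 0\bmod 4$, whose source is the copy of $h^{0,q-w}(F)$ contributed by the fiber $\Sigma\HZ/2\to\vs_0\KQ$ of \eqref{triangle} and whose target lies in $h^{2,q+1-w}(F)$. A short bookkeeping with $\cd(F)\le 2$ disposes of all $\vd^{r}$ with $r\ge 2$, as well as of the remaining differentials involving the diagonal terms (which otherwise only feed into the degrees $\KQ_{w,w}$ handled by \Cref{thm:KQ-coeff-field}). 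For the exceptional family one checks, via \eqref{triangle}, that the differential $\vs_0\KQ\to\Sigma\vs_1\KQ$ annihilates the fiber $\Sigma\HZ/2$ — equivalently, it factors through the Milnor--Witt quotient $\cHZ$ — so it vanishes on the source group $\pi_{1,w-q}\vs_0\KQ$, which is exactly the image of that fiber. Hence $\tilde E^{1}=\tilde E^{\infty}$.

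Step (b). The Karoubi--Bott element $\alpha\in\pi_{4+(4)}\KQ$ of \Cref{thm:chn} makes $\pi_{\star}\KQ$ periodic of period $(8,4)$, reducing the computation to the eight residues $n\bmod 8$ and the four weights $w\in\{0,1,2,3\}$. In each such bidegree the vanishing range $p\le 2$ leaves at most two nonzero $\tilde E^{1}$-entries in the relevant column of \Cref{fig:e1-KQ}; identifying each of them — through \Cref{thm:veff-KQ}, \eqref{triangle}, \eqref{triangle2}, and \Cref{prop:slices-cHZ} — as an ordinary, mod-$2$, or Milnor--Witt motivic cohomology group of the appropriate weight, and placing the term of higher slice filtration as the subgroup, assembles them into the extension $A\bullet B$ of the table. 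The coincidence $h^{p,q}(F)=H^{p}_{\et}(F,\mu_{2})$ for $0\le p\le q$ accounts for the same mod-$2$ group appearing with different weights in the table.

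The main obstacle is the exceptional family of $\vd^{1}$-differentials in step (a): the pure degree count leaves precisely these, and their vanishing — that $\vs_0\KQ\to\Sigma\vs_1\KQ$ kills the $\Sigma\HZ/2$-fiber of \eqref{triangle}, hence factors through $\cHZ$ — is the one input not reducible to bookkeeping. The remainder of (a) and all of (b) are routine but lengthy; the $8\times 4$ table in particular requires careful attention to weights and to which of the three motivic cohomology theories each surviving term belongs to.
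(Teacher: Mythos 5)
Your overall strategy---strong convergence plus a degree count on the $\tilde E^1$-page of \Cref{fig:e1-KQ}, followed by reading off the at most two surviving subquotients in each bidegree---is the same as the paper's, whose proof simply asserts that $\cd(F)\le 2$ forces all differentials to vanish and then inspects the $E^1$-terms. You are in fact more careful than the paper in isolating the one family of differentials that degree reasons alone do not dispose of: the $\vd^1$ whose source is the top group $\pi_{w+1-(w)}\vs_0\KQ\cong h^{0,w+1}$ (the image of the $\Sigma\HZ/2$-term of \eqref{triangle}) and whose target is $h^{2,w+1}$.

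However, your resolution of that family is a genuine gap. You assert that $\vd^1_0\colon\vs_0\KQ\to\Sigma\vs_1\KQ\simeq\Sigma^{3,1}\MZ/2$ annihilates the $\Sigma\HZ/2$-fiber of \eqref{triangle}, i.e.\ factors through $\cHZ$; no argument is given, and the claim is false in general. By \Cref{cor:vd-KQ} and (the proof of) \Cref{lem:vd-KQ-on-pi}, the restriction of $\vd^1_0$ to precisely these classes is computed by $\Sq^2$ acting on powers of $\tau$: in characteristic $\neq 2$ and for $w+1\equiv 2,3 \bmod 4$ it sends the generator $\tau^{w+1}$ of $h^{0,w+1}$ to $\tau^{w-1}\rho^2\in h^{2,w+1}$. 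Over $\Z$ (or $\R$) this is nonzero, so the composite $\Sigma\HZ/2\to\vs_0\KQ\to\Sigma\vs_1\KQ$ is not the zero map of spectra and the claimed factorization through $\cHZ$ cannot hold; this is exactly the mechanism that prevents collapse at the first page over $\Z$ in \Cref{lem:vd1-iso}. On homotopy groups the differential on your exceptional classes vanishes exactly when $\tau^{w-1}\rho^2=0$, and that is not a consequence of $\cd(F)\le 2$: for instance $F=\Q(\sqrt{-7})$ is totally imaginary, so $H^{p,q}(F)=0$ for $p>2$, yet $(-1,-1)_F$ is a division algebra and hence $\rho^2\neq 0$ in $h^{2,2}(F)$. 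So your step (a) needs either an additional hypothesis guaranteeing the vanishing of these $\rho^2$-multiplications (automatic in characteristic $2$, or when $-1$ is a sum of two squares in $F$), or a genuinely different argument for this family. Note that the paper's own one-sentence claim that $\cd(F)\le 2$ kills all differentials is silent on exactly this point, so the tension your analysis exposes is worth raising explicitly rather than resolving by an unproved factorization.
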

\begin{proof}
All the differentials in the very effective slice spectral sequence for $\KQ$ are trivial owing to the assumption $\cd(F)\leq 2$. 
The calculation follows by a direct inspection of the $\tilde{E}^1$-terms in \Cref{fig:e1-KQ}, together with the convergence of the very effective slice spectral sequence \Cref{proposition:strongconvergence}. 
The calculation might involve determining the corresponding homotopy group of $\vs_0\KQ$, for which \Cref{fig:einfty-v} is helpful.
For example, when $n=8k+6$ and the weight $w=0$, 
\eqref{triangle} gives an isomorphism 
$$
\KQ_{8k+6,0}
\cong 
\pi_{8k+6,0}\vs_{4k+4}\KQ 
\cong 
{H}^{2,4k+4}
$$
on account of the vanishing $h^{3,4k+4}=0$, and, moreover, when $n=8k+7$ and $w=0$, an exact sequence
\[
0\to h^{2,4k+5}\to
\KQ_{8k+7,0}
\cong 
\pi_{8k+7,0}\vs_{4k+4}\KQ 
\to 
{H}^{1,4k+4}
\to 0
\]
This shows the cases $n=8k+6, 8k+7$ in the table for $\KQ_{n,0}$. 
The remaining groups are calculated similarly.
\end{proof}

\begin{example}\label{ex:kq-finite-fields}
If $F=\mathbb{F}_q$ is a finite field, hence of cohomological dimension $1$, the above computations recover Friedlander's computation of hermitian $K$-theory of finite fields in \cite{Friedlander-Fq}.
This reference addresses only the even weight part. 
Indeed, for $n=0$ we obtain $\KQ_{0,0}\cong\GW$, $\KQ_{2,1}\iso \Z/2$, $\KQ_{4,2}\cong \Z$ and $\KQ_{6,3}\iso 0$, while for $n>0$ and odd characteristic we have
\begin{center}
\begin{tabularx}{\textwidth}{l | c c c c c c c c}
\toprule
$n\mod 8$ & $0$ & $1$ & $2$ & $3$ & $4$ & $5$ & $6$ & $7$\\
\midrule
$\KQ_{n,0}$ & $\Z/2$ & $(\Z/2)^2$ & $\Z/2$ &  $\Z/(q^{\frac{n+1}{2}-1})$ & 0 & 0 & 0 & $\Z/(q^{\frac{n+1}{2}-1})$\\
$\KQ_{n+2,1}$ & $\Z/2$ & $\Z/(q^{\frac{n+1}{2}-1})$ & 0 & 0 & 0 & $\Z/(q^{\frac{n+1}{2}-1})$ & $\Z/2$ & $(\Z/2)^2$\\
$\KQ_{n+4,2}$ & $0$ & $0$ & $0$ &  $\Z/(q^{\frac{n+1}{2}-1})$ & $\Z/2$ & $(\Z/2)^2$ & $\Z/2$ & $\Z/(q^{\frac{n+1}{2}-1})$\\
$\KQ_{n+6,3}$ & $0$ &  $\Z/(q^{\frac{n+1}{2}-1})$ & $\Z/2$ & $(\Z/2)^2$ & $\Z/2$ & $\Z/(q^{\frac{n+1}{2}-1})$& $0$ & $0$ \\
\bottomrule
\end{tabularx}
\end{center}
The single potentially nontrivial extension can be shown to split with the aid of topology. More precisely, in odd characteristic the first stable homotopy group of the topological sphere spectrum appears as a direct summand via étale realization. The corresponding table for finite fields of characteristic $2$ is as follows.
\begin{center}
\begin{tabularx}{\textwidth}{l| c c c c c c c c}
\toprule
$n\mod8$ & $0$ & $1$ & $2$ & $3$ & $4$ & $5$ & $6$ & $7$\\
\midrule
$\KQ_{n,0}\cong \KQ_{n+4,2}$ & $0$ & $0$ & $0$ &  $\Z/(q^{\frac{n+1}2}-1)$ & 0 & 0 & 0 & $\Z/(q^{\frac{n+1}2}-1)$\\
$\KQ_{n+2,1}\cong \KQ_{n+6,3}$ & $0$ &  $\Z/(q^{\frac{n+1}2}-1)$ & 0 & 0 & 0 & $\Z/(q^{\frac{n+1}2}-1)$ & $0$ & $0$ \\
\bottomrule
\end{tabularx}
\end{center}
In characteristic $2$, étale realization cannot detect the first stable homotopy group of the topological sphere spectrum, since this requires inverting the characteristic. 
In particular, the vanishing $\KQ_{1,0}(\FF_{2})=0$ implies that the first topological Hopf map maps to zero in $\KQ_{\FF_{2}}$. 
This phenomenon already occurs for the motivic sphere spectrum $\unit_{\FF_{2}}$, as will be discussed in separate work.
\end{example}

\section{The very effective zero slice of hermitian $K$-theory}
\label{section:tvezsokq}
 
An understanding of the coefficients of $\vs_{0}\KQ$ is useful for the computation of the very effective slice spectral sequence. For a regular scheme $S$, one has $H^{2,1}(S)\cong Pic(S)$, the Picard group of isomorphism classes of line bundles on $S$, while $h^{2,1}(S)$ is its quotient modulo tensor squares.

\begin{lemma}
\label{lem:pi0-v}
Over a Dedekind scheme $\mathcal{D}$ with $h^{n+1,n}(\mathcal{D})=0$ for all $n\in \Z$, 
 the natural map  $\vs_{0}\KQ \to \cHZ$ induces an isomorphism on $\pi_{0+\bideg}$.
\end{lemma}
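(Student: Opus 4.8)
The plan is to reduce everything to the cofiber sequence \eqref{triangle} of Proposition~\ref{prop:chz}, which reads $\Sigma^1\HZ/2\to \vs_0\KQ\to \cHZ$ and thus identifies the simplicial suspension $\Sigma^1\HZ/2$ with the homotopy fiber of the map $\vs_0\KQ\to\cHZ$ under consideration. Applying bigraded homotopy groups, for each weight $w\in\Z$ one obtains the four-term exact sequence
\[
\pi_{0+(w)}\Sigma^1\HZ/2\to \pi_{0+(w)}\vs_0\KQ\to \pi_{0+(w)}\cHZ\to \pi_{-1+(w)}\Sigma^1\HZ/2,
\]
so it suffices to show that both outer groups vanish for every $w$.

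First I would rewrite these outer groups as motivic cohomology of $\mathcal{D}$. Using the suspension shift $\pi_{0+(w)}\Sigma^1 E\cong \pi_{-1+(w)}E$ and the identification $\pi_{a+(b)}\HZ/2\cong H^{-(a+b),-b}(\mathcal{D};\Z/2)$ of bigraded homotopy groups with $\Z/2$-motivic cohomology, one gets $\pi_{0+(w)}\Sigma^1\HZ/2\cong h^{1-w,-w}(\mathcal{D})$ and $\pi_{-1+(w)}\Sigma^1\HZ/2\cong h^{2-w,-w}(\mathcal{D})$. Writing $n=-w$, the first group is $h^{n+1,n}(\mathcal{D})$, which vanishes by the standing hypothesis; the second is $h^{n+2,n}(\mathcal{D})$, which vanishes because $\mathcal{D}$ is regular of Krull dimension at most one, so that $H^{p,q}(\mathcal{D};\Z/2)=0$ once $p\geq q+2$ by the standard dimension bound for motivic cohomology (coniveau spectral sequence), see \cite{zbMATH07015021}. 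The exact sequence then gives the isomorphism $\pi_{0+(w)}\vs_0\KQ\xrightarrow{\ \sim\ }\pi_{0+(w)}\cHZ$ for all $w$, i.e.\ an isomorphism on $\pi_{0+\bideg}$.

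I do not expect a serious obstacle: the content of the lemma is exactly that the hypothesis $h^{n+1,n}(\mathcal{D})=0$ kills the fiber of \eqref{triangle} on the diagonal, while the neighbouring contribution $h^{n+2,n}(\mathcal{D})$ is automatically trivial for dimension reasons. The only point demanding genuine care is the bookkeeping: matching the simplicial and $\Gm$-suspensions encoded in $\Sigma^1\HZ/2$ and $\pi_{0+(w)}$ against the cohomological bidegree $(p,q)$ of $h^{p,q}(\mathcal{D})$, and invoking the correct vanishing range for motivic cohomology over a Dedekind scheme.
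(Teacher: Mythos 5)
Your proof is correct and is essentially the paper's own argument: apply homotopy groups to the cofiber sequence \eqref{triangle} and observe that the hypothesis $h^{n+1,n}(\mathcal{D})=0$ kills $\pi_{0+\bideg}\Sigma\HZ/2$. The only difference is that you also spell out the vanishing $h^{n+2,n}(\mathcal{D})=0$ (needed for surjectivity), which the paper leaves implicit since it holds for dimension reasons over a Dedekind scheme; your bidegree bookkeeping is accurate.
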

\begin{proof}
The assumption implies $\pi_{0+\bideg}\Sigma \MZ/2=0$. 
The statement then follows from the triangle 
$\Sigma \MZ/2\to \vs_{0}\KQ \to \cHZ$ in~(\ref{triangle}).
\end{proof}

\begin{lemma}
\label{lem:etainv-v}
There are equivalences of motivic spectra 
\[
\HWZ[\eta^{-1}] \simeq \cHZ[\eta^{-1}]\simeq\vs_{0}\KQ [\eta^{-1}]
\] 
Over a field $F$, they are equivalent to the motivic Eilenberg-MacLane spectrum associated with the homotopy module $\mathsf{W}(F)[\eta,\eta^{-1}]$ concentrated in degree zero.
\end{lemma}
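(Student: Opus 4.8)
The plan is to establish the two equivalences successively, then compute them over a field. For the first pair of equivalences, I would start from the cofiber sequences relating $\HWZ$, $\vs_0\KQ$, and $\cHZ$ established above: the triangle $\Sigma^{(1)}\HWZ \to \vs_0\KQ \to \HZ$ in \eqref{triangle2}, the triangle $\Sigma^1\HZ/2 \to \vs_0\KQ \to \cHZ$ in \eqref{triangle}, and the defining homotopy pullback square for $\cHZ$ in \Cref{def:mwz}. The key observation is that $\HZ$ and $\HZ/2$ are $\eta$-torsion: multiplication by $\eta$ is nullhomotopic on any $\MA$ for $A$ an abelian group, since $\eta$ acts trivially on motivic cohomology (equivalently, $\HZ$ is an $\MZ$-module and $\eta$ maps to $0$ in $\pi_{(1)}\MZ$). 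Hence $\HZ[\eta^{-1}] \simeq \HZ/2[\eta^{-1}] \simeq \ast$. Inverting $\eta$ in \eqref{triangle} then gives $\vs_0\KQ[\eta^{-1}]\simeq \cHZ[\eta^{-1}]$, and inverting $\eta$ in the cofiber sequence $\Sigma^{(1)}\HWZ \to \vs_0\KQ \to \HZ$ — using that $\eta$-inversion is exact and commutes with suspension, so that $\Sigma^{(1)}\HWZ[\eta^{-1}]\simeq \HWZ[\eta^{-1}]$ since $\eta$ is already invertible there — gives $\HWZ[\eta^{-1}]\simeq \vs_0\KQ[\eta^{-1}]$. Alternatively, inverting $\eta$ in the defining pullback square for $\cHZ$ directly yields $\cHZ[\eta^{-1}]\simeq \HWZ[\eta^{-1}]$ since the bottom row of the square becomes $\ast \to \ast$. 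This handles the chain of equivalences.

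For the identification over a field $F$, I would argue as follows. Since $\eta$ is invertible on $\HWZ[\eta^{-1}]$ and $\HWZ$ is effective with $\s_q\HWZ \simeq \Sigma^{(q)}(\MZ/2\oplus\Sigma^2\MZ/2)$ by \Cref{prop:slices-cHZ}, together with the fact there that $\eta^q$ induces equivalences $\Sigma^{(q)}\s_0\HWZ \xrightarrow{\simeq}\s_q\HWZ$, the spectrum $\HWZ[\eta^{-1}]$ is a colimit of the tower $\HWZ \xrightarrow{\eta^{-1}}\Sigma^{(-1)}\HWZ\xrightarrow{\eta^{-1}}\dotsm$. Its homotopy sheaves can be read off: $\pi_{0+\bideg}\HWZ$ is the Witt sheaf $\mathsf{W}$, and $\eta$ acts invertibly after localization, so $\pi_{\star}\HWZ[\eta^{-1}]$ over a field is concentrated in the homotopy module $\mathsf{W}(F)[\eta,\eta^{-1}]$, placed in degree zero. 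To conclude that $\HWZ[\eta^{-1}]$ is the motivic Eilenberg–MacLane spectrum of this homotopy module, I would compare it with $\KW[\eta^{-1}]=\KW$ and its slices, or more directly invoke \Cref{prop.KQetaisKW} together with \Cref{thm:slices-KW}: the equivalence $\vs_0\KQ[\eta^{-1}]\simeq \HWZ[\eta^{-1}]$ combined with $\KQ[\eta^{-1}]\simeq\KW$ shows that $\HWZ[\eta^{-1}]$ is the weight-zero part of $\KW$, whose zeroth slice is $\MZ$ and which, after extracting the appropriate summand, is the Eilenberg–MacLane spectrum of $\mathsf{W}[\eta^{\pm 1}]$. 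Over a field this homotopy module is $\mathsf{W}(F)[\eta,\eta^{-1}]$ concentrated in degree zero, as claimed.

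The main obstacle I anticipate is the last identification — pinning down that $\HWZ[\eta^{-1}]$ is genuinely an Eilenberg–MacLane spectrum (and not merely a spectrum with those homotopy sheaves), i.e., that it has no higher $k$-invariants after $\eta$-inversion. The cleanest route is to observe that the slices $\s_q\HWZ$ are all $\MZ/2$-modules with $\eta$ acting invertibly after localization, so the slice spectral sequence for $\HWZ[\eta^{-1}]$ collapses onto a single homotopy module; over a perfect field one can then identify the resulting spectrum using the known structure of $\mathsf{W}\Z[\eta^{-1}]$ (equivalently the $\eta$-inverted motivic cohomology with Witt coefficients). The remaining verification — that the Witt-sheaf value over $F$ is exactly $\mathsf{W}(F)$ — is immediate from Morel's computation $\pi_{0+\bideg}\unit \simeq \KMW$ and the identification $\mathsf{W} = \KMW[\eta^{-1}]$ in weight zero, referenced via \cite{MorelICM06,2020arXiv200406634B}.
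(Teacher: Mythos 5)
Your first chain of equivalences is fine and is exactly the paper's (terse) argument: $\eta$ acts as zero on $\MZ$ and $\MZ/2$, so inverting $\eta$ in the cofiber sequences \eqref{triangle2} and \eqref{triangle} identifies $\HWZ[\eta^{-1}]\simeq\vs_{0}\KQ[\eta^{-1}]\simeq\cHZ[\eta^{-1}]$.

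The gap is in the second statement, and you locate it correctly but do not close it: one must prove that the homotopy modules of $\HWZ[\eta^{-1}]$ vanish in all degrees $s\neq 0$. Your ``cleanest route'' via the slice spectral sequence of $\HWZ[\eta^{-1}]$ does not work as stated. First, $\eta$ does not act invertibly on any slice: it acts trivially on each slice (slices are $\MZ$-modules) and only induces equivalences between \emph{consecutive} slices (\Cref{prop:slices-cHZ}); the slices of $\HWZ[\eta^{-1}]$ are $\Sigma^{(q)}(\MZ/2\oplus\Sigma^{2}\MZ/2)$ for \emph{all} $q\in\Z$, whose $\Sigma^{2}\MZ/2$ summands contribute a priori to positive degrees, so a collapse ``onto a single homotopy module'' would require controlling the differentials of \Cref{lem:d1-v}; indeed the positive homotopy modules of $\HWZ$ itself are nonzero in negative weights (\Cref{thm:pi-v}, \Cref{rem:pi-v}) --- they are merely $\eta$-torsion. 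Second, and more seriously, the slice spectral sequence of an $\eta$-periodic spectrum converges to the homotopy of its slice completion, which by \Cref{lem:convergence-v} is an $\eta$-completion and differs from the spectrum itself: over $F=\R$ it would produce a completed Witt group rather than $\mathsf{W}(\R)\cong\Z$, so it cannot identify $\pi_{\ast+\bideg}\HWZ[\eta^{-1}]$ integrally. Your fallback via $\KQ[\eta^{-1}]\simeq\KW$ and \Cref{thm:slices-KW} is too vague to repair this: $\vs_0\KQ$ is a quotient of $\kq$, not a summand of $\KW$, ``the weight-zero part of $\KW$'' is not a defined object, and $\s_0\KW$ is not $\MZ$ but $\MZ$ plus infinitely many shifted copies of $\MZ/2$. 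The paper's actual argument avoids all of this: $\pi_{0+\bideg}\bigl(\vs_0\KQ[\eta^{-1}]\bigr)\cong\pi_{0+\bideg}\KW\cong\mathsf{W}(F)[\eta,\eta^{-1}]$ essentially by construction, negative homotopy modules vanish by Morel's connectivity theorem, and for $s>0$ every class $a\in\pi_{s+(w)}\HWZ$ is killed by a power of $\eta$, because $\pi_{s+(w)}\HWZ\cong\pi_{s+(w+1)}\f_1\vs_0\KQ=0$ for $w\geq 0$ (using that $\vf_1\KQ\to\vf_0\KQ$ is an isomorphism on these homotopy groups), while for $w<0$ one multiplies by $\eta^{-w}$ to land in weight $0$. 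Some argument of this kind is what your proposal is missing.
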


\begin{proof}
The first statement follows from the cofiber sequences~\eqref{triangle2} and~\eqref{triangle}.
The second statement holds for fields of characteristic not 2 by \cite[Theorem 17]{Bachmann}.
For a field of any characteristic, the identifications 
\[ \pi_0\bigl(\vs_0\KQ[\eta^{-1}]\bigr)\cong \pi_0\bigl(\kq[\eta^{-1}]\bigr)\cong \pi_0\KW \cong \mathsf{W}(F)[\eta,\eta^{-1}]\] 
hold basically by construction.
Lower homotopy modules are zero by Morel's connectivity theorem \cite{morel.connectivity}.
The second statement follows once we show that for every element $a\in \pi_{s+(w)}\HWZ$ with $s>0$, there exists $\ell\in \mathbb{N}$ such that $\eta^\ell \cdot a=0$. If $s>0$, then for all $w\ge 0$,
the group $\pi_{s+(w)}\HWZ\cong \pi_{s+(w+1)}\f_1\vs_0\KQ\cong \pi_{s+(w+1)}\vs_0\KQ$ is zero.
In fact, the canonical map $\vf_1\KQ\to \vf_0\KQ$ induces an isomorphism $\pi_{s+(w+1)}\vf_1\KQ\to \pi_{s+(w+1)}\vf_0\KQ$ for $s>0$ and $w\geq 0$, and $\pi_{0+(w+1)}\vf_1\KQ=0$ by construction. 
Hence for $a\in \pi_{s+(w)}\HWZ$ with $w<0$, the element $\eta^{-w}\cdot a \in \pi_{s+(0)}\HWZ=0$.
\end{proof}

\begin{lemma}
\label{lem:convergence-v}
The slice completions of $\HWZ$, $\cHZ$, and $\vs_{0}\KQ $ 
are naturally equivalent to their respective $\eta$-completions.
\end{lemma}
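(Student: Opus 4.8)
The plan is to produce, on effective motivic spectra, a natural comparison map from the $\eta$-completion to the slice completion, and then to check that it is an equivalence on $\HWZ$, $\vs_0\KQ$ and $\cHZ$; the case of $\HWZ$ is the only one needing genuine input. Write $\sc\E:=\holim_{q\to\infty}\E/\f_q\E$ for the (effective) slice completion and $\E^\wedge_\eta:=\holim_{q\to\infty}\E/\eta^q$ for the $\eta$-completion, where $\E/\eta^q$ is the cofiber of $\eta^q\colon\Sigma^{(q)}\E\to\E$. Both $\sc(-)$ and $(-)^\wedge_\eta$ are exact endofunctors of $\SH(\mathcal{X})$ commuting with the suspensions $\Sigma^1$ and $\Sigma^{(1)}$; for $\sc$ the latter uses $\Sigma^{(1)}\f_q\simeq\f_{q+1}\Sigma^{(1)}$ and cofinality.

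To build the comparison $\nu\colon(-)^\wedge_\eta\to\sc(-)$: when $\E$ is effective, $\Sigma^{(q)}\E$ is $q$-effective, so $\eta^q\colon\Sigma^{(q)}\E\to\E$ factors uniquely as $\Sigma^{(q)}\E\xrightarrow{\phi_q}\f_q\E\to\E$ with $\phi_q$ natural in $\E$. Passing to cofibers of the two maps into $\E$ gives natural maps $\E/\eta^q\to\E/\f_q\E$ compatible with the tower structures, and $\holim_{q\to\infty}$ of these is $\nu$. The construction is compatible with $\Sigma^1$ and $\Sigma^{(1)}$, so in particular $\nu_{\Sigma^{(1)}\HWZ}$ is $\Sigma^{(1)}\nu_{\HWZ}$ under the canonical identifications above.

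For $\HWZ$, \Cref{prop:slices-cHZ} says $\eta^q$ induces an equivalence $\Sigma^{(q)}\HWZ\xrightarrow{\simeq}\f_q\HWZ$ for all $q\ge 0$, i.e.\ $\phi_q$ is an equivalence in exactly the range entering the colimit; hence $\HWZ/\eta^q\to\HWZ/\f_q\HWZ$ is an equivalence for $q\ge 0$ and, taking $\holim_{q\to\infty}$, $\nu_{\HWZ}$ is an equivalence. Moreover $\MZ$ and $\MZ/2$ are $0$-slices, so they are slice-complete ($\f_1\MZ\simeq\ast$, and $\f_1\MZ/2$ is the cofiber of $2\colon\f_1\MZ\to\f_1\MZ$, hence also trivial); and they are $\eta$-complete, since $\eta$ acts trivially on them — $\eta\smash\MZ$ corresponds to $\eta\cdot 1\in\pi_{1,1}\MZ=H^{-1,-1}(\mathcal{X})=0$, and likewise mod $2$ — so the transition maps in the tower defining $\MZ^\wedge_\eta$ (resp.\ $(\MZ/2)^\wedge_\eta$) are null and have vanishing homotopy limit. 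Thus $\nu$ is an equivalence on $\MZ$ and on $\MZ/2$.

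The remaining two cases follow by bootstrapping. Applying the exact functor $\nu$ to the cofiber sequence \eqref{triangle2}, $\Sigma^{(1)}\HWZ\to\vs_0\KQ\to\MZ$, and using that $\nu$ is an equivalence on $\Sigma^{(1)}\HWZ$ and on $\MZ$, two-out-of-three in the resulting map of cofiber sequences shows $\nu_{\vs_0\KQ}$ is an equivalence; applying $\nu$ to \eqref{triangle}, $\Sigma^1\MZ/2\to\vs_0\KQ\to\cHZ$, then yields $\nu_{\cHZ}$. (Alternatively one can deduce the $\cHZ$ case from its defining homotopy pullback square in \Cref{def:mwz}, as $\nu$ preserves pullbacks.) The only non-formal input is the identification $\f_q\HWZ\simeq\Sigma^{(q)}\HWZ$ from \Cref{prop:slices-cHZ}; the main point to get right is making $\nu$ an honest natural transformation and verifying its compatibility with $\Sigma^{(1)}$ (so that $\nu_{\Sigma^{(1)}\HWZ}=\Sigma^{(1)}\nu_{\HWZ}$), together with exactness of both completion functors so that the inductive comparison along \eqref{triangle2} and \eqref{triangle} is valid.
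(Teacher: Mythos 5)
Your proposal is correct and takes essentially the same route as the paper: both compare the $\eta$- and slice-completion towers via the canonical factorization of $\eta^q\colon\Sigma^{(q)}\E\to\E$ through $\f_q\E$ for effective $\E$, use the last statement of \Cref{prop:slices-cHZ} to see the comparison is an equivalence for $\HWZ$, and then bootstrap along \eqref{triangle2} and \eqref{triangle} using that $\MZ$ and $\MZ/2$ are both $\eta$- and slice-complete. One small correction to your completeness argument: the null transition maps live in the tower $\{\Sigma^{(q)}\MZ\}$ whose homotopy limit is the fiber of $\MZ\to\MZ^\wedge_\eta$, not in the tower $\{\MZ/\eta^q\}$ defining the completion itself (if that tower had vanishing homotopy limit, $\MZ$ would fail to be $\eta$-complete rather than be it).
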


\begin{proof}
For the $\eta$-completion of $\E$ defined in \cite[§2]{zbMATH06957413} there is a natural distinguished triangle
\begin{equation}
\label{equation:second}
\holim \Sigma^{(q)}\E  \to \E\to \E^\wedge_\eta 
\end{equation}
The slice completion of $\E$ defined in \cite[(3.10)]{zbMATH06957413} 
participates in a natural distinguished triangle
\begin{equation}
\label{equation:first}
\holim \f_{q}\E  \to \E\to \sc\E  
\end{equation}
If $\E$ is effective, there exists a natural transformation from \eqref{equation:second} to \eqref{equation:first}, which is the identity on $\E$. 
It is an equivalence for $\HWZ$ owing to the last statement in \Cref{prop:slices-cHZ}.
The triangles \eqref{triangle2} and \eqref{triangle} conclude the proof, using that $\MZ$ and $\MZ/2$ are both $\eta$- and slice-complete.
\end{proof}

Now we identify the first slice differential 
$\s_\ast(\vs_{0}\KQ )\to \Sigma \s_{\ast+1}\vs_{0}\KQ $.

\begin{lemma}
\label{lem:d1-v}
The first slice differential for $\vs_{0}\KQ $ is given by $(\Sq^2\pr^\infty_2,0)$ in degree zero, 
and
\[\begin{pmatrix} 
\Sq^2+\rho\Sq^1 & \Sq^3\Sq^1\\ \tau & \Sq^2
\end{pmatrix} \] in every positive degree.
\end{lemma}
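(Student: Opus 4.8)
The plan is to reduce the computation of the first slice differential for $\vs_0\KQ$ to the already-known first slice differentials for $\KW$ (\Cref{thm:slices-KW}) and for $\MZ/2$, using the cofiber sequence \eqref{triangle2} relating $\vs_0\KQ$, $\Sigma^{(1)}\HWZ$, and $\MZ$, together with the identification of the slices of $\vs_0\KQ$ and $\HWZ$ from \Cref{prop:slices-cHZ}. By \Cref{prop:slices-cHZ}, for $q>0$ the slice $\s_q\vs_0\KQ$ agrees with $\s_q\HWZ\simeq \Sigma^{(q)}\HZ/2\oplus\Sigma^{2+(q)}\HZ/2$, while $\s_0\vs_0\KQ\simeq\HZ$; the claimed matrix has exactly the bidegrees forced by these identifications (the $(2,2)$-entry $\Sq^2$ raises the first $\HZ/2$ by degree $2$, the off-diagonal entries $\rho\Sq^1$ and $\tau$, $\Sq^3\Sq^1$ shift between the two summands appropriately, etc.). So the content is to pin down which stable cohomology operations occur.

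First I would treat the positive-degree case. Since $\HWZ[\eta^{-1}]\simeq\vs_0\KQ[\eta^{-1}]$ by \Cref{lem:etainv-v}, and since $\eta$ acts invertibly on the positive slices of $\HWZ$ (again \Cref{prop:slices-cHZ}, last statement), the slice differential $\s_q\vs_0\KQ\to\Sigma\s_{q+1}\vs_0\KQ$ for $q>0$ is identified, compatibly with the $\eta$-action, with the slice differential of $\KW$ restricted to the relevant $\eta$-power summands. Under the dictionary sending the summand indexed by $\eta^m\sqrt\alpha^n$ in $\s_\ast\KW$ to the two $\HZ/2$-summands of $\s_q\HWZ$ (with $q$ even giving the "$n$ even" case and $q$ odd the "$n$ odd" case of \Cref{thm:slices-KW}), the formula there for $\mathsf d^1(\eta^m\sqrt\alpha^n)$ reassembles into precisely the $2\times 2$ matrix $\left(\begin{smallmatrix}\Sq^2+\rho\Sq^1 & \Sq^3\Sq^1\\ \tau & \Sq^2\end{smallmatrix}\right)$: the $\tau\eta^{m+3}\sqrt\alpha^{n-1}$ term becomes the lower-left $\tau$, the $(\Sq^2+\rho\Sq^1)\eta^{m+1}\sqrt\alpha^n$ term the upper-left entry, and the $\Sq^3\Sq^1\eta^{m-1}\sqrt\alpha^{n+1}$ term the upper-right; the even-$n$ case of \Cref{thm:slices-KW} supplies the lower-right $\Sq^2$. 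I would carry out this bookkeeping carefully, checking that the bidegree shifts match and that no term is lost in passing from $\KW$ back to $\HWZ$.

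For degree zero, the source is $\s_0\vs_0\KQ\simeq\MZ$ and the target is $\s_1\vs_0\KQ\simeq\Sigma^{(1)}\MZ/2\oplus\Sigma^{3,1}\MZ/2$. The differential is therefore a pair of maps $\MZ\to\Sigma\MZ/2$ and $\MZ\to\Sigma^3\MZ/2$ (after untwisting). The second component lands in $[\MZ,\Sigma^3\MZ/2]$; I would argue this vanishes either by a weight/degree count against the known structure of the mod-$2$ motivic Steenrod algebra over the base (no operation of the relevant bidegree built from a single $\MZ$), or by reducing to the analogous statement for $\kq$ via \Cref{cor:slices-kq} and \Cref{lem:endo-mz-weight0}/\Cref{lem:endo-mz-weight1}. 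The first component, a map $\MZ\to\Sigma\MZ/2$, must be $\Sq^2$ precomposed with reduction $\pr^\infty_2\colon\MZ\to\MZ/2$: this is the only nonzero such operation, and it is nonzero here because the analogous differential is nonzero in the sphere (or in $\kq$) by \cite{RO:slices}, transported via the unit map and \Cref{lem:unit-kq-slices}. The hard part will be the degree-zero off-diagonal vanishing and making the $\KW$-to-$\HWZ$ identification of $\eta$-summands fully rigorous — in particular ensuring the operations are the integral-coefficient lifts stated (e.g.\ $\Sq^2\pr^\infty_2$ rather than a mod-$2$ operation) and that the comparison with \Cref{thm:slices-KW} is natural under base change, which it is by \Cref{rem:slice-diff-numberring}.
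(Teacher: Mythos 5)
Your degree-zero argument is essentially workable: the second component of the differential lives in $[\MZ,\Sigma^{4,1}\MZ/2]$, which vanishes by \Cref{lem:endo-mz-weight1} (note the relevant group has weight one — you cannot ``untwist'' to $[\MZ,\Sigma^{3}\MZ/2]$ since the source is $\MZ$ — but the conclusion is the same), and the first component is pinned down by naturality along $\kq\to\vs_0\KQ$ together with \cite{RO:slices}. The positive-degree step, however, has a genuine gap. Your bridge to $\KW$ is the equivalence $\HWZ[\eta^{-1}]\simeq\vs_0\KQ[\eta^{-1}]$ of \Cref{lem:etainv-v}, but neither of these spectra is $\KW$: over a field, $\vs_0\KQ[\eta^{-1}]$ is the Eilenberg--MacLane spectrum of the homotopy module $\mathsf{W}(F)[\eta,\eta^{-1}]$, a truncation of $\KW$, and its slices have exactly two $\MZ/2$-summands per degree, whereas $\s_q\KW$ has infinitely many (one for each power of $\sqrt{\alpha}$, see \Cref{thm:slices-KW}). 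Neither your argument nor the paper constructs a map of motivic spectra relating $\vs_0\KQ$ (or its $\eta$-inversion) to $\KW$, and slices do not commute with Postnikov-type truncations, so the asserted ``identification, compatibly with the $\eta$-action'' of the two first differentials is exactly what needs proof. Moreover, the passage from $\mathsf{d}^1_{\KW}$ to the claimed matrix is a compression (restrict the source to the $\sqrt{\alpha}^{0},\sqrt{\alpha}^{1}$ summands \emph{and} project the target), which is not automatically compatible with differentials: the $\Sq^3\Sq^1$-component out of the odd summand lands in the discarded $\sqrt{\alpha}^{2}$ summand, and in your bookkeeping the upper-right $\Sq^3\Sq^1$ entry is attributed to the odd-$n$ case of \Cref{thm:slices-KW}, whereas under the correct matching it must come from the even-$n$ formula.

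A repair close to your idea: use the canonical map $\kq=\vf_0\KQ\to\vs_0\KQ$, whose induced map $\s_q\kq\to\s_q\vs_0\KQ$ is, by the proof of \Cref{prop:slices-cHZ}, a split surjection onto the two surviving summands; naturality of the slice tower then determines $\mathsf{d}^1$ for $\vs_0\KQ$ from the known $\mathsf{d}^1$ for $\kq$, supplied over fields by \cite{RO:slices} and transported to $\Z$ via \Cref{rem:slice-diff-numberring}. The paper's own proof is shorter still and bypasses both $\KW$ and $\kq$: it base-changes to $\Spec(\Z)$, notes via \Cref{lem:endo-mz-weight1} that the group of possible differentials over $\Spec(\Z)$ injects into the corresponding group over $\Spec(\Q)$, and quotes the known computation over fields of characteristic zero from \cite[Lemma 4.1]{zbMATH07003144}.
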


\begin{proof}
It suffices to verify the claim over $\Spec(\Z)$, since the slices base change by Proposition~\ref{prop:slices-cHZ}, together with Spitzweck’s construction of the absolute Eilenberg–MacLane spectrum. The group of possible differentials over $\Spec(\Z)$ injects into the corresponding group over $\Spec(\Q)$ by \Cref{lem:endo-mz-weight1}. The desired statement then follows from the case of $\Spec(\Q)$, where \cite[Lemma 4.1]{zbMATH07003144} applies.
\end{proof}

\begin{lemma}
\label{lemma:sssforV}
Let $S$ be the spectrum of a field or the ring of integers.
The slice spectral sequence for $\vs_{0}\KQ $ collapses on its $E^{2}$-page.
\end{lemma}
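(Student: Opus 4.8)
The plan is to compute the slice differentials explicitly using \Cref{lem:d1-v} and show that all higher differentials must vanish for degree reasons, working over $\Spec(\Z)$ and then appealing to base change via \Cref{prop:slices-cHZ}. The key point is that the $E^1$-page of the slice spectral sequence for $\vs_0\KQ$ is, by \Cref{prop:slices-cHZ}, a sum of (shifted) copies of $\MZ$ and $\MZ/2$, and the first differential $\sliced^1$ is given by the matrix of cohomology operations recorded in \Cref{lem:d1-v}. The strategy is to pass to homotopy groups $\pi_{s,w}$ and analyze the resulting spectral sequence of abelian groups term by term.

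First I would unwind the $E^1$-page: by \Cref{prop:slices-cHZ} we have $\s_0\vs_0\KQ\simeq \MZ$ and $\s_q\vs_0\KQ\simeq \Sigma^{(q)}\MZ/2\oplus \Sigma^{2+(q)}\MZ/2$ for $q>0$, so after taking $\pi_{s,w}$ the only contributions come from motivic cohomology groups $H^{\ast,\ast}(\Z)$ and their mod-$2$ reductions $h^{\ast,\ast}(\Z)$. Next I would compute the $\sliced^1$-differential on homotopy groups: using \Cref{lem:d1-v}, in positive degrees it is induced by the matrix $\left(\begin{smallmatrix}\Sq^2+\rho\Sq^1 & \Sq^3\Sq^1\\ \tau & \Sq^2\end{smallmatrix}\right)$, and from degree zero to degree one it is $(\Sq^2\pr^\infty_2,0)$. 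Taking cohomology of $(\sliced^1,\sliced^1)$ with $\sliced^1$ of this shape gives the $E^2$-page, and the claim is that the spectral sequence already collapses there, i.e.\ $\sliced^r=0$ for $r\geq 2$.

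The main obstacle will be ruling out the higher differentials $\sliced^r$ for $r\geq 2$. Here I would argue by \emph{degree reasons}: a $\sliced^r$-differential has source and target in slices $r$ apart and shifts topological degree by one, so by the explicit form of the slices (each $\s_q\vs_0\KQ$ concentrated, after the $\Sigma^{(q)}$-twist, in topological degrees $0$ and $2$) the possible targets of a nonzero $\sliced^r$ out of a given $E^2$-class live in a very restricted range of $(s,w)$; combined with the vanishing ranges for $H^{\ast,\ast}(\Z)$ and $h^{\ast,\ast}(\Z)$ — in particular $H^{p,q}(\Z)=0$ for $p>q+1$ and the known low-weight structure — one checks that every candidate target vanishes or is already killed on $E^2$. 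For $w<0$, \Cref{lem:etainv-v} identifies everything with the Witt-theoretic Eilenberg--MacLane spectrum concentrated in degree zero, which forces collapse immediately; for $w\geq 0$ one uses the finiteness and vanishing of the motivic cohomology of $\Z$ in the relevant bidegrees. Finally, since the slices base change along $\Spec(\Z)\to \Spec(F)$ or $\Spec(\Z)\to \mathcal{X}$ by \Cref{prop:slices-cHZ}, and the differentials are natural, collapse over $\Spec(\Z)$ yields collapse over any field or ring of integers, completing the proof.
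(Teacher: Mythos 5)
Your setup agrees with the paper up to the computation of the $E^2$-page: you read off the $E^1$-page from \Cref{prop:slices-cHZ} and feed in the $d^1$ of \Cref{lem:d1-v}, exactly as the paper does (this is \Cref{fig:einfty-v}). The gap is in the decisive step, the vanishing of the higher differentials, which you attribute to ``degree reasons'' plus vanishing ranges of $H^{\ast,\ast}(\Z)$ and $h^{\ast,\ast}(\Z)$. That does not suffice. The zeroth column of the $E^2$-page is an infinite tower $H^{n,n},\,h^{n+1,n+1},\,h^{n+2,n+2},\dots$ whose entries are nonzero in arbitrarily high slice degree ($h^{m,m}(\Z)\cong\Z/2$ for all $m\geq 1$, and $h^{m,m}(F)\cong K^{\sfM}_m(F)/2$ over a field), while column $1$ retains nonzero entries in rows $0$ and $1$ (e.g.\ $H^{n-1,n}$ and $h^{n,n+1}/\Sq^2\pr^\infty_2$). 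Hence for infinitely many $r$ there are candidate differentials $d^r$ from column $1$ into the zeroth column with nonzero source \emph{and} nonzero target; no bidegree vanishing kills them, and they are not ``already killed on $E^2$''. The paper disposes of exactly these differentials by an abutment argument, not a degree argument: over a field, \Cref{lem:pi0-v} identifies $\pi_{0-(n)}\vs_0\KQ$ with $\pi_{0-(n)}\cHZ\cong K^{\MW}_n$, whose associated graded (via the fundamental-ideal filtration, i.e.\ the Milnor conjecture) is as large as the entire zeroth column, so nothing in that column can be hit. Degree reasons are used only for the single exceptional class in column $1$ (the $n+q=2$ entry over $\Z$), where the potential sources in column $2$ genuinely vanish. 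Your proposal has no substitute for the \Cref{lem:pi0-v} step, and the appeal to \Cref{lem:etainv-v} in negative weights does not replace it either, since the problematic differentials occur in all weights.

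Your base-change step also runs in the wrong direction. Naturality gives maps of spectral sequences $E^r(\Z)\to E^r(F)$, but the $E^2$-groups over a field are in general much larger than the images of the corresponding groups over $\Z$ (compare $h^{m,m}(\Z)\cong\Z/2$ with $h^{m,m}(F)$, or $H^{n-1,n}(\Z)$ with $H^{n-1,n}(F)$), so vanishing of a differential over $\Z$ implies nothing about its vanishing over an arbitrary field. The paper argues the other way around: the field case is proved directly via \Cref{lem:pi0-v}, and the case $S=\Spec(\Z)$ is then deduced by base change to $\Spec(\Q)$, using that the relevant diagonal entries over $\Z$ inject into those over $\Q$ (cf.\ \Cref{lem:hZ}). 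As written, your argument both asserts an unproved degree-reasons collapse and transports it in a direction that base change does not support.
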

\begin{proof}
Lemma~\ref{lem:d1-v} shows that the $E^2$-page of the $-n$th slice spectral sequence for $\vs_{0}\KQ$ has the form depicted in Figure~\ref{fig:einfty-v}, with the pattern extending naturally along the zeroth column and the horizontal axis. Except for the columns indexed by $0$ and $1$, the $E^2$-page is concentrated in two rows. In the column indexed by $1$, the group $h^{n+q-1,n+q}/\tau h^{n+q-1,n+q-1}$ 
is nontrivial if and only if the base is the ring of integers and $n+q=2$, in which case it has order two. No higher differential can hit this group, as all potential sources vanish for degree reasons. Lemma~\ref{lem:pi0-v} implies that all higher differentials entering the leftmost column vanish over a field; the same conclusion then holds over the integers by base change to $\Spec(\Q)$. It follows that all higher differentials vanish, so that $E^2=E^\infty$ for $\vs_{0}\KQ$.
\end{proof}

\begin{figure}
\begin{center}
\resizebox{\linewidth}{!}{\begin{tikzpicture}[font=\scriptsize,scale=1.2]
        \draw[help lines,xstep=2.0,ystep=1.0] (-0.5,-0.0) grid (8.5,4.5);
        \foreach \i in {0,...,4} {\node[label=left:$\i$] at (-0.5,1.0*\i+.5) {};}
        \foreach \i in {0,...,4} {\node[label=below:$\i$] at (2.0*\i+.8,-.2) {};}
        
{\node[above right=0pt] at (0.0,0.0) {${H}^{n,n}$};}
{\node[above right=0pt] at (0.0,1.0) {${h}^{n+1,n+1}$};}
{\node[above right=0pt] at (0.0,2.0) {${h}^{n+2,n+2}$};}
{\node[above right=0pt] at (0.0,3.0) {${h}^{n+3,n+3}$};}
{\node[above right=0pt] at (0.0,4.0) {${h}^{n+4,n+4}$};}

{\node[above right=0pt] at (2.0,0.0) {$H^{n-1,n}$};}
{\node[above right=0pt] at (2.0,1.0) {${h}^{n,n+1}\!/\!\Sq^2\!\pr^\infty_2$};}
{\node[above right=0pt] at (2.0,2.0) {$h^{n+1,n+2}\!/\!\tau$};}
{\node[above right=0pt] at (2.0,3.0) {$h^{n+2,n+3}\!/\!\tau$};}
{\node[above right=0pt] at (2.0,4.0) {$h^{n+3,n+4}\!/\!\tau$};}

{\node[above right=0pt] at (4.0,0.0) {$\ker(\Sq^2\!\pr^\infty_2)$};}
{\node[above right=0pt] at (4.0,1.0) {${h}^{n-1,n+1}$};}
{\node[above right=0pt] at (4.0,2.0) {$0$};}
{\node[above right=0pt] at (4.0,3.0) {$0$};}
{\node[above right=0pt] at (4.0,4.0) {$0$};}

{\node[above right=0pt] at (6.0,0.0) {$H^{n-3,n}$};}
{\node[above right=0pt] at (6.0,1.0) {${h}^{n-2,n+1}$};}
{\node[above right=0pt] at (6.0,2.0) {$0$};}
{\node[above right=0pt] at (6.0,3.0) {$0$};}
{\node[above right=0pt] at (6.0,4.0) {$0$};}

{\node[above right=0pt] at (8.0,0.0) {$H^{n-4,n}$};}
{\node[above right=0pt] at (8.0,1.0) {$h^{n-3,n+1}$};}
{\node[above right=0pt] at (8.0,2.0) {$0$};}
{\node[above right=0pt] at (8.0,3.0) {$0$};}
{\node[above right=0pt] at (8.0,4.0) {$0$};}

\end{tikzpicture}}
\end{center}
\caption{$E^\infty_{p-n,q,-n}(\vs_{0}\KQ )$}
\label{fig:einfty-v}
\end{figure}

Modules over Milnor-Witt $K$-theory are used in the calculations of 
universal motivic invariants in \cite{zbMATH07003144,zbMATH07858209}.
We note the following result for $\vs_{0}\KQ $.

\begin{proposition}\label{thm:pi-v}
Let $F$ be a field.
If $s>0$ the $\KMW$-module $\pi_{s-\bideg}\vs_{0}\KQ $ participates in the exact sequences
\begin{align*}
& 0 \to \mathbf{h}^{\star-s+1,\star+1}/\Sq^2\pr^\infty_2 \to 
\pi_{s-\bideg}\vs_{0}\KQ  \to \mathbf{H}^{\star-s,\star} \to 0 \\
& 0 \to \mathbf{h}^{\star-s+1,\star+1} \to 
\pi_{s-\bideg}\vs_{0}\KQ  \to 
\ker(\Sq^2\pr^\infty_2:\mathbf{H}^{\star-s,\star}\to \mathbf{h}^{\star-s+2,\star+1}) \to 0 \\
& 0 \to \mathbf{h}^{\star-s+1,\star+1} \to 
\pi_{s-\bideg}\vs_{0}\KQ  \to \mathbf{H}^{\star-s,\star} \to 0 
\end{align*}
for $s \equiv 1 \bmod 4$, $s \equiv 2 \bmod 4$, and $s \equiv 0,3 \bmod 4$, 
respectively. 
The Hopf map $\eta$ operates through the projection from the quotient in the short 
exact sequence in weight $n$ to the kernel in the short exact sequence in weight $n-1$.
\end{proposition}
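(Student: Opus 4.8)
The plan is to read off the $\KMW$-module $\pi_{s-\bideg}\vs_{0}\KQ$ from the slice spectral sequence for $\vs_{0}\KQ$. By \Cref{lemma:sssforV} this spectral sequence collapses at its $E^{2}$-page, with $E^{2}=E^{\infty}$ of the shape recorded in \Cref{fig:einfty-v} and its evident $4$-periodic continuation along the horizontal axis; the relevant slices are $\s_{0}\vs_{0}\KQ\simeq\MZ$ and $\s_{q}\vs_{0}\KQ\simeq\Sigma^{(q)}\MZ/2\oplus\Sigma^{2+(q)}\MZ/2$ for $q>0$ by \Cref{prop:slices-cHZ}, and the first slice differential is the one made explicit in \Cref{lem:d1-v}.

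The first point is that, over a field and for $s>0$, the slice filtration on $\pi_{s-\bideg}\vs_{0}\KQ$ has only two steps. Indeed, in the column of \Cref{fig:einfty-v} abutting to $\pi_{s-\bideg}\vs_{0}\KQ$ every $E^{\infty}$-entry in rows $q\ge 2$ vanishes: these appear as subquotients of groups of the form $\mathbf{h}^{a+1,a+1}/\tau$ — the ``$\tau$''-entry of the matrix in \Cref{lem:d1-v} produces the division by $\tau$ — and over a field multiplication by $\tau$ on mod $2$ motivic cohomology $\mathbf{h}^{p,q}$ is an isomorphism for $0\le p\le q$ (it equals $K^{\M}_{p}(F)/2$ there, independently of $q$) by the Milnor and Bloch--Kato conjectures, while $\mathbf{h}^{p,q}$ vanishes for $p\notin[0,q]$. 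Since higher slice degree lies deeper in the filtration, one obtains a short exact sequence in which the subobject is the $q=1$ graded piece and the quotient is the $q=0$ graded piece of the $E^{\infty}$-page.

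Next I would identify these two pieces from \Cref{fig:einfty-v} exactly as in the proof of \Cref{lemma:sssforV}, using \Cref{prop:slices-cHZ} and the differential of \Cref{lem:d1-v}. The $q=0$ piece is $\mathbf{H}^{\star-s,\star}$, except that when $s\equiv 2\bmod 4$ it is $\ker(\Sq^{2}\pr^{\infty}_{2}\colon\mathbf{H}^{\star-s,\star}\to\mathbf{h}^{\star-s+2,\star+1})$, because the degree-zero differential $(\Sq^{2}\pr^{\infty}_{2},0)$ of \Cref{lem:d1-v} acts on this column. The $q=1$ piece is $\mathbf{h}^{\star-s+1,\star+1}$, except that when $s\equiv 1\bmod 4$ it is the cokernel $\mathbf{h}^{\star-s+1,\star+1}/\Sq^{2}\pr^{\infty}_{2}$. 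The four residues $s\bmod 4$ (the cases $s\equiv 0$ and $s\equiv 3$ coinciding) then assemble into the three displayed short exact sequences.

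For the $\eta$-action, since $\eta$ is a self-map $\Sigma^{(1)}\vs_{0}\KQ\to\vs_{0}\KQ$ it respects the slice filtration and raises slice degree by one, inducing on associated graded the maps $E^{\infty}_{q}\to E^{\infty}_{q+1}$ coming from $\s_{q+1}(\eta)\colon\Sigma^{(1)}\s_{q}\vs_{0}\KQ\to\s_{q+1}\vs_{0}\KQ$; by \Cref{prop:slices-cHZ} this is an equivalence for $q\ge 1$ and the canonical nontrivial map $\Sigma^{(1)}\MZ\to\s_{1}\vs_{0}\KQ$ for $q=0$. As only $q=0,1$ survive, $\eta$ sends the $q=1$ subobject of $\pi_{s-\bideg}\vs_{0}\KQ$ into $E^{\infty}_{q=2}=0$ and therefore annihilates it, while on the $q=0$ quotient it is the map induced by $\s_{1}(\eta)$ landing in the $q=1$ subobject one weight lower; this is precisely the asserted factorization of $\eta$ through the projection from the quotient in weight $n$ onto the kernel in weight $n-1$. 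The main obstacle is the bookkeeping behind the first two paragraphs — verifying, from \Cref{lem:d1-v} and the $\tau$-periodicity and vanishing of mod $2$ motivic cohomology of fields, that the $E^{\infty}$-page collapses onto its two bottom rows for all $s>0$ (not merely $s\le 4$) and that the $\Sq^{2}\pr^{\infty}_{2}$-modifications land in the row prescribed by $s\bmod 4$ — but this is essentially what \Cref{lemma:sssforV} already records.
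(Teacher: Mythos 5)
Your reading of the $E^\infty$-page (two surviving rows over a field, with the $\Sq^2\pr^\infty_2$-modifications placed according to $s\bmod 4$, and the $\eta$-action induced by $\s_{q+1}(\eta)$ on associated graded) agrees with what the paper does via \Cref{lemma:sssforV}, \Cref{fig:einfty-v}, \Cref{lem:d1-v} and \Cref{prop:slices-cHZ}. But there is a genuine gap at the very first step: you treat the slice spectral sequence of $\vs_0\KQ$ as converging to $\pi_{\ast-\bideg}\vs_0\KQ$, and it does not. Its abutment is the slice completion, which by \Cref{lem:convergence-v} is the $\eta$-completion $\vs_0\KQ^\wedge_\eta$. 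The slice tower of $\vs_0\KQ$ is not complete: by \Cref{prop:slices-cHZ} one has $\f_q\HWZ\simeq\Sigma^{(q)}\HWZ$, so $\holim_q\f_q$ is governed by the tower of multiplications by $\eta$, and already over $F=\R$ the degree-zero abutment is the $I$-adic completion of the Witt ring rather than $\mathsf{W}(F)$ itself (this is exactly why the proposition is restricted to $s>0$). Hence "reading off $\pi_{s-\bideg}\vs_0\KQ$ from the $E^\infty$-page" is not justified without an extra argument identifying the positive-degree homotopy of $\vs_0\KQ$ with that of $\vs_0\KQ^\wedge_\eta$.

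The paper closes precisely this gap: it notes that the only column with infinitely many $E^\infty$-terms is the degree-zero one, identifies $\vs_0\KQ^\wedge_\eta[\eta^{-1}]$ as the Eilenberg--MacLane spectrum of the homotopy module $\mathsf{W}^\wedge_{\mathsf{I}}[\eta,\eta^{-1}]$ concentrated in degree zero, and combines this with \Cref{lem:etainv-v} (which says the same for $\vs_0\KQ[\eta^{-1}]$) to conclude that the positive homotopy modules of $\vs_0\KQ$ and $\vs_0\KQ^\wedge_\eta$ coincide; only then are the short exact sequences read off the $E^\infty$-page. Your proposal needs this comparison (or an equivalent completeness argument for the slice tower in positive degrees) before the bookkeeping in your second and third paragraphs, which is otherwise fine, can be applied to $\pi_{s-\bideg}\vs_0\KQ$ itself.
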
 

\begin{proof}
By Lemma~\ref{lem:convergence-v} the slice spectral sequence of $\vs_{0}\KQ $ computes the homotopy of $\vs_{0}\KQ ^\wedge_\eta$. 
\Cref{lemma:sssforV} and its proof shows the only column on the $E^\infty$-page with potentially infinitely many terms is located in degree zero.
It follows, see Lemma~\ref{lem:pi0-v}, that $\vs_{0}\KQ ^\wedge_\eta[\eta^{-1}]$ is the motivic Eilenberg-MacLane spectrum associated to the homotopy module $\mathsf{W}^\wedge_{\mathsf{I}}[\eta,\eta^{-1}]$ associated to the completion of the Witt ring at its fundamental ideal.
Hence all the positive homotopy modules of $\vs_{0}\KQ ^\wedge_\eta[\eta^{-1}]$ are zero.
Since $\vs_{0}\KQ[\eta^{-1}]$ is also a motivic Eilenberg–MacLane spectrum concentrated in degree zero (Lemma~\ref{lem:etainv-v}), the positive homotopy modules of $\vs_{0}\KQ$ and $\vs_{0}\KQ^\wedge_\eta$ coincide. The short exact sequences follow directly from the $E^\infty$-page, and \Cref{prop:slices-cHZ} accounts for the claims regarding $\eta$.
\end{proof}

\begin{remark}
\label{rem:pi-v}
In general, 
the short exact sequences in Theorem~\ref{thm:pi-v} do not split, 
not even as sequences of abelian groups. 
For example, 
if $F$ is a number field, 
the canonical map $\kq\to \vs_{0}\KQ $ induces an isomorphism $\pi_{1-(2)}\kq \to \pi_{1-(2)}\vs_{0}\KQ $, 
and the forgetful map $\pi_{1-(2)}\kq\to \pi_{1-(2)}\kgl\iso K_3$ participates in the commutative diagram
\[
\begin{tikzcd}
0\ar[r] & h^{2,3} \ar[r] \ar[d,"\partial^2_\infty"] & \pi_{1-(2)}\kq \ar[r]\ar[d] & H^{1,2} 
\ar[r] \ar[d,"="] & 0 \\
0\ar[r] & H^{3,3} \ar[r] & \pi_{1-(2)}\kgl \ar[r] & H^{1,2} \ar[r] & 0
\end{tikzcd}\]
If $F$ admits a real embedding, 
the connecting map on the left-hand side is surjective and does not split the bottom sequence by \cite{lee-szczarba}; 
hence the top sequence cannot split. 
\end{remark}

\begin{proposition}
\label{prop:veff-forg}
Let $\Forg:\KQ\to\KGL$ denote the forgetful map.
\begin{enumerate}
\item The induced map $\vs_0(\Forg)$ fits into the commutative diagram
\[\begin{tikzcd}
\vs_0\KQ \ar{r}{\vs_0(\Forg)}\ar{d} & \vs_0\KGL\ar{d}{\simeq}\\
\cHZ\ar{r} & \HZ
\end{tikzcd}\]
The lower horizontal map is the canonical map from Milnor-Witt motivic cohomology to motivic cohomology.
\item The map $\vs_1(\Forg)\colon\vs_1\KQ \simeq\Sigma^{1+(1)}\HZ/2\to\vs_1\KGL\simeq\Sigma^{1+(1}\HZ$ 
is trivial.
\item The map $\vs_2(\Forg)\colon \vs_2\KQ \simeq\Sigma^{2+(2)}\HZ\to \vs_2\KGL\simeq\Sigma^{2+(2)}\HZ$ 
is multiplication by $2$.
\end{enumerate}
\end{proposition}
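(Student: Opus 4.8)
The plan is to treat the three statements separately: parts~(2) and~(3) are short consequences of results already in hand, while part~(1) carries the bookkeeping. For part~(3), the claim is essentially recorded in the proof of \Cref{thm:veff-KQ}. Since $\KGL$ is effective and its $\P^1$- and $\Gm$-slice filtrations coincide, $\vf_1\KGL\simeq\Sigma^{2,1}\kgl$, so $\vs_2\KGL\simeq\Sigma^{2+(2)}\s_0\kgl\simeq\Sigma^{2+(2)}\MZ$; by \Cref{thm:veff-KQ} also $\vs_2\KQ\simeq\Sigma^{2+(2)}\MZ$. Then $\vs_2(\Forg)$ is classified by an integer via \Cref{lem:endo-mz-weight0} together with $[\MZ,\MZ]_{\SH(\mathcal{X})}\cong[\unit,\MZ]_{\SH(\mathcal{X})}$ (\cite[Theorem~B.5]{bachmann-hoyois}); this integer is invariant under essentially smooth base change and under passage to a closed or generic point of a Dedekind scheme, and over perfect fields of characteristic $\neq 2$ it equals $2$ by \cite{RO:slices}. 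Hence it equals $2$ over every field, every Dedekind domain, and thus over $\mathcal{X}$. I would simply make this chain of reductions explicit.

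For part~(2), the point is that the classifying group already vanishes. Using \Cref{thm:veff-KQ} and the identification $\vs_1\KGL\simeq\Sigma^{1+(1)}\MZ$ above, the map $\vs_1(\Forg)$ lies in $[\Sigma^{1+(1)}\MZ/2,\Sigma^{1+(1)}\MZ]\cong[\MZ/2,\MZ]_{\SH(\mathcal{X})}$. Applying $[-,\MZ]$ to the cofiber sequence $\MZ\xrightarrow{2}\MZ\to\MZ/2$ presents $[\MZ/2,\MZ]$ as an extension of the $2$-torsion subgroup of $[\MZ,\MZ]$ by a quotient of $[\MZ,\Sigma^{-1+(0)}\MZ]$. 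The former vanishes since $[\MZ,\MZ]_{\SH(\mathcal{X})}\cong[\unit,\MZ]_{\SH(\mathcal{X})}\cong H^{0,0}(\mathcal{X};\Z)$ is torsion-free, and the latter since there are no nonzero weight-zero stable operations on $\MZ$ of negative topological degree (cf.\ \Cref{lem:endo-mz-weight0}); one may reduce to a perfect field or a Dedekind domain if needed, as in the proof of \Cref{thm:veff-KQ}. Hence $[\MZ/2,\MZ]=0$ and $\vs_1(\Forg)=0$.

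For part~(1), I would give two descriptions of the composite $\vs_0\KQ\to\MZ$ obtained by going around the square and match them via the uniqueness clause of \Cref{prop:chz}. First, since $\KGL$ is effective with coinciding $\P^1$- and $\Gm$-slice filtrations, $\vs_0\KGL\simeq\s_0\KGL\simeq\MZ$ and the natural map $\vs_0\KGL\to\s_0\vs_0\KGL$ is an equivalence; naturality of $\vs_0\Rightarrow\s_0\vs_0$ applied to $\Forg$ then shows that $\vs_0(\Forg)$, read through $\vs_0\KGL\simeq\MZ$, factors as the canonical map $\vs_0\KQ\to\s_0\vs_0\KQ\simeq\MZ$ followed by $\s_0\vs_0(\Forg)\colon\MZ\to\MZ$. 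Because $\Forg$ is a ring map and the unit maps $\unit\to\kq$ and $\unit\to\kgl$ induce equivalences on $\s_0$ (\Cref{lem:unit-kq-slices} and $\s_0\unit\simeq\s_0\kgl\simeq\MZ$), the endomorphism $\s_0\vs_0(\Forg)$ of $\MZ$ is the identity, so $\vs_0(\Forg)$ is exactly the canonical map $\vs_0\KQ\to\s_0\vs_0\KQ\simeq\MZ$ appearing in \eqref{triangle2} and \Cref{prop:chz}. Second, \Cref{prop:chz} characterizes $\vs_0\KQ\to\cHZ$ as the unique map whose composite with the pullback projection $\cHZ\to\MZ$ from \Cref{def:mwz} — which is precisely the canonical map from Milnor--Witt to ordinary motivic cohomology — is that same canonical map. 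Matching the two descriptions yields the square, with bottom edge the canonical map $\cHZ\to\MZ$ as asserted.

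The main obstacle will be the bookkeeping in part~(1): one must ensure that the several copies of $\MZ$ occurring as $\vs_0\KGL$, $\s_0\vs_0\KGL$, $\s_0\vs_0\KQ$, and the lower-right corner of the square are glued by the \emph{same} canonical equivalences, so that chasing through \Cref{prop:chz} produces an honest commuting square and not merely one commuting up to an automorphism of $\MZ$; concretely this comes down to tracking the unit maps through the slice functors $\vs_0$ and $\s_0$. Once $\vs_0(\Forg)$ has been pinned down to the canonical map, parts~(2) and~(3) follow formally from the earlier results.
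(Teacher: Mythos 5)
Your proposal is correct, but it reaches parts~(1) and~(3) by routes that differ from the paper's own proof. For part~(1) the paper does not pass through $\s_0\vs_0$: it applies $[-,\MZ]$ to the cofiber sequence \eqref{triangle} and to the slice tower of $\cHZ$ to obtain $[\vs_0\KQ,\MZ]\cong[\s_0\cHZ,\MZ]\cong[\MZ,\MZ]\cong\Z$, and then concludes because both composites around the square are unital maps of zero slices (using only that $\Forg$ is unital, via \cite{grso}). You instead pin $\vs_0(\Forg)$ down, via naturality of $\vs_0\Rightarrow\s_0\vs_0$ and unitality of $\Forg$ together with the equivalences $\s_0\unit\simeq\s_0\kq$ and $\s_0\unit\simeq\s_0\kgl$, as the canonical map $\vs_0\KQ\to\s_0\vs_0\KQ\simeq\MZ$, and then invoke the defining property of the map $\vs_0\KQ\to\cHZ$ from \Cref{prop:chz} (note you only need its existence/defining clause, not the uniqueness clause); this buys you a proof that avoids computing $[\vs_0\KQ,\MZ]$, at the cost of the $\MZ$-bookkeeping you flag, which is exactly the point the paper's mapping-group argument sidesteps. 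Part~(2) is the same in substance: the paper simply quotes $[\MZ/2,\MZ]=0$, which is \Cref{lem:endo-mz-weight0} for $A=\Z/2$, $B=\Z$, and your coefficient-sequence derivation is just an unwinding of that lemma. For part~(3) the paper argues differently: it applies $\vs_0\Sigma^{2+(2)}$ to the Wood cofiber sequence to get a cofiber sequence $\MZ\to\MZ\to\MZ/2$, which forces $\vs_2(\Forg)=\pm 2$, and then fixes the sign using compatibility with the symplectic unit map; you instead rerun the integer-classification argument ($[\MZ,\MZ]\cong\Z$ by \Cref{lem:endo-mz-weight0}, invariance under base change, value $2$ over fields of characteristic $\neq 2$ by \cite{RO:slices}) that already appears verbatim in the proof of \Cref{thm:veff-KQ}. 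That is legitimate and shorter, since the proposition is stated with the identifications of \Cref{thm:veff-KQ} already in place, though it defers the sign (multiplication by $2$ rather than $-2$) to \cite{RO:slices} instead of resolving it intrinsically as the paper does.
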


\begin{proof}
For the zeroth slice, 
applying $[-,\HZ]$ to the cofiber sequence \eqref{triangle} yields the long exact sequence
\[
[\Sigma^{2}\HZ/2,\HZ]\to[\cHZ,\HZ]\to[\vs_0\KQ ,\HZ]\to[\Sigma\HZ/2,\HZ].
\]
As $[\HZ/2,\Sigma^{t}\HZ]=0$ for $t\ne 1$, 
there is a canonically induced isomorphism
$$[\cHZ,\HZ]\xrightarrow{\cong} [\vs_0\KQ ,\HZ]$$ 
Moreover, since $\HZ\simeq\vs_0\KGL\simeq\s_0\KGL$ is a $0$-slice, the canonical map to 
$\s_0\cHZ$ in the cofiber sequence
\[ \f_1\cHZ\to \cHZ \to \s_0\cHZ \]
induces an isomorphism 
$$[\s_0\cHZ,\HZ]\xrightarrow{\cong} [\cHZ,\HZ]$$
Combined with Proposition~\ref{prop:slices-cHZ} we deduce the isomorphisms
\begin{equation}
\label{equation:ringmapinteger}
[\s_0\cHZ,\HZ]\cong [\HZ\oplus \Sigma^{2}\HZ/2,\HZ]\cong [\HZ,\HZ] \cong \Z
\end{equation}
The forgetful map $\Forg:\KQ \to \KGL$ is a ring spectrum homomorphism, 
and hence also the induced
map on the (very) effective zero-slices by \cite{grso}.\footnote{It suffices to use that $\Forg$ is unital.} 
The first part follows from \eqref{equation:ringmapinteger}.

The second part follows since $[\HZ/2,\HZ]=0$. 
For the third part, 
by an inspection of the proof of \cite[Theorem 16]{Bachmann}, 
based on \cite[Theorem 4.4]{RO:slices},  
there is a cofiber sequence
\[
\vs_0(\Sigma^{2+(2)}\KQ) \xrightarrow{\vs_0(\Sigma^{2+(2)}\Forg)} \vs_0(\Sigma^{2+(2)}\KGL) 
\xrightarrow{\vs_0(\Sigma^{3+(3)}\Hyper \circ \beta)} \vs_0(\Sigma^{3+(3)}\KQ) 
\]
(applying $\vs_0\Sigma^{2+(2)}$ to the Wood cofiber sequence), and hence a cofiber sequence:
\[
\HZ\xrightarrow{\Sigma^{2+(2)}\vs_{-2}(\Forg)} \HZ \to \HZ/2 
\]
It follows that $\vs_2(\Forg)$ is multiplication by $\pm 2$. 
Since the unit map in symplectic $K$-theory induces compatible identifications on slices, 
the map $\vs_2(\Forg)$ is multiplication by $2$.
\end{proof}

\begin{proposition}\label{prop:veff-hyper}
Let $\Hyper:\KGL \to \KQ$ denote the hyperbolic map.
\begin{enumerate}
\item\label{it:vs1-hyper} 
$\vs_1(\Hyper)\colon\vs_1\KGL\simeq\Sigma^{1+(1)}\HZ\to\vs_1\KQ \simeq\Sigma^{1+(1)}\HZ/2$ 
is the canonical projection.
\item\label{it:vs2-hyper} 
$\vs_2(\Hyper)\colon \vs_2\KGL\simeq\Sigma^{2+(2)}\HZ\to \vs_2\KQ \simeq\Sigma^{2+(2)}\HZ$ is the identity.
\end{enumerate}
\end{proposition}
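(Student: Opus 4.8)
The plan is to derive both claims from Proposition~\ref{prop:veff-forg}, the Wood cofiber sequence \eqref{item:wood-seq}, the multiplicative identities $\Forg\circ\Hyper=\id_{\KGL}+\mathsf{D}$ (the sum of the identity and the duality involution) and $\Hyper\circ\Forg=\mathsf{h}=1-\varepsilon$ on $\KQ$ recalled above, and the identification $[\HZ,\HZ]_{\SH(\mathcal{X})}\cong\Z$ already used in the proof of Theorem~\ref{thm:veff-KQ}. Since every object and map in sight is compatible with essentially smooth base change and localization in the sense of \cite{bachmann-hoyois}, I would first reduce to a connected base; the computations below then go through uniformly, with no separate treatment of characteristic two needed.

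For the second assertion I would apply the very effective slice functor $\vs_2$ to $\Forg\circ\Hyper=\id_{\KGL}+\mathsf{D}$. The duality involution acts on $\s_q\KGL\simeq\Sigma^{q+(q)}\HZ$ by multiplication by $(-1)^q$ — the computation behind the proof of Proposition~\ref{prop.KQetaisKW}, cf.\ \cite{RO:slices} — hence trivially on $\vs_2\KGL\simeq\s_2\KGL\simeq\Sigma^{2+(2)}\HZ$, so $\vs_2(\Forg\circ\Hyper)=2\cdot\id$. As $\vs_2$ is additive and functorial this gives $\vs_2\Forg\circ\vs_2\Hyper=2$ in the ring $[\Sigma^{2+(2)}\HZ,\Sigma^{2+(2)}\HZ]\cong[\HZ,\HZ]\cong\Z$; combined with the third part of Proposition~\ref{prop:veff-forg}, which says $\vs_2\Forg=2$, this forces $\vs_2\Hyper=\id$. (One can also argue via $\Hyper\circ\Forg=\mathsf{h}=1-\varepsilon$, since $\varepsilon$ acts on $\vs_2\KQ\simeq\Sigma^{2+(2)}\HZ$ through the rank $-1$, so $\mathsf{h}$ acts by $2$, and again $\vs_2\Hyper\circ\vs_2\Forg=2$ with $\vs_2\Forg=2$ yields $\vs_2\Hyper=\id$.)

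For the first assertion I would revisit the cofiber sequence
\[ \vs_2\KQ\xrightarrow{\vs_2\Forg}\vs_2\KGL\xrightarrow{\vs_2\partial}\vs_2\Sigma^{1+(1)}\KQ\simeq\Sigma^{1+(1)}\vs_1\KQ \]
used in the proof of Theorem~\ref{thm:veff-KQ} to identify $\vs_1\KQ$, where $\partial=\Sigma^{1+(1)}\Hyper\circ\beta\colon\KGL\to\Sigma^{1+(1)}\KQ$ is the Wood connecting map. Since $\vs_2\Forg$ is multiplication by $2$ on $\Sigma^{2+(2)}\HZ$, the map $\vs_2\partial$ is its cofibre, so under the resulting identification $\vs_1\KQ\simeq\Sigma^{1+(1)}\HZ/2$ it becomes (a suspension of) the canonical reduction $\HZ\to\HZ/2$; this is unambiguous because $[\HZ,\HZ/2]$ is $2$-torsion (multiplication by $2$ vanishes on $\HZ/2$) and the only automorphism ambiguity is a sign in $[\HZ,\HZ]^{\times}=\{\pm1\}$. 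On the other hand, writing $\beta$ also for the Bott equivalence $\KGL\simeq\Sigma^{1+(1)}\KGL$, functoriality gives $\vs_2\partial=(\Sigma^{1+(1)}\vs_1\Hyper)\circ\vs_2\beta$, with $\vs_2\beta$ an equivalence between two copies of $\Sigma^{2+(2)}\HZ$, i.e.\ multiplication by $\pm1$. Hence $\Sigma^{1+(1)}\vs_1\Hyper$ equals the reduction pre-composed with $\pm1$, which is again the reduction; desuspending shows $\vs_1\Hyper$ is the canonical projection.

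The step I expect to be the main obstacle is the precise identification of $\vs_2\partial$: both that the Wood cofiber sequence restricts to a cofiber sequence of second very effective slices and that the resulting map is the canonical reduction rather than some twisted variant. This is, however, exactly the content extracted — by reducing to perfect base fields as in \cite{Bachmann} — in the proof of Theorem~\ref{thm:veff-KQ}; once it is invoked, tracking the periodicity equivalence $\beta$ and using the $2$-torsion of $[\HZ,\HZ/2]$ is routine.
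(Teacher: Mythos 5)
Your proof is correct and follows essentially the paper's own route: for part (2) the paper likewise composes $\vs_2(\Forg)\cdot\vs_2(\Hyper)=\vs_2(\Forg\circ\Hyper)=2$ in $[\HZ,\HZ]\cong\Z$ with $\vs_2(\Forg)=2$ from Proposition~\ref{prop:veff-forg} (citing \cite[Proposition 4.4]{RO:slices} for the ``multiplication by $2$'' fact you rederive from the duality involution), and for part (1) your identification of $\vs_2(\Sigma^{1+(1)}\Hyper\circ\beta)$ as the cofiber of multiplication by $2$ is exactly the ``alternative'' argument the paper gives via the last cofiber sequence in the proof of Proposition~\ref{prop:veff-forg} (the paper's primary argument instead reads off the surjection $\Z\to\Z/2$ from the weight-$2$ Wood long exact sequence). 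Your extra care with the sign of $\vs_2\beta$ and the $2$-torsion of $[\HZ,\HZ/2]$ is fine and harmless.
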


\begin{proof}
The map $\vs_1(\Hyper)$ is either zero or the canonical projection. 
The long exact sequence of homotopy groups obtained from Theorem~\ref{item:wood-seq} in weight $2$ ends with 
\[ 
\dotsm \rightarrow
\pi_{2+(2)}\KQ 
\rightarrow
\pi_{2+(2)} \KGL 
\rightarrow
\pi_{1+(1)} \KQ 
\rightarrow
\pi_{1+(2)}\KQ 
\]
By calculations of symplectic $K$-groups or using the very effective slice spectral sequence 
and Proposition~\ref{prop:veff-forg}, 
this sequence takes the form 
\begin{equation}
\label{equation:symplecticles}
\dotsm 
\rightarrow
\Z 
\rightarrow
\Z 
\rightarrow
\Z/2 
\rightarrow
0
\end{equation}
As the very effective slice spectral sequence implies,
the surjection onto $\Z/2$ in \eqref{equation:symplecticles} 
is induced by $\pi_{1+(1)}\vs_1(\Hyper)$, 
which is thus nonzero.
Alternatively, the last cofiber sequence in the proof of \Cref{prop:veff-forg} implies the first claim.
Proposition~\ref{prop:veff-forg} and
the fact that $\s_2(\Forg \circ \Hyper)=\vs_2(\Forg \circ \Hyper)$
is multiplication by $2$ according to \cite[Proposition 4.4]{RO:slices} imply the second claim.
\end{proof}

\section{The first very effective slice differential of hermitian $K$-theory}
\label{section:tfvesdfkq}
In this section, 
we identify the first differential in the very effective slice spectral sequence of $\KQ$ as maps of motivic spectra.
The form and the $4+(4)$-periodicity of the very effective slices of $\KQ$ implies that it suffices to determine 
\[\vd^1_0\colon \vs_0\KQ \to \Sigma \vs_1\KQ  \simeq \Sigma^{3,1} \MZ/2\] and 
$$\vd^1_1\colon \vs_1\KQ \simeq\Sigma^{(1)}\MZ/2 \to \Sigma \vs_2\KQ \simeq\Sigma^{3+(2)}\MZ$$
We begin by calculating maps between Milnor-Witt and ordinary motivic cohomology spectra.
For our purposes, 
we extend certain computations known over a field or under conditions on the characteristics 
of residue fields in the base scheme \cite[Appendix A]{RO:slices}, \cite{Voevodsky:power}.

\begin{lemma}\label{lem:endo-mz-weightneg}
Let $S$ be a Dedekind scheme, and let $A$ and $B$ be abelian groups.
Then $[\MA,\Sigma^{s+(w)}\MB]_S=0$ for all $s\in \Z$ and all negative $w\in \Z$.
\end{lemma}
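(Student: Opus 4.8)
The plan is to play off effectivity of $\MA$ against the fact that a negative Tate twist of $\MB$ has trivial effective cover. Two inputs from Spitzweck's theory of motivic cohomology over Dedekind schemes are needed, and both propagate from $\MZ$ to arbitrary coefficients through the construction of $\mathbf{M}(-)$ by cofiber sequences and filtered colimits (write any abelian group as a filtered colimit of its finitely generated subgroups, each a finite direct sum of copies of $\Z$ and $\Z/n$, with $\mathbf{M}\Z/n=\mathrm{cofib}(n\colon\MZ\to\MZ)$, and use that $\mathbf{M}(-)$ represents motivic cohomology, which commutes with filtered colimits of the coefficient group on smooth $S$-schemes of finite type): first, $\MA\in\SH^\eff(S)$; second, motivic cohomology over $S$ vanishes in negative weights, so that $\f_k\MB\simeq\ast$ for all $k\ge1$ (indeed $[\Sigma^{(k)}\Sigma^n\Sigma^\infty X_+,\MB]_S\cong H^{-n-k,-k}(X;B)=0$ for $k\ge1$ and $X$ smooth over $S$, and such objects generate $\SH^\eff(S)$ under colimits).

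Given these inputs, the argument is short. Because $\MA$ is effective, the coreflection $\f_0\E\to\E$ induces a natural isomorphism $[\MA,\E]_S\cong[\MA,\f_0\E]_S$ for every $\E\in\SH(S)$. Since $\Sigma^{(1)}$ is an autoequivalence of $\SH(S)$ carrying $\SH^\eff(S,q)=\Sigma^{(q)}\SH^\eff(S)$ onto $\SH^\eff(S,q+1)$, the effective covers intertwine with Tate twists as $\f_q\Sigma^{(w)}\simeq\Sigma^{(w)}\f_{q-w}$, while the simplicial suspension $\Sigma^s$ commutes with every $\f_q$. Hence, for $w<0$,
\[
\f_0\bigl(\Sigma^{s+(w)}\MB\bigr)\simeq\Sigma^{s+(w)}\f_{-w}\MB\simeq\ast,
\]
since $-w\ge1$. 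Therefore $[\MA,\Sigma^{s+(w)}\MB]_S\cong[\MA,\ast]_S=0$, as desired.

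The genuine content sits in the two inputs rather than in the formal manipulation: over a field both the effectivity of $\MA$ and the negative-weight vanishing of motivic cohomology (equivalently $\f_1\MZ\simeq\ast$) are classical, and the point of the lemma is that Spitzweck's construction of the absolute motivic cohomology spectrum supplies them over an arbitrary Dedekind scheme. Once that is granted, everything else is a purely formal play with the effective slice filtration, uniform in $S$, $A$, $B$ and $s$.
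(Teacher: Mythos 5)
The paper states this lemma without proof (it is treated as a known extension, with the actual argument only appearing for the weight-zero case in \Cref{lem:endo-mz-weight0}), so there is nothing to match line by line; what matters is that your argument is correct, and it is. The two inputs you isolate are exactly what Spitzweck's theory, in the form the paper itself invokes, provides: since $\MZ\simeq\s_0\unit$ over a Dedekind domain (\cite[Theorem B.5]{bachmann-hoyois}, cited in the proof of \Cref{lem:endo-mz-weight0}), $\MZ$ is effective and $\f_1\MZ\simeq\ast$ (a zero slice is killed by $\f_1$; equivalently, Bloch--Levine motivic cohomology vanishes in negative weights), and both properties pass to arbitrary coefficients by your cofiber-sequence and filtered-colimit reduction, the colimit step being legitimate because the vanishing $\f_1\MB\simeq\ast$ only needs to be tested against the compact generators $\Sigma^{(k)}\Sigma^n\Sigma^\infty X_+$, $k\geq1$. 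The formal part — $[\MA,\E]\cong[\MA,\f_0\E]$ for effective $\MA$, the intertwining $\f_q\Sigma^{(w)}\simeq\Sigma^{(w)}\f_{q-w}$, and hence $\f_0\Sigma^{s+(w)}\MB\simeq\Sigma^{s+(w)}\f_{-w}\MB\simeq\ast$ for $w<0$ — is sound. Your route differs slightly in flavor from what the paper does in the neighboring weight-zero lemma, where the authors replace the source $\MZ$ by $\unit$ via $\s_0\unit\simeq\MZ$ and the vanishing $[\f_1\unit,\Sigma^s\MB]=0$; you instead keep the source as is and kill the target's effective cover, which is arguably the cleaner mechanism in negative weight since it avoids resolving $\MA$ at all and works uniformly in $s$, $A$, $B$.
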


\begin{lemma}\label{lem:endo-mz-weight0}
Let $S$ be a connected Dedekind scheme, and let $A$ and $B$ be abelian groups.
There is a natural isomorphism
\[
[\MA,\Sigma^{s}\MB]_S \cong \begin{cases}
\Hom_{\Ab}(A,B) & s = 0 \\
\Ext_{\Ab}(A,B) & s = 1\\
0 & t \neq 0, 1
\end{cases}
\]
\end{lemma}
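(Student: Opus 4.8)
The plan is to reduce the computation of $[\MA,\Sigma^s\MB]_S$ to the known absolute case over $\Spec(\Z)$ via Spitzweck's construction of motivic cohomology over Dedekind schemes \cite{zbMATH07015021}, and then to handle that absolute case by resolving $A$ and $B$ by free abelian groups so as to reduce to $A=B=\Z$, where the answer follows from Voevodsky's computation of the motivic Steenrod algebra together with the structure of the motivic cohomology of $\Spec(\Z)$ (and $\Spec(\mathbb{Q})$) in the relevant bidegrees. First I would observe that for finitely generated $A$ one has $\MA_S\simeq f^\ast \MA_{\Z}$ with $f\colon S\to \Spec(\Z)$ the structure map, and that $\MA$ commutes with filtered colimits in $A$, so it suffices to treat $A$ finitely generated and then pass to the colimit on the source; the same reasoning applies to $B$ by writing a general abelian group as a filtered colimit of its finitely generated subgroups. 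For finitely generated $A$ one has a two-term free resolution $0\to F_1\to F_0\to A\to 0$ of abelian groups, inducing a cofiber sequence $\MF_1\to \MF_0\to \MA$ (using that $\MA$ is additive in $A$ and exact on short exact sequences of abelian groups, which holds since $\M(-)$ is built from the motivic cohomology spectrum tensored with the corresponding Moore spectrum, i.e.\ $\MA\simeq \MZ\smash S^0_A$). Applying $[-,\Sigma^s\MB]_S$ turns the problem into a computation with $A$ free; similarly one reduces $B$ to be free. So the whole lemma comes down to computing $[\MZ,\Sigma^{s}\MZ]_S$, i.e.\ the motivic cohomology group $H^{s,0}(S;\Z)$ after identifying $[\MZ,\Sigma^{s,w}\MZ]_S\cong [\unit,\Sigma^{s,w}\MZ]_S = H^{s,w}(S;\Z)$, the first isomorphism being valid here by the vanishing of the relevant negative-weight maps (cf.\ the use of \cite[Theorem B.5]{bachmann-hoyois} in the proof of \Cref{thm:veff-KQ}, together with \Cref{lem:endo-mz-weightneg}).

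The key computational input is then that $H^{s,0}(S;\Z)$ is $\Z$ for $s=0$ and $0$ otherwise when $S$ is connected; this is part of the standard properties of Spitzweck's absolute motivic cohomology spectrum, since $H^{\ast,0}(S;\Z)$ coincides with $\Z$ in degree $0$ (the constant sheaf) and vanishes in all other degrees in weight $0$. Feeding this back through the two resolutions: with $B$ free, $[\MF_0',\Sigma^s\MF_0]_S$ computes to $\Hom(F_0',\ol{F_0\otimes \Z})$-type terms concentrated in $s=0$, and the long exact sequence in $s$ coming from $0\to F_1\to F_0\to A\to 0$ then produces exactly $\Hom_{\Ab}(A,B)$ in degree $0$, $\Ext_{\Ab}(A,B)$ in degree $1$ (as the cokernel of $\Hom(F_0,B)\to\Hom(F_1,B)$), and $0$ elsewhere — this is just the usual $\Ext$-long-exact-sequence bookkeeping once everything is concentrated in a single degree. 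The naturality in $A$ and $B$ is automatic from the naturality of all the constructions involved (the resolutions can be chosen functorially for maps, or one checks independence of the resolution in the standard way).

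The main obstacle is not the homological algebra but ensuring that the passage between bidegrees $[\MA,\Sigma^{s}\MB]_S$ (weight $0$ only, as in the statement) and the motivic cohomology groups $H^{s,0}(S;\Z)$ is legitimate over a general Dedekind scheme — in particular that $\MZ_S$ behaves well under the base change $f\colon S\to\Spec(\Z)$ and that the identification $[\MZ,\Sigma^{s,0}\MZ]_S\cong H^{s,0}(S;\Z)$ holds integrally, not merely rationally. The earlier excerpt already invokes exactly this circle of ideas (the citation \cite[Theorem B.5]{bachmann-hoyois}, valid since $[\Sigma^n\f_1\unit,\s_0\unit]=0$, and Spitzweck's construction), so I would lean on those results; the remaining care is to track that the Moore-spectrum smash $\MZ\smash S^0_A$ really does compute Spitzweck's $\MA$ and is exact in $A$, which is standard but should be stated explicitly. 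Everything else is formal.
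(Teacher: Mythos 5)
Your proposal shares the paper's two essential ingredients (a free resolution of $A$, and the identification $[\MZ,\Sigma^{s}\MB]_S\cong[\unit,\Sigma^{s}\MB]_S$ over a Dedekind base), but the extra reductions you layer on top contain a genuine gap. You want to also resolve $B$, reduce to $B=\Z$, and handle arbitrary abelian groups by writing both $A$ and $B$ as filtered colimits of finitely generated subgroups. Maps \emph{out} of an arbitrary wedge are a product, so an infinite-rank free resolution of $A$ is harmless; but maps \emph{into} an infinite wedge or a filtered homotopy colimit only decompose when the source is compact, and $\MA$ (in particular $\MZ$) is not a compact object of $\SH(S)$. So neither the step ``reduce $[\MA,\Sigma^{s}\MF]$ for $F$ free to $F=\Z$'' nor ``pass to the colimit over finitely generated subgroups of $B$'' is justified — and the second cannot be repaired even in principle, because the target of the claimed isomorphism does not commute with such colimits: $\Hom_{\Ab}(A,-)$ and $\Ext_{\Ab}(A,-)$ preserve filtered colimits only for finitely presented $A$ (for instance $\Hom_{\Ab}(\Q,\Q)=\Q$ while $\Hom_{\Ab}(\Q,B_i)=0$ for every finitely generated subgroup $B_i\subset\Q$). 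The colimit passage on the source side likewise hides $\lim^1$ terms that you do not address.

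The fix is exactly what the paper does, and it is shorter than your route: resolve only $A$ by a two-term free resolution $0\to F_1\to F_0\to A\to 0$ (this exists for \emph{any} abelian group, so no finiteness or colimit argument is needed), and compute $[\MZ,\Sigma^{s}\MB]_S$ directly for an \emph{arbitrary} coefficient group $B$. For that, use the cofiber sequence $\f_1\unit\to\unit\to\s_0\unit\simeq\s_0\MZ\simeq\MZ$ over a Dedekind domain (\cite[Theorem B.5]{bachmann-hoyois}) together with the vanishing $[\f_1\unit,\Sigma^{s}\MB]_S=0$, which holds because a $1$-effective spectrum maps trivially to any simplicial suspension of a zero-slice; note that this, rather than \Cref{lem:endo-mz-weightneg} (which concerns maps between motivic Eilenberg--MacLane spectra in negative weights and says nothing about $\f_1\unit$), is the correct justification of $[\MZ,\Sigma^{s}\MB]_S\cong[\unit,\Sigma^{s}\MB]_S$. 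Since $[\unit,\Sigma^{s}\MB]_S$ is $B$ for $s=0$ and $0$ otherwise over a connected base, your $\Hom$/$\Ext$ bookkeeping with the resolution of $A$ then finishes the argument without ever resolving $B$ or invoking any colimit or base-change reduction.
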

\begin{proof}
Choose a free resolution
\[ 0 \to F_1\to F_0 \to A\to 0\]
It induces a change-of-coefficients cofiber sequence of motivic Eilenberg-MacLane spectra.
One may conclude from the associated long exact sequence induced on $[-,\MB]_S$, as soon as $[\MZ,\Sigma^s\MB]_S$ has the correct values.
Since the unit morphism $\s_0\unit\to \s_0\MZ\simeq \MZ$ is an equivalence over any Dedekind domain by \cite[Theorem B.5]{bachmann-hoyois}, and $[\f_1\unit,\Sigma^s\MB]=0$ for all $s\in \Z$, there results a natural identification
\[ [\MZ,\Sigma^s\MB]_S\cong [\unit,\Sigma^s\MB]_S\cong \begin{cases} B & s=0 \\ 0 & s\neq 0 \end{cases}\]
\end{proof}

Although the following lemmata apply more generally to Dedekind domains, the restriction to $\Spec(\mathbb{Z})$ avoids complications from a nontrivial Picard group.

\begin{lemma}\label{lem:endo-mz-weight1}
In $\SH(\Z)$ there are isomorphisms
\begin{align*}
[\MZ,\Sigma^{s+1,1}\MZ] &\iso  \begin{cases} H^{1,1}(\Z) \iso \Z/2 & s=0 \\
\Z/2 & s=2 \\
0 & s\notin \{0,2\} \end{cases}
\\
[\MZ,\Sigma^{s+1,1}\MZ/2] &\iso  \begin{cases} h^{s+1,1}(\Z) \iso \Z/2 & s\in \{-1,0\} \\
\Z/2 & s\in \{1,2\} \\
0 & s\notin \{-1,0,1,2\} \end{cases}
\\
[\MZ/2,\Sigma^{s+1,1}\MZ] &\iso \begin{cases} \Z/2 & s\in \{0,1,2,3\} \\ 
0 & s\notin \{0,1,2,3\} \end{cases}
\end{align*}
\end{lemma}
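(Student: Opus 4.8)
The plan is to compute all three groups directly in $\SH(\Z)$ by resolving the source through the slice filtration of the motivic sphere and then feeding in the weight-one motivic cohomology of $\Spec(\Z)$, which is small because $\mathrm{Pic}(\Z)$ is trivial. Over a Dedekind domain one has $\s_0\unit\simeq\MZ$ (as already used in the proof of \Cref{lem:endo-mz-weight0}), so there is a cofiber sequence $\f_1\unit\to\unit\to\MZ$. Applying $[-,\Sigma^{s+1,1}\MB]$ with $\MB\in\{\MZ,\MZ/2\}$ reduces the two groups with source $\MZ$ to the motivic cohomology $[\unit,\Sigma^{\star,1}\MB]=H^{\star,1}(\Z;B)$ together with $[\f_1\unit,\Sigma^{\star,1}\MB]$; the group with source $\MZ/2$ will be deduced from the first by a change-of-coefficients argument. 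The cohomological input I would read off as $H^{1,1}(\Z)\cong\Z^\times\cong\Z/2$, $H^{2,1}(\Z)\cong\mathrm{Pic}(\Z)=0$ and $H^{n,1}(\Z)=0$ otherwise, and correspondingly $h^{0,1}(\Z)\cong h^{1,1}(\Z)\cong\Z/2$ with $h^{n,1}(\Z)=0$ otherwise, via the Bockstein.

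The remaining input is $[\f_1\unit,\Sigma^{\star,1}\MB]$. The slice tower of $\f_1\unit$ has associated graded $\s_q\unit$ for $q\ge 1$, and for $q\ge 2$ one has $\s_q\unit\simeq\Sigma^{(q)}N_q$ with $N_q$ a $0$-slice, so $[\s_q\unit,\Sigma^{k,1}\MB]\cong[N_q,\Sigma^{k-1+(1-q)}\MB]=0$ by \Cref{lem:endo-mz-weightneg}, extended from Eilenberg--MacLane spectra to an arbitrary $0$-slice through the cell structure of effective motives. Combined with the increasing connectivity of $\f_q\unit$ and the boundedness in each weight of the homotopy of $\MB$, this gives $[\f_1\unit,\Sigma^{\star,1}\MB]\cong[\s_1\unit,\Sigma^{\star,1}\MB]$. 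Since $\s_1\unit_\Z\simeq\Sigma^{(1)}\MZ/2$ by \Cref{lem:unit-kq-slices} and \Cref{cor:slices-kq}, this equals $[\Sigma^{(1)}\MZ/2,\Sigma^{k,1}\MB]\cong[\MZ/2,\Sigma^{k-1}\MB]$, which by \Cref{lem:endo-mz-weight0} is $\Hom_{\Ab}(\Z/2,B)$ for $k=1$, $\Ext_{\Ab}(\Z/2,B)$ for $k=2$, and $0$ otherwise.

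Plugging these into the long exact sequences yields the first display: the $\Z/2$ at $s=0$ is $H^{1,1}(\Z)$, and the $\Z/2$ at $s=2$ is the boundary of $[\f_1\unit,\Sigma^{2,1}\MZ]\cong\Ext_{\Ab}(\Z/2,\Z)$. The second and third displays then follow by applying $[\MZ,\Sigma^{\star,1}(-)]$, respectively $[-,\Sigma^{\star,1}\MZ]$, to $\MZ\xrightarrow{2}\MZ\to\MZ/2$ and reading off the short exact sequences $0\to A/2A\to C\to {}_2A'\to 0$ with $A,A'$ the integral groups just computed (so that each $\Z/2$ in the integral answer produces two neighbouring $\Z/2$'s). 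Naturality of the computation under pullback along $\Spec(\Q)\to\Spec(\Z)$ exhibits all these groups as subgroups of their $\Q$-counterparts, which is the form in which the lemma is applied in the proof of \Cref{lem:d1-v}, where one may alternatively invoke \cite[Lemma 4.1]{zbMATH07003144}.

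The main obstacle is the vanishing $[\s_q\unit,\Sigma^{\star,1}\MB]=0$ for $q\ge 2$: this means upgrading \Cref{lem:endo-mz-weightneg} to maps out of an arbitrary $0$-slice and controlling the slice tower of $\f_1\unit$ (a $\varprojlim$/$\varprojlim^{1}$ point) so that only $\s_1\unit$ contributes. Everything afterwards is bookkeeping with long exact sequences.
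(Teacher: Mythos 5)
Your overall skeleton is the same as the paper's: reduce along the cofiber sequence $\f_1\unit\to\unit\to\s_0\unit\simeq\MZ$, feed in $H^{\ast,1}(\Z)$, identify the contribution of $\f_1\unit$ with that of $\s_1\unit\simeq\Sigma^{(1)}\MZ/2$ so that \Cref{lem:endo-mz-weight0} applies, and then obtain the mod-$2$ and $\MZ/2$-source displays from the coefficient cofiber sequence $\MZ\xrightarrow{2}\MZ\to\MZ/2\to\Sigma\MZ$. The bookkeeping in your long exact sequences is correct and reproduces the stated groups. The problem is the step you yourself flag as the main obstacle: the isomorphism $[\f_1\unit,\Sigma^{\star,1}\MB]\cong[\s_1\unit,\Sigma^{\star,1}\MB]$. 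As written, your argument runs the entire slice tower of $\f_1\unit$, and needs (i) that every $\s_q\unit$ for $q\ge 2$ is $\Sigma^{(q)}$ of a $0$-slice, (ii) an unproved upgrade of \Cref{lem:endo-mz-weightneg} from Eilenberg--MacLane spectra to arbitrary $0$-slices, and (iii) a $\varprojlim/\varprojlim^{1}$ control of the tower via ``increasing connectivity of $\f_q\unit$''. Point (iii) is genuinely shaky over $\Z$: the $\Gm$-effective covers $\f_q\unit$ are not simplicially connective (effective categories are closed under simplicial desuspension), and convergence of the slice tower of the sphere over $\Z$ is exactly the delicate issue the paper's Remark after \Cref{proposition:strongconvergence} warns about. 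So the proposal, as it stands, has a gap at its central step.

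The gap is avoidable, and closing it the short way recovers the paper's proof: apply $[-,\Sigma^{\star,1}\MB]$ only to the single cofiber sequence $\f_2\unit\to\f_1\unit\to\s_1\unit$ and observe that $[\f_2\unit,\Sigma^{\star,1}\MB]=0$, because a $2$-effective spectrum maps trivially to any (simplicial shift of a) $1$-slice; equivalently, $[\Sigma^{(2)}\E',\Sigma^{k,1}\MB]\cong[\E',\Sigma^{k-2+(-1)}\MB]=0$ for $\E'$ effective, since motivic cohomology of smooth schemes over a Dedekind base vanishes in negative weights and this vanishing passes to the whole effective subcategory. This single orthogonality statement replaces your items (i)--(iii): no structure theory of higher slices, no extension of \Cref{lem:endo-mz-weightneg} to general $0$-slices, and no convergence of the tower is needed. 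One further small caveat: your closing remark that all these groups inject into their counterparts over $\Q$ ``by naturality'' is not automatic from the computation alone; it requires checking that the base-change maps hit the nonzero classes (which is how the paper uses the lemma in \Cref{lem:d1-v}), so it should not be presented as a formal consequence.
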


\begin{proof}
For the first statement, the cofiber sequence 
\[ \f_2\unit \to \f_1\unit \to \s_1\unit \to \Sigma\f_2\unit\]
induces a long exact sequence 
\[ \dotsm \to [\f_2\unit,\Sigma^{s,1}\MZ] \to [\s_1\unit,\Sigma^{s+1,1}\MZ] \to [\f_1\unit,\Sigma^{s+1,1}\MZ] \to [\f_2\unit,\Sigma^{s+1,1}\MZ] \to \dotsm \]
in which the outer terms displayed are zero, since any $2$-effective motivic spectrum 
maps trivially to any $1$-slice.
Hence we have $[\f_1\unit,\Sigma^{s+1,1}\MZ]\iso [\s_1\unit,\Sigma^{s+1,1}\MZ]$.
\Cref{lem:unit-kq-slices} shows $\s_1\unit\simeq \Sigma^{1,1}\MZ/2$, 
whence $[\f_1\unit,\Sigma^{s+1,1}\MZ]$ is given by \Cref{lem:endo-mz-weight0}.
In particular, the group is nontrivial only for $s=1$, in which case it is of order two.
The cofiber sequence 
\[ \f_1\unit \to \unit \to \s_0\unit\simeq \MZ \to \Sigma\f_1\unit\]
and the isomorphism 
\[ [\unit,\Sigma^{s+1,1}\MZ]\iso H^{s+1,1}(\Z)\iso \begin{cases} \Z^{\times} & s=0\\ 0 & s\neq 0\end{cases} \]
then induce the exact sequence
\[ 0\to [\MZ,\Sigma^{1,1}\MZ] \to H^{1,1}(\Z) \to 0\to [\MZ,\Sigma^{2,1)}\MZ] \to 0 
\to \Ext(\Z/2,\Z) \to [\MZ,\Sigma^{3,1}\MZ] \to 0 \]
giving the first identification.
For $s=0$, the isomorphism is induced by multiplication with the unit $-1$, 
while the second nontrivial operation factors through
$\Sigma^{1,1}(\partial^2_\infty \MZ/2\to \Sigma\MZ)$.
The remaining two isomorphisms follow from the cofiber sequence
\[ \MZ\xrightarrow{2}\MZ \xrightarrow{\pr^\infty_2}{\MZ/2}\xrightarrow{\partial^2_\infty}\Sigma \MZ.\]
In particular, the unique nonzero operation $\MZ\to \Sigma^{3,1}\MZ$ factors as
\[ \MZ \xrightarrow{\pr^\infty_2}\MZ/2 \xrightarrow{\Sq^2} \Sigma^{2,1}\MZ/2 \xrightarrow{\partial^2_\infty} \Sigma^{3,1}\MZ\]
where the middle term is the unique nonzero map.
\end{proof}
\begin{lemma}\label{lem:unique-v-operation}
In $\SH(\Z)$ there is a unique nonzero map
\[ \vs_0\KQ\to \Sigma\vs_1\KQ \simeq \Sigma^{3,1}\MZ/2\]
\end{lemma}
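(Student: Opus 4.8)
The plan is to compute the group $[\vs_0\KQ,\Sigma^{3,1}\MZ/2]_{\SH(\Z)}$ and show it has order exactly two. The natural tool is the cofiber sequence \eqref{triangle}, namely $\Sigma^{1}\HZ/2\to \vs_0\KQ\to \cHZ$, which reduces the problem to understanding $[\cHZ,\Sigma^{3,1}\MZ/2]$ and $[\Sigma\HZ/2,\Sigma^{3,1}\MZ/2]=[\MZ/2,\Sigma^{2,1}\MZ/2]$ together with the connecting maps. First I would apply $[-,\Sigma^{3,1}\MZ/2]$ to \eqref{triangle} to get a long exact sequence
\[
[\cHZ,\Sigma^{3,1}\MZ/2]\to [\vs_0\KQ,\Sigma^{3,1}\MZ/2]\to [\Sigma\HZ/2,\Sigma^{3,1}\MZ/2]\xrightarrow{\partial} [\cHZ,\Sigma^{4,1}\MZ/2].
\]
The third term is $[\MZ/2,\Sigma^{2,1}\MZ/2]_{\SH(\Z)}$, which contains the class of $\Sq^2$; by the mod-$2$ motivic Steenrod algebra over $\Z$ (or by reduction to $\Spec(\Q)$ via \Cref{lem:endo-mz-weight1}, whose proof already controls such weight-one operations) this group is known to be small.

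**Key steps.**
(1) Use the defining pullback square for $\cHZ$ in \Cref{def:mwz}, or equivalently the cofiber sequence $\f_1\HWZ\to \cHZ\to \s_0\cHZ$ together with \Cref{prop:slices-cHZ} which gives $\s_0\cHZ\simeq \HZ\oplus\Sigma^2\MZ/2$ and $\f_1\HWZ\simeq\Sigma^{(1)}\HWZ$, to reduce maps out of $\cHZ$ to maps out of $\HZ$, $\MZ/2$, and $\HWZ$. (2) Compute $[\HZ,\Sigma^{3,1}\MZ/2]=h^{3,1}(\Z)$ and $[\Sigma^2\MZ/2,\Sigma^{3,1}\MZ/2]=[\MZ/2,\Sigma^{1,1}\MZ/2]=h^{1,1}(\Z)\cong\Z/2$, the latter generated by $\rho=-1$ (multiplication by the class of $-1$ in $\pi_{-1,-1}$); one checks $h^{3,1}(\Z)=0$ since $H^{p,1}(\Z)$ vanishes for $p>2$. (3) Handle the contribution of $\f_1\HWZ\simeq\Sigma^{(1)}\HWZ$: the relevant group is $[\Sigma^{(1)}\HWZ,\Sigma^{3,1}\MZ/2]=[\HWZ,\Sigma^{2}\MZ/2]$, and since $\HWZ$ is effective with $\s_q\HWZ\simeq \Sigma^{(q)}(\MZ/2\oplus\Sigma^2\MZ/2)$ by \Cref{prop:slices-cHZ}, an elementary slice-filtration argument (maps from a $q$-effective spectrum to a lower slice vanish) reduces this to endomorphism computations handled by \Cref{lem:endo-mz-weight0}, giving a group of order at most two, generated by the projection $\HWZ\to\s_0\HWZ\to\MZ/2$ followed by the Bockstein-type map. (4) Assemble: the term $[\cHZ,\Sigma^{3,1}\MZ/2]$ is then at most $\Z/2$, and combining with the $[\Sigma\HZ/2,\Sigma^{3,1}\MZ/2]$-term — which contributes the class of $\Sq^2\pr^\infty_2$ appearing in \Cref{lem:d1-v} as the known nonzero first slice differential $\s_0\vd^1$ — shows there is a nonzero map, hence the group is exactly $\Z/2$. (5) For nonvanishing, exhibit the map $\vs_0\KQ\to\cHZ\to\s_0\cHZ\to\Sigma^2\MZ/2\to\Sigma^{3,1}\MZ/2$ built from the Bockstein, or alternatively note that $\vd^1_0$ of \Cref{section:tfvesdfkq} is a nonzero such map because its image detects the nonzero first slice differential of \Cref{lem:d1-v} under $\vf_1\vs_0\KQ\to\vs_1\KQ$; either way the group is nontrivial.

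**Main obstacle.**
The hard part will be step (3): bounding the contribution of $\f_1\HWZ\simeq\Sigma^{(1)}\HWZ$, since $\HWZ$ is not a finite wedge of Eilenberg–MacLane spectra and its higher slices $\Sigma^{(q)}(\MZ/2\oplus\Sigma^2\MZ/2)$ for $q\geq 1$ could in principle contribute to $[\HWZ,\Sigma^2\MZ/2]$. The point is a connectivity/weight bookkeeping: a $q$-effective source with $q\geq 1$ admits only the zero map to $\Sigma^2\MZ/2$ for degree reasons after restricting to weight-one targets, so only $\s_0\HWZ$ matters, and \Cref{lem:endo-mz-weight1} pins down $[\MZ/2,\Sigma^{2,1}\MZ/2]$ and $[\MZ/2,\Sigma^{1,1}\MZ/2]$. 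One must be careful that the relevant Steenrod operation $\Sq^2$ and the twisted operation $\rho\Sq^1$ either coincide or differ by an element that dies in the quotient — this is exactly the phenomenon already isolated in \Cref{lem:d1-v}, so the calculation here is consistent with, and in fact a corollary of, the degree-zero part of that differential; I would cite \Cref{lem:d1-v} to avoid redoing the Steenrod-algebra bookkeeping.
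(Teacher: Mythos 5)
Your route is genuinely different from the paper's (the paper never touches $[\cHZ,\Sigma^{3,1}\MZ/2]$: it runs up the very effective tower, getting $\Z/2\cong[\vs_1\KQ,\vs_1\KQ]\cong[\vf_1\KQ,\vs_1\KQ]$ from the vanishing of maps out of $2$-very effective spectra, shows $[\kq,\Sigma\vs_1\KQ]=0$ via the Wood sequence for $\kq$ and $\kgl$, and then compares with fields of characteristic $\neq 2$), but as written your argument has a genuine gap in both the upper bound and the nonvanishing. The upper bound fails because in step (2) you identify groups of stable operations with motivic cohomology groups of the base, which is only valid in weight $\leq 0$ (\Cref{lem:endo-mz-weight0}): the group $[\HZ,\Sigma^{3,1}\MZ/2]$ is \emph{not} $h^{3,1}(\Z)=0$ but is $\Z/2$, generated by a nontrivial composite of Steenrod operations --- this is exactly the content of \Cref{lem:endo-mz-weight1} (case $s=2$). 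Likewise $[\MZ/2,\Sigma^{1,1}\MZ/2]$ contains $\tau\Sq^1$ as well as $\rho$, so it is not $h^{1,1}(\Z)$, and the third term of your long exact sequence, $[\MZ/2,\Sigma^{2,1}\MZ/2]$, contains the distinct classes $\Sq^2$ and $\rho\Sq^1$ and hence has order at least four. With the corrected sizes, your bookkeeping no longer bounds $[\vs_0\KQ,\Sigma^{3,1}\MZ/2]$ by two; you would have to compute the connecting map into $[\cHZ,\Sigma^{4,1}\MZ/2]$ and the image of $[\cHZ,\Sigma^{3,1}\MZ/2]$ precisely, which the sketch does not do. (A corrected version of your ``filter the source'' idea is essentially \Cref{lem:vtomod2weight1}: run the slice spectral sequence of $\vs_0\KQ$ itself, using \Cref{lem:convergence-v} and the differential of \Cref{lem:d1-v}; its case $s=2$ gives the order-two upper bound without ever needing weight-one maps out of $\cHZ$ or $\HWZ$.)

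The nonvanishing step (5) is also flawed. Your explicit witness $\vs_0\KQ\to\cHZ\to\Sigma^{2}\MZ/2\to\Sigma^{3,1}\MZ/2$ is zero: by \Cref{lem:mztilde-weight0} and \Cref{rem:v-fiber} the unique nontrivial map $\cHZ\to\Sigma^{2}\MZ/2$ is the connecting map of the cofiber sequence \eqref{triangle}, so precomposing with $\vs_0\KQ\to\cHZ$ gives two consecutive maps of a cofiber sequence. The fallback, that $\vd^1_0\KQ\neq 0$ ``because of \Cref{lem:d1-v}'', conflates the internal slice differential of $\vs_0\KQ$ with the very effective slice differential of $\KQ$; the nontriviality of $\vd^1_0\KQ$ is established only later (\Cref{cor:vd-KQ}) via the hyperbolic map and the corresponding statement for $\KGL$, and is not a consequence of \Cref{lem:d1-v}. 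Note also that knowing the group is $\Z/2$ over fields does not by itself produce a nonzero class over $\Z$: the paper gets nontriviality by exhibiting a surjection from the order-two group $[\vf_1\KQ,\vs_1\KQ]$ onto $[\vs_0\KQ,\Sigma\vs_1\KQ]$ compatibly with base change and using that over fields of characteristic not two this surjection is an isomorphism; some such lifting or comparison argument is needed in any repair of your proof.
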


\begin{proof}
The vanishing $[\E,\vs_{1}\KQ]=0$ for $2$-very effective $\E$ \cite[Lemma 7]{Bachmann} applied 
to the long exact sequences arising from the homotopy cofiber sequences
\[ \vf_{n+1}\KQ \to \vf_n\KQ \to \vs_n\KQ \]
together with the contractibility of $\holim_n \vf_n\KQ$, yield isomorphisms
$\Z/2\iso [\vs_1\KQ,\vs_1\KQ] \iso [\vf_1\KQ,\vs_1\KQ]$.
The computation $[\kq,\Sigma\vs_1]=0$ can be derived from \Cref{thm:woodkq} and similar statements for $\kgl$, which in turn rely on the canonical map 
\[ \MZ\simeq \s_0\kgl \to \Sigma^{3,1}\kgl \simeq \Sigma\f_1\kgl \to \Sigma\s_1\kgl\simeq \Sigma^{3,1}\MZ\]
being the unique nonzero element from \Cref{lem:endo-mz-weight1}.
It follows that a group of order two surjects onto $[\vs_{0}\KQ,\Sigma\vs_{1}\KQ]$.
Since over fields of characteristic not two this surjection is an isomorphism of groups with 
precisely two elements, the result follows.
\end{proof}

For the identification of a second differential, the following will be used.

\begin{lemma}\label{lem:endo-weight2-mz}
The group $[\MZ,\Sigma^{5,2}\MZ]$ in $\SH(\Z)$ is cyclic of order six.
A generator is given by the sum of the elements
\[ \MZ\xrightarrow{\pr^\infty_2}\MZ/2\xrightarrow{\Sq^4}\Sigma^{4,2}\MZ/2\xrightarrow{\partial^2_\infty} \Sigma^{5,2}\MZ\]
and 
\[ \MZ\xrightarrow{\pr^\infty_3}\MZ/3
\xrightarrow{{\mathsf{P}}^1}\Sigma^{4,2}\MZ/3\xrightarrow{\partial^3_\infty}\Sigma^{5,2}\MZ\]
\end{lemma}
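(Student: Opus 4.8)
The plan is to compute $[\MZ,\Sigma^{5,2}\MZ]$ via the low part of the effective slice tower of the motivic sphere $\unit$ over $\Spec(\Z)$, using the endomorphism computations of \Cref{lem:endo-mz-weight0,lem:endo-mz-weight1}, the slices $\s_1\unit\simeq\Sigma^{(1)}\MZ/2$ and $\s_2\unit\simeq\Sigma^{(2)}\MZ/2\oplus\Sigma^{1+(2)}\MZ/12$ recorded in \Cref{lem:unit-kq-slices,lem:2-slice-sphere}, and the first slice differential of $\unit$ from \cite{zbMATH07003144}. First I would apply $[-,\Sigma^{4,2}\MZ]$ to the cofiber sequence $\f_1\unit\to\unit\to\s_0\unit\simeq\MZ$. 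Since $H^{4,2}(\Z)=H^{5,2}(\Z)=0$ --- the weight-$2$ motivic cohomology of $\Spec(\Z)$ being concentrated in cohomological degrees $1$ and $2$, see \Cref{thm:HZ} --- the connecting map yields a natural isomorphism $[\MZ,\Sigma^{5,2}\MZ]\cong[\f_1\unit,\Sigma^{4,2}\MZ]$.

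Next I would feed in the cofiber sequence $\f_2\unit\to\f_1\unit\to\s_1\unit\xrightarrow{\partial}\Sigma\f_2\unit$. As $\f_3\unit$ is $3$-effective it maps trivially to $2$-slices and their simplicial suspensions, so $[\f_2\unit,\Sigma^{j,2}\MZ]\cong[\s_2\unit,\Sigma^{j,2}\MZ]$ for $j\in\{3,4\}$. Plugging in the descriptions of $\s_1\unit$ and $\s_2\unit$, \Cref{lem:endo-mz-weight1} gives $[\s_1\unit,\Sigma^{4,2}\MZ]\cong[\s_1\unit,\Sigma^{5,2}\MZ]\cong\Z/2$, while \Cref{lem:endo-mz-weight0} gives $[\f_2\unit,\Sigma^{3,2}\MZ]\cong\Z/2$ and $[\f_2\unit,\Sigma^{4,2}\MZ]\cong\Ext(\Z/12,\Z)\cong\Z/12$. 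The long exact sequence for $[-,\Sigma^{4,2}\MZ]$ therefore reads
\[ \Z/2\xrightarrow{\ \partial^\ast\ }\Z/2\longrightarrow[\f_1\unit,\Sigma^{4,2}\MZ]\longrightarrow\Z/12\xrightarrow{\ \partial^\ast\ }\Z/2, \]
both boundary maps being induced by the first slice differential $\mathsf{d}^{1}\colon\s_1\unit\to\Sigma\s_2\unit$ of $\unit$. The left-hand $\partial^\ast$ is precomposition with the $\Sigma^{(2)}\MZ/2$-component of $\mathsf{d}^{1}$, a nonzero mod-$2$ operation of bidegree $(2,1)$ (namely $\Sq^2$, up to the $\rho\Sq^1$ correction); by \Cref{lem:endo-mz-weight1} this forces it to be an isomorphism, whence $[\s_1\unit,\Sigma^{4,2}\MZ]$ maps to zero and $[\f_1\unit,\Sigma^{4,2}\MZ]$ injects into $\Z/12$. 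The right-hand $\partial^\ast$ is, under $[\f_2\unit,\Sigma^{4,2}\MZ]\cong[\s_2\unit,\Sigma^{4,2}\MZ]$, precomposition with the $\Sigma^{1+(2)}\MZ/12$-component of $\mathsf{d}^{1}$, and the explicit description of the sphere's first slice differential shows it is surjective. Exactness then gives $[\f_1\unit,\Sigma^{4,2}\MZ]\cong\ker(\Z/12\to\Z/2)\cong\Z/6$, i.e. $[\MZ,\Sigma^{5,2}\MZ]\cong\Z/6$.

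To identify the generator I would check that the two composites in the statement are nonzero of orders $2$ and $3$. One has $2\cdot(\partial^2_\infty\Sq^4\pr^\infty_2)=\partial^2_\infty\Sq^4(2\cdot\pr^\infty_2)=0$ since $2=0$ on $\MZ/2$, so this element has order dividing $2$; and it is nonzero because $\Sq^5=\Sq^1\Sq^4\neq 0$ in the motivic Steenrod algebra forces $\partial^2_\infty\Sq^4\neq 0$ as a map $\MZ/2\to\Sigma^{5,2}\MZ$, while $H^{4,2}(\Z)=0$ implies that precomposing with $\pr^\infty_2$ cannot annihilate a nonzero such map. The mod-$3$ composite $\partial^3_\infty\mathsf{P}^1\pr^\infty_3$ is handled identically, using $\beta\mathsf{P}^1\neq 0$. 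Since $[\MZ,\Sigma^{5,2}\MZ]$ is cyclic of order six, the sum of an order-$2$ and an order-$3$ element generates it, which is the assertion. (One may also argue in reverse: these two nonzero torsion classes already show $6\mid\#[\MZ,\Sigma^{5,2}\MZ]$, which together with the injection into $\Z/12$ established above leaves only $\Z/6$ or $\Z/12$.)

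The main obstacle is the surjectivity of the second boundary map $\Z/12\to\Z/2$: this is the only input not formally forced by the weight-$\le 1$ endomorphism lemmas together with $\s_1\unit$ and $\s_2\unit$, and it amounts to ruling out that the $2$-primary part of $[\MZ,\Sigma^{5,2}\MZ]$ is $\Z/4$ rather than $\Z/2$. The cleanest route I see is to quote the explicit first slice differential of the motivic sphere over $\Spec(\Q)$ from \cite{zbMATH07003144} and transport it along the base change $\Spec(\Q)\to\Spec(\Z)$ --- under which every group in the exact sequence above is preserved, being computed by \Cref{lem:endo-mz-weight0,lem:endo-mz-weight1} --- or, alternatively, to carry out the corresponding bookkeeping in the mod-$2$ motivic Steenrod algebra over $\Z$ in bidegrees $(4,2)$ and $(5,2)$ directly.
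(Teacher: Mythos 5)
Your computation of the group itself is correct and follows the paper's own route essentially verbatim: you reduce to $[\f_1\unit,\Sigma^{4,2}\MZ]$ using $H^{4,2}(\Z)=H^{5,2}(\Z)=0$, extract the five-term sequence $\Z/2\to\Z/2\to[\MZ,\Sigma^{5,2}\MZ]\to\Ext(\Z/12,\Z)\to\Z/2$ from $\f_2\unit\to\f_1\unit\to\s_1\unit$ together with $[\f_2\unit,\Sigma^{j,2}\MZ]\cong[\s_2\unit,\Sigma^{j,2}\MZ]$, and identify both boundary maps with precomposition by the sphere's first slice differential $(\Sq^2,\inc^2_{12}\Sq^2\Sq^1)$, imported from \cite{zbMATH07003144} and transported to $\SH(\Z)$ via \Cref{lem:endo-mz-weight1}; this is exactly the paper's argument, and your conclusion that the group is cyclic of order six is sound, including your identification of the surjectivity of $\Ext(\Z/12,\Z)\to\Z/2$ as the one nonformal input.

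There is, however, a genuine gap in your identification of the generator. You argue that $\partial^2_\infty\Sq^4\pr^\infty_2\neq 0$ because $\partial^2_\infty\Sq^4\neq 0$ and ``$H^{4,2}(\Z)=0$ implies that precomposing with $\pr^\infty_2$ cannot annihilate a nonzero such map.'' The relevant obstruction group is not the motivic cohomology $H^{4,2}(\Z)=[\unit,\Sigma^{4,2}\MZ]$ of the base: applying $[-,\Sigma^{5,2}\MZ]$ to $\MZ\xrightarrow{2}\MZ\xrightarrow{\pr^\infty_2}\MZ/2\xrightarrow{\partial^2_\infty}\Sigma\MZ$ shows that the kernel of $(\pr^\infty_2)^\ast\colon[\MZ/2,\Sigma^{5,2}\MZ]\to[\MZ,\Sigma^{5,2}\MZ]$ is the image of $(\partial^2_\infty)^\ast$ from $[\MZ,\Sigma^{4,2}\MZ]$, i.e.\ maps of motivic spectra out of $\MZ$, and this group does not vanish --- running your own slice-tower argument one degree lower gives $[\MZ,\Sigma^{4,2}\MZ]\cong[\f_1\unit,\Sigma^{3,2}\MZ]\cong\Z/2$. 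So the vanishing argument fails as stated, and you must instead rule out that $\partial^2_\infty\Sq^4$ equals $g\circ\partial^2_\infty$ for the nonzero $g\in[\MZ,\Sigma^{4,2}\MZ]$, for instance by reducing mod $2$ (the left-hand side reduces to $\Sq^5$, while anything in the image of $(\partial^2_\infty)^\ast$ reduces into the $h^{\ast,\star}(\Z)$-span of admissible monomials ending in $\Sq^1$), or by tracking the orders of the two displayed composites through the injection $[\MZ,\Sigma^{5,2}\MZ]\hookrightarrow\Ext(\Z/12,\Z)$ you already established, which is in effect what the paper does. For the mod-$3$ composite the repair is immediate once the correct group is used, since $[\MZ,\Sigma^{4,2}\MZ]$ has no $3$-torsion; your parenthetical ``argue in reverse'' fallback does not help, as it relies on the same nonvanishing claims.
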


\begin{proof}
We will use the identifications $\s_0\unit\simeq \MZ$ and $\s_1\unit\simeq \Sigma^{1,1}\MZ/2$ 
from \Cref{lem:unit-kq-slices} and 
$\s_2\unit\simeq \Sigma^{2,2}\oplus \Sigma^{3,2}\MZ/12$ from \Cref{lem:2-slice-sphere}, 
together with their defining cofiber sequences.
Since $3$-effective motivic spectra map trivially to $2$-slices, 
we have 
$$
[\s_2\unit,\Sigma^{s+2,2}\MZ]
\cong 
[\f_2\unit,\Sigma^{s+2,2}\MZ]
$$ 
for any $s\in \Z$.
Then the vanishing $[\unit,\Sigma^{s+2,2}\MZ]=0$ for $s\notin \{-1,0\}$ (actually, just $s>0$) 
is used to conclude $[\MZ,\Sigma^{s+2,2}\MZ]\iso [\f_1\unit,\Sigma^{s+1,2}\MZ]$ and an exact sequence
\[ [\s_2\unit,\Sigma^{s,2}\MZ] \to [\s_1\unit,\Sigma^{s+1,2}\MZ] \to 
[\MZ,\Sigma^{s+2,2}\MZ] \to [\s_2\unit,\Sigma^{s+1,2}\MZ] \to [\s_1\unit,\Sigma^{s+2,2}\MZ] \]
for $s>1$.
\Cref{lem:endo-mz-weight1} and \Cref{lem:endo-mz-weight0} show that $[\MZ,\Sigma^{s+2,2}\MZ]=0$ for $s>3$, and identify the above exact sequence with
\begin{equation}\label{eq:endo-mz-weight2}
\Ext(\Z\! / 2,\Z) \to [\MZ/2,\Sigma^{3,1}\MZ] \to [\MZ,\Sigma^{5,2}\MZ]\to \Ext(\Z\! /\! 12,\Z)\to [\MZ/2,\Sigma^{4,1}\MZ] 
\end{equation}
for $s=3$, 
where the first two groups, as well as the last, are of order two.
The first and last maps are induced by the initial slice differential $\s_{1}\unit \to \Sigma\s_{2}\unit$, which, by \Cref{lem:endo-mz-weight1} and \cite{zbMATH07003144}, corresponds in $\SH(\Z)$ to the pair $(\Sq^{2},, \inc^{2}_{12}\Sq^{2}\Sq^{1})$.
Consequently, the first map in \eqref{eq:endo-mz-weight2} detects the nontrivial class $\partial^{2}_{\infty}\Sq^{2}$, while the last detects $\partial^{2}_{\infty}\Sq^{2}\Sq^{1}=\partial^{12}_{\infty}\inc^{2}_{12}\Sq^{2}\Sq^{1}$.
Hence the group is cyclic of order six.
Via the cofiber sequence
\[ \MZ\xrightarrow{p}\MZ\xrightarrow{\pr^\infty_p}\MZ/p \xrightarrow{\partial^p_\infty} \Sigma\MZ\]
for $p\in \{2,3\}$, this yields elements
$\Sq^4\colon \MZ/2\to \Sigma^{4,2}\MZ/2$ and $\mathsf{P}^1\colon \MZ/3\to \Sigma^{4,2}\MZ/3$ 
which pull back to the classical motivic Steenrod operations of the same name, giving rise to the displayed nontrivial elements of orders $2$ and $3$, respectively.
\end{proof}

Now we extend the above computation to Milnor-Witt motivic cohomology.

\begin{lemma}\label{lem:mztilde-weight0}
Let $S$ be a connected Dedekind scheme. 
In $\SH(\S)$ there are isomorphisms
\[
[\cHZ, \Sigma^{t}\HZ/2] 
\cong
\begin{cases}
\Z/2 & t = 0 \\
\Z/2 & t = 2 \\
\Z/2 & t = 3 \\
0 & t \notin \{0, 2, 3\}
\end{cases}
\]

\[
[\cHZ, \Sigma^{t}\HZ] \cong
\begin{cases}
\Z & t = 0 \\
\Z/2 & t = 3 \\
0 & t \neq 0, 3
\end{cases}
\]
\end{lemma}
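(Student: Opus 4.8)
The plan is to reduce both computations to the weight-zero endomorphism groups of ordinary motivic cohomology recorded in \Cref{lem:endo-mz-weight0}, by running the cofiber sequences \eqref{triangle2} and \eqref{triangle} attached to $\vs_0\KQ$. From \Cref{lem:endo-mz-weight0} (applied with $A,B\in\{\Z,\Z/2\}$; since every object in sight has weight zero, the Picard group plays no role over a connected Dedekind base, so no base change is required) one has $[\HZ,\Sigma^t\HZ]\cong\Z$ for $t=0$ and $0$ otherwise, $[\HZ,\Sigma^t\HZ/2]\cong\Z/2$ for $t=0$ and $0$ otherwise, $[\HZ/2,\Sigma^t\HZ]\cong\Z/2$ for $t=1$ and $0$ otherwise, and $[\HZ/2,\Sigma^t\HZ/2]\cong\Z/2$ for $t\in\{0,1\}$ (spanned by the identity and $\Sq^1$) and $0$ otherwise.

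The first substantive step is the slice-theoretic vanishing $[\Sigma^{(1)}\HWZ,\Sigma^t\HZ]=0=[\Sigma^{(1)}\HWZ,\Sigma^t\HZ/2]$ for all $t$. Since $\HWZ$ is effective (\Cref{prop:slices-cHZ}), the spectrum $\Sigma^{2,1}\HWZ$ is $1$-effective, whereas $\Sigma^t\HZ$ and $\Sigma^t\HZ/2$ are simplicial suspensions of the $0$-slices $\HZ$ and $\HZ/2$; after rewriting $\Sigma^{(1)}\HWZ=\Sigma^{-1,0}\Sigma^{2,1}\HWZ$, the standard fact that a $1$-effective motivic spectrum maps trivially to every simplicial suspension of a $0$-slice — available over Dedekind schemes via Spitzweck's slice formalism — gives the claim. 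Applying $[-,\Sigma^t\HZ]$ and $[-,\Sigma^t\HZ/2]$ to \eqref{triangle2} then collapses the long exact sequence into isomorphisms $[\vs_0\KQ,\Sigma^t\HZ]\cong[\HZ,\Sigma^t\HZ]$ and $[\vs_0\KQ,\Sigma^t\HZ/2]\cong[\HZ,\Sigma^t\HZ/2]$. In particular both groups vanish for $t\neq 0$ and equal $\Z$, respectively $\Z/2$, for $t=0$.

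Next I would apply $[-,\Sigma^t\HZ]$ and $[-,\Sigma^t\HZ/2]$ to \eqref{triangle}, namely $\Sigma^1\HZ/2\to\vs_0\KQ\to\cHZ$. The neighbouring terms of $[\cHZ,\Sigma^t\HZ]$ in the resulting exact sequence are $[\vs_0\KQ,\Sigma^{t-1}\HZ]$, $[\HZ/2,\Sigma^{t-2}\HZ]$ and $[\vs_0\KQ,\Sigma^t\HZ]$; these are nonzero only for $t=0$ and, because of the middle term, $t=3$, and since $[\vs_0\KQ,\Sigma^2\HZ]=0$ the connecting map $[\HZ/2,\Sigma^1\HZ]\to[\cHZ,\Sigma^3\HZ]$ is injective, giving $[\cHZ,\Sigma^t\HZ]=\Z$ for $t=0$, $\Z/2$ for $t=3$, and $0$ otherwise. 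The same argument with $\HZ/2$-coefficients, now using that $[\HZ/2,\Sigma^m\HZ/2]\neq 0$ exactly for $m\in\{0,1\}$ while $[\vs_0\KQ,\Sigma^t\HZ/2]\neq 0$ only for $t=0$, pins down $[\cHZ,\Sigma^t\HZ/2]=\Z/2$ for $t\in\{0,2,3\}$ and $0$ otherwise; again the relevant connecting maps are injective because the obstructing groups $[\vs_0\KQ,\Sigma^{t-1}\HZ/2]$ vanish in the degrees that matter.

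The main obstacle is bookkeeping rather than a genuine difficulty: one must verify, degree by degree, that the connecting homomorphisms in the two long exact sequences are injective — so that each answer is the full cyclic group and not a proper quotient — which is precisely where the vanishing of $[\vs_0\KQ,\Sigma^{t-1}\HZ]$ and $[\vs_0\KQ,\Sigma^{t-1}\HZ/2]$ from the second step is needed, and one must be sure that $[\Sigma^{(1)}\HWZ,\Sigma^t\HZ]=0$ (and its mod $2$ version) really holds over an arbitrary connected Dedekind base, not merely over a field. A parallel route, which avoids \eqref{triangle2}, instead uses the defining homotopy pullback square for $\cHZ$ together with the identifications $[\HWZ,\Sigma^t\HZ]\cong[\s_0\HWZ,\Sigma^t\HZ]$ and $[\HWZ,\Sigma^t\HZ/2]\cong[\s_0\HWZ,\Sigma^t\HZ/2]$ coming from effectivity of $\HWZ$, with $\s_0\HWZ\simeq\HZ/2\oplus\Sigma^2\HZ/2$ by \Cref{prop:slices-cHZ}.
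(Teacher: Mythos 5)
Your argument is correct, but it follows a genuinely different route from the paper. The paper computes $[\cHZ,\Sigma^{t,w}\HZ/2]$ (and the integral analogue) by running the slice spectral sequence of $\cHZ$ itself: strong convergence is supplied by \Cref{lem:convergence-v} together with $\eta$-completeness of the target, the $E^1$-page is identified via the slices of $\cHZ$ from \Cref{prop:slices-cHZ} (in particular $\s_0\cHZ\simeq\s_0\vs_0\KQ\oplus\Sigma^{2}\MZ/2$), and in weight $0$ all rows $q\geq 1$ die by the negative-weight vanishing of \Cref{lem:endo-mz-weightneg}, so everything collapses to \Cref{lem:endo-mz-weight0}. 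You instead bypass the spectral sequence entirely: the orthogonality of $1$-effective spectra against simplicial suspensions of $0$-slices (applied to $\Sigma^{(1)}\HWZ$, which is $1$-effective since $\HWZ$ is effective) turns \eqref{triangle2} into the isomorphisms $[\vs_0\KQ,\Sigma^t\HZ]\cong[\HZ,\Sigma^t\HZ]$ and $[\vs_0\KQ,\Sigma^t\HZ/2]\cong[\HZ,\Sigma^t\HZ/2]$, and then the long exact sequences of \eqref{triangle} plus \Cref{lem:endo-mz-weight0} pin down the answer; your injectivity checks for the connecting maps (using $[\vs_0\KQ,\Sigma^{t-1}\HZ]=0$ resp. $[\vs_0\KQ,\Sigma^{t-1}\HZ/2]=0$ in the relevant degrees $t=2,3$) are exactly what is needed, and the one slightly loose remark about which neighbouring terms are nonzero (the term $[\vs_0\KQ,\Sigma^{t-1}\HZ]$ is nonzero at $t=1$) is harmless because at $t=1$ the adjacent terms already vanish. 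What each approach buys: yours is shorter and avoids any convergence input (\Cref{lem:convergence-v}, $\eta$-completeness), relying only on the two cofiber sequences, formal slice orthogonality, and \Cref{lem:endo-mz-weight0}; the paper's spectral-sequence setup, by contrast, is the same machine reused for the weight-one computation in \Cref{lem:vtomod2weight1}, where your weight-zero shortcut (which kills everything of positive weight in one stroke) no longer applies, so the paper's extra investment pays off later. Your suggested parallel route through the defining pullback square and $\s_0\HWZ\simeq\HZ/2\oplus\Sigma^2\HZ/2$ is also viable and is closest in spirit to how the paper exploits \Cref{prop:slices-cHZ}.
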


\begin{proof}
The slice spectral sequence
\begin{equation}
\label{eq:ss-cHZ}
E^{t,q,w}_1 \cong 
[\s_{q}(\cHZ),\Sigma^{t,w}\HZ/2]
\implies 
[\cHZ, \Sigma^{t,w}\HZ/2]
\end{equation}
with differentials $E^{t,q,w}_r \to E^{t+1,q-r,w}_r$ is strongly convergent.
This follows from Lemma~\ref{lem:convergence-v}, the fact that $\MZ/2$ is $\eta$-complete, and the vanishing $[\s_{q}(\cHZ),\Sigma^{s+1,1}\MZ/2]=0$ for $q>1$. 

The computation essentially follows from the determination of the slices of $\vs_0\KQ$ in \Cref{prop:slices-cHZ} together with the vanishing of motivic cohomology operations in negative weights (see \Cref{lem:endo-mz-weightneg}).
The group $E^{t,q,w}_1 = 0$ for $q < 0$.
For $q=0$, where $\s_0\cHZ\iso \s_0\vs_0\KQ\oplus \Sigma^{2,0}\MZ/2$, one obtains 
\[ E^{t,0,w}_1=[\MZ\oplus \Sigma^{2,0}\MZ/2,\Sigma^{t,w}\MZ/2] = [\MZ,\Sigma^{t,w}\MZ/2]\oplus [\MZ/2,\Sigma^{t-2,w}\MZ/2] \]

In weight $w=0$, 
the spectral sequence \eqref{eq:ss-cHZ} has $E^1$-page concentrated along the row $q = 0$, 
with terms given by 
\[
E^{0,0,0}_1\cong [\HZ, \HZ/2],\quad
E^{1,0,0}_1=0,\quad
E^{2,0,0}_1\cong [\HZ/2, \HZ/2],\quad
E^{3,0,0}_1\cong [\HZ/2, \Sigma\HZ/2]
\]
All remaining groups on the $E_1$-page vanish. It follows that the spectral sequence collapses, and the first claim then follows from \Cref{lem:endo-mz-weight0}. The second claim is established by an analogous computation.
\end{proof}

\begin{remark}
\label{rem:v-fiber}
In particular, over any connected Dedekind scheme there is a unique nontrivial map $\cHZ \to \Sigma^2 \HZ/2$, as described in \Cref{lem:mztilde-weight0}; this is precisely the connecting map $\partial$ arising from the homotopy cofiber sequence \eqref{triangle}. Moreover, the computation of $\pi_1 \vs_{0}\KQ$ from Theorem~\ref{thm:pi-v} together with \cite[Theorem 17]{Bachmann} allows one to identify the terms in the associated long exact sequence of homotopy modules
\[ 
\dotsm \to \pi_2\cHZ \cong \mathbf{H}^{\star-2,\star} 
\xrightarrow{\partial}
\pi_1\Sigma\MZ/2 \cong \mathbf{h}^{\star,\star} 
\to \pi_1\vs_{0}\KQ  \to \pi_1\cHZ  \cong \mathbf{H}^{\star-1,\star} \to 0
\]
Since $\pi_1\vs_{0}\KQ $ is typically a nontrivial extension of $\pi_1\cHZ$ by an often proper quotient of $\pi_1\Sigma\MZ/2 \cong h^{\star,\star+1}$, 
see Remark~\ref{rem:pi-v}, 
the map $\pi_1\Sigma\MZ/2\to \pi_1\vs_{0}\KQ $ is usually not injective.
More precisely, 
since there is a unique nonzero map $\cHZ\to \Sigma^{2,1}\HZ/2$, 
the connecting map fits into the commutative diagram
\[
\begin{tikzcd}
\cHZ \ar[r,"\partial"] \ar[d,swap,"\mathrm{can}"] & \Sigma^2 \HZ/2 \ar[d,"\tau"] \\
\HZ \ar[r,"\Sq^2\pr^\infty_2"] & \Sigma^{2,1}\HZ/2
\end{tikzcd}
\]
\end{remark}




\begin{lemma}
\label{lem:vtomod2weight1}
Let $S$ be a connected Dedekind scheme with $h^{2,1}(S)=0$.
We have the following identifications
\[ [\vs_{0}\KQ ,\Sigma^{s+1,1}\MZ/2] \cong \begin{cases}
[\s_1(\vs_{0}\KQ ),\Sigma^{4,1}\MZ/2] \cong \Z/2\{(0,\Sq^1)\} & s=3 \\
[\s_1(\vs_{0}\KQ ),\Sigma^{3,1}\MZ/2] \cong \Z/2\{(0,\id)\} & s=2 \\
[\s_0(\vs_{0}\KQ ),\Sigma^{1,1}\MZ/2] \cong h^{1,1}(S)\{\pr^\infty_2\} & s=0 \\
[\s_0(\vs_{0}\KQ ),\Sigma^{0,1}\MZ/2] \cong h^{0,1}(S)\{\pr^\infty_2\} & s=-1 \\
0 & \mathit{else}
\end{cases}
\]
\end{lemma}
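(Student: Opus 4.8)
The plan is to use the bottom two layers of the slice tower of the effective spectrum $\vs_{0}\KQ$. Since $\f_{2}(\vs_{0}\KQ)$ is $2$-effective while $\Sigma^{s+1,1}\MZ/2=\Sigma^{s}(\Sigma^{(1)}\MZ/2)$ is a topological shift of the $1$-slice $\Sigma^{(1)}\MZ/2$, a $2$-effective spectrum admits no nontrivial map to it, so $[\f_{2}(\vs_{0}\KQ),\Sigma^{s+1,1}\MZ/2]=0$ and hence $[\vs_{0}\KQ,\Sigma^{s+1,1}\MZ/2]\iso[\vs_{0}\KQ/\f_{2}(\vs_{0}\KQ),\Sigma^{s+1,1}\MZ/2]$. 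Applying $[-,\Sigma^{\ast,1}\MZ/2]$ to the two-stage tower with layers $\s_{0}(\vs_{0}\KQ)\simeq\MZ$ and $\s_{1}(\vs_{0}\KQ)\simeq\Sigma^{(1)}\MZ/2\oplus\Sigma^{2+(1)}\MZ/2$ (\Cref{prop:slices-cHZ}) then expresses $[\vs_{0}\KQ,\Sigma^{s+1,1}\MZ/2]$ as an extension built from $[\MZ,\Sigma^{s+1,1}\MZ/2]_{S}$, $[\s_{1}(\vs_{0}\KQ),\Sigma^{s+1,1}\MZ/2]$, and the connecting homomorphisms $d$, which are given by precomposition with the first slice differential $\s_{0}(\vs_{0}\KQ)\to\Sigma\s_{1}(\vs_{0}\KQ)$.

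First I would evaluate the two $\mathrm{Hom}$-groups. Writing $m=s+1$, the decomposition of $\s_{1}(\vs_{0}\KQ)$ gives $[\s_{1}(\vs_{0}\KQ),\Sigma^{m,1}\MZ/2]\iso[\MZ/2,\Sigma^{m-1,0}\MZ/2]\oplus[\MZ/2,\Sigma^{m-3,0}\MZ/2]$, and since the only weight-$0$ mod-$2$ cohomology operations are $\id$ (degree $0$) and $\Sq^{1}$ (degree $1$), this is $\Z/2$ exactly at $m\in\{1,2\}$ (generators $(\id,0)$, $(\Sq^{1},0)$) and $m\in\{3,4\}$ (generators $(0,\id)$, $(0,\Sq^{1})$), and zero otherwise. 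For $[\MZ,\Sigma^{m,1}\MZ/2]_{S}$ I would cite \Cref{lem:endo-mz-weight1} over $\Spec(\Z)$ and extend to a connected Dedekind $S$ with $h^{2,1}(S)=0$ by base change for the operations together with the change-of-coefficients sequence for the coefficient groups (the hypothesis $h^{2,1}(S)=0$ is exactly what forces $[\MZ,\Sigma^{m,1}\MZ]_{S}\iso H^{m,1}(S;\Z)$ for $m\leq 3$): this yields $h^{0,1}(S)\{\pr^{\infty}_{2}\}$ at $m=0$, $h^{1,1}(S)\{\pr^{\infty}_{2}\}$ at $m=1$, $\Z/2\{\Sq^{2}\pr^{\infty}_{2}\}$ at $m=2$, $\Z/2\{\Sq^{3}\pr^{\infty}_{2}\}$ at $m=3$, and zero otherwise; in particular $[\MZ,\Sigma^{4,1}\MZ/2]_{S}=[\MZ,\Sigma^{5,1}\MZ/2]_{S}=0$.

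The crux is the connecting map $d\colon[\s_{1}(\vs_{0}\KQ),\Sigma^{m,1}\MZ/2]\to[\MZ,\Sigma^{m+1,1}\MZ/2]_{S}$. By \Cref{lem:d1-v} the first slice differential $\s_{0}(\vs_{0}\KQ)\to\Sigma\s_{1}(\vs_{0}\KQ)\simeq\Sigma^{2,1}\MZ/2\oplus\Sigma^{4,1}\MZ/2$ is $(\Sq^{2}\pr^{\infty}_{2},0)$, so $d(f_{1},f_{2})=(\Sigma f_{1})\circ\Sq^{2}\circ\pr^{\infty}_{2}$, depending only on $f_{1}$. Hence $d$ carries the $m=1$ generator $(\id,0)$ to $\Sq^{2}\pr^{\infty}_{2}$, a generator of $[\MZ,\Sigma^{2,1}\MZ/2]_{S}$, and the $m=2$ generator $(\Sq^{1},0)$ to $\Sq^{1}\Sq^{2}\pr^{\infty}_{2}=\Sq^{3}\pr^{\infty}_{2}$, a generator of $[\MZ,\Sigma^{3,1}\MZ/2]_{S}$ (here $\Sq^{1}\pr^{\infty}_{2}=0$ because consecutive maps in the Bockstein cofiber sequence $\MZ\xrightarrow{2}\MZ\xrightarrow{\pr^{\infty}_{2}}\MZ/2\xrightarrow{\partial}\Sigma\MZ$ compose to zero). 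Thus $d$ is an isomorphism for $m=1,2$, so those contributions cancel and $[\vs_{0}\KQ,\Sigma^{s+1,1}\MZ/2]$ vanishes for $s=0,1$; while for $m=3,4$ the map $d$ out of $[\s_{1}(\vs_{0}\KQ),\Sigma^{m,1}\MZ/2]$ is zero (its target vanishes), so $[\vs_{0}\KQ,\Sigma^{s+1,1}\MZ/2]\iso[\s_{1}(\vs_{0}\KQ),\Sigma^{s+1,1}\MZ/2]$ equals $\Z/2\{(0,\id)\}$ for $s=2$ and $\Z/2\{(0,\Sq^{1})\}$ for $s=3$; and for $m=0,1$ the $\MZ$-summand survives (the connecting map into it has source $0$), giving $h^{0,1}(S)\{\pr^{\infty}_{2}\}$ for $s=-1$ and $h^{1,1}(S)\{\pr^{\infty}_{2}\}$ for $s=0$. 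All remaining groups are zero, as claimed. I expect the principal obstacle to be the general-$S$ version of the weight-$1$ endomorphism computation of $\MZ$, which is precisely where the hypothesis $h^{2,1}(S)=0$ is essential.
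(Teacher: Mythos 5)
Your proposal is correct and follows essentially the paper's argument: both proofs run the slice tower of $\vs_{0}\KQ$ against $\Sigma^{s+1,1}\MZ/2$, use that all contributions from $\s_{q}(\vs_{0}\KQ)$ with $q\geq 2$ vanish for weight reasons, feed in the first slice differential $(\Sq^2\pr^\infty_2,0)$ from \Cref{lem:d1-v}, and finish with the weight-one operations on $\MZ$; your truncation at $\f_{2}(\vs_{0}\KQ)$ (two-effective spectra map trivially to shifted $1$-slices) is simply a convergence-free substitute for the paper's strongly convergent spectral sequence via \Cref{lem:convergence-v}. Two points should be repaired in the write-up. First, the clause asserting that $[\vs_{0}\KQ,\Sigma^{s+1,1}\MZ/2]$ ``vanishes for $s=0,1$'' contradicts your own (correct) conclusion $h^{1,1}(S)\{\pr^\infty_2\}$ at $s=0$: at $m=1$ only the $\s_{1}$-contribution is cancelled by the isomorphism $d$, and the whole group vanishes only at $s=1$. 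Second, the parenthetical ``$[\MZ,\Sigma^{m,1}\MZ]_{S}\iso H^{m,1}(S;\Z)$ for $m\leq 3$'' is not the statement you need (it already fails at $m=3$ over $\Z$ by \Cref{lem:endo-mz-weight1}); the hypothesis $h^{2,1}(S)=0$ enters instead through the tower $\f_{1}\unit\to\unit\to\MZ$, guaranteeing that $[\MZ,\Sigma^{m,1}\MZ/2]_{S}$ has the same list of values and generators as over $\Z$ (in particular no extra contribution at $m=2,3$), which is exactly what makes the two cancellations clean over a general connected Dedekind $S$.
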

\begin{proof}
As in the proof of \Cref{lem:mztilde-weight0}, consider the slice spectral sequence 
\[ 
[\s_{q}(\vs_{0}\KQ ),\Sigma^{s+1,1}\MZ/2] \Rightarrow [\vs_{0}\KQ ,\Sigma^{s+1,1}\MZ/2] 
\]
It converges strongly on account of Lemma~\ref{lem:convergence-v}, 
the fact that $\MZ/2$ is $\eta$-complete, 
and the vanishing $[\s_{q}(\vs_{0}\KQ ),\Sigma^{s+1,1}\MZ/2]=0$ for $q>1$. 
The latter implies it is concentrated in two rows, $q\in \{0,1\}$.
The differential is induced by the first slice differential for $\vs_{0}\KQ $ in degree zero, 
see Lemma~\ref{lem:d1-v}. 
It is an isomorphism for $s\in \{1,2\}$, which implies the result.
\end{proof}

\begin{corollary}\label{cor:vd-KQ}
Let $S$ be essentially smooth over a Dedekind scheme.
The first very effective slice differential
\[ \vd_0^1 \colon  \vs_0\KQ  \to \Sigma\vs_1\KQ \simeq\Sigma^{3,1}\MZ/2 \]
is the unique nontrivial map.
Moreover, 
the first very effective slice differential
\[ \vd_1^1 \colon \vs_1\KQ \simeq\Sigma^{2,1}\MZ/2 \to \Sigma\vs_2\KQ \simeq\Sigma^{5,2}\MZ\]
is the unique nontrivial map $\partial^2_\infty \Sq^2$.
\end{corollary}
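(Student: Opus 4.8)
The plan is to reduce by essentially smooth base change to the base $\Spec(\Z)$ — the very effective slices of $\KQ$ and of $\KGL$, the hyperbolic map $\Hyper$, and the maps it induces on slices are all compatible with pullback, exactly as in the proof of \Cref{thm:veff-KQ} — and then, for each of the two differentials, first to pin down the (cyclic of order two) group in which it lives and then to prove it is nonzero by comparison with algebraic $K$-theory. The comparison rests on one naturality observation: since very effective covers and slices are functorial and $\Hyper\colon\KGL\to\KQ$ is a morphism of motivic spectra, the maps $\vs_q(\Hyper)$ are compatible with the very effective slice differentials, so that $\vd_q^1(\KQ)\circ\vs_q(\Hyper)=\Sigma\vs_{q+1}(\Hyper)\circ\vd_q^1(\KGL)$ for all $q$. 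Because the $\mathbb{P}^1$- and $\Gm$-slice filtrations on $\KGL$ agree, $\vd_q^1(\KGL)$ is the ordinary slice differential of $\KGL$; by the periodicity $\KGL\simeq\Sigma^{1+(1)}\KGL$ it is the $\Sigma^{1+(1)}$-suspension of $\vd_0^1(\KGL)$, and the latter was already identified in the proof of \Cref{lem:unique-v-operation} with the unique nonzero operation $\MZ\to\Sigma^{3,1}\MZ$, namely $\partial^2_\infty\Sq^2\pr^\infty_2$.

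For $\vd_0^1$: By \Cref{lem:unique-v-operation} the group $[\vs_0\KQ,\Sigma\vs_1\KQ]$ is cyclic of order two, so it suffices to show $\vd_0^1\neq 0$. In the naturality square with $q=0$ one has $\vs_1\KGL\simeq\Sigma^{2,1}\MZ$, $\vs_1\KQ\simeq\Sigma^{2,1}\MZ/2$, and $\Sigma\vs_1(\Hyper)$ is the canonical projection $\pr^\infty_2$ by \Cref{prop:veff-hyper}. Postcomposition with $\pr^\infty_2$ is injective on $[\MZ,\Sigma^{3,1}\MZ]\cong\Z/2$, since in the long exact sequence associated with $\MZ\xrightarrow{2}\MZ\xrightarrow{\pr^\infty_2}\MZ/2$ the preceding map is multiplication by $2$ and hence zero. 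Therefore $\Sigma\vs_1(\Hyper)\circ\vd_0^1(\KGL)\neq 0$ — concretely, using $\pr^\infty_2\partial^2_\infty=\Sq^1$ and the Adem relation $\Sq^1\Sq^2=\Sq^3$ it equals $\Sq^3\pr^\infty_2$ — and thus $\vd_0^1(\KQ)\circ\vs_0(\Hyper)\neq 0$. Hence $\vd_0^1(\KQ)\neq 0$, so it is the unique nontrivial map.

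For $\vd_1^1$: By \Cref{lem:endo-mz-weight1} the group $[\vs_1\KQ,\Sigma\vs_2\KQ]\cong[\MZ/2,\Sigma^{3,1}\MZ]$ is cyclic of order two with unique nonzero element $\partial^2_\infty\Sq^2$, so again it suffices to show $\vd_1^1\neq 0$. In the naturality square with $q=1$, the map $\Sigma\vs_2(\Hyper)$ is the identity by \Cref{prop:veff-hyper}, while $\vd_1^1(\KGL)$ is the nonzero $\Sigma^{1+(1)}$-suspension of $\vd_0^1(\KGL)$. Hence $\vd_1^1(\KQ)\circ\vs_1(\Hyper)=\vd_1^1(\KGL)\neq 0$, so $\vd_1^1(\KQ)\neq 0$ and it equals $\partial^2_\infty\Sq^2$.

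The main obstacle is bookkeeping: keeping the simplicial and Tate suspensions straight, using the identifications $\vs_1\KGL\simeq\Sigma^{2,1}\MZ$, $\vs_1\KQ\simeq\Sigma^{2,1}\MZ/2$, $\vs_2\KGL\simeq\vs_2\KQ\simeq\Sigma^{4,2}\MZ$, and checking that the small hom-groups of motivic Eilenberg--MacLane spectra over $\Spec(\Z)$ behave as asserted — which is precisely the content of \Cref{lem:unique-v-operation}, \Cref{lem:endo-mz-weight1} and \Cref{prop:veff-hyper}, together with the identification of the first slice differential of $\kgl$ recalled in the proof of \Cref{lem:unique-v-operation}. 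Should a more classical route be preferred, one can instead pass from $\Spec(\Z)$ to $\Spec(\Q)$, where the relevant hom-groups inject by \Cref{lem:vtomod2weight1} and \Cref{lem:endo-mz-weight1}, and invoke the identification of the first slice differentials of hermitian $K$-theory over fields of characteristic zero from \cite{RO:slices} (cf.\ also \cite{zbMATH07003144}).
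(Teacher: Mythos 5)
Correct, and essentially the paper's own argument: you reduce to $\Spec(\Z)$ by base change, use \Cref{lem:unique-v-operation} and \Cref{lem:endo-mz-weight1} to see that each differential lives in a group of order two, and detect nontriviality via the naturality square for the hyperbolic map together with \Cref{prop:veff-hyper}, exactly as in the paper. The only (harmless) deviation is that for $\vd^1_1$ you deduce the nontriviality of $\vd^1_1\KGL$ from Bott periodicity and the already identified level-zero slice differential of $\kgl$, whereas the paper routes the same detection through $\KGL/2$ and the first Milnor operation.
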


\begin{proof}
Since the slice filtration is preserved under essentially smooth base change, it suffices to prove the statement over a Dedekind scheme.
The very effective slices of $\KQ$, including the first differential, are preserved under base change along morphisms of Dedekind schemes.
Hence it suffices to consider the base scheme $\Spec(\Z)$.
Lemma~\ref{lem:vtomod2weight1} implies there are only two choices for $\vd_0^1$.
Similarly, 
$\vd_1^1$ is an element in the group $[\MZ/2,\Sigma^{3,1}\MZ]\cong \Z/2\{\partial^\infty_2\Sq^2\}$ 
of order two by \Cref{lem:endo-mz-weight1}. 

To determine $\vd^1_1$ we consider the commutative diagram
\[
\begin{tikzcd}
\vs_1(\KGL/2)\ar[d,"\vd^1_1(\KGL/2)"'] & 
\vs_1\KGL \ar[rr,"\vs_1(\Hyper)"] \ar[d,"\vd^1_1\KGL"] \ar[l,"\pr^\infty_2"'] && 
\vs_1\KQ  \ar[d,"\vd^1_1\KQ "] \\
\Sigma\vs_2(\KGL/2) &
\Sigma\vs_2\KGL \ar[rr,"\Sigma\vs_2(\Hyper)"] \ar[l,"\pr^\infty_2"'] && 
\Sigma\vs_2\KQ 
\end{tikzcd}
\]
Proposition~\ref{prop:veff-hyper} implies that $\Sigma \vs_2(\Hyper)$ is an equivalence.
If $\vd^1_1 \KQ$ were zero, the same would hold for $\vd^1_1 \KGL$, contradicting the fact that $\vd^1_1(\KGL/2)$ is the first Milnor operation \cite[Lemma 5.1]{RO:slices}, whose composition with $\pr^\infty_2$ is the unique nontrivial map $\Sq^1 \Sq^2 \pr^\infty_2$; here we use that effective and very effective covers agree for $\KGL$ \cite[Lemma 4]{aro}.

To show that $\vd_0^1 \KQ$ is the unique nontrivial map, we consider the commutative diagram of very effective slices and differentials
\[
\begin{tikzcd}
\vs_0\KGL \ar[rr,"\vs_0(\Hyper)"] \ar[d,"\vd^1_0\KGL"']  && 
\vs_0\KQ  \ar[d,"\vd^1_0\KQ "] \\
\Sigma\vs_1\KGL \ar[rr,"\Sigma\vs_1(\Hyper)"] && 
\Sigma\vs^1_0\KQ 
\end{tikzcd}
\]
By Proposition~\ref{prop:veff-hyper}, 
the lower horizontal map $\Sigma\vs_1(\Hyper)$ is the unique nonzero map $\pr^\infty_2$ from 
$\Sigma\vs_1\KGL$ to $\Sigma\vs^1_0\KQ $.
The composite $\Sigma\vs_1(\Hyper)\circ\vd^1_0\KGL$ is thus the nonzero map $\Sq^1\Sq^2\pr^\infty_2$.
It follows that $\vd^1_0\KQ $ cannot be zero.
\end{proof}


\begin{remark}\label{rem:top-real-diff}
The complex realization of $\KQ$ is the real topological $K$-theory spectrum $\KO$ \cite[Lemma 2.13]{aro}.
As noted in \cite[§3.3]{grso}, 
the very effective slice filtration maps to the ``even-connectivity''-Postnikov filtration on topological spectra 
because the complex realization of a very effective motivic spectrum over $\mathbb{C}$ is connective, 
and $\Sigma^{n+(n)}$ realizes to $\Sigma^{2n}$. 
In particular, 
the complex realization of $\vs_{0}\KQ $ is the fiber of the map
\[ 
\Sq^2\pr^\infty_2 \colon H\Z\to \Sigma^2 H\Z/2
\]
of topological Eilenberg-MacLane spectra. 
Indeed, the latter is the boundary map from the zeroth to the first stage for the Postnikov filtration on $\KO$ --- see, for example, \cite[(1.3)]{fujii},
which also shows that the complex realizations of $\vd^1_0\KQ $ and $\vd_1^1\KQ $ are nonzero.
\end{remark}

For the computation of the $E^2$-page of the very effective slice spectral sequence of $\KQ$, it is crucial to determine the first differential $\vd^1_0 \KQ$ (Corollary~\ref{cor:vd-KQ}) on the coefficients of 
$\vs_0 \KQ$ (Theorem~\ref{thm:pi-v}).

\begin{lemma}\label{lem:vd-KQ-on-pi}
Let $S$ be the spectrum of a field or the ring of integers.
The map 
\[ \pi_{s-(w)}\vd^1_0\KQ \colon \pi_{s-(w)}\vs_{0}\KQ \to \pi_{s-(w)}\Sigma^{3,1}\MZ/2\iso h^{w-s+3,w+1}\]
is zero for all $s<2$ and for all $s\equiv 0,1 \bmod 4$, as well as for all $s$ if $S$ is the spectrum of a field of characteristic $2$.
For $S$ not the spectrum of a field of characteristic $2$, if $s\geq 2$ and $s\equiv 2,3 \bmod 4$, the composition
\[ \pi_{s-(w)}\f_1\vs_0\KQ \to \pi_{s-(w)}\vs_0\KQ \xrightarrow{\pi_{s-(w)}\vd^1_0\KQ} h^{w-s+3,w+1}\]
coincides with the composition
\[ \pi_{s-(w)}\f_1\vs_0\KQ \iso h^{w-s+1,w+1}\xrightarrow{\phi} h^{w-s+3,w+1}\iso \pi_{s-(w)}\Sigma^{3,1}\MZ/2\]
where $\phi(\tau^{s}x)=\tau^{s-2}\rho^2x$ for all $\tau^sx\in h^{w-s+1,w+1}$.
\end{lemma}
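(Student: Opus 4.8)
The plan is to compute $\pi_{s-(w)}\vd^1_0\KQ$ from the map of slice spectral sequences induced by $\vd^1_0\KQ$, which collapse by \Cref{lemma:sssforV}. First I would reduce to $S=\Spec(\Z)$ together with $S=\Spec(\FF_2)$: for a field of characteristic $0$ (resp.\ $p>2$) the assertion follows by base change along $\Spec(F)\to\Spec(\Z)$ (resp.\ $\Spec(F)\to\Spec(\FF_p)\to\Spec(\Z)$), since the very effective slices of $\KQ$, the slices of $\vs_0\KQ$ and $\HWZ$, and the differential $\vd^1_0\KQ$ all commute with such base change by \Cref{prop:slices-cHZ} and \Cref{cor:vd-KQ}. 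Over these bases $\Sigma^{3,1}\MZ/2\simeq\Sigma^{1+(1)}\HZ/2$ is a first slice, so the homotopy of the target is concentrated in slice filtration one, and I would separate the effect of $\pi_{s-(w)}\vd^1_0\KQ$ on the image of $\pi_{s-(w)}\f_1\vs_0\KQ$ (slice filtration $\ge 1$, identified with $h^{w-s+1,w+1}$ for $s\ge 2$ by \Cref{thm:pi-v} and \Cref{fig:einfty-v}) from its effect on the filtration-zero leading term $\ker(\Sq^2\pr^\infty_2)\subseteq\pi_{s-(w)}\HZ$ (\Cref{lem:d1-v}).

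For the restriction to $\f_1\vs_0\KQ$, precomposing $\vd^1_0\KQ$ with the equivalence $\f_1\vs_0\KQ\simeq\Sigma^{(1)}\HWZ$ from \eqref{eq:def-hwz}, \eqref{triangle2} and desuspending by $\Sigma^{(1)}$ produces a map $\psi\colon\HWZ\to\Sigma^2\MZ/2$. A slice spectral sequence computation as in \Cref{lem:vtomod2weight1} (using \Cref{lem:endo-mz-weightneg}, \Cref{lem:endo-mz-weight0} and \Cref{prop:slices-cHZ}) identifies $[\HWZ,\Sigma^2\MZ/2]$ with $\Z/2$, generated by $\HWZ\to\s_0\HWZ\simeq\MZ/2\oplus\Sigma^2\MZ/2\to\Sigma^2\MZ/2$. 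I would show $\psi$ is this generator: if $\psi=0$, then $\vd^1_0\KQ$ factors through $\vs_0(\Forg)\colon\vs_0\KQ\to\s_0\vs_0\KQ\simeq\HZ$ (\Cref{prop:veff-forg}), so $\vd^1_0\KQ\circ\vs_0(\Hyper)$ would be multiplication by $2$ on a map $\HZ\to\Sigma^{3,1}\MZ/2$, hence zero, contradicting the identity $\vd^1_0\KQ\circ\vs_0(\Hyper)=\Sq^1\Sq^2\pr^\infty_2$ established in the proof of \Cref{cor:vd-KQ}.

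Next I would feed $\psi$ into the slice spectral sequence of $\HWZ$, whose first differential is in every degree the matrix $\bigl(\begin{smallmatrix}\Sq^2+\rho\Sq^1&\Sq^3\Sq^1\\\tau&\Sq^2\end{smallmatrix}\bigr)$ by \eqref{eq:def-hwz} and \Cref{lem:d1-v}, and which for the relevant stems (when $s\ge 2$) is concentrated in slice filtration zero by \Cref{thm:pi-v}. Writing a cycle in $\pi_{s-(w+1)}\s_0\HWZ\iso h^{w-s+3,w+1}\oplus h^{w-s+1,w+1}$ as $(\beta,\alpha)$, the bottom row of the matrix forces $\tau\beta=\Sq^2\alpha$; as $\tau$ is injective on $h^{p,q}$ for $p\le q$ this gives $\beta=\tau^{-1}\Sq^2\alpha$, and $\pi\psi$ sends the class of $\pi_{s-(w)}\f_1\vs_0\KQ\iso h^{w-s+1,w+1}$ corresponding to $\alpha$ to $\beta$. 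Now every $\alpha\in h^{w-s+1,w+1}$ is uniquely $\tau^sx$ with $x$ on the diagonal, and for diagonal $x$ one has $\Sq^1x=\Sq^2x=0$, so the motivic Cartan formula reduces $\Sq^2(\tau^sx)$ to $\Sq^2(\tau^s)\,x$. Using $\Sq^1\tau=\rho$ and $\Sq^i\tau=0$ for $i\ge 2$, a short induction gives $\Sq^2(\tau^s)=\tau^{s-1}\rho^2$ when $s\equiv 2,3\bmod 4$ and $\Sq^2(\tau^s)=0$ when $s\equiv 0,1\bmod 4$; hence $\pi\psi(\tau^sx)=\tau^{s-2}\rho^2x$ in the first case and $0$ in the second, which is the asserted formula for $\phi$ and the vanishing on the image of $\f_1$ for $s\equiv 0,1\bmod 4$. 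The filtration-zero contribution of $\vd^1_0\KQ$ is governed by $\vd^1_0\KQ\circ\vs_0(\Hyper)=\Sq^1\Sq^2\pr^\infty_2=\Sq^1\circ(\Sq^2\pr^\infty_2)$, which vanishes on $\ker(\Sq^2\pr^\infty_2)$; a short additional argument using the description of $\pi_{s-(w)}\vs_0\KQ$ in \Cref{thm:pi-v} and $2$-divisibility of leading terms via \eqref{triangle} upgrades this to the statement on $\pi_{s-(w)}\vs_0\KQ$ itself. For $s<2$ the target $h^{w-s+3,w+1}$ vanishes over a field, and over $\Spec(\Z)$ only the case $s=1$ remains, treated directly; and over a field of characteristic $2$ one has $\rho=\Sq^1\tau=0$, so the induction yields $\Sq^2(\tau^s)=0$ for all $s$ and $\phi=0$, which together with the filtration-zero analysis gives $\pi_{s-(w)}\vd^1_0\KQ=0$ for every $s$.

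I expect the main obstacle to be the third step: pinning down $\psi$ exactly rather than up to the order-two ambiguity, matching the powers of $\tau$ across the identifications $\pi_{s-(w)}\f_1\vs_0\KQ\iso\pi_{s-(w+1)}\HWZ$, $\s_0\HWZ$, and $\Sigma^{3,1}\MZ/2$, and justifying the reduction to diagonal classes, so that the single motivic identity $\Sq^2(\tau^s)=\tau^{s-1}\rho^2$ (for $s\equiv 2,3\bmod 4$) simultaneously produces the $\rho^2$ in $\phi$ and the mod-$4$ vanishing.
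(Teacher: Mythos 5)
Your core computation is the same as the paper's: restrict $\vd^1_0\KQ$ to $\f_1\vs_0\KQ\simeq\Sigma^{(1)}\HWZ$, identify the restriction with the projection through the slice onto the $\Sigma^{2}$-summand, describe $\pi_{s-(w)}\f_1\vs_0\KQ$ as pairs $(\alpha,\beta)$ with $\Sq^2\alpha=\tau\beta$ coming from the first slice differential of \Cref{lem:d1-v}, and evaluate via the motivic Cartan formula $\Sq^2(\tau^s)=\binom{s}{2}\tau^{s-1}\rho^2$, which produces exactly the mod--$4$ pattern and the formula for $\phi$. The one genuine variation is how you pin down the restriction: the paper quotes \Cref{lem:unique-v-operation} and \Cref{lem:vtomod2weight1}, whereas you rule out the zero map by composing with $\vs_0(\Hyper)$ and using $\vd^1_0\KQ\circ\vs_0(\Hyper)=\Sq^1\Sq^2\pr^\infty_2$ from the proof of \Cref{cor:vd-KQ}; since that corollary precedes this lemma, this is a legitimate and rather clean alternative.

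Two cautions. First, the opening reduction is misstated: vanishing of an induced map on homotopy groups over $\Spec(\Z)$ does not transport to a field $F$ by base change, because $\pi_{s-(w)}$ over $F$ is not the pullback of $\pi_{s-(w)}$ over $\Z$. What does base change is the identification of the map of spectra (the restriction to $\f_1\vs_0\KQ$ as the stated projection), after which the homotopy-level computation must be, and in your third paragraph essentially is, carried out over the given base $S$; phrase it that way, as the paper does. Relatedly, your claim that every class of $h^{w-s+1,w+1}$ is uniquely $\tau^s x$ with $x$ diagonal fails over $\Z$ in the corner $w-s+1=1$, $s=w$, because of the classes involving $\tau\epsilon$ from \Cref{lem:hZ}; note that the lemma only asserts the formula on elements of the form $\tau^s x$. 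Second, your promised upgrade of the vanishing for $s\equiv 0,1\bmod 4$ from the image of $\f_1\vs_0\KQ$ to all of $\pi_{s-(w)}\vs_0\KQ$ does not work as sketched: the leading terms are integral motivic cohomology groups such as $H^{1,w}$ or $H^{2,w}$ (see \Cref{thm:pi-v} and \Cref{thm:HZ}), which are not $2$-divisible, and the image of $\vs_0(\Hyper)$ only accounts for twice the leading terms, so neither mechanism kills the filtration-zero contribution. To be fair, the paper's own proof also only computes the composite with $\pi_{s-(w)}\f_1\vs_0\KQ$ (except in characteristic $2$, where $2$-divisibility of $H^{w-2,w}$ really is available), so this is a shared soft spot rather than a defect unique to your argument; but the specific "short additional argument" you indicate would fail and should be replaced or the claim restricted accordingly.
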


\begin{proof}
Note that the target group $\pi_{s-(\star)}\Sigma^{3,1}\MZ/2\cong \mathbf{h}^{\star+1-(s-2),\star+1}$ is zero for $s<2$. 
If $S$ is the spectrum of a field of characteristic $2$, the target of the map in question is zero unless $s=2$. 
In this case the source is the $2$-divisible group $\pi_{2-(w)}\vs_0\KQ\iso H^{w-2,w}$ mapping trivially to the target $h^{w+1,w+1}$.
Hence we may assume $s>1$ and exclude fields of characteristic $2$. 
By the proof of \Cref{lem:unique-v-operation}, the differential $\vd^1_0\KQ$ uniquely corresponds to a map $\f_1\vs_0\KQ\to \Sigma^{3,1}\MZ/2$ factoring as 
\[ \f_1\vs_0\KQ \to \s_1\vs_0(\KQ)\simeq \Sigma^{1,1}\MZ/2\oplus \Sigma^{3,1}\MZ/2 \xrightarrow{\pr} \Sigma^{3,1}\MZ/2\]
where the last map is the projection onto the last summand.
The first slice differential for $\vs_0\KQ$ implies that for $s>1$, $\pi_{s-(w)}\f_1\vs_0\KQ$ is the subgroup
\[ \bigl\lbrace (a,b)\in h^{w-s+1,w+1}\oplus h^{w-s+3,w+1}\colon \Sq^2a=\tau b\bigr\rbrace. \]
If $s\equiv 0,1 \bmod 4$, then $\Sq^2a=0$.
Since multiplication with $\tau$ is always injective (see \Cref{lem:hZ} for the case of $\Z$), the projection above sends this subgroup to zero.
If $s\equiv 2,3(4) \bmod 4$ and $a=\tau^sx$, then $\Sq^2a=\Sq^2(\tau^sx)=\tau^{s-1}\rho^2x$.
The equation $\Sq^2a=\tau b$ then implies $b=\tau^{s-2}\rho^2x$, which gives the result.
\end{proof}



\begin{remark}
\label{remark:xi}
Having determined the first differential $\vd^1\KQ $, 
it is natural to inquire about the higher differentials $\vd^r\KQ$.
Since $\vs_3\KQ=0$, the equivalence $\vf_4\KQ\simeq \vf_3\KQ$ produces a second differential
\[ \vd^2\KQ\colon \Sigma^{4,2}\MZ\simeq \vs_2\KQ \to \Sigma\vf_3\KQ\simeq \Sigma\vf_4\KQ \to \Sigma\vs_4\KQ\simeq \Sigma^{9,4}\vs_0\KQ\]
which desuspends to the map of motivic spectra
$\vd^2\KQ\colon \MZ\to \Sigma^{5,2}\vs_0\KQ$.
Composing it with the canonical map $\vs_0\KQ\to \MZ$ produces an element $\Xi$ in the abelian group $[\MZ,\Sigma^{5,2}\MZ]$, which in $\SH(\Z)$ has $6$ elements by \Cref{lem:endo-weight2-mz}.
This element will be used in Section \ref{section:tioMWKinhK} to derive an injectivity statement.
\end{remark}

\section{The image of Milnor-Witt $K$-theory in hermitian $K$-theory}
\label{section:tioMWKinhK}

For any field $F$, 
possibly of characteristic $2$, 
there is a graded ring homomorphism 
\[
\mu_*\colon K_*^{\MW}(F)\to\GW_*^{*}(F)=\pi_{0+\bideg}\KQ
\]
arising from the unit map for hermitian $K$-theory via Morel's calculation of the $0$-line of the stable 
homotopy groups of the motivic sphere spectrum in terms of Milnor-Witt $K$-theory $K_*^{\MW}(F)$ in \cite{MorelICM06}.
Asok and Fasel \cite[Theorem 4.1.2]{KO-map} construct the map explicitly for fields of characteristic not $2$ as follows: in degree $1$, $\mu_1$ is the isomorphism $K_1^{\MW}(F) \xrightarrow{\cong} \GW_1^{1}(F)$; in degree $-1$, $\mu_{-1}(\eta) = \eta \in \GW_{-1}^{-1}(F)$; and in general, $\mu_*$ is determined by the multiplicative structure of Milnor–Witt $K$-theory. By \cite[Lemma 4.1.6, Theorems 4.1.2, 4.3.1]{KO-map}, $\mu_*$ is an isomorphism in degrees $\le 3$. In this section, we employ the very effective slice spectral sequence to provide a short proof of this fact and to extend the discussion of $\mu_*$ to degrees four and five.

\begin{theorem}
\label{thm:symb-iso}
Let $F$ be a field.
For $n\leq 5$ the unit map $\unit_F\rightarrow\KQ_F$ induces an exact sequence 
\[
0
\to
K_n^{\MW}(F)
\xrightarrow{\mu_{n}} 
\GW_n^{n}(F)\to H^{n-4,n-2}(F)
\to 0
\]
reducing to an isomorphism
$
\mu_n\colon 
K_n^{\MW}(F) 
\xrightarrow{\cong}
\GW_n^{n}(F)
$
for $n<3$.
\end{theorem}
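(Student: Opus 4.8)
\emph{Proof idea.} The plan is to run the very effective slice spectral sequence \eqref{eq:vsss} for $\KQ_F$ in the tridegree computing $\pi_{0+\bideg}\KQ_F=\GW_\ast^\ast(F)$, and to show that for $n\leq 5$ it has exactly two nonzero contributions. First I would pin down the relevant part of the $\tilde E^1$‑page: by \Cref{thm:veff-KQ} the slices $\vs_q\KQ_F$ are suspensions of $\vs_0\KQ$, $\HZ$, $\HZ/2$, or $0$, so, as displayed in \Cref{fig:e1-KQ}, every entry is a motivic cohomology group of $F$ or a homotopy group of $\vs_0\KQ$ described by \Cref{thm:pi-v}. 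Since motivic cohomology of a field vanishes above its weight and in negative weights, since $\vs_0\KQ$ is connective, and since \Cref{thm:pi-v} expresses $\pi_{\ast+\bideg}\vs_0\KQ$ through motivic cohomology of $F$ (this is where the bound $n\leq 5$ enters), one finds that the only entries that can be nonzero are the one in the $0$th slice, $\pi_{0-(n)}\vs_0\KQ\cong\widetilde H^{n,n}(F)\cong K_n^{\MW}(F)$ --- using \Cref{lem:pi0-v} to replace $\vs_0\KQ$ by $\cHZ$ and \cite{2020arXiv200406634B,deglise2023notesmilnorwittktheory} for $\widetilde H^{n,n}(F)\cong K_n^{\MW}(F)$ --- and the one in the $(-2)$nd slice, $H^{n-4,n-2}(F)$.

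Next I would verify that every very effective slice differential into or out of these two positions vanishes. For all but one of them the source or target is a motivic cohomology group of $F$ above its weight or in negative weight, hence zero, which settles the matter when $n\leq 4$. The remaining possibility is a differential $\vd^2$ from the $(-2)$nd slice to the $0$th slice, whose source is $H^{n-5,n-2}(F)$; this vanishes for $n\leq 4$ but for $n=5$ it equals $H^{0,3}(F,\Z)$, which need not be zero, so one must show the differential itself is trivial. By the $4+(4)$‑periodicity of $\vs_\ast\KQ$ this differential is a suspension of the second very effective slice differential $\vd^2\KQ\colon \Sigma^{-4,-2}\HZ\simeq\vs_{-2}\KQ\to\Sigma\vf_{-1}\KQ\simeq\Sigma\vf_0\KQ\to\Sigma\vs_0\KQ$ of \Cref{remark:xi}; desuspended it is a map $\MZ\to\Sigma^{5,2}\vs_0\KQ$ whose composite with $\vs_0\KQ\to\MZ$ is the class $\Xi\in[\MZ,\Sigma^{5,2}\MZ]$, a cyclic group of order six by \Cref{lem:endo-weight2-mz}. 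The hard part will be to show that the induced map $H^{0,3}(F,\Z)\to K_5^{\MW}(F)$ is zero --- equivalently, that $\Xi$ and the component of $\vd^2\KQ$ landing in $\HWZ$‑cohomology act trivially on $H^{0,3}(F,\Z)$; I expect to do this by combining the computation of stable motivic cohomology operations in \Cref{lem:endo-weight2-mz} with the arithmetic of Milnor--Witt $K$‑theory and quadratic forms (for instance, by checking that the composites of $H^{0,3}(F,\Z)\to K_5^{\MW}(F)$ with the projections $K_5^{\MW}(F)\to K_5^{\sfM}(F)$ and $K_5^{\MW}(F)\to I^5(F)$ of the Milnor--Witt pullback square both vanish), with complex realization to the $S^2$‑Postnikov tower of $\KO$ (\Cref{rem:top-real-diff}) as an additional consistency check. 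This is the step I expect to be the main obstacle.

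Finally I would assemble the answer. By strong convergence (\Cref{proposition:strongconvergence}) the induced filtration on $\GW_n^n(F)$ is finite with only two nonzero graded pieces: the deeper one, $\vf_0\GW_n^n(F)=\im(\pi_{0+\bideg}\kq_F\to\GW_n^n(F))$, which equals $\tilde E^\infty$ in the $0$th slice, namely $K_n^{\MW}(F)$; and the quotient $\GW_n^n(F)/\vf_0\GW_n^n(F)\cong H^{n-4,n-2}(F)$. Since $\unit_F$ is very effective, the unit map factors through $\vf_0\KQ_F=\kq_F$; moreover $\pi_{\ast+\bideg}\vf_1\KQ_F$ vanishes in the relevant degrees by the $\tilde E^1$‑computation, so $\pi_{0+\bideg}\kq_F\cong\pi_{0+\bideg}\vs_0\KQ_F\cong K_n^{\MW}(F)$ and $\pi_{0+\bideg}\kq_F\to\GW_n^n(F)$ is injective with image $\vf_0\GW_n^n(F)$. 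Morel's description of the $0$‑line \cite{MorelICM06} identifies $\pi_{0+\bideg}\unit_F\to\pi_{0+\bideg}\kq_F$ with an isomorphism $K_n^{\MW}(F)\xrightarrow{\cong}K_n^{\MW}(F)$ and the resulting composite $K_n^{\MW}(F)\to\GW_n^n(F)$ with $\mu_n$, which is therefore injective with image $\vf_0\GW_n^n(F)$; this yields the exact sequence. As $H^{n-4,n-2}(F)=0$ for $n<3$ (negative weight, or weight $0$ in negative degree), $\mu_n$ is an isomorphism there.
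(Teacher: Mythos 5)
Your overall strategy coincides with the paper's: run the very effective slice spectral sequence in the degrees computing $\GW_n^n(F)$, observe that for $n\leq 5$ only the entries $\widetilde{H}^{n,n}(F)\cong K_n^{\MW}(F)$ (slice $0$) and $H^{n-4,n-2}(F)$ (slice $-2$) survive degree and weight considerations, identify the edge homomorphism with $\mu_n$, and reduce everything to the vanishing of the $d^2$-differential controlled by the class $\Xi\in[\MZ,\Sigma^{5,2}\MZ]$ of \Cref{remark:xi}. The assembly of the exact sequence and the case $n<3$ agree with the paper (which secures the edge-map identification via \Cref{thm:unit-vs0-iso} and the multiplicativity of the spectral sequence, rather than by citing Morel's theorem alone).

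There are, however, two genuine gaps at the decisive step. First, you dispose of $n=4$ by asserting that the source $H^{n-5,n-2}(F)=H^{-1,2}(F)$ of the $d^2$ vanishes; this is precisely the Beilinson--Soul\'e vanishing, which is open for general fields (the paper calls this group only ``conjecturally trivial''). The paper instead proves that the differential itself is zero for $n=4$: since $\Xi=\partial^2_\infty\Sq^4\pr^\infty_2+\partial^3_\infty\mathsf{P}^1\pr^\infty_3$ factors through mod $2$ and mod $3$ motivic cohomology and $h^{-1,2}(F)=h^{-1,2}_3(F)=0$, the composite to $H^{4,4}(F)=K_4^{\sfM}(F)$ vanishes, and the resulting lift to $I^5(F)$ is pushed into $I^\infty(F)=0$ by Arason--Pfister. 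Second, for $n=5$ you only sketch a plan (kill the composites to $K_5^{\sfM}(F)$ and to the $I$-part of the Milnor--Witt pullback square) and flag it as the main obstacle; this is exactly where the real work lies. The paper's execution requires the explicit computation $\Sq^4(\tau^3)=\tau\rho^4$, a case distinction according to whether $\sqrt{-1}\in F$, the unique $2$-divisibility of $H^{0,3}(F)$ in characteristic $2$ (Geisser--Levine), the Cartan formula to see that $\mathsf{P}^1$ kills $h^{0,3}_3(F)$, and again Arason--Pfister for the remaining $I$-part. So while your skeleton is correct and matches the paper, the argument as written does not establish the theorem for $n=4$ and leaves the crucial vanishing for $n=5$ unproved.
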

\begin{proof}
We argue using the very effective slice spectral sequences for $\unit$ and $\KQ$.
The unit map $\unit_F\to \KQ_F$ induces an isomorphism on very effective zero slices by 
\Cref{thm:unit-vs0-iso} below 
(see \cite[Theorem 3.30]{2020arXiv200506778B} for fields of characteristic not $2$).
Since the motivic sphere spectrum is very effective, the cofiber sequence \eqref{triangle} yields
\[
\pi_{0-(n)}\unit
\cong 
\pi_{0-(n)}\tilde{s}_{0}\unit
\cong
K_n^{\MW}(F)
\]
Using the identification $\pi_{s-(n)}\KQ\cong\GW_{s+n}^{n}(F)$ and \Cref{fig:e1-KQ}, the potentially nontrivial contributions to $\GW_n^{n}(F)$ for $n\leq 5$ in column $s=0$ are
\[
H^{n-4,n-2}(F)
\text{ and }
\coker\!\left(H^{n-5,n-2}(F)\xrightarrow{\vd_2}\pi_{0-(n)}\unit \cong K_n^{\MW}(F)\cong \widetilde{H}^{n,n}(F)\right)
\]
Composition with the canonical map $K_n^{\MW}(F)\to H^{n,n}(F)$ is induced by the element 
$\Xi\in [\MZ,\Sigma^{5,2}\MZ]$ in $\SH(\Z)$ (see \Cref{remark:xi}).
The symbol map $\mu_*$ coincides with the edge map in the slice spectral sequence for $\KQ$, a consequence of the multiplicative structure of the very effective spectral sequence for the motivic $E_\infty$-ring spectrum $\KQ$; cf.~\cite[Remark~4]{KO-map}, \cite{grso}, and \cite[Theorem~3.6.16]{zbMATH05870074}.
Thus, for $n\leq 5$, we obtain the short exact sequence
\[
0
\to
\coker 
(H^{n-5,n-2}(F) 
\xrightarrow{\vd_2}
K_n^{\MW}(F))
\xrightarrow{\mu_{n}} 
\GW_n^{n}(F)\to H^{n-4,n-2}(F)
\to 0
\]
from \Cref{fig:e1-KQ}.
For $n\leq 3$, both groups $H^{n-4,n-2}(F)$ and $H^{n-5,n-2}(F)$ vanish, so $\mu_n$ is an isomorphism. 
This uses $H^{i,j}(F)=0$ for $j<0$ and $H^{i,0}(F)=0$ unless $i=0$.
The case $n=2$ is essentially due to Suslin \cite[Theorem~4.1.11]{KO-map}, 
and for $n=3$ we note $H^{i,1}(F)=0$ unless $i=1$ (see \cite[Lecture~4]{zbMATH05051055}).

For $n=4$, the group $H^{-1,2}(F)$ is conjecturally trivial, while $h^{-1,2}(F)=h^{-1,2}_3(F)=0$. 
More generally, the Beilinson--Soul{\'e} conjecture predicts $H^{p,q}(F)=0$ for $p<0$ \cite[Preface]{zbMATH05051055}.
In any case, the differential $\vd_2\colon H^{-1,2}(F)\to K^{\MW}_4(F)$ vanishes for all fields.
Indeed, by \Cref{lem:endo-weight2-mz}, the generator 
\[
\partial^2_\infty\Sq^4\pr^\infty_2+\partial^3_\infty\mathsf{P}^1\pr^\infty_3\in [\MZ,\Sigma^{5,2}\MZ],
\]
which represents $\Xi$, factors through $h^{*,*}$ and $h^{*,*}_3$, and hence induces the zero map $H^{-1,2}(F)\to H^{4,4}(F)$.
Consequently $H^{-1,2}(F)\to K^{\MW}_4(F)$ lifts to $I^5(F)$, and further to $I^\infty(F)=0$ by \cite{arason-pfister}.
Thus $\vd_2=0$ for $n=4$.

For $n=5$, the composite $H^{0,3}(F)\to H^{5,5}(F)$ can be analyzed via power operations.  
If $\Char(F)\neq 2$, then $\Sq^4(\tau^3)=\tau\rho^4\in h^{4,5}(F)$ for the generator 
$\tau^3\in h^{0,3}(F)$.  
Thus 
$\Sq^4\circ \pr^\infty_2\colon H^{0,3}\to h^{4,5}$ is zero
whenever $\rho^4=0$, for instance if $\sqrt{-1}\in F$.  
If $\sqrt{-1}\notin F$, then $\pr^\infty_2\colon H^{0,3}(F)\to h^{0,3}(F)$ already vanishes, since 
\[
h^{0,3}(F)\xrightarrow{\partial^2_\infty} H^{1,3}(F)\xrightarrow{\pr^\infty_2} h^{1,3}(F)
\]
sends $\tau^3$ to $\tau^2\rho\neq 0$.  
If $\Char(F)=2$, the group $H^{0,3}(F)$ is uniquely $2$-divisible \cite{geisser-levine.ktheory}, so again $\Sq^4\circ \pr^\infty_2=0$.
Similarly, the composite
\[
H^{0,3}(F)\xrightarrow{\pr^\infty_3} h^{0,3}_3(F)\xrightarrow{\mathsf{P}^1} h^{4,5}_3(F)\xrightarrow{\partial^3_\infty} H^{5,5}(F)
\]
is zero.  Indeed, if $F$ contains a primitive cube root of unity $\zeta$, then $h^{0,3}_3(F)\cong\Z/3$ generated by $\zeta^3$.  
Since $\mathsf{P}^1(\zeta)=0\in h^{4,3}(F)=0$, the Cartan formula 
\cite[Proposition~9.7]{Voevodsky:power} forces $\mathsf{P}^1(\zeta^2)=\mathsf{P}^1(\zeta^3)=0$.  
Otherwise $h^{0,3}_3(F)=0$.  
It follows that 
\[
H^{0,3}(F)\xrightarrow{\vd_2} K^{\MW}_5(F) \to H^{5,5}(F)
\]
is the zero map.  Thus $\vd_2$ lifts to $H^{0,3}(F)\to I^6(F)$, and, as in the case $n=4$, further to $I^\infty(F)=0$.  
Concretely, if $\sqrt{-1}\in F$ of characteristic $\neq 2$, one may pass to a subfield of cohomological dimension $\leq 2$, where $h^{0,3}$ vanishes in degrees $>5$, forcing the generator of $h^{0,3}(F)$ to map to zero by naturality.  
If $\sqrt{-1}\notin F$ or $\Char(F)=2$, 
we have already seen that $\pr^\infty_2\colon H^{0,3}(F)\to h^{0,3}(F)$ is zero.  
Hence $\vd_2=0$ for $n=5$.

\end{proof}

\begin{remark}
\label{remark:vanishing}
The cokernel $H^{0,2}(F)$ of $\mu_4$ is trivial according to \cite[Remarks 3.3 (a)]{zbMATH06603527}.
\end{remark}

Of independent interest is the following statement, proven for fields of characteristic not $2$ in \cite[Theorem 3.30]{2020arXiv200506778B}, which was used in the proof of \Cref{thm:symb-iso}.

\begin{theorem}\label{thm:unit-vs0-iso}
For any scheme which is essentially smooth over a Dedekind scheme, the unit map $\unit\to \KQ$ induces an isomorphism on $\vs_0$.
\end{theorem}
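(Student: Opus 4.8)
The plan is to reduce to the spectrum of a perfect field, reformulate the assertion in terms of the homotopy cofiber of the unit map, and identify that cofiber using the already established computations of slices of $\kq$ and of $\vs_0\KQ$. First I would carry out the base‑change reduction exactly as in the proof of \Cref{thm:veff-KQ}: the unit map $\unit\to\KQ$, the spectrum $\KQ$ (by \Cref{thm:chn}), and the very effective slice filtration are all compatible with essentially smooth base change; base change along the closed immersions of residue fields and the open immersion of the generic point of a Dedekind scheme preserves very effective covers and slices by \cite[Lemmas B.1, B.2, Prop.~B.3]{bachmann-hoyois}; and any field is essentially smooth over a perfect subfield by \cite[Lemma A.2]{hoyois.mgl-mz}. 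So it suffices to treat $\mathcal{X}=\Spec(F)$ with $F$ perfect, and the argument I propose is uniform in the characteristic, so in particular it reproves \cite[Theorem 3.30]{2020arXiv200506778B} for $\Char(F)\neq 2$.

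Next I would pass to the cofiber. The unit map factors as $\unit\to\kq\to\KQ$ and $\vs_0\KQ\simeq\vs_0\kq$, so it is enough to show that the homotopy cofiber $C$ of the ring map $\unit\to\kq$ is $1$-very effective. Now $C$ is very effective: it sits in a cofiber sequence $\kq\to C\to\Sigma^{1,0}\unit$ whose outer terms are very effective (recall $S^{1,0}\in\SH^\veff(F)$), and $\SH^\veff(F)$ is closed under extensions. Over a perfect field one has the equality $\SH^\veff(F)=\SH^\eff(F)\cap\SH(F)_{\ge 0}$ for the homotopy $t$-structure \cite[Subsection 1.2.3, Theorem 7.15]{DKO}; since $\Gm$-suspension is $t$-exact and simplicial suspension shifts the homotopy $t$-structure by one, twisting by $\Sigma^{2,1}$ yields $\SH^\veff(F,1)=\SH^\eff(F,1)\cap\SH(F)_{\ge 1}$. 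Hence it remains to check that $C$ is $1$-effective and that $C\in\SH(F)_{\ge 1}$.

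The $1$-effectivity is immediate from \Cref{lem:unit-kq-slices}: since $\s_0\unit\to\s_0\kq$ is an equivalence, $\s_0 C\simeq\ast$, so $\f_1 C\simeq C$. For the connectivity, $C$ is very effective, hence connective, and $\upi_{-1}\unit=0$ by Morel's connectivity theorem \cite{morel.connectivity}; the long exact sequence of homotopy sheaves of $\unit\to\kq\to C\to\Sigma^{1,0}\unit$ then gives $\upi_0 C\cong\coker(\upi_0\unit\to\upi_0\kq)$, and it suffices to see this map is surjective. Here $\upi_0\unit\cong\KMW$ by \cite{MorelICM06}. On the other hand $\vf_1\kq$ is $1$-very effective, hence lies in $\SH(F)_{\ge 1}$, so $\upi_0\kq\xrightarrow{\simeq}\upi_0\vs_0\kq\cong\upi_0\vs_0\KQ$; applying $\upi_0$ to the cofiber sequence \eqref{triangle} — whose first term $\Sigma^{1}\HZ/2$ has vanishing $\upi_0$ and $\upi_{-1}$ since $\HZ/2$ is connective — identifies the latter with $\upi_0\cHZ\cong\KMW$, the identification of the diagonal of Milnor–Witt motivic cohomology with Milnor–Witt $K$-theory being available in every characteristic \cite{2020arXiv200406634B,deglise2023notesmilnorwittktheory}. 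Tracing through the construction of $\cHZ$ in \Cref{prop:chz}, the map induced by $\unit\to\kq$ is, under these identifications, the canonical map $\KMW\to\KMW$; it is the identity on $\GW=\KMW_0$ and sends $\eta$ to $\eta$, hence is surjective (in fact an isomorphism). Therefore $\upi_0 C=0$, so $C\in\SH(F)_{\ge 1}$, and together with $\f_1 C\simeq C$ this gives $C\in\SH^\veff(F,1)$, as required.

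The main difficulty will be this last identification: controlling $\upi_0\kq$ (equivalently $\upi_0\vs_0\KQ$) in characteristic $2$ and verifying that the unit map realizes the expected surjection onto it. This is exactly where the genuinely new input over fields of characteristic $2$ enters — the identification of the zeroth Milnor–Witt motivic cohomology sheaf with $\KMW$, the $\eta$-structure on $\vs_0\KQ$ furnished by \Cref{prop:chz}, and the compatibility of the unit map with Morel's computation of $\upi_0\unit$. The $t$-structure bookkeeping for $\SH^\veff(F,1)$ and the claim that $C$ is very effective are routine but should be written out with care.
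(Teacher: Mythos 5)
You have a genuine gap at the pivot of the argument: the claim that it suffices to show the cofiber $C$ of $\unit\to\kq$ is $1$-very effective. For very effective spectra $A$, the triangle $\vf_1A\to A\to\vs_0A$ is the truncation triangle for the t-structure whose aisle is $\SH^\veff(F,1)$ (this is exactly the orthogonality $[\E,\vs_0 A]=0$ for $1$-very effective $\E$ used elsewhere in the paper via \cite[Lemma 7]{Bachmann}). For such a truncation, $\vs_0A\to\vs_0B$ is an equivalence when the \emph{fiber} of $A\to B$ lies in the aisle $\SH^\veff(F,1)$; membership of the \emph{cofiber} in $\SH^\veff(F,1)$ is strictly weaker, because $\SH^\veff(F,1)$ is closed under simplicial suspension but not desuspension. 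Concretely, take $A=\Sigma^{1,1}\HZ$ and $B=0$: the cofiber $\Sigma^{2,1}\HZ$ lies in $\SH^\veff(F,1)$, yet $\vs_0A\neq 0=\vs_0B$ since $\Sigma^{1,1}\HZ$ is connective with nonzero $\upi_0$ and hence not $1$-very effective. So the implication you rely on is false in general, and everything you verify about $C$ (very effective, $\s_0C\simeq\ast$, $\upi_0C=0$, i.e.\ $C\in\SH^\veff(F,1)$) does not yield the theorem.

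What is actually needed, in your own homotopy-sheaf bookkeeping, is $\upi_0$ of the fiber to vanish, equivalently (given the injectivity of $\upi_0\unit\to\upi_0\kq$ that you establish) the \emph{surjectivity of $\upi_1\unit\to\upi_1\kq$}, i.e.\ $2$-connectivity of $C$. This is a substantive extra input about Morel's $1$-line versus the first homotopy module of $\kq$ (in particular in characteristic $2$), and it is nowhere addressed in your proposal; it is precisely the kind of control supplied by the proof of \cite[Theorem 3.30]{2020arXiv200506778B}, which the paper's proof reduces to (after the same base-change reduction you perform) and then extends to characteristic $2$ using \Cref{thm:KQ-coeff-field} and \Cref{cor:slices-kq} — the two facts corresponding to your $\upi_0$- and $\s_0$-computations, which are fine as far as they go. To repair your argument you would either prove the $\upi_1$-surjectivity directly (in all characteristics) or replace the cofiber criterion by the fiber criterion and verify it.
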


\begin{proof}
By \cite[Lemma~B.1]{bachmann-hoyois} it suffices to treat the case of Dedekind domains.  
Then \cite[Prop.~B.3]{bachmann-hoyois} reduces the problem further to the case of fields.  
For fields of characteristic $\neq 2$, the unit map induces an isomorphism on $\tilde{s}_{0}$ by \cite[Theorem~3.30]{2020arXiv200506778B}.  
The proof of loc.~cit.~extends to characteristic $2$, using \Cref{thm:KQ-coeff-field} (implying that $\pi_{0+\bideg}\unit\to \pi_{0+\bideg}\kq$ is an isomorphism) together with \Cref{cor:slices-kq} (implying that $\s_{0}(\unit)\to \s_{0}(\kq)$ is an isomorphism).
 \end{proof}

\begin{corollary}
If $\mu_4$ is an isomorphism, 
see \Cref{remark:vanishing}, 
there is an exact sequence
\[
0\to \coker (K_4^{\sfM}(F)\rightarrow K_4(F))\to 
\GW_4^{5}(F)\to K_3^{\mathrm{ind}}(F)\to K_3(F)
\]
\end{corollary}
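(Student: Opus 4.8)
The plan is to read $\GW_4^5(F)$ off the very effective slice spectral sequence \eqref{equation:vesssforKQ}. Using the $(8,4)$-periodicity of $\KQ$ from \Cref{thm:chn}, one has
\[
\GW_4^5(F)=\pi_{-6,-5}\KQ\;\cong\;\pi_{2,-1}\KQ=\pi_{3-(1)}\KQ,
\]
which lies in the column $s=3$, weight $n=1$ of \Cref{fig:e1-KQ}. First I would evaluate the $\tilde E^1$-page there: the entries in slice filtrations $<0$ and $>2$ vanish for degree reasons (negative weight, or $p>q$ for motivic cohomology of a field), and $\pi_{3-(1)}\vs_0\KQ=0$ by \Cref{thm:pi-v} in the case $s\equiv 3\bmod 4$, weight $1$, since the outer terms $h^{-1,2}(F)$ and $H^{-2,1}(F)$ of the relevant short exact sequence both vanish. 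This leaves exactly two entries: $h^{n-1,n+1}(F)=h^{0,2}(F)$ in slice filtration $1$ and $H^{n+1,n+2}(F)=H^{2,3}(F)$ in slice filtration $2$. Note $h^{0,2}(F)\cong H^0_{\mathrm{et}}(F,\mu_2^{\otimes 2})\cong \Z/2$ if $\Char(F)\neq 2$, and $h^{0,2}(F)=0$ if $\Char(F)=2$.

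Next I would pin down the differentials meeting this column. A bidegree count shows the only potentially nonzero one is the first differential leaving the filtration-$1$ entry $h^{0,2}(F)$ and hitting the filtration-$2$ entry $\pi_{1,-1}\vs_2\KQ\cong H^{3,3}(F)\cong K_3^{\sfM}(F)$ of the adjacent column; everything else entering or leaving the column vanishes for degree reasons. By \Cref{cor:vd-KQ} this differential is induced by $\vd^1_1=\partial^2_\infty\Sq^2$, so on homotopy it is $h^{0,2}(F)\xrightarrow{\Sq^2}h^{2,3}(F)\xrightarrow{\partial^2_\infty}H^{3,3}(F)$. The motivic Cartan formula gives $\Sq^2(\tau^2)=\tau\rho^2$, and $\partial^2_\infty(\tau)=\{-1\}$, so this map sends the generator of $h^{0,2}(F)$ to the symbol $\{-1,-1,-1\}\in K_3^{\sfM}(F)$. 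Since $\tilde E^2=\tilde E^\infty$, assembling the two-step slice filtration on $\GW_4^5(F)$ yields a short exact sequence
\[
0\to H^{2,3}(F)\to \GW_4^5(F)\to \ker\!\big(\{-1,-1,-1\}\colon h^{0,2}(F)\to K_3^{\sfM}(F)\big)\to 0 .
\]

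It remains to rephrase this in algebraic $K$-theory, which is where the hypothesis does its real work. Since $H^{0,2}(F)=0$ (\Cref{remark:vanishing}, i.e.\ $\mu_4$ is an isomorphism), the motivic spectral sequence for $K_\ast(F)$ leaves no room for differentials in the relevant range, whence $\coker(K_4^{\sfM}(F)\to K_4(F))\cong \mathrm{gr}^3 K_4(F)\cong H^{2,3}(F)$; this identifies the sub. For the quotient I would use that $\Forg$ and $\Hyper$ intertwine the very effective slice spectral sequence of $\KQ$ with the motivic spectral sequence of $\KGL$ (\Cref{prop:veff-forg}, \Cref{prop:veff-hyper}): tracing the filtration-$1$ generator through $\Forg$ and the edge maps identifies the image of $\GW_4^5(F)\to K_3^{\mathrm{ind}}(F)$ with the kernel of the natural map $K_3^{\mathrm{ind}}(F)\to K_3(F)$ appearing in the statement, and simultaneously fixes that map. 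The hard part will be precisely this last translation — matching the small abstract quotient $\ker(\{-1,-1,-1\}\colon h^{0,2}(F)\to K_3^{\sfM}(F))$, a subgroup of $\Z/2$ that is nontrivial exactly when $\{-1\}^3=0$ in $K_3^{\sfM}(F)$, with $\ker(K_3^{\mathrm{ind}}(F)\to K_3(F))$ — together with the careful Cartan-formula bookkeeping for $\vd^1_1$, including the degenerate characteristic-$2$ case where $h^{0,2}(F)=0$ and the quotient is automatically trivial.
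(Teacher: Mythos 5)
There is a genuine gap, and it sits exactly where you flag it yourself. The statement is not merely a computation of $\GW_4^{5}(F)$ up to extension: it asserts an exact sequence whose third and fourth terms come with specific maps $\GW_4^{5}(F)\to K_3^{\mathrm{ind}}(F)\to K_3(F)$. Your very effective slice spectral sequence argument can only produce the associated graded of a filtration on $\GW_4^{5}(F)$ (a copy of $H^{2,3}(F)$ and a subgroup of $h^{0,2}(F)$ cut out by the $\partial^2_\infty\Sq^2$-differential); it does not construct any map out of $\GW_4^{5}(F)$ to $K_3^{\mathrm{ind}}(F)$, nor the map $K_3^{\mathrm{ind}}(F)\to K_3(F)$, and the appeal to naturality of $\Forg$ and $\Hyper$ between the two spectral sequences does not supply them. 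The paper's proof is essentially the opposite route and is much shorter: it takes the long exact sequence in homotopy groups induced by the Wood cofiber sequence of \Cref{thm:chn},
\[
\GW_4^{4}(F)\to K_4(F)\to \GW_4^{5}(F)\to \GW_3^{4}(F)\to K_3(F),
\]
identifies $\GW_3^{4}(F)\cong \GW_3^{0}(F)\cong K_3^{\mathrm{ind}}(F)\cong\coker\bigl(K_3^{\sfM}(F)\to K_3(F)\bigr)$ via $(4,0)$-periodicity and \cite[Remark 7.7]{RO:slices}, and then uses the hypothesis that $\mu_4$ is an isomorphism together with the factorization of $K_4^{\MW}(F)\to K_4(F)$ through $K_4^{\sfM}(F)$ to rewrite $\coker\bigl(\GW_4^{4}(F)\to K_4(F)\bigr)$ as $\coker\bigl(K_4^{\sfM}(F)\to K_4(F)\bigr)$. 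Everything in the corollary — in particular the maps and the exactness at $K_3^{\mathrm{ind}}(F)$ — comes from this cofiber sequence, which your proposal never invokes.

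Two further points. First, your translation step is not just "bookkeeping": identifying $\coker(K_4^{\sfM}(F)\to K_4(F))$ with $H^{2,3}(F)$ needs the slice spectral sequence for $\KGL$ together with weight-$\leq 2$ vanishing statements, none of which the paper's proof requires; and matching $\ker\bigl(\{-1,-1,-1\}\colon h^{0,2}(F)\to K_3^{\sfM}(F)\bigr)$ with $\ker\bigl(K_3^{\mathrm{ind}}(F)\to K_3(F)\bigr)$ is precisely the content you would have to prove, not a reformulation — you concede it is unresolved, so the proposal does not establish the corollary. Second, be aware that your computation of the exiting $\vd^1$ on $h^{0,2}$ (sending $\tau^2$ to $\{-1,-1,-1\}$) is in tension with the short exact sequence for $\KQ_{2,-1}(F)$ recorded in part 3 of \Cref{thm:KQ-coeff-field}; reconciling these is additional work your outline does not address, whereas the Wood-sequence argument sidesteps the issue entirely.
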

\begin{proof}
The Wood cofiber sequence stated in \Cref{thm:chn} implies there is a long exact sequence
\[
\cdots\to \GW_n^{n}(F)\rightarrow K_n(F)\to \GW_n^{n+1}(F)\to 
\GW_{n-1}^{n}(F)\rightarrow K_{n-1}(F)\to\cdots
\]
When $n=4$, 
then $(4,0)$-periodicity and Remark 7.7 in \cite{RO:slices} imply 
the isomorphisms
\[
\GW_3^{4}(F)\cong \GW_3^{0}(F)\cong K_3^{\mathrm{ind}}(F)\cong 
\coker(K_3^{\sfM}(F)\rightarrow K_3(F))
\]
Since the forgetful map $K_4^\MW(F)\to K_4(F)$
factors through $K_4^{\sfM}(F)$, 
we obtain the claimed exact sequence using $\mu_4$.
\end{proof}

\section{The hermitian $K$-groups of the integers}
\label{section:hkgoti}

Although our prior results apply more generally, 
in this section we restrict attention to the hermitian $K$-groups of the ring of integers $\Z$, 
often leaving this base implicit in the notation.  
The key input is the computation of motivic cohomology groups of $\Z$ with coefficients in $\Z$ and $\Z/2$.  
Recall that for $n>0$, the $n$-th Bernoulli number $B_n$ may be defined via the Riemann $\zeta$-function as $B_n=-n\zeta(1-n)$, or equivalently through the power series expansion
\[
\frac{t}{e^t-1} = \sum_{k=0}^\infty \frac{B_k t^k}{k!}.
\]
For $n>0$, let $u_n,v_n\in\N$ be coprime with $\frac{u_n}{v_n}=\frac{|B_{2n}|}{4n}$.  
Then $u_n$ is a product of (odd) irregular primes, while $v_n$ is even.

\begin{theorem}\label{thm:HZ}
The motivic cohomology groups $H^{s,w}(\Z)$ with integral coefficients are 
\begin{align*}
    H^{s,w}(\Z) & = 0 & s<0 \mathrm{\ or\ } w<0 \\
    H^{0,w}(\Z) & = \begin{cases} \Z & w=0 \\ 0 & w\neq 0 \end{cases} \\
    H^{1,w}(\Z) & = \begin{cases} 0 & w\leq 0 \\ \Z/2 & w=1 \\ \Z\oplus \Z/2 & w>1 \mathrm{\ odd} \\
     \Z/v_w & w>0 \mathrm{\ even} \end{cases}\\
    H^{2,w}(\Z) & = \begin{cases} 0 & w\in \{0,1,3\} \\ (0?) & 3<w \mathrm{\ odd} \\ \Z/2u_w(?) & 0<w \mathrm{\ even} \end{cases} \\
    H^{s,w}(\Z) & =\begin{cases} \Z/2 & 2<s\leq w \mathrm{\ and\ } w-s \mathrm{\ even} \\ 0 & 2<s \mathrm{\ and\ } (w<s \mathrm{\ or\ } w-s \mathrm{\ odd}) \end{cases}
\end{align*}
Here $(0?)$ denotes a finite group, conjecturally trivial, whose order is a product of irregular primes.  
If a group is followed by a question mark in parentheses, it is conjecturally cyclic, though always of the specified order.  
\end{theorem}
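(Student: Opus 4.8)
The plan is to derive the groups $H^{s,w}(\Z)$ from the well-documented computations of the algebraic $K$-groups of $\Z$ and of the $\ell$-adic \'etale cohomology of $\Spec\Z[1/\ell]$, transporting these into motivic cohomology via the motivic (slice) spectral sequence, the Beilinson-Lichtenbaum comparison over Dedekind domains, and the Bloch-Kato norm residue theorem. One organises the argument into three layers: an elementary part (vanishing ranges and weights $0$ and $1$), a rational part (Borel), and the torsion (\'etale cohomology with Iwasawa-theoretic input).

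\emph{Elementary part.} Since $\Z(w)=0$ for $w<0$ we get $H^{s,w}(\Z)=0$ for $w<0$, and trivially for $s<0$. As $\Spec\Z$ is noetherian of Krull dimension one, the hypercohomology of $\Z(w)$ vanishes above degree $w+1$, so $H^{s,w}(\Z)=0$ for $s>w+1$; the sharper vanishing for $s>2$ when $w<s$ or $w-s$ is odd, including the case $s=w+1$, comes out of the \'etale computation below (equivalently, from the known $K_*(\Z)$). The weights $0$ and $1$ are immediate from $\Z(0)\simeq\Z$ and $\Z(1)\simeq\Gm[-1]$: they give $H^{0,0}(\Z)=\Z$, $H^{1,1}(\Z)=\Z^\times=\Z/2$, $H^{2,1}(\Z)=\mathrm{Pic}(\Z)=0$, and vanishing otherwise.

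\emph{Rational part.} Rational degeneration of the slice spectral sequence, together with Borel's computation of $\rk\,K_n(\Z)$, shows $H^{s,w}(\Z;\Q)=0$ except at $(s,w)=(0,0)$ and at $s=1$ with $w$ odd and $w>1$, where it is one-dimensional. Hence every group in the statement is finite apart from $H^{1,w}(\Z)$ for $w$ odd $>1$, which is $\Z$ plus a finite group; and all the finite groups are determined by torsion.

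\emph{Torsion and the main obstacle.} Fix a prime $\ell$. For $s\le w$, Beilinson-Lichtenbaum identifies $H^{s,w}(\Z[1/\ell];\Z/\ell^\nu)$ with $H^{s}_{\mathrm{et}}(\Z[1/\ell];\mu_{\ell^\nu}^{\otimes w})$ and gives an injection for $s=w+1$; the localization (purity) sequence relating $H^{*,*}(\Z)$, $H^{*,*}(\Z[1/\ell])$ and the motivic cohomology of $\FF_\ell$ (concentrated in $H^{0,0}=\Z$ and $H^{1,m}=\Z/(\ell^m-1)$, $m\ge1$) then determines $H^{s,w}(\Z;\Z/\ell^\nu)$ throughout $s\le w+1$ and forces vanishing beyond. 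Plugging these into the universal-coefficient sequences $0\to H^{s,w}(\Z)/\ell^\nu\to H^{s,w}(\Z;\Z/\ell^\nu)\to H^{s+1,w}(\Z)[\ell^\nu]\to 0$ and comparing with the rational part isolates the $\ell$-primary torsion of each $H^{s,w}(\Z)$. The \'etale input is classical arithmetic: the denominators $v_w$ (from $B_{2w}/(4w)$) come from the von Staudt-Clausen theorem and the order of $H^1_{\mathrm{et}}$; the numerators $u_w$ come from the order of $H^2_{\mathrm{et}}$ via the Mazur-Wiles main conjecture of Iwasawa theory and Herbrand-Ribet, with $H^2_{\mathrm{et}}$ vanishing precisely when the relevant eigenspace of the cyclotomic class group is trivial --- the content of the Kummer-Vandiver conjecture, hence the conditional entries ``$(0?)$'' and ``$?$''; and at $\ell=2$ the real place of $\Q$ produces the extra copies of $\Z/2$ in $H^{1,w}(\Z)$ for odd $w$ and throughout $2<s\le w$ with $w-s$ even. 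The main obstacle is precisely the uniform bookkeeping of these $2$-primary and odd-primary contributions --- applying Beilinson-Lichtenbaum over $\Z[1/\ell]$ and reassembling over $\Z$, and tracking the archimedean place so that exactly the displayed $\Z/2$'s appear and no more --- together with the fact that the whole computation rests on the norm residue theorem and the Iwasawa main conjecture, the Kummer-Vandiver conjecture being the only genuinely open ingredient.
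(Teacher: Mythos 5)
Your proposal is correct and is essentially the paper's own argument: the paper proves this theorem by citing Levine and Weibel's $K$-book together with the Milnor and Bloch–Kato conjectures (and Spitzweck's construction of motivic cohomology over Dedekind domains), and your sketch simply unpacks what those references contain — the elementary weights $0,1$, Borel's rational computation, Beilinson–Lichtenbaum plus localization at each prime, and the Iwasawa-theoretic identification of $u_w,v_w$ with the Kummer–Vandiver conjecture accounting for the conditional entries.
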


\begin{proof}
This follows from \cite{levine99} and \cite{Weibel:K-book} using the Milnor and Bloch-Kato conjectures on Galois cohomology; see \cite{spitzweck.him}.
\end{proof}

For motivic cohomology with coefficients in $\Z/2$, 
recall that $h^{\ast,\star}(\R)\iso \Z/2[\tau,\rho]$, 
where $\tau\in h^{0,1}(\R)$ and $\rho\in h^{1,1}(\R)$ are the unique nontrivial elements.

\begin{lemma}\label{lem:hZ}
The canonical map $h^{\ast,\star}(\Z)\to h^{\ast,\star}(\R)$ is a surjective ring homomorphism whose kernel is concentrated in $\ast=1$ and $\star>1$. 
More precisely, the kernel of $h^{1,w}(\Z)\to h^{1,w}(\R)$ is $\Z/2$ for $w>1$.
If $\tau\epsilon\in h^{1,2}(\Z)$ is the nonzero element in this kernel for $w=2$, 
and $\tau\in h^{0,1}(\Z),\rho\in h^{1,1}(\Z)$ are the usual elements, 
the ring $h^{\ast,\star}(\Z)$ has the following presentation:
\[ 
h^{\ast,\star}(\Z)\iso 
\FF_2[\tau,\rho,\tau\epsilon]/(\tau\epsilon)^2,\tau\epsilon\cdot \rho
\]
Finally, 
$\tau^{2m-1}\cdot \tau\epsilon $ is the image of a generator of the infinite cyclic summand in 
$H^{1,2m+1}(\Z)\cong \Z\oplus \Z/2$.
\end{lemma}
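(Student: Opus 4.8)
The plan is to deduce everything from \Cref{thm:HZ} by means of the universal coefficient (Bockstein) sequences $0\to H^{s,w}(\Z)/2\to h^{s,w}(\Z)\to {}_2H^{s+1,w}(\Z)\to 0$ together with a comparison with $h^{\ast,\star}(\R)\cong\FF_2[\tau,\rho]$. Since every $v_w$ is even and every $u_w$ is odd, reading off the tables of \Cref{thm:HZ} shows that $h^{s,w}(\Z)$ is one--dimensional over $\FF_2$ whenever $0\le s\le w$, except that it is two--dimensional when $s=1$ and $w\ge 2$, and it vanishes otherwise; in particular its bigraded Poincaré series coincides with that of $\FF_2[\tau,\rho,x]/(x^2,x\rho)$ with $x$ in bidegree $1+(2)$. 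The pullback $c\colon h^{\ast,\star}(\Z)\to h^{\ast,\star}(\R)$ along $\Spec(\R)\to\Spec(\Z)$ is a homomorphism of bigraded rings; it is an isomorphism in bidegree $0+(1)$ (both sides are $\Z/2$, detected by $-1$ through the Bockstein), and $\rho=[-1]$ obviously lifts, so $c$ is surjective because $h^{\ast,\star}(\R)$ is generated by $\tau$ and $\rho$. Comparing dimensions then shows that $\ker c$ is $\Z/2$ in bidegree $1+(w)$ for $w>1$ and vanishes in all other bidegrees, which is the first claim; and $\tau\epsilon$ is by definition the nonzero class in $\ker c$ in bidegree $1+(2)$.

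Next I would build the displayed ring map. Since $c$ is multiplicative, $(\tau\epsilon)^2$ and $\rho\cdot\tau\epsilon$ lie in $\ker c$, in bidegrees $2+(4)$ and $2+(3)$ respectively; but $\ker c$ vanishes in cohomological degree $2$, so $(\tau\epsilon)^2=0=\rho\cdot\tau\epsilon$ in $h^{\ast,\star}(\Z)$. Hence $\tau\mapsto\tau$, $\rho\mapsto\rho$, $x\mapsto\tau\epsilon$ defines a ring homomorphism $\phi\colon\FF_2[\tau,\rho,x]/(x^2,x\rho)\to h^{\ast,\star}(\Z)$, and $c\circ\phi$ is the quotient that kills $x$. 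As source and target have the same Poincaré series, $\phi$ is an isomorphism as soon as it is surjective in every bidegree; and, using the short exact sequence $0\to\ker c\to h^{\ast,\star}(\Z)\xrightarrow{c}\FF_2[\tau,\rho]\to 0$ together with the fact that $\phi$ already hits $\tau^{w-1}\rho\notin\ker c$, surjectivity everywhere reduces to the single assertion that $\tau^{w-2}\cdot\tau\epsilon\ne 0$ in $h^{1,w}(\Z)$ for all $w\ge 2$.

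To prove this I would show that multiplication by $\tau$ is injective on $h^{1,\star}(\Z)$. The localization sequence relating the mod $2$ motivic cohomology of $\Spec(\Z)$, of its open subscheme $\Spec(\Q)$, and of the closed points $\Spec(\FF_p)$ reads $\bigoplus_{p}h^{s-2,w-1}(\FF_p)\to h^{s,w}(\Z)\to h^{s,w}(\Q)\to\bigoplus_{p}h^{s-1,w-1}(\FF_p)$, and for $s=1$ the left--hand groups $h^{-1,w-1}(\FF_p)$ vanish for dimension reasons, so $h^{1,\star}(\Z)\hookrightarrow h^{1,\star}(\Q)$; since $h^{\ast,\star}(\Q)$ is $\tau$--torsion free (Beilinson--Lichtenbaum gives $h^{1,w}(\Q)\cong H^1_{\et}(\Q,\Z/2)$ for $w\ge1$, on which $\tau$ acts by the identification, and $h^{1,w}(\Q)=0$ for $w\le 0$), $\tau$ is injective on $h^{1,\star}(\Z)$. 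Because $\tau\epsilon\ne 0$, this gives $\tau^{w-2}\cdot\tau\epsilon\ne 0$ for all $w\ge 2$, and the ring presentation follows.

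Finally, for the last assertion fix $w=2m+1$. Here ${}_2H^{2,w}(\Z)=0$, so $h^{1,w}(\Z)=H^{1,w}(\Z)/2$ and the reductions $\bar g,\bar t$ of the free generator $g$ and the torsion generator $t$ of $H^{1,w}(\Z)\cong\Z\oplus\Z/2$ form an $\FF_2$--basis; moreover $\tau^{2m-1}\cdot\tau\epsilon$ is, by the previous paragraph, a nonzero element of the one--dimensional space $\ker c$. It therefore suffices to show $\bar g\in\ker c$ and $\bar t\notin\ker c$, equivalently that $t$ restricts nontrivially to $H^{1,w}(\R)\cong\Z/2$ while $g$ restricts trivially: this is the statement that the $\Z/2$--summand of $H^{1,2m+1}(\Z)$ — the image--of--$J$ summand of $K_{4m+1}(\Z)$ — maps isomorphically onto the $2$--torsion of $K_{4m+1}(\R)$ under the real place, a fact one extracts from the structure of the algebraic $K$--theory of $\Z$ and of $\R$ in \cite{levine99,Weibel:K-book}. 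This last identification is the main obstacle: it is where the interplay of the $\zeta$--value (Bernoulli) contribution and the topological real--place contribution to $H^{1,\mathrm{odd}}(\Z)$ has to be controlled. Everything else is bookkeeping with the Bockstein sequences and with the comparison map $c$.
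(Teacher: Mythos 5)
The presentation part of your argument is correct, but it takes a genuinely different route from the paper's. The paper localizes only at the prime $2$: it uses the cofiber sequence $i_\ast i^!\MZ/2\to\MZ/2\to j_\ast j^\ast\MZ/2$ for $i\colon\Spec(\FF_2)\hookrightarrow\Spec(\Z)$ with open complement $\Spec(\Z[\tfrac12])$, the purity equivalence $i^!\MZ/2\simeq\Sigma^{-2,-1}i^\ast\MZ/2$, the fact that $h^{\ast,\star}(\FF_2)\iso\Z/2$ is concentrated in bidegree $(0,0)$, and the quoted presentation $h^{\ast,\star}(\Z[\tfrac12])\iso\FF_2[\tau,\rho,\epsilon]/(\epsilon^2,\epsilon\rho)$; the lemma then drops out because the boundary to $h^{0,0}(\FF_2)$ kills exactly the Kummer class $\epsilon=[2]$ in weight one. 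You instead read off the $\FF_2$-dimensions of $h^{\ast,\star}(\Z)$ from \Cref{thm:HZ} via Bockstein sequences, compare with $h^{\ast,\star}(\R)$ to locate the kernel and to force $(\tau\epsilon)^2=\rho\cdot\tau\epsilon=0$, and prove $\tau$-injectivity on $h^{1,\star}(\Z)$ by embedding into $h^{1,\star}(\Q)$ and invoking Beilinson--Lichtenbaum there. This is a legitimate alternative (your localization at the fibre over $2$ needs the same purity input for $\MZ/2$ at a characteristic-$2$ residue field that the paper cites explicitly), and it has the advantage of not relying on the external computation of $h^{\ast,\star}(\Z[\tfrac12])$.

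The genuine gap is the final claim. Since the splitting $H^{1,2m+1}(\Z)\iso\Z\oplus\Z/2$ is not canonical ($g$ and $g+t$ both generate infinite cyclic summands), what must be shown is exactly that $\bar t\notin\ker c$; you cannot prove $\bar g\in\ker c$ for an arbitrarily chosen $g$. More seriously, the arithmetic input you defer to --- that the $\Z/2$-summand of $H^{1,2m+1}(\Z)$ is ``the image-of-$J$ summand of $K_{4m+1}(\Z)$'' mapping isomorphically onto the $2$-torsion of $K_{4m+1}(\R)$ --- is not correct as stated: for $2m+1\equiv 3\bmod 4$ one has $K_{4m+1}(\Z)\iso\Z$ with no $\Z/2$-summand (cf.\ \Cref{thm:slice-spec-seq-KGLZ}), $H^{1,w}(\R;\Z)$ is not $\Z/2$ (only $h^{1,w}(\R)$ is), and the algebraic $K$-group $K_{4m+1}(\R)$ of the field $\R$ offers no usable ``$2$-torsion $\iso\Z/2$''. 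So this step, which you yourself flag as the main obstacle, is left unproven. It can in fact be closed softly, with no $K$-theory of $\R$: since $H^{0,w}(\Z)=0$ for $w\geq1$, the Bockstein sequence identifies the torsion generator as $t=\partial^2_\infty(\tau^w)$, whence $\bar t=\Sq^1(\tau^w)=w\,\tau^{w-1}\rho=\tau^{w-1}\rho$ for $w$ odd (using $\Sq^1\tau=\rho$, checked by restriction to $\R$). Hence $c(\bar t)=\tau^{w-1}\rho\neq 0$, so $\ker c$ is spanned by $\bar g$ or by $\bar g+\bar t$, either of which is the reduction of a generator of an infinite cyclic summand; with this substitution your argument becomes complete.
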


\begin{proof}
This follows from the localization cofiber sequence
\[ i_\ast i^! \MZ/2\to \MZ/2 \to j_\ast j^\ast \MZ/2 \]
for the closed embedding $i\colon \Spec(\FF_2)\hookrightarrow \Spec(\Z)$ 
and open complement $j\colon \Spec(\Z[\frac{1}{2}])\hookrightarrow \Spec(\Z)$, 
the purity equivalence $i^!\MZ/2\simeq \Sigma^{-2,-1}i^\ast \MZ/2$ 
in \cite[Theorem 7.2]{rs.lecture},  
and the presentations of $h^{\ast,\star}(\FF_2)\cong \Z/2$ 
in \cite{kato.milnor-char2} and 
$h^{\ast,\star}(\Z[\frac{1}{2}]) \iso 
\FF_2[\tau,\rho,\epsilon]/(\epsilon^2,\epsilon\rho)$ 
in \cite{arXiv:2311.13304}.
\end{proof}

\Cref{lem:hZ} implicitly determines the action of the Steenrod algebra on $h^{\ast,\star}(\Z)$, which in turn is controlled on $h^{\ast,\star}(\R)$ by the relation $\Sq^1(\tau)=\rho$ together with the Cartan formula.  
In particular, we obtain the following statement.  

\begin{lemma}\label{lem:steenrod-hZ}
The homomorphism $\Sq^2\colon h^{s,w}(\Z)\to h^{s+2,w+1}(\Z)$ is nonzero if and only if $s\leq w$ and $w-s\equiv 2,3 \bmod 4$.
If it is nonzero, it is an isomorphism unless $s=1$, and then its kernel has 2 elements, the nontrivial being $\tau^{w-2}\cdot \tau\epsilon$.
\end{lemma}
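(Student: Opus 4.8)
The plan is to transfer the computation from the mod-$2$ motivic cohomology of $\R$, where the Steenrod action is classical, to $\Z$ along the surjective ring map $\pi\colon h^{\ast,\star}(\Z)\to h^{\ast,\star}(\R)$ of \Cref{lem:hZ} (induced by $\Spec\R\to\Spec\Z$, hence compatible with Steenrod operations), carrying along the one extra family of classes $\tau^k\cdot\tau\epsilon\in h^{1,k+2}(\Z)$, $k\ge 0$, which spans $\ker\pi$.

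First I would pin down the action on generators. By the presentation in \Cref{lem:hZ} one has $h^{2,1}(\Z)=h^{3,2}(\Z)=0$, so $\Sq^1\rho=0$ and $\Sq^2\rho=0$; moreover $\Sq^1\tau=\rho$ (the defining relation) and $\Sq^2\tau=0$ (a standard fact about the $\R$-motivic / $C_2$-equivariant Steenrod algebra, valid in $h^{2,2}(\Z)=\FF_2\{\rho^2\}$ via the isomorphism $h^{2,2}(\Z)\cong h^{2,2}(\R)$). Since $\Sq^1$ is a derivation and $\Sq^2$ obeys the Cartan formula, an induction then gives, in $h^{\ast,\star}(\Z)$,
\[
\Sq^2(\tau^a\rho^b)=\binom{a}{2}\,\tau^{a-2}\rho^{b+2},
\qquad
\Sq^1(\tau^a\rho^b)=a\,\tau^{a-1}\rho^{b+1}.
\]

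Next I would treat the class $\tau\epsilon\in h^{1,2}(\Z)$. Since $\pi$ commutes with Steenrod operations, $\pi(\tau\epsilon)=0$, and $\pi$ is injective in topological degrees $\ne 1$, the targets $h^{2,2}(\Z)=\FF_2\{\rho^2\}$ and $h^{3,3}(\Z)=\FF_2\{\rho^3\}$ force $\Sq^1(\tau\epsilon)=0$ and $\Sq^2(\tau\epsilon)=0$. The Cartan formula then yields
\[
\Sq^2(\tau^a\cdot\tau\epsilon)=\Sq^2(\tau^a)\cdot\tau\epsilon+\Sq^1(\tau^a)\cdot\Sq^1(\tau\epsilon)+\tau^a\cdot\Sq^2(\tau\epsilon)=\binom{a}{2}\,\tau^{a-2}\rho^2\cdot\tau\epsilon=0,
\]
the last equality by the relation $\tau\epsilon\cdot\rho=0$ of \Cref{lem:hZ}. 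So $\Sq^2$ annihilates every $\tau^k\cdot\tau\epsilon$.

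Finally I would assemble the answer from the description $h^{1,w}(\Z)=\FF_2\{\tau^{w-1}\rho\}\oplus\FF_2\{\tau^{w-2}\cdot\tau\epsilon\}$ for $w\ge 2$ (with $h^{1,1}(\Z)=\FF_2\{\rho\}$ and $h^{1,w}(\Z)=0$ for $w\le 0$), together with $h^{s,w}(\Z)\cong h^{s,w}(\R)=\FF_2\{\tau^{w-s}\rho^s\}$ for $s\ne 1$. For $s\ne 1$, the map $\Sq^2\colon h^{s,w}(\Z)\to h^{s+2,w+1}(\Z)$ is identified via $\pi$ with the corresponding map over $\R$ (the target sits in topological degree $s+2\ge 2$, where $\pi$ is an isomorphism), namely $\tau^{w-s}\rho^s\mapsto\binom{w-s}{2}\tau^{w-s-2}\rho^{s+2}$; since both groups are one-dimensional when nonzero, this is an isomorphism exactly when $s\le w$ and $\binom{w-s}{2}$ is odd, i.e. $w-s\equiv 2,3\bmod 4$ (a congruence which already forces $w-s\ge 2$, so the target is nonzero), and is zero otherwise. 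For $s=1$ and $w\ge 2$, $\Sq^2$ kills the summand $\tau^{w-2}\cdot\tau\epsilon$ and sends $\tau^{w-1}\rho$ to $\binom{w-1}{2}\tau^{w-3}\rho^3$ inside the one-dimensional group $h^{3,w+1}(\Z)$; hence $\Sq^2$ is nonzero precisely when $\binom{w-1}{2}$ is odd, i.e. $w-1=w-s\equiv 2,3\bmod 4$, in which case it is surjective with kernel exactly $\{0,\tau^{w-2}\cdot\tau\epsilon\}$, a group of order $2$, so in particular not injective; for $w\le 1$ the target $h^{3,w+1}(\Z)$ vanishes, consistent with $w-s\not\equiv 2,3\bmod 4$. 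This yields the stated dichotomy. The one point that needs care is the exceptional degree $s=1$: one must know that the ``arithmetic'' classes $\tau^k\cdot\tau\epsilon$ all lie in the kernel of $\Sq^2$, which uses both $\Sq^2(\tau\epsilon)=0$ (detected after pushforward to $\R$) and the vanishing $\tau\epsilon\cdot\rho=0$ that kills the Cartan cross-term.
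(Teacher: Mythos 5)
Your argument is essentially the paper's: the published proof is exactly a comparison along $\Z\hookrightarrow\R$ using the presentation of \Cref{lem:hZ}, with the extra classes $\tau^{k}\cdot\tau\epsilon$ killed by $\Sq^2$ because of the relation $\tau\epsilon\cdot\rho=0$, so your route and the intended one coincide, just with the details written out. One correction: you use the topological Cartan formula instead of the motivic one, $\Sq^2(xy)=\Sq^2x\,y+\tau\,\Sq^1x\,\Sq^1y+x\,\Sq^2y$, and consequently your displayed formula $\Sq^2(\tau^a\rho^b)=\binom{a}{2}\tau^{a-2}\rho^{b+2}$ lands in the wrong bidegree; the correct statement is $\Sq^2(\tau^a\rho^b)=\binom{a}{2}\tau^{a-1}\rho^{b+2}$ (e.g.\ $\Sq^2(\tau^2)=\tau\rho^2$, not $\rho^2$). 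Since the binomial coefficient, and hence the nonvanishing pattern $w-s\equiv 2,3\bmod 4$, the one-dimensionality of the relevant groups for $s\neq 1$, and the kernel $\{0,\tau^{w-2}\cdot\tau\epsilon\}$ for $s=1$ are unaffected, the conclusions all stand once the $\tau$-twist is restored.
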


\begin{proof}
    This follows from \Cref{lem:hZ} by comparison along the inclusion $\Z\hookrightarrow \R$.
\end{proof}

A consequence of these results is an identification of the algebraic $K$-theory of $\Z$.

\begin{theorem}\label{thm:slice-spec-seq-KGLZ}
The slice spectral sequence for $\KGL_\Z$ collapses at the second page.
The algebraic $K$-groups of $\Z$ are given by 
\[ \pi_{s+(w)}\KGL_\Z = K_{s-w}(\Z) = 
\begin{cases} 
0 & s<w \\
H^{0,0}(\Z)=\Z & s=w \\
H^{1,\frac{s-w+1}{2}}(\Z) & s-w\equiv \pm 1\bmod 8 \\
H^{2,\frac{s-w+2}{2}}(\Z) & s-w\equiv 2 \bmod 8 \\
H^{3,\frac{s-w+3}{2}}(\Z)\bullet H^{1,\frac{s-w+1}{2}}(\Z) & s-w\equiv 3\bmod 8\\
H^{2, \frac{s-w+2}{2}}(\Z) & s-w\equiv 0 \bmod 4 \\
\Z  & s-w\equiv 5\bmod 8 \\
\ker (\pr\colon H^{2,\frac{s-w+2}{2}}(\Z)\to \Z/2) & s-w\equiv 6 \bmod 8
\end{cases}
\]
\end{theorem}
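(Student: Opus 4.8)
The plan is to run the slice spectral sequence for $\KGL_\Z$ — which coincides with its very effective slice spectral sequence, since $\f_q\KGL = \vf_q\KGL$ by \cite[Lemma 4]{aro} and the $\P^1$- and $\Gm$-filtrations on $\KGL$ agree — and to read everything off from inputs already assembled. First I would recall from \cite{bachmann.jems} that the slices are $\s_q\KGL_\Z \simeq \Sigma^{2q,q}\MZ$, so that
\[
E^1_{s,q,w} = \pi_{s+(w)}\s_q\KGL_\Z \cong H^{2q-s-w,\,q-w}(\Z),
\]
with groups given by \Cref{thm:HZ}. By $\beta$-periodicity it suffices to treat $w=0$, where the abutment is $\pi_{s+(0)}\KGL_\Z = K_s(\Z)$.

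Next I would identify the first differential. The slice $d^1\colon \s_q\KGL \to \Sigma\s_{q+1}\KGL$ desuspends to a map $\MZ \to \Sigma^{3,1}\MZ$ in $\SH(\Z)$; this is the unique nonzero such map by \Cref{lem:endo-mz-weight1} (its mod-$2$ reduction being the first Milnor operation as in \cite[Lemma 5.1]{RO:slices}; compare the proof of \Cref{cor:vd-KQ}), and it factors as $\partial^2_\infty \Sq^2 \pr^\infty_2$. Thus on $E^1$ the differential $H^{p,\ell}(\Z) \to H^{p+3,\ell+1}(\Z)$ is the composite $H^{p,\ell}(\Z) \xrightarrow{\pr^\infty_2} h^{p,\ell}(\Z) \xrightarrow{\Sq^2} h^{p+2,\ell+1}(\Z) \xrightarrow{\partial^2_\infty} H^{p+3,\ell+1}(\Z)$. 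Its behaviour is entirely controlled by \Cref{lem:steenrod-hZ} (on which $h$-groups $\Sq^2$ is an isomorphism, and its precise kernel $\tau^{w-2}\tau\epsilon$ in cohomological degree $1$) together with the integral Bockstein long exact sequences; since $h^{\ast,\star}(\Z)$ surjects onto $h^{\ast,\star}(\R)=\FF_2[\tau,\rho]$ with kernel described by \Cref{lem:hZ}, one can decide in each case exactly when $\pr^\infty_2$ and $\partial^2_\infty$ are surjective.

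I would then compute $E^2$ stem by stem, over the residue of $s-w$ modulo $8$ — the period $4$ coming from $\Sq^2$, doubled by the $2q$ in the bidegree of $E^1$. In a fixed stem the $E^1$-page consists of the term in cohomological degree $0$, $1$ or $2$ (which produces the ``main'' summand in the table) together with a tail of copies of $\Z/2$ in cohomological degrees $>2$ coming from the last clause of \Cref{thm:HZ}; the $d^1$ is arranged so that these cancel in adjacent pairs. Exactly one fails to cancel precisely when $s-w \equiv 6 \bmod 8$, where the Bockstein computation identifies the surviving $d^1$ out of $H^{2,(s-w+2)/2}(\Z)$ with the reduction $\pr$ onto a $\Z/2$, leaving $\ker(\pr\colon H^{2,(s-w+2)/2}(\Z)\to\Z/2)$; for $s-w \equiv 3 \bmod 8$ two groups $H^{3,(s-w+3)/2}(\Z)=\Z/2$ and $H^{1,(s-w+1)/2}(\Z)$ survive in adjacent filtration degrees, producing the extension $\bullet$.

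Finally I would establish collapse at $E^2$: after the $d^1$ the surviving $E^2$-groups in each stem lie in at most two adjacent filtration degrees $q$, so one checks directly that every potential $d^r$ with $r\geq 2$ has vanishing source or target; alternatively, the associated graded already exhausts $K_s(\Z)$ as computed in \cite{Weibel:K-book} (via the Milnor and Bloch--Kato conjectures), which forces vanishing of all higher differentials and, for $s-w \equiv 3 \bmod 8$, pins down the nonsplit extension $\bullet$ (for instance $K_3(\Z)=\Z/48$ is the nonsplit extension of $H^{1,2}(\Z)$ by $H^{3,3}(\Z)=\Z/2$). Reading off $E^\infty$ stem by stem then yields the displayed table. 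The main obstacle is the uniform bookkeeping of $d^1$ across all eight residues together with the collapse verification; everything else is formal given \Cref{thm:HZ}, \Cref{lem:hZ}, \Cref{lem:steenrod-hZ} and \Cref{lem:endo-mz-weight1}.
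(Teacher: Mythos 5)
Your proposal is correct and follows essentially the same route as the paper: identify the slices $\s_q\KGL_\Z\simeq\Sigma^{2q,q}\MZ$, pin down $d^1$ as the unique nontrivial map $\MZ\to\Sigma^{3,1}\MZ$ from \Cref{lem:endo-mz-weight1} (nontrivial by comparison with $\R$/\cite{RO:slices}), compute its effect via $\partial^2_\infty\Sq^2\pr^\infty_2$ using \Cref{thm:HZ}, \Cref{lem:hZ}, \Cref{lem:steenrod-hZ}, and then rule out higher differentials because the surviving $E^2$-entries are confined to a narrow filtration band. Your fallback of forcing collapse by comparison with the known $K_*(\Z)$ is not needed (and is somewhat circular given how \Cref{thm:HZ} is sourced), but the primary degree-reasons argument matches the paper's.
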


\begin{proof}
 The slices of $\KGL_\Z$ are given by $\s_n\KGL_\Z \simeq \Sigma^{n+(n)}\HZ$ for all $n\in \Z$ \cite{levine.homotopy-coniveau,bachmann.jems,bem.kglslices}.  
Bott periodicity $\Sigma^{1+(1)}\KGL\simeq \KGL$ implies that the first slice differential has the form $\HZ\to \Sigma^{3,1}\HZ$ up to suspensions, independent of the slice index.  
By \Cref{lem:endo-mz-weight1}, there is a unique nontrivial such map over $\Spec(\Z)$.  
Base change to $\Spec(\R)$ shows that this map is indeed nontrivial; see \cite{RO:slices}.  

On motivic cohomology groups, the differential is the composition  
\[
H^{s,w}(\Z)\xrightarrow{\pr^\infty_2}h^{s,w}(\Z)\xrightarrow{\Sq^2}h^{s+2,w+1}(\Z)\xrightarrow{\partial^2_\infty}H^{s+3,w+1}(\Z),
\]
which is nontrivial precisely for $0<s<w$ with $w-s\equiv 2\bmod 4$.  
In that case it is an isomorphism for $s>2$, and for $s\in\{1,2\}$ it projects onto a direct summand of order two, as provided by \Cref{lem:steenrod-hZ}.  

It follows that the $E^2$-page of the slice spectral sequence for $\KGL_\Z$ has at most two nonzero groups in each column.  
All nonzero entries lie along a curve of slope $\leq 1$ (asymptotically $\tfrac{1}{2}$), ruling out higher differentials.  
This proves the first claim, and the resulting groups may then be read off directly from the $E^2$-page.
\end{proof}

As preparation, we next determine the $E^2$-page of the very effective slice spectral sequence for $\KQ_\Z$, using the following results.

\begin{lemma}\label{lem:vd1-iso}
Let $s,w$ be natural numbers with $1<s$. 
Then $\pi_{s-(w)}\vs_0\KQ_\Z$ is trivial for $s> w+1$.
For $s\leq w+1$, the first differential homomorphism 
\[ \pi_{s-(w)}\vd^1\KQ_\Z\colon \pi_{s-(w)}\vs_0\KQ_\Z \to \pi_{s-1+(w)}\vs_1\KQ_\Z\]
is surjective whenever $s\equiv 2,3 \bmod 4$.
It is even an isomorphism except in the following cases:
If $w=s$ its kernel is $\Z/2$.
If $w=s+1$ its kernel is $H^{1,w}(\Z)$.
If $w=s+2$ its kernel is $H^{2,w}(\Z)$ if $s\equiv 3 \bmod 4$, and $\Z/u_w(?)$ if $s\equiv 2 \bmod 4$.
\end{lemma}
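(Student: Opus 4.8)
The plan is to extract everything from the slice spectral sequence of $\vs_0\KQ$ over $\Spec(\Z)$, which collapses at its $E^2$-page by \Cref{lemma:sssforV}, together with the identification of its first differential on homotopy in \Cref{lem:vd-KQ-on-pi}; since the claims are stated over $\Z$ there is nothing to reduce. For $s\geq 2$ the collapsed spectral sequence --- equivalently the analogue over $\Z$ of the exact sequences in \Cref{thm:pi-v}, as displayed in \Cref{fig:einfty-v} --- presents $\pi_{s-(w)}\vs_0\KQ_\Z$ as an extension
\[ 0\longrightarrow \pi_{s-(w)}\f_1\vs_0\KQ_\Z\longrightarrow \pi_{s-(w)}\vs_0\KQ_\Z\longrightarrow Q\longrightarrow 0 \]
with sub $\pi_{s-(w)}\f_1\vs_0\KQ_\Z\cong h^{w-s+1,w+1}(\Z)$ and quotient $Q$ equal to $H^{w-s,w}(\Z)$ or to $\ker\bigl(\Sq^2\pr^\infty_2\colon H^{w-s,w}(\Z)\to h^{w-s+2,w+1}(\Z)\bigr)$ according to $s\bmod 4$ (\Cref{lem:d1-v}). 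If $s>w+1$ then $w-s+1<0$ and $w-s<0$, so both outer groups vanish by \Cref{thm:HZ} and \Cref{lem:hZ}; hence $\pi_{s-(w)}\vs_0\KQ_\Z=0$, which is the first assertion.

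Next assume $s\leq w+1$ and $s\equiv 2,3\bmod 4$ (for $s\equiv 0,1$ the differential vanishes by \Cref{lem:vd-KQ-on-pi} and nothing is claimed). The target is $\pi_{s-1-(w)}\vs_1\KQ_\Z\cong h^{w-s+3,w+1}(\Z)$ by \Cref{thm:veff-KQ}, and since $2\leq w-s+3\leq w+1$ this group is one-dimensional over $\FF_2$ on the generator $\tau^{s-2}\rho^{w-s+3}$ (\Cref{lem:hZ}). By \Cref{lem:vd-KQ-on-pi} the restriction of $\pi_{s-(w)}\vd^1\KQ_\Z$ along the sub is the operation $\phi$, $\tau^sx\mapsto\tau^{s-2}\rho^2x$, which sends the generator $\tau^s\rho^{w-s+1}$ of the $\rho$-part of $h^{w-s+1,w+1}(\Z)$ to $\tau^{s-2}\rho^{w-s+3}$; therefore $\phi$, and hence $\pi_{s-(w)}\vd^1\KQ_\Z$, is surjective. (When $w=s$, $h^{1,s+1}(\Z)$ is two-dimensional but $\phi$ kills only the class $\tau^{s-1}\cdot\tau\epsilon$, since $\Sq^2$ does by \Cref{lem:steenrod-hZ}; surjectivity is unaffected.)

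For the kernel I would run the snake lemma on the displayed extension, with $\phi$ on the sub and $0$ on $Q$: surjectivity of $\phi$ yields a short exact sequence $0\to\ker\phi\to\ker\bigl(\pi_{s-(w)}\vd^1\KQ_\Z\bigr)\to Q\to 0$, and it remains to determine $\ker\phi$ and $Q$ in the ranges $w=s$, $w=s+1$, $w=s+2$, $w\geq s+3$. For $w=s$: $Q=H^{0,s}(\Z)=0$ and $\ker\phi=\FF_2\{\tau^{s-1}\cdot\tau\epsilon\}\cong\Z/2$. For $w\geq s+1$ the sub is one-dimensional and $\phi$ an isomorphism, so $\ker\phi=0$ and the kernel is $Q$; when $w\geq s+3$ this $Q$ vanishes (for $s\equiv 3$ by the parity condition in \Cref{thm:HZ}, for $s\equiv 2$ because $\Sq^2\pr^\infty_2$ is then an isomorphism onto the nonzero group $h^{w-s+2,w+1}(\Z)$ by \Cref{lem:hZ} and \Cref{lem:steenrod-hZ}), giving an isomorphism; when $w\in\{s+1,s+2\}$ one reads $Q$ off $H^{1,w}(\Z)$, respectively $H^{2,w}(\Z)$, using \Cref{thm:HZ}, and in the case $w=s+2$, $s\equiv 2\bmod 4$, one uses $H^{2,w}(\Z)\cong\Z/2u_w$ together with the surjectivity of $\pr^\infty_2$ and of $\Sq^2$ to identify $Q$ with the index-two subgroup $\Z/u_w$.

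The step I expect to be the real obstacle is this last case analysis when $s\equiv 2\bmod 4$: it needs the image of $\pr^\infty_2$ on $H^{1,w}(\Z)$ and $H^{2,w}(\Z)$ --- in particular the role of the $\tau\epsilon$-class and of the parity of $w$ recorded in \Cref{lem:hZ}, and the fact that the conjecturally trivial part of $H^{2,w}(\Z)$ has odd order --- together with the $\Sq^2$-action on that image (\Cref{lem:steenrod-hZ}) and the Bernoulli-number description of $H^{2,w}(\Z)$ in \Cref{thm:HZ} needed to recognise $\Z/u_w$ inside $\Z/2u_w$.
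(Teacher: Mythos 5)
Your proposal is essentially the paper's own proof: it runs on the same short exact sequences supplied by the collapsed slice spectral sequence of $\vs_{0}\KQ$ (\Cref{lemma:sssforV}, \Cref{thm:pi-v}, \Cref{fig:einfty-v}), identifies the restriction of the differential to the subgroup with the operation $\phi$ of \Cref{lem:vd-KQ-on-pi} to get surjectivity and the $\Z/2$-kernel on the $\tau\epsilon$-class when $w=s$, and then does the same case analysis in $w-s$ using \Cref{thm:HZ}, \Cref{lem:hZ} and \Cref{lem:steenrod-hZ}, with the splitting/snake step playing the role of the paper's observation that the sequences split when $w\neq s$. The one spot where you are terse --- the subcase $w=s+1$, $s\equiv 2\bmod 4$, where the quotient is a priori $\ker(\Sq^2\pr^\infty_2)\subseteq H^{1,w}(\Z)$ rather than all of $H^{1,w}(\Z)$ --- is treated with exactly the same brevity in the paper's proof, so it is not a gap relative to the paper.
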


\begin{proof}
Since $s>1$, the slice spectral sequence for $\vs_0\KQ$ provides short exact sequences 
\begin{align*} 
0\to h^{w+1-s,w+1}\to \pi_{s-(w)}\vs_0\KQ \to H^{w-s,w}\to 0 && s\equiv 0,3 \bmod 4 \\
0\to h^{w+1-s,w+1}\!/\!\Sq^2\pr^\infty_2\! H^{w-1-s,w}\to \pi_{s-(w)}\vs_0\KQ \to H^{w-s,w}\to 0 && s\equiv 1 \bmod 4 \\
0\to h^{w+1-s,w+1}\to \pi_{s-(w)}\vs_0\KQ \to \ker(\Sq^2\pr^\infty_2\colon \! H^{w-s,w}\to h^{w+2-s,w+1})\to 0 && s\equiv 2 \bmod 4 
\end{align*}

If $s>w+1$, the outer groups vanish by \Cref{thm:HZ} and \Cref{lem:hZ}, yielding the first statement. 
If $s\equiv 2,3 \bmod 4$, the composition 
\[
h^{w+1-s,w+1} \to \pi_{s-(w)}\vs_0\KQ \to h^{w+3-s,w+1}
\] 
sends $\tau^s x$ to $\tau^{s-2}\rho^2 x$, giving a surjection, and even an isomorphism if $w\neq s$. 
Hence, for $w\neq s$, the short exact sequences split. The kernel is then $H^{w-s,w}$ if $s\equiv 3 \bmod 4$, and $\ker(\Sq^2\pr^\infty_2\colon H^{w-s,w}\to h^{w+2-s,w+1})$ if $s\equiv 2\bmod 4$, which vanishes for $w-s>2$ by \Cref{thm:HZ} and \Cref{lem:hZ}. 

For $w=s$, the sequences reduce to an isomorphism $h^{1,w+1}\cong \pi_{w-(w)}\vs_0\KQ$, and the kernel follows from \Cref{lem:hZ}. 
If $w=s+1$, we have 
\[
0 \to h^{2,w+1} \to \pi_{w-1-(w)}\vs_0\KQ \to H^{1,w} \to 0,
\] 
so the kernel is $H^{1,w}$. 
If $w=s+2$, the kernel is $H^{2,w}$ for $s\equiv 3 \bmod 4$, and 
\[
\ker(\Sq^2\pr^\infty_2\colon H^{2,w}\to h^{4,w+1}) \cong \Z/u_w(?)
\] 
for $s\equiv 2 \bmod 4$, with ambiguity only in cyclicity, not order.
\end{proof}

\begin{lemma}\label{lem:pi0V}
Let $w\in\Z$. 
The map $\pi_{0-(w)}\vs_0\KQ_\Z\to \pi_{0-(w)}\vs_0\KQ_\R\iso K^\MW_w(\R)$ is injective, and even an isomorphism for $w\leq 0$.
For $w>0$ the injection induces an isomorphism $\pi_{0-(w)}\vs_0\KQ_\Z\iso \Z$.
\end{lemma}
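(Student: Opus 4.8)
The plan is to reduce the statement to a computation of diagonal Milnor--Witt motivic cohomology and then to evaluate the latter by classical means. Since $h^{n+1,n}(\Z)=0$ for all $n$ by \Cref{lem:hZ} --- no monomial of $\FF_2[\tau,\rho,\tau\epsilon]/((\tau\epsilon)^2,\tau\epsilon\rho)$ has bidegree $(n+1,n)$ --- and $h^{n+1,n}(\R)=0$ trivially, \Cref{lem:pi0-v} applies over both $\Spec(\Z)$ and $\Spec(\R)$: the canonical map $\vs_0\KQ\to\cHZ$ induces an isomorphism $\pi_{0-(w)}\vs_0\KQ\xrightarrow{\simeq}\pi_{0-(w)}\cHZ=\widetilde H^{w,w}$. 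As this map is defined over $\Spec(\Z)$ and compatible with base change, the homomorphism in the statement is identified with the base--change map on diagonal Milnor--Witt cohomology
\[
\widetilde H^{w,w}(\Z)\longrightarrow\widetilde H^{w,w}(\R)\cong\KMW_w(\R),
\]
the last isomorphism by \cite[Chapter~5]{2020arXiv200406634B}, compatibly with the identification used in the statement. Writing $\KMW_w(\Z):=\widetilde H^{w,w}(\Z)$, it then remains to evaluate $\KMW_w(\Z)\to\KMW_w(\R)$.

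Here I would use the fundamental exact sequence \eqref{ses:I-MW-M}, which reads $0\to I^{w+1}\to\KMW_w\to K^{\sfM}_w\to 0$ and is natural in the base. The classical isomorphism $W(\Z)\xrightarrow{\simeq}W(\R)$ induces isomorphisms $I^{w+1}(\Z)\xrightarrow{\simeq}I^{w+1}(\R)$ for all $w$ and $\GW(\Z)\xrightarrow{\simeq}\GW(\R)=\KMW_0(\R)$; and $K^{\sfM}_w(\Z)\to K^{\sfM}_w(\R)$ is an isomorphism for $w\le 0$ and, for $w\ge 1$, is the injection $\Z/2\hookrightarrow K^{\sfM}_w(\R)$ onto the $2$-torsion, generated by $\{-1\}^w$ on both sides. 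Comparing the two short exact sequences, the five lemma gives that $\KMW_w(\Z)\to\KMW_w(\R)$ is an isomorphism for $w\le 0$, and for $w\ge 1$ a diagram chase (the left term having trivial kernel) shows it is injective. Finally, for $w\ge 1$ the target $\KMW_w(\R)$ is torsion-free --- it is an extension of $I^w(\R)\cong\Z$ by the uniquely divisible group $2K^{\sfM}_w(\R)$, as one reads off Morel's computation --- so its subgroup $\KMW_w(\Z)$ is torsion-free; being an extension of $K^{\sfM}_w(\Z)=\Z/2$ by $I^{w+1}(\Z)\cong\Z$, it must be infinite cyclic. Thus $\pi_{0-(w)}\vs_0\KQ_\Z\cong\Z$ for $w>0$, completing all three assertions.

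The hard part is the first reduction: the identification $\widetilde H^{w,w}(\Z)\cong\KMW_w(\Z)$ and the validity of \eqref{ses:I-MW-M} over $\Z$, since the results of Calmès--Fasel and of \cite{deglise2023notesmilnorwittktheory} are stated for base fields. One natural route is to apply $\pi_{0-(w)}$ to the defining homotopy pullback \eqref{def:mwz} over $\Z$ and read $\widetilde H^{w,w}(\Z)$ off the resulting Mayer--Vietoris sequence in terms of $K^{\sfM}_w(\Z)=\pi_{0-(w)}\MZ$, the diagonal Witt motivic cohomology $\pi_{0-(w)}\HWZ$, and $h^{w,w}(\Z)=\pi_{0-(w)}\MZ/2$, using \Cref{thm:HZ} and \Cref{lem:hZ} to dispose of the off-diagonal correction terms; this presents $\widetilde H^{w,w}(\Z)$ as the fibre product defining Milnor--Witt $K$-theory once $\pi_{0-(w)}\HWZ_\Z$ is identified with $I^w(\Z)$ (and with $W(\Z)$ in non-positive degrees), compatibly with the reduction map to $h^{w,w}(\Z)$. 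That last identification of the diagonal Witt motivic cohomology of $\Z$ is the genuinely arithmetic input, which I would in turn establish by a localization argument over $\Spec(\Z)$ in the style of the proof of \Cref{lem:hZ} --- closed point $\Spec(\FF_2)$, complement $\Spec(\Z[1/2])$, and comparison with the generic point $\Spec(\Q)$ --- together with Milnor's exact sequence of Witt groups and $W(\Z)\cong W(\R)\cong\Z$. Once it is in place, the short--exact--sequence bookkeeping above finishes the proof.
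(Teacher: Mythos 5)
Your opening reduction is correct: since $h^{n+1,n}(\Z)=0$ by \Cref{lem:hZ} and $h^{n+1,n}(\R)=0$, \Cref{lem:pi0-v} applies over both bases, and by uniqueness of the map $\vs_0\KQ\to\cHZ$ the homomorphism in question is indeed $\widetilde{H}^{w,w}(\Z)\to\widetilde{H}^{w,w}(\R)\cong\KMW_w(\R)$; the comparison-with-$\R$ idea is also the one the paper uses. The genuine gap is the step you yourself flag and then only sketch: the short exact sequence $0\to I^{w+1}(\Z)\to\widetilde{H}^{w,w}(\Z)\to K^{\sfM}_w(\Z)\to 0$ over $\Z$. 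All the inputs you cite for it (\eqref{ses:I-MW-M}, \cite{deglise2023notesmilnorwittktheory}, \Cref{thm:pi-v}) are field-based. Your Mayer--Vietoris route through \Cref{def:mwz} does not produce the fibre product on the nose: the correction term $\pi_{1-(w)}\MZ/2=h^{w-1,w}(\Z)$ is nonzero for every $w\ge 1$ (it contains $\tau\rho^{w-1}$, and additionally $\tau\epsilon$ when $w=2$), while the $\MZ$-contribution $H^{w-1,w}(\Z)$ is zero for $w\geq 3$ and cyclic for $w=2$, hence too small to cover $h^{1,2}(\Z)\cong(\Z/2)^2$; so exactness forces you to know that $\pi_{1-(w)}\HWZ_\Z$ surjects onto $h^{w-1,w}(\Z)$ --- exactly the kind of computation of the homotopy of $\HWZ_\Z$ you have not supplied. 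The same applies to the identification $\pi_{0-(w)}\HWZ_\Z\cong I^w(\Z)$: the proposed localization argument with closed fibre $\Spec(\FF_2)$ needs a purity/d\'evissage statement for $\HWZ$ (equivalently for $\vs_0\KQ$) at residue characteristic $2$; absolute purity is known for $\KQ$ itself, but it does not pass formally to very effective covers, slices, or $\f_1$, and Witt-type d\'evissage at the prime $2$ is precisely the delicate point. So, as written, the central arithmetic input is unproven, and supplying it would in effect push you through the same slice-filtration analysis of $\vs_0\KQ$ (equivalently of $\HWZ$) over $\Z$ that you set out to avoid, with extra bookkeeping on top.

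By contrast, the paper argues directly with the slice tower of $\vs_0\KQ_\Z$: in the relevant column its layers are the diagonal groups $H^{n,n}$ and $h^{n,n}$ (\Cref{prop:slices-cHZ}, with convergence from \Cref{lemma:sssforV} and \Cref{lem:convergence-v}), and since $h^{n,n}(\Z)\to h^{n,n}(\R)$ is an isomorphism while $H^{n,n}(\Z)\to H^{n,n}(\R)$ is injective (an isomorphism for $n=0$), injectivity and the isomorphism for $w\le 0$ follow layerwise; for $w>0$ the $\eta$-multiplications, which on layers are $\pr^\infty_2\colon H^{n,n}\to h^{n,n}$ and are isomorphisms for $n>0$, identify $\pi_{0-(w)}\vs_0\KQ_\Z$ with an infinite cyclic group (sitting as $2^w\Z$-type subgroups inside $\KMW_w(\R)$). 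If you want to keep your route, replace the Witt-ring/localization input by this slice computation of $\pi_{0-(w)}\HWZ_\Z$ and $\pi_{1-(w)}\HWZ_\Z$; at that point, however, you have essentially reproduced the paper's proof with $\cHZ$ interposed.
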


\begin{proof}
The inclusion $\Z \to \R$ induces isomorphisms 
\[
h^{n,n}(\Z) \cong h^{n,n}(\R) \quad \text{for all } n \in \N, \qquad 
H^{0,0}(\Z) \cong H^{0,0}(\R)
\] 
For $w \le 0$, the isomorphism follows from the slice filtration. In particular, the rank map 
\[
\pi_{0-(0)} \vs_0 \KQ_\Z \to \Z
\] 
has kernel $\Z$. For $n>0$, the map $H^{n,n}(\Z) \to H^{n,n}(\R)$ is injective, with image $\Z/2$ generated by $\{-1\}^n$, so injectivity of the corresponding slice map follows.  

Multiplication by $\eta$ induces $\pr^\infty_2\colon H^{n,n}(\Z)\to h^{n,n}(\Z)$ on slices, an isomorphism for $n>0$. Hence 
\[
\eta\colon \pi_{1-(1)}\vs_0\KQ_\Z \to \pi_{1-(0)}\vs_0\KQ_\Z
\] 
recovers the inclusion of the kernel $\Z$ above. More generally, for $w>0$, 
\[
\eta\colon \pi_{1-(w+1)}\vs_0\KQ_\Z \to \pi_{1-(w)}\vs_0\KQ_\Z
\] 
corresponds to the inclusion $2^w \Z \hookrightarrow 2^{w-1} \Z$.
\end{proof}

\begin{lemma}\label{lem:pi1V}
The group $\pi_{1-(w)}\vs_0\KQ_\Z$ is nonzero if and only if $w\leq 2$.
Moreover, there are isomorphisms 
\[ \pi_{1-(w)}\vs_0\KQ_\Z\iso \begin{cases} \Z/48 & w=2 \\\Z/2\oplus \Z/2 & w\in \{0,1\} \\ \Z/2 & w<0\end{cases}\]
\end{lemma}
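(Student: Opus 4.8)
The plan is to compute $\pi_{1-(w)}\vs_0\KQ_\Z$ from the slice spectral sequence for $\vs_0\KQ$; by base change it suffices to work over $\Spec(\Z)$, where this spectral sequence collapses at its $E^2$-page by \Cref{lemma:sssforV} and is convergent (cf.~\Cref{lem:convergence-v}). First I would read off the column abutting to $\pi_{1-(w)}\vs_0\KQ_\Z$. By \Cref{prop:slices-cHZ} the filtration-$0$ slice is $\HZ$, contributing $\pi_{1-(w)}\HZ\cong H^{w-1,w}(\Z)$, while for $q\geq1$ the slice $\Sigma^{(q)}\HZ/2\oplus\Sigma^{2+(q)}\HZ/2$ contributes $h^{w+q-1,w+q}(\Z)\oplus h^{w+q+1,w+q}(\Z)$, the second summand vanishing by the presentation of $h^{\ast,\star}(\Z)$ in \Cref{lem:hZ}. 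Feeding in \Cref{thm:HZ} and \Cref{lem:hZ}: the filtration-$0$ entry is $H^{1,2}(\Z)\cong\Z/24$ for $w=2$ and $0$ otherwise; the filtration-$1$ entry is $h^{w,w+1}(\Z)$; and the filtration-$q$ entry ($q\geq2$) is $h^{w+q-1,w+q}(\Z)$, which is $\Z/2\{\tau\rho^{\,w+q-1}\}$ for $w+q>2$, the four-element group $h^{1,2}(\Z)$ for $w+q=2$, and $0$ for $w+q\leq0$.

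Next I would run the first slice differential of \Cref{lem:d1-v}. Since the classes above all lie in the $\Sigma^{(q)}\HZ/2$-summands, in that matrix only $\Sq^2$ (landing in the next $\Sigma^{(q)}\HZ/2$, the ``outgoing'' differential), $\tau$ (received from a $\Sigma^{2+(q-1)}\HZ/2$), and $\Sq^2\pr^\infty_2$ (received from the filtration-$0$ slice) are relevant; the other components land in or issue from groups vanishing for weight reasons. By \Cref{lem:steenrod-hZ} every outgoing $\Sq^2\colon h^{w+q-1,w+q}(\Z)\to h^{w+q+1,w+q+1}(\Z)$ vanishes, since $(w+q)-(w+q-1)=1\not\equiv2,3\bmod4$; the incoming $\Sq^2\pr^\infty_2$ into filtration $1$ is surjective onto $h^{w,w+1}(\Z)$ for $w\geq3$ and has zero source for $w\leq2$; and the incoming $\tau\colon h^{w+q-1,w+q-1}(\Z)=\Z/2\{\rho^{\,w+q-1}\}\to h^{w+q-1,w+q}(\Z)$ into filtration $q\geq2$ is an isomorphism for $w+q>2$ and is the inclusion $\tau h^{1,1}(\Z)\hookrightarrow h^{1,2}(\Z)$ with cokernel $\Z/2\{\tau\epsilon\}$ when $w+q=2$. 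Hence on $E^\infty=E^2$: for $w\geq3$ all entries cancel, so $\pi_{1-(w)}\vs_0\KQ_\Z=0$; for $w<0$ only the class $\tau\epsilon$ in filtration $2-w$ survives, giving $\Z/2$; for $w=1$ only the filtration-$1$ group $h^{1,2}(\Z)\cong(\Z/2)^2$ survives; for $w=0$ the filtration-$1$ group $h^{0,1}(\Z)\cong\Z/2$ and the filtration-$2$ class $\tau\epsilon$ both survive; and for $w=2$ one is left with an extension $0\to\Z/2\to\pi_{1-(2)}\vs_0\KQ_\Z\to\Z/24\to0$.

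It remains to settle these extensions, the one non-formal step. For $w=2$ I would compare with algebraic $K$-theory: under the identification $\pi_{1-(2)}\vs_0\KQ_\Z\cong\pi_{1-(2)}\kq_\Z$ (cf.~\Cref{rem:pi-v}) and the forgetful map to $\pi_{1-(2)}\kgl_\Z\cong K_3(\Z)\cong\Z/48$, which by \Cref{thm:slice-spec-seq-KGLZ} is an extension of $H^{1,2}(\Z)\cong\Z/24$ by $H^{3,3}(\Z)\cong\Z/2$, the forgetful map is the identity on the $\Z/24$-quotients and, on the length-one subobjects, is the Bockstein $\partial^2_\infty\colon h^{2,3}(\Z)\to H^{3,3}(\Z)$, which is an isomorphism because $H^{2,3}(\Z)=0$ by \Cref{thm:HZ}; so $\pi_{1-(2)}\vs_0\KQ_\Z\to K_3(\Z)$ is an isomorphism of filtered groups and $\pi_{1-(2)}\vs_0\KQ_\Z\cong\Z/48$. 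For $w=0$ I would separate the surviving ``topological'' class $\tau$ (filtration $1$) from the ``$2$-adic'' class $\tau\epsilon$ (filtration $2$) by base change --- to $\Z[1/2]$, where $\tau\epsilon$ vanishes, and to $\FF_2$ --- forcing the order-four group to be $(\Z/2)^2$. The comparison with $K_3(\Z)$ is where I expect the real content; the rest is bookkeeping with the collapsed slice spectral sequence and the motivic cohomology of $\Z$ from \Cref{thm:HZ}, \Cref{lem:hZ} and \Cref{lem:steenrod-hZ}.
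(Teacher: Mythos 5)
Your spectral-sequence bookkeeping is essentially the paper's own: the paper likewise reads the answer off the collapsed slice spectral sequence for $\vs_0\KQ$ (\Cref{lemma:sssforV}, \Cref{thm:pi-v}, \Cref{fig:einfty-v}), and your treatment of $w=1$, $w<0$ and $w\geq 3$ matches it (one caveat: the outgoing entry of the matrix in \Cref{lem:d1-v} is $\Sq^2+\rho\Sq^1$, not $\Sq^2$, and $\rho\Sq^1(\tau\rho^m)=\rho^{m+2}$ is not killed ``for weight reasons'', so the survival of these classes deserves the sentence you skipped; it is what \Cref{thm:pi-v} records). The genuine gaps are in the two extension problems, which is exactly where the paper's proof does its work. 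For $w=0$, base change cannot decide the extension: over any field of characteristic $\neq 2$, and likewise over $\Z[1/2]$, the target group is $h^{0,1}\iso \Z/2$ and the filtration-two class maps to zero there whether the group is $\Z/4$ or $(\Z/2)^2$, while over $\FF_2$ the target group is trivial, so no information is gained. Moreover the assertion that $\tau\epsilon$ vanishes over $\Z[1/2]$ is false: by the localization sequence used to prove \Cref{lem:hZ}, the restriction $h^{1,2}(\Z)\to h^{1,2}(\Z[1/2])$ is injective; only its class in the associated graded dies, because it becomes $\tau$-divisible. The paper settles $w=0$ by exhibiting an order-two lift of the generator of the quotient $h^{0,1}(\Z)$, namely the image of the topological Hopf map in $\pi_{1-(0)}\vs_0\KQ$; some such lift (or an equivalent argument) is indispensable.

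For $w=2$ you correctly identify the comparison with $K_3(\Z)\iso\Z/48$ as the real content, but the mechanism you propose does not go through as stated. First, the identification $\pi_{1-(2)}\vs_0\KQ_\Z\iso\pi_{1-(2)}\kq_\Z$ is asserted in \Cref{rem:pi-v} only for number fields and requires an argument over $\Z$. More seriously, the claim that on the order-two subobjects the forgetful map ``is the Bockstein $\partial^2_\infty\colon h^{2,3}(\Z)\to H^{3,3}(\Z)$'' is unsubstantiated, and it cannot be read off from a filtration-preserving comparison of slice towers: the induced map on first slices is a map $\Sigma^{1+(1)}\MZ/2\to \Sigma^{1+(1)}\MZ$, and $[\MZ/2,\MZ]=0$ by \Cref{lem:endo-mz-weight0}, so the map of filtration-one graded pieces is zero. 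That the order-two subgroup is nevertheless detected in $K_3(\Z)$ is precisely what must be proved, not an input. The paper obtains it from the Wood cofiber sequence: in $\pi_{1-(3)}\KQ\to\pi_{1-(2)}\KQ\xrightarrow{\Forg}K_3(\Z)\to\pi_{0-(3)}\KQ\iso\Z$ the boundary map vanishes because its source is torsion, so $\Forg$ is onto $\Z/48$; since $\pi_{1-(2)}\KQ$ is computed in the very effective slice spectral sequence by the single surviving entry $\pi_{1-(2)}\vs_0\KQ$, a group of order $48$, that group must be cyclic, i.e.\ the extension is nontrivial. Replace your graded/Bockstein step by this surjectivity argument (or an equivalent one); without it, and without an actual splitting argument at $w=0$, the lemma is not proved.
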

\begin{proof}
    The second page (which equals the $\infty$-page) of the slice spectral sequence for $\vs_0\KQ$ gives the desired first statement. 
    It also produces for $w=1$ precisely one nontrivial group on the respective column, namely $h^{1,2}(\Z)\iso \Z/2\oplus \Z/2$.
    For $w=0$ two groups isomorphic to $\Z/2$ show up, so that $\pi_{1-(w)}\vs_0\KQ_Z$ could be a nontrivial extension. 
    However, for $w=0$ the nonzero element in the group $h^{0,1}(\Z)$ appearing in 
    \[ 0 \to h^{1,2}(\Z)/\tau h^{1,1}(\Z) \to \pi_{1-(0)}\vs_0\KQ_\Z \to h^{0,1}(\Z)\to 0\]
    lifts to the image of the topological Hopf map $\eta_{\mathrm{top}}\in \pi_{1-(0)}\vs_0\KQ$, which has order 2.
    Hence the short sequence above splits for $w=0$. 
    For $w<0$ the single nontrivial group appearing is $h^{1,2}(\Z)/\tau h^{1,1}(\Z)\iso \Z/2$.
    It remains to see that the extension 
    \[ 0 \to h^{2,3}(\Z)\to \pi_{1-(2)}\vs_0\KQ_\Z \to H^{1,2}(\Z) \to 0\]
    is the unique nontrivial one.
    For this purpose, one observes that $\pi_{1-(2)}\vs_0\Q$ is the single nonzero group in the column of the $\tilde{E}^1$-page for $\KQ$ computing $\pi_{1-(2)}\KQ$. 
    The Wood cofiber sequence provides an exact sequence
    \[ \pi_{1-(3)}\KQ\to \pi_{1-(2)}\KQ \to \pi_{1-(2)}\KGL\cong K_3(\Z) \to \pi_{0-(3)}\KQ\cong \Z\]
    where the last isomorphism follows as in \Cref{thm:KQ-coeff-field}. 
    Hence the last homomorphism displayed above, originating in a torsion group, has to be zero.
    Since $K_3(\Z)\cong \Z/48$ by \cite{lee-szczarba}, the extension for $\pi_{1-(2)}\vs_0\KQ$ has to contain an element of order $48$.
\end{proof}

\begin{lemma}\label{lem:more-vd1-iso}
The differential homomorphism 
\[ \partial^2_\infty\Sq^2 =\pi_{s-w+(w)}(\Sigma^{-1-(1)}\vd^1_1\KQ)\colon h^{s,w}\to H^{s+3,w+1}\]
is nonzero only if $w-s\equiv 2 \bmod 4$.
Under this condition, it is an isomorphism if $1<s<w$ and a surjection with kernel $\Z/2$ if $1=s<w$.
\end{lemma}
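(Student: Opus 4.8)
The plan is to reduce the claim to the explicit description of the motivic cohomology of $\Z$ and of the Steenrod action on it. By \Cref{cor:vd-KQ} the differential $\vd^1_1\KQ\colon\vs_1\KQ\simeq\Sigma^{2,1}\MZ/2\to\Sigma\vs_2\KQ\simeq\Sigma^{5,2}\MZ$ is the unique nontrivial map, and by \Cref{lem:endo-mz-weight1} its $\Sigma^{-1-(1)}$-desuspension $\MZ/2\to\Sigma^{3,1}\MZ$ factors as $\MZ/2\xrightarrow{\Sq^2}\Sigma^{2,1}\MZ/2\xrightarrow{\partial^2_\infty}\Sigma^{3,1}\MZ$, where $\partial^2_\infty$ is the integral Bockstein attached to $\MZ\xrightarrow{2}\MZ\xrightarrow{\pr^\infty_2}\MZ/2$. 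Thus, on motivic cohomology of $\Z$, the homomorphism in question is the composite $h^{s,w}(\Z)\xrightarrow{\Sq^2}h^{s+2,w+1}(\Z)\xrightarrow{\partial^2_\infty}H^{s+3,w+1}(\Z)$, and the plan is to analyze the two factors separately: \Cref{lem:steenrod-hZ} together with the presentation of $h^{\ast,\star}(\Z)$ from \Cref{lem:hZ} handles $\Sq^2$, while \Cref{thm:HZ} together with the Bockstein long exact sequence handles $\partial^2_\infty$.

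First I would settle the vanishing statement. If $w-s\equiv 0,1\bmod 4$ then $\Sq^2$ already vanishes on $h^{s,w}(\Z)$ by \Cref{lem:steenrod-hZ}. If $w-s\equiv 3\bmod 4$ then $(w+1)-(s+3)=w-s-2$ is odd; since $h^{s,w}(\Z)$ is nonzero only for $0\le s\le w$ (so $s+3>2$), \Cref{thm:HZ} gives $H^{s+3,w+1}(\Z)=0$. And $h^{s,w}(\Z)=0$ for $s<0$ or $s>w$. In all cases the composite is zero, which proves the ``only if''. Next, assuming $w-s\equiv 2\bmod 4$ and $1\le s<w$ (so $s\le w-2$), the key observation is that $(w+1)-(s+2)=w-s-1$ is odd, whence $H^{s+2,w+1}(\Z)=0$ by \Cref{thm:HZ}; the Bockstein long exact sequence then shows $\partial^2_\infty\colon h^{s+2,w+1}(\Z)\to H^{s+3,w+1}(\Z)$ is injective, and since $(w+1)-(s+3)$ is even with $s+3\le w+1$ the target $H^{s+3,w+1}(\Z)$ is $\Z/2$ by \Cref{thm:HZ}. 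As a bidegree $(m,n)$ with $m\ne 1$ and $0\le m\le n$ is spanned by $\tau^{n-m}\rho^m$ in the presentation $h^{\ast,\star}(\Z)\iso\FF_2[\tau,\rho,\tau\epsilon]/\bigl((\tau\epsilon)^2,\tau\epsilon\rho\bigr)$ of \Cref{lem:hZ}, the group $h^{s+2,w+1}(\Z)$ is $\Z/2$, so the injection $\partial^2_\infty$ is an isomorphism $h^{s+2,w+1}(\Z)\xrightarrow{\iso}H^{s+3,w+1}(\Z)$.

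It then remains to insert \Cref{lem:steenrod-hZ}. For $1<s<w$ the square $\Sq^2\colon h^{s,w}(\Z)\to h^{s+2,w+1}(\Z)$ is an isomorphism of groups of order two, so the composite $\partial^2_\infty\Sq^2$ is an isomorphism. For $s=1<w$, hence $w\equiv 3\bmod 4$, the presentation yields $h^{1,w}(\Z)=\FF_2\{\tau^{w-1}\rho\}\oplus\FF_2\{\tau^{w-2}\tau\epsilon\}$, and \Cref{lem:steenrod-hZ} identifies the kernel of $\Sq^2$ with $\FF_2\{\tau^{w-2}\tau\epsilon\}$; comparing orders with $h^{3,w+1}(\Z)\iso\Z/2$ shows that $\Sq^2$ is onto, whence $\partial^2_\infty\Sq^2$ is a surjection onto $H^{4,w+1}(\Z)\iso\Z/2$ with kernel of order two. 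The only genuinely delicate point is the index and parity bookkeeping, in particular checking that $\partial^2_\infty$ is an isomorphism precisely on the order-two groups that arise, which rests on the vanishing $H^{s+2,w+1}(\Z)=0$ from the parity count; apart from this, \Cref{cor:vd-KQ}, \Cref{lem:steenrod-hZ} and \Cref{thm:HZ} carry essentially all of the weight, and no serious obstacle remains.
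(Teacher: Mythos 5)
Your proof is correct and follows exactly the route the paper intends: its own proof is the one-line citation of \Cref{thm:HZ} and \Cref{lem:hZ}, and your argument is simply the spelled-out version, factoring the differential as $\partial^2_\infty\circ\Sq^2$, using \Cref{lem:steenrod-hZ} (itself a consequence of \Cref{lem:hZ}) for the square and the Bockstein sequence plus the parity pattern in \Cref{thm:HZ} for $\partial^2_\infty$. The bookkeeping (vanishing of $H^{s+2,w+1}(\Z)$, the order-two identifications, and the $s=1$ kernel generated by $\tau^{w-2}\cdot\tau\epsilon$) checks out.
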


\begin{proof}
    This follows from \Cref{thm:HZ} and \Cref{lem:hZ}.
\end{proof}

The statements regarding the first page of the very effective slice spectral sequence for $\KQ_\Z$ allow us to determine its second page.
The periodicity $\Sigma^{4+(4)}\KQ\simeq \KQ$ implies that it suffices to 
identify $\pi_{s+(w)}\KQ$ for all $s\in \Z$ and $w\in \{0,1,2,3\}$.
The case $w=0$ corresponds to symmetric Grothendieck-Witt groups, the case $w=2$ to symplectic Grothendieck-Witt groups.
From the perspective of the very effective slice spectral sequence, the case $w=2$ is the simplest.

\begin{proposition}\label{prop:2ndpageweight2KQ}
The second page of the very effective slice spectral sequence 
computing $\pi_{s+(2)}\KQ_\Z$ consists of zero columns if 
$s\equiv 3\bmod 8$ and if $2\neq s\equiv 2\bmod 8$.
The columns for $s=2$, $s\equiv 1,5,6,7 \bmod 8$ and $2>s\equiv 0\bmod 4$ 
contain a single nonzero entry given by 
\begin{center}
\begin{tabular}{|c|c|}
\hline
$\Z/2$ & $2>s\equiv 1\bmod 4$, $2<s\equiv 6,7 \bmod 8$\\
$H^{1,\frac{s-1}{2}}(\Z)$ & $2<s\equiv 1 \bmod 8$ \\
$\pi_{\tfrac{s-3}{2}-(\tfrac{s-1}{2})}\vs_0\KQ_\Z= h^{2,\frac{s+1}{2}}(\Z)\bullet H^{1,\frac{s-1}{2}}(\Z)$ &  $2<s\equiv 5\bmod 8$ \\
$\Z$ & $s=2$, $2>s\equiv 0 \bmod 4$ \\
\hline
\end{tabular}
\end{center}
\end{proposition}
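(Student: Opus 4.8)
The plan is a bookkeeping computation on the $\tilde E^1$-page. First I would write out, for weight $2$, the term $\tilde E^1_{s+2,q,2}=\pi_{s+(2)}\vs_q\KQ$ using \Cref{thm:veff-KQ}: it is $H^{2q-s-2,\,q-2}(\Z)$ for $q\equiv 2\bmod 4$, it is $h^{2q-s-2,\,q-2}(\Z)$ for $q\equiv 1\bmod 4$, it vanishes for $q\equiv 3\bmod 4$, and for $q\equiv 0\bmod 4$ it is the desuspension $\pi_{(s-q)-(q-2)}\vs_0\KQ$, whose value over $\Z$ is supplied by the short exact sequences in the proof of \Cref{lem:vd1-iso} (and by \Cref{lem:pi0V}, \Cref{lem:pi1V} near the bottom edge). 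Since $H^{p,w}(\Z)$ and $h^{p,w}(\Z)$ are supported in $0\le p\le w$ by \Cref{thm:HZ} and \Cref{lem:hZ}, each entry is nonzero only for $q$ in an explicit finite window depending on $s$, and for a fixed residue of $s$ modulo $8$ only a short list of residues of $q$ modulo $8$ can contribute.

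Next I would identify $\tilde d^1$. By \Cref{cor:vd-KQ} and the $(4,4)$-periodicity of the slices, $\vd^1_q$ is the suspended unique nonzero map $\vs_0\KQ\to\Sigma^{3,1}\HZ/2$ when $q\equiv 0\bmod 4$, it is $\partial^2_\infty\Sq^2$ when $q\equiv 1\bmod 4$, and it is null for $q\equiv 2,3\bmod 4$ since the next nonzero slice sits in slice degree $q+4$. Thus on $\tilde E^1$ the nonzero differentials run only along $q\equiv 0\to q\equiv 1$ and $q\equiv 1\to q\equiv 2$ (one column to the left). On the first transition \Cref{lem:vd-KQ-on-pi} together with \Cref{lem:vd1-iso} shows the map is null whenever the relevant first index is $<2$ or $\equiv 0,1\bmod 4$, and is otherwise surjective with kernel $0$, $\Z/2$, $H^{1,w}(\Z)$, $H^{2,w}(\Z)$ or $\Z/u_w(?)$; on the second transition \Cref{lem:more-vd1-iso} with \Cref{lem:steenrod-hZ} shows it is an isomorphism $h^{p,w}\xrightarrow{\sim}H^{p+3,w+1}$ for $1<p<w$ and a surjection with kernel $\Z/2$ for $p=1<w$, and null otherwise. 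Consequently every $q\equiv 2$ entry dies as a cokernel, every $q\equiv 1$ entry dies (as a cokernel on the left or as the kernel of a surjection on the right), and what can survive is a single $q\equiv 0$ term, namely the kernel of $\vd^1_0$ on it (or, in one residue, a single $\Z/2$ kernel of $\partial^2_\infty\Sq^2$ on a $q\equiv 1$ term, or a lone $H^{1,w}(\Z)$ on a $q\equiv 2$ term). In each case the incoming $\tilde d^1$ vanishes: its source is either a slice in degree $\equiv 3\bmod 4$ and so zero, or a $\pi_\bullet\vs_0\KQ$ whose first index exceeds its weight by more than one and so zero by \Cref{lem:vd1-iso}.

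Finally I would run the casework modulo $8$: for $s\equiv 3\bmod 8$ and for $2\ne s\equiv 2\bmod 8$ every entry cancels, giving zero columns; for $s=2$ the survivor is the $q=2$ term $H^{0,0}(\Z)=\Z$, and for $2>s\equiv 0\bmod 4$ it is the $q=s$ term $\pi_{0-(2-s)}\vs_0\KQ\cong\Z$ by \Cref{lem:pi0V}; for $2<s\equiv 1\bmod 8$ it is the $q=\tfrac{s+3}2$ term $H^{1,(s-1)/2}(\Z)$, which survives in full because its outgoing $\vd^1$ is null, its slice index being $\equiv 2\bmod 4$; for $2<s\equiv 5\bmod 8$ it is the $q=\tfrac{s+3}2$ term $\pi_{(s-3)/2-((s-1)/2)}\vs_0\KQ$ in full (the outgoing $\vd^1_0$ vanishes by \Cref{lem:vd-KQ-on-pi}, the first index $\tfrac{s-3}2$ being $\equiv 1\bmod 4$), which by the exact sequences in the proof of \Cref{lem:vd1-iso} (using $H^{0,(s-1)/2}(\Z)=0$) is the extension $h^{2,(s+1)/2}(\Z)\bullet H^{1,(s-1)/2}(\Z)$; for $2<s\equiv 6\bmod 8$ it is the $\Z/2$ kernel of $\vd^1_0$ on the $q=\tfrac{s+2}2$ term (the ``$w=s$'' case of \Cref{lem:vd1-iso}); and for $2<s\equiv 7\bmod 8$ it is the $\Z/2$ kernel of the surjection $\partial^2_\infty\Sq^2\colon h^{1,(s-1)/2}(\Z)\to H^{4,(s+1)/2}(\Z)$ on the $q=\tfrac{s+3}2$ term (\Cref{lem:more-vd1-iso}). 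Convergence (\Cref{proposition:strongconvergence}) enters only to make the page meaningful; the argument itself is pure $\tilde d^1$-homology. The step I expect to be the main obstacle is this final one: one must keep precise track of the shapes of $H^{p,w}(\Z)$ and $h^{p,w}(\Z)$ (including the $\Z\oplus\Z/2$ in odd weight and the cyclic groups $\Z/v_w$, $\Z/2u_w$) and of the exact kernels and cokernels of the two families of differentials, and verify in every residue class that the cancellations leave a single term; the subtlest point is $s\equiv 5\bmod 8$, where the survivor is a genuinely non-split extension (cf.\ \Cref{rem:pi-v}) that must be pinned down on the nose rather than merely by its order.
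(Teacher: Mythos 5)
Your proposal is correct and follows essentially the same route as the paper: you read off the $\tilde{E}^1$-page in weight $2$ from \Cref{thm:veff-KQ}, identify the two families of $d^1$-maps via \Cref{cor:vd-KQ}, \Cref{lem:vd-KQ-on-pi}, \Cref{lem:vd1-iso} and \Cref{lem:more-vd1-iso}, and carry out the mod-$8$ casework using \Cref{thm:HZ}, \Cref{lem:hZ}, \Cref{lem:pi0V} and \Cref{lem:pi1V}, which is precisely what the paper's terse ``the other cases follow from'' compresses (the $s<2$ columns being handled by hand in both arguments). Only cosmetic slips: the survivor for $2>s\equiv 0\bmod 4$ is $\pi_{0-(s-2)}\vs_0\KQ$ (weight $s-2$, not $2-s$), which still gives $\Z$ by \Cref{lem:pi0V}, and the incoming differential into the $H^{1,\frac{s-1}{2}}$ survivor for $2<s\equiv 1\bmod 8$ (and into the $q=2$ entry for $s=2$) vanishes because its source is an $h$-group of negative first degree, not for either of the two reasons in your summary sentence, whose blanket claim that all $q\equiv 1,2$ entries die is in any case corrected by your subsequent case-by-case analysis.
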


\begin{proof}
For $s<2$, $\pi_{s+(2)}\vs_n\KQ=0$ for $n\not\equiv 0\bmod 4$.
For $n\equiv 0 \bmod 4$ and $s<2$, one has 
\[ \pi_{s+(2)}\vs_{n}\KQ=\pi_{s+(2)}\Sigma^{n+(n)}\vs_0\KQ=\pi_{s-n+(2-n)}\vs_0\KQ\neq 0 \]
if and only if $s=n$ or $s=n+1$, giving $\pi_{0+(2-s)}\vs_0\KQ = \KMW_{s-2}$ and $\pi_{1+(3-s)}\vs_0\KQ\iso \Z/2$ as the single nonzero entry in this column.
The other cases follow from \Cref{lem:vd1-iso} and \Cref{lem:more-vd1-iso}, together with \Cref{lem:pi0V}, \Cref{thm:HZ} and \Cref{lem:hZ}.
\end{proof}

We now consider the remaining columns $2 < s \equiv 0 \bmod 4$ on the second page of the very effective slice spectral sequence computing $\pi_{s+(2)} \KQ_\Z$. Each such column forms a tower of nontrivial groups with top term 
$\KMW_{\frac{s+4}{2}}(\Z) \cong \Z$.
Its bottom term is $h^{0,\frac{s+4}{4}}$ 
if $s \equiv 0 \bmod 8$, 
coming from the same slice as the single nonzero entry in the column to its left; 
thus these groups are permanent cycles.  
If $s \equiv 4 \bmod 8$, the bottom term is 
$\pi_{\frac{s-4}{2}+(\frac{s}{2})}\vs_0 \KQ_\Z \cong h^{3,\frac{s+2}{2}} \bullet H^{2,\frac{s}{2}}$, 
and the column to its left is zero, so these too consist of permanent cycles.  
It remains to check whether any differential hits these columns and to resolve possible extensions. For this purpose, the Wood cofiber sequence relating weights $2$ and $3$ will be instrumental.

\begin{proposition}\label{prop:2ndpageweight3KQ}
The second page of the very effective slice spectral sequence for $\pi_{s+(3)}\KQ_\Z$ consists of zero columns if $1<s\equiv 1\bmod 8$, if $s\equiv 2\bmod 8$, and if $4>s\equiv 2,3 \bmod 4$.
The columns for $3<s\equiv 3,5,6,7 \bmod 8$ and $5>s\equiv 0\bmod 4$ contain a single nonzero entry given in the following table
\begin{center}
\begin{tabular}{|c|c|}
\hline
$\Z/2$ & $2>s\equiv 1\bmod 4$ \\
$H^{2,\frac{s-1}{2}}(\Z)$ & $3<s \equiv 3,7 \bmod 8$ \\
$h^{1,\frac{s-1}{2}}(\Z)$ & $3<s\equiv 5 \bmod 8$ \\
$h^{1,\frac{s-2}{2}}(\Z)$ &  $3<s\equiv 6\bmod 8$ \\
$\Z$ & $5>s\equiv 0 \bmod 4$ \\
\hline
\end{tabular}
\end{center}
\end{proposition}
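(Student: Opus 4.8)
The plan is to compute $\tilde E^2 = H(\tilde E^1,\vd^1)$ directly, treating one residue class of $s$ modulo $8$ at a time, as permitted by the $4+(4)$-periodicity $\Sigma^{4+(4)}\KQ\simeq\KQ$.

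First I would record the $\tilde E^1$-page in weight $3$. By \Cref{thm:veff-KQ} the only contributing slices are $\vs_{4k}\KQ\simeq\Sigma^{4k+(4k)}\vs_0\KQ$, $\vs_{4k+1}\KQ\simeq\Sigma^{4k+1+(4k+1)}\HZ/2$ and $\vs_{4k+2}\KQ\simeq\Sigma^{4k+2+(4k+2)}\HZ$, which give, at filtration $4k$, $4k+1$, $4k+2$, the group $\pi_{(s-4k)+(3-4k)}\vs_0\KQ$, a shift of $h^{\ast,\star}(\Z)$, and a shift of $H^{\ast,\star}(\Z)$, respectively. These are made explicit by \Cref{thm:HZ} and \Cref{lem:hZ} for the motivic cohomology of $\Z$; by \Cref{lem:pi0V} and \Cref{lem:pi1V} for $\pi_{0+\bideg}\vs_0\KQ$ and $\pi_{1+\bideg}\vs_0\KQ$; and, for the higher homotopy of $\vs_0\KQ$, by the vanishing $\pi_{s-(w)}\vs_0\KQ=0$ for $s>w+1$ together with the short exact sequences of \Cref{thm:pi-v} (equivalently \Cref{lem:vd1-iso}). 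Connectivity of $\vs_0\KQ$ and strong convergence (\Cref{proposition:strongconvergence}) confine the nonzero $\vs_{4k}\KQ$-terms of a given column to the window roughly $s/8\le k\le s/4$, with the boundary values $k=s/4$ and $4k=s-1$ being read off from \Cref{lem:pi0V} and \Cref{lem:pi1V}; a glance at \Cref{fig:e1-KQ} (with the weight built in) makes the resulting pattern transparent.

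Next I would feed in the differential. By \Cref{thm:veff-KQ} and periodicity the only possibly nonzero $\vd^1$'s are the $\Sigma^{4k+(4k)}$-suspensions of $\vd^1_0\KQ\colon\vs_0\KQ\to\Sigma\vs_1\KQ$ and $\vd^1_1\KQ\colon\vs_1\KQ\to\Sigma\vs_2\KQ$, which are the unique nonzero maps by \Cref{cor:vd-KQ}; their effect on homotopy is given by \Cref{lem:vd1-iso} (for $\pi_\ast\vd^1_0$: it vanishes for $s\equiv 0,1\bmod 4$, and for $s\equiv 2,3\bmod 4$ it is surjective with the explicit kernels $\Z/2$, $H^{1,w}(\Z)$, $H^{2,w}(\Z)$, $\Z/u_w(?)$) and by \Cref{lem:more-vd1-iso} (the desuspension $\partial^2_\infty\Sq^2$ of $\pi_\ast\vd^1_1$ is nonzero only when the weight drop is $\equiv 2\bmod 4$, and then an isomorphism except at the edge, where the kernel is $\Z/2$). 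Running through $s\bmod 8$, one then takes homology of the resulting length-$\le 2$ strands: a $\vd^1_0$ out of a $\vs_{4k}\KQ$-term lands in a $\vs_{4k+1}\KQ$-term in the adjacent column $s-1$, whose own $\vd^1_1$ lands in a $\vs_{4k+2}\KQ$-term in column $s-2$; the differential out of a $\vs_{4k+2}\KQ$-term vanishes because $\vs_{4k+3}\KQ=0$, and no $\vd^1$ enters a $\vs_{4k}\KQ$-term for the same reason. Matching these up shows that in each column listed in the statement exactly one entry survives: either an entire $\pi_{s-(w)}\vs_0\KQ$ when $s\equiv 0,1\bmod 4$ (where $\vd^1_0$ vanishes), or, when $s\equiv 2,3\bmod 4$, the kernel of $\vd^1_0$ on such a group, which is precisely the displayed $H^{2,(s-1)/2}(\Z)$ for $s\equiv 3,7\bmod 8$, the quoted $h^{1,\bullet}(\Z)$'s for $s\equiv 5,6\bmod 8$, and a $\Z$ or $\Z/2$ in the remaining small-$s$ cases; dually, every entry of the columns declared zero is killed by an in- or out-going $\vd^1$.

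The main obstacle is purely the bookkeeping: aligning the periodicity offsets so that each residue class is paired with the correct differentials, and verifying that in the overlap range, where two consecutive $\vs_{4k}\KQ$-terms survive (which is what happens for $s\equiv 0,4\bmod 8$ with $s$ large), the column does \emph{not} collapse to a single group; these are the tower columns, deliberately omitted from the statement and handled separately, as in the weight-$2$ case (\Cref{prop:2ndpageweight2KQ}). No cyclicity or extension ambiguity arises at $\tilde E^2$ for the listed columns; the Wood cofiber sequence relating weights $2$ and $3$ (\Cref{thm:chn}, already used in \Cref{lem:pi1V}) is available as a cross-check but is not needed here.
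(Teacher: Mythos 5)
Your overall strategy is exactly the paper's: the paper's proof is literally ``similar to the proof of \Cref{prop:2ndpageweight2KQ}'', i.e.\ write down the $\tilde E^1$-page in weight $3$ from \Cref{thm:veff-KQ}, \Cref{thm:pi-v}, \Cref{lem:pi0V}, \Cref{thm:HZ}, \Cref{lem:hZ}, and take homology with respect to the differentials identified in \Cref{cor:vd-KQ}, \Cref{lem:vd1-iso}, \Cref{lem:more-vd1-iso}, treating $s$ modulo $8$ and leaving the tower columns $5<s\equiv 0\bmod 4$ aside. So the plan is sound and the ingredients are the right ones (modulo a small attribution slip: the vanishing of $\pi_\ast\vd^1_0$ for $s\equiv 0,1\bmod 4$ is \Cref{lem:vd-KQ-on-pi}, not \Cref{lem:vd1-iso}).

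However, your concluding ``matching up'' paragraph misidentifies where the surviving entries actually sit, and this is precisely the content of the proposition, so as written the verification would go wrong for half the residue classes. You assert that for $s\equiv 2,3\bmod 4$ the survivor is always the kernel of $\vd^1_0$ on a $\pi_{\ast}\vs_0\KQ$-group, and you file the $h^{1,\bullet}$ entries for $s\equiv 5,6\bmod 8$ and the $H^{2,\bullet}$ entry for $s\equiv 7\bmod 8$ under that heading. In fact: for $s=8m+5$ the survivor is the \emph{entire} group $\pi_{(4m+1)-(4m+1)}\vs_0\KQ\cong h^{1,4m+2}$ in filtration $4m+4$ (this belongs to your ``$\vd^1_0$ vanishes'' clause, since $4m+1\equiv 1\bmod 4$); for $s=8m+6$ the relevant $\vs_0\KQ$-groups are killed ($\vd^1_0$ is injective on them by \Cref{lem:vd1-iso}, as $w'\notin\{s',s'+1,s'+2\}$), and the survivor is the $\HZ/2$-slice contribution $h^{1,4m+2}$ in filtration $4m+5$, which lives because the incoming $\vd^1_0$ has vanishing source ($\pi_{s'-(w')}\vs_0\KQ=0$ for $s'>w'+1$) and the outgoing $\vd^1_1$ vanishes by the weight condition in \Cref{lem:more-vd1-iso}; for $s=8m+7$ the survivor is the untouched $\HZ$-slice contribution $H^{2,4m+3}$ in filtration $4m+6$, again not a kernel. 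Only $s\equiv 3\bmod 8$ is genuinely a kernel case, giving $\ker\vd^1_0\cong H^{2,4m+1}$ by \Cref{lem:vd1-iso}. The fix stays entirely within your framework --- one just has to carry out the filtration-by-filtration bookkeeping you describe, rather than assuming all survivors come from the $\vs_0\KQ$-rows --- but as it stands that step is not done correctly, and it is the step the proposition consists of.
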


\begin{proof}
    Similar to the proof of \Cref{prop:2ndpageweight2KQ}.
\end{proof}

The Wood cofiber sequence then yields, for $1 < s \equiv 1 \bmod 8$, an exact sequence
\[
0 = \pi_{s+1+(3)} \KQ \longrightarrow \pi_{s+1+(3)} \KGL = K_{s-2}(\Z) \longrightarrow \pi_{s+(2)} \KQ \longrightarrow \pi_{s+(3)} \KQ = 0
\]
and hence an isomorphism
$K_{s-2}(\Z) \cong \pi_{s+(2)} \KQ$.
A comparison of \Cref{thm:slice-spec-seq-KGLZ} and \Cref{prop:2ndpageweight2KQ} shows that all differentials originating from the $s$-th column in the spectral sequence for $\pi_{s+(2)} \KQ$ vanish when $s \equiv 1 \bmod 8$.
For $1 < s \equiv 5 \bmod 8$, the same holds, but the argument is more involved.
Again the Wood cofiber sequence gives an exact sequence
\[
K_{s-1}(\Z) \to \pi_{s+1+(2)}\KQ \to \pi_{s+1+(3)}\KQ \to K_{s-2}(\Z) \to \pi_{s+(2)}\KQ \to \pi_{s+(3)}\KQ \to K_{s-3}(\Z)
\]
in which the first homomorphism is zero, as its source has odd order by \Cref{thm:slice-spec-seq-KGLZ} and its target is $\Z/2$ by \Cref{prop:2ndpageweight2KQ}. 
Hence the cokernel of the second homomorphism $\eta\colon \pi_{s+1+(2)}\KQ \to \pi_{s+1+(3)}\KQ$ is $\Z/2$, using \Cref{prop:2ndpageweight3KQ} for identifying its target. 
The fourth homomorphism thus has kernel $\Z/2$ by the previous sentence.
To identify the kernel of the last homomorphism above, recall that $\KQ$ satisfies absolute purity by \cite[Theorem 8.4.2]{chn}.
The localization cofiber sequence 
\[
i_!i^!\KQ_\Z \;\longrightarrow\; \KQ_\Z \;\longrightarrow\; j_\ast j^\ast \KQ_\Z 
\]
for the closed embedding $ i \colon \Spec(\FF_{2}) \hookrightarrow \Spec(\Z)$
with open complement 
$j \colon \Spec(\Z[1/2]) \hookrightarrow \Spec(\Z)$
thus induces a long exact sequence
\[ \dotsm \to \pi_{s+1+(4)}\KQ_{\FF_2}\to \pi_{s+(3)}\KQ_\Z\to \pi_{s+(3)}\KQ_{\Z[1/2]} \to \pi_{s+(4)}\KQ_{\FF_2}\to \dotsm \]
Here the groups $\pi_{s+1+(4)}\KQ_{\FF_2}\iso \pi_{s+(4)}\KQ_{\FF_2}\iso 0$ vanish by \Cref{ex:kq-finite-fields}, giving an isomorphism $\pi_{s+(3)}\KQ_\Z\iso \pi_{s+(3)}\KQ_{\Z[1/2]}$.
The same localization cofiber sequence for $\KGL$ in place of $\KQ$ and the vanishing $\pi_{s+1+(4)}\KGL_{\FF_2}=0$ from \cite{quillen.finite} supplies an injection $K_{s-3}(\Z)\to K_{s-3}(\Z[1/2])$.
Hence the kernel of the last homomorphism above coincides with the kernel of the forgetful map $\pi_{s+(3)}\KQ_{\Z[1/2]}\to K_{s-3}(\Z[1/2])$.
By \cite[Theorem 1.4]{zbMATH05840069}, the source is isomorphic to ${}_{1}V_{s-4}(\Z[1/2])\iso (\Z/2)^2$ (up to groups of finite order).
\Cref{prop:2ndpageweight3KQ} shows that the expression in parentheses can be removed.
Hence the last homomorphism has the group $\Z/2$ as its kernel, because its source is the Klein four group and its target $\pi_{s+(3)}\KGL=K_{s-3}(\Z)$ is the direct sum of $\Z/2$ and a finite group of odd order.
There finally results an exact sequence
\[
0 \to \Z/2 \to K_{s-2}(\Z) \to \pi_{s+(2)}\KQ \to \Z/2 \to 0
\]
showing that the orders of these finite groups agree. 
Comparison of \Cref{thm:slice-spec-seq-KGLZ} and \Cref{prop:2ndpageweight2KQ} then implies that all differentials originating from the $s$-th column of the spectral sequence for $\pi_{s+(2)}\KQ$ vanish when $s \equiv 5 \bmod 8$.

\begin{theorem}\label{thm:inftypageweight2KQ}
The slice spectral sequence for $\pi_{s+(2)}\KQ_\Z$ collapses at its second page.
The weight $2$ hermitian $K$-groups of $\Z$ are given by 
\begin{center}
\begin{tabular}{|c|c|}
\hline
$\Z/2$ & $2>s\equiv 1\bmod 4$, $2<s\equiv 6,7 \bmod 8$\\
$H^{1,\frac{s-1}{2}}(\Z)$ & $2<s\equiv 1 \bmod 8$ \\
$0$ & $2>s\equiv 2,3 \bmod 4$, $2<s\equiv 2,3 \bmod 8$ \\
$\pi_{\tfrac{s-3}{2}-(\tfrac{s-1}{2})}\vs_0\KQ_\Z= h^{2,\frac{s+1}{2}}(\Z)\bullet H^{1,\frac{s-1}{2}}(\Z)$ & $2<s\equiv 5\bmod 8$ \\
$\Z$ & $s=2$, $5>s\equiv 0 \bmod 4$ \\
$\Z\oplus \ker (\pr\colon H^{2,\frac{s}{2}}(\Z)\to \Z/2)$ & $2<s\equiv 0 \bmod 4$ \\
\hline
\end{tabular}
\end{center}
\end{theorem}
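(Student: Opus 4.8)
The plan is to start from the second page recorded in \Cref{prop:2ndpageweight2KQ} and the refinements made in the discussion preceding the statement, and to show that no further differential survives. That discussion already produces, via the Wood cofiber sequence relating weights $2$ and $3$ and the algebraic $K$-theory of $\Z$ in \Cref{thm:slice-spec-seq-KGLZ}, short exact sequences identifying $\pi_{s+(2)}\KQ_\Z$ for $s\equiv 1,5\bmod 8$ with groups of exactly the same order as the corresponding $\tilde{E}^2$-columns; hence these columns survive to $\tilde{E}^\infty$ unchanged, so no differential $\vd^r$ with $r\ge 2$ enters or leaves them. Since such a differential lowers the first grading by one (mapping the column computing $\pi_{s+(2)}\KQ_\Z$ to the one computing $\pi_{s-1+(2)}\KQ_\Z$), this immediately kills every potential differential with source or target in a column $\equiv 1\bmod 8$ or $\equiv 5\bmod 8$ — in particular all differentials into the tower columns $2<s\equiv 0\bmod 4$ and out of the columns $\equiv 6\bmod 8$.

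For a tower column with $s\equiv 4\bmod 8$ the adjacent column $s-1\equiv 3\bmod 8$ is zero, so nothing emanates; for $s\equiv 0\bmod 8$ the bottom term of the tower lies in the same slice as the lone entry of the column $s-1\equiv 7\bmod 8$, as noted before the statement, so the strictly slice-increasing nature of $\vd^r$ forbids a differential between them. Thus every tower consists of permanent cycles. Rationally, \Cref{corollary:rationalhermitianKgroups} (equivalently \Cref{theorem:eoosplitting}) shows that $\pi_{s+(2)}\KQ_\Z\otimes\Q$ has rank one, so the free term $\Z$ at the top of the tower splits off as a direct summand and what remains is finite; comparing this finite part with $K_{s-2}(\Z)$ through the Wood long exact sequence and controlling its $2$-primary part with the localization sequence $i_!i^!\KQ_\Z\to\KQ_\Z\to j_\ast j^\ast\KQ_\Z$ for $i\colon\Spec(\FF_2)\hookrightarrow\Spec(\Z)$ with open complement $j$ (using $\KQ_{\FF_2}$ from \Cref{ex:kq-finite-fields} and the computation of $\KQ_{\Z[1/2]}$ from \cite{zbMATH05840069}) identifies it with $\ker(\pr\colon H^{2,s/2}(\Z)\to\Z/2)$, yielding the entry $\Z\oplus\ker(\pr\colon H^{2,s/2}(\Z)\to\Z/2)$.

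The only differential still undecided is the one from the $\Z/2$ in the column $s\equiv 7\bmod 8$ to the $\Z/2$ in the column $s-1\equiv 6\bmod 8$ (the $\equiv 6\bmod 8$ column receives nothing else by the tower-slice argument, and sends nothing to the $\equiv 5\bmod 8$ column by the previous paragraph). To exclude it I would again run the Wood sequence relating $\pi_{s+(2)}\KQ_\Z$ and $\pi_{s+(3)}\KQ_\Z$, feeding in the weight-$3$ second page from \Cref{prop:2ndpageweight3KQ} together with the analysis following it, and combine this with the $\FF_2/\Z[1/2]$ localization sequence and \cite[Theorem 1.4]{zbMATH05840069}; this pins $\pi_{s+(2)}\KQ_\Z$ down to a group of order two for both residues, so the differential vanishes. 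With all higher differentials shown to be zero one has $\tilde{E}^2=\tilde{E}^\infty$, and the table is obtained by reading off \Cref{prop:2ndpageweight2KQ} together with the identifications above; every column other than a tower has at most one nonzero $\tilde{E}^\infty$-term, so the only extension problem is the tower one settled in the previous paragraph. \emph{The main obstacle} is this last step: excluding the differential between the two $\Z/2$-columns — equivalently, showing $\pi_{s+(2)}\KQ_\Z\neq 0$ for $s\equiv 6,7\bmod 8$ — is not a formal degree argument but, as in the $s\equiv 5\bmod 8$ case treated before the statement, requires the full interplay of the Wood sequence, the algebraic $K$-groups of $\Z$, and the localization over $\FF_2$.
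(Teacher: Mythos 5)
Most of your outline coincides with the paper's argument: the columns $s\equiv 1,5\bmod 8$ are frozen by the order comparisons coming from the Wood sequence, \Cref{thm:slice-spec-seq-KGLZ} and the $\FF_2$/$\Z[\tfrac12]$ localization; the towers consist of permanent cycles by the slice-position remark preceding \Cref{prop:2ndpageweight3KQ}; and for $2<s\equiv 0\bmod 4$ the essential point is not the rational rank (which cannot detect the extension problem) but the absence of $2$-torsion, obtained exactly as you gesture: injectivity of $\pi_{s+(2)}\KQ_\Z\to\pi_{s+(2)}\KQ_{\Z[1/2]}$ from absolute purity and the vanishing of $\pi_{s+1+(3)}\KQ_{\FF_2}$, combined with $\pi_{4m+(2)}\KQ_{\Z[1/2]}\cong\Z\oplus(\text{finite odd})$, an upper bound from the $\tilde E^2$-page and a lower bound from the Wood sequence. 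Up to this point your proposal is essentially the paper's proof in sketch form.

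The genuine gap is the step you yourself single out as the main obstacle: the potential differential from the $\Z/2$ in a column $s\equiv 7\bmod 8$ to the $\Z/2$ in the column $s-1\equiv 6\bmod 8$. You do not prove anything here; you only assert that another round of the Wood sequence and $\FF_2$-localization ``pins the groups down to order two'', and as formulated this is circular, since it would feed in the entries of \Cref{prop:2ndpageweight3KQ} as the actual groups $\pi_{\ast+(3)}\KQ_\Z$, i.e.\ it presupposes collapse of the weight-$3$ spectral sequence, which is not available at this stage (the paper's $s\equiv 1\bmod 8$ argument works only because the relevant weight-$3$ columns vanish identically, so no collapse is needed there). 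Moreover, the step is a non-problem: tracing the proof of \Cref{prop:2ndpageweight2KQ}, the lone $\Z/2$ of the column $s\equiv 7\bmod 8$ is the kernel of $\partial^2_\infty\Sq^2$ on $h^{1,\frac{s-1}{2}}(\Z)$ and lies over the slice $\vs_{\frac{s+3}{2}}\KQ$, while the lone $\Z/2$ of the column $s-1\equiv 6\bmod 8$ is the class $\tau^{\frac{s-5}{2}}\cdot\tau\epsilon$ inside $\pi_{\ast}\vs_{\frac{s+1}{2}}\KQ$, i.e.\ it sits in slice degree one \emph{lower} than the source. Since $\vd^r$ raises the slice index by $r\geq 2$, no differential can connect these two classes (the same comparison, with equal slice indices $\tfrac{s+2}{2}$, independently rules out a differential from the $\equiv 6$ column into the $\equiv 5$ column). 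This filtration-degree check is what the paper's phrase ``as explained above'' tacitly relies on; once you insert it, your argument closes without any additional weight-$3$ input.
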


\begin{proof}
As explained above, the very effective slice spectral sequence for $\pi_{\ast+(2)}\KQ_\Z$ collapses at its second page. 
A simpler argument for the vanishing of higher differentials in columns for $\pi_{s+(2)}\KQ$ with $s \equiv 1 \mod{8}$ is as follows. 
The only group in this column arises from $\vs_{\frac{s+3}{2}}\KQ$, which is killed by $\vs_{\frac{s+3}{2}}\Hyper$ (see Proposition~\ref{prop:veff-hyper}). 
Since the very effective slice spectral sequence is natural, and since the slice spectral sequence for $\KGL_\Z$ collapses at its second page (Theorem~\ref{thm:slice-spec-seq-KGLZ}), the same conclusion holds here. 

This argument does not extend to columns with $s \equiv 5 \mod{8}$. 
In this case the relevant groups arise from $\vs_{\frac{s+3}{2}}\KQ$ and are not killed by the hyperbolic map. 
We therefore turn to the case $2 < s \equiv 0 \mod{4}$. 
Recall that $\KQ$ satisfies absolute purity by \cite[Theorem 8.4.2]{chn}. 
Thus the localization cofiber sequence 
\[
i_!i^!\KQ_\Z \;\longrightarrow\; \KQ_\Z \;\longrightarrow\; j_\ast j^\ast \KQ_\Z 
\]
for the closed embedding $ i \colon \Spec(\FF_{2}) \hookrightarrow \Spec(\Z)$
with open complement 
$j \colon \Spec(\Z[1/2]) \hookrightarrow \Spec(\Z)$
induces a long exact sequence of hermitian $K$-groups
\[
\dotsb \;\to\; \pi_{s+1+(3)}\KQ_{\FF_2}
  \;\to\; \pi_{s+(2)}\KQ_\Z
  \;\to\; \pi_{s+(2)}\KQ_{\Z[1/2]}
  \;\to\; \pi_{s+(3)}\KQ_{\FF_2}
  \;\to\; \dotsb 
\]
If $s \equiv 0 \mod{4}$, then $\pi_{s+1+(3)}\KQ_{\FF_2}=0$ (Example~\ref{ex:kq-finite-fields}). 
Thus $\pi_{s+(2)}\KQ_\Z \to \pi_{s+(2)}\KQ_{\Z[1/2]}$ is injective for $s \equiv 0 \mod{4}$. 

By \cite[Theorem B]{berrick-karoubi}, 
\[
\pi_{4m+(2)}\KQ_{\Z[1/2]} \;\cong\; \Z \oplus A_m,
\]
where $A_m$ is finite of odd order. 
On the $E_2$-page of the very effective slice spectral sequence for $\pi_{4m+(2)}\KQ_\Z$, the only entry with odd torsion is $H^{2,2m}$. 
According to Theorem~\ref{thm:HZ}, this group has order $2u_{2m}$, where $u_{2m}$ is a product of irregular primes. 
Hence the odd-order part of $\pi_{4m+(2)}\KQ_\Z$ has size at most $u_{2m}$. 
Now consider the Wood cofiber sequence
\[
\pi_{8m+1+(3)}\KQ_{\Z}
   \;\to\; \pi_{8m+1+(3)}\KGL_\Z
   \;\to\; \pi_{8m+(2)}\KQ_\Z
   \;\to\; \pi_{8m+(3)}\KQ_\Z 
\]
The first term vanishes (Proposition~\ref{prop:2ndpageweight3KQ}), so $\pi_{8m+1+(3)}\KGL_\Z$, which has order $u_{4m}$ (Theorem~\ref{thm:slice-spec-seq-KGLZ}), injects into $\pi_{8m+(2)}\KQ_\Z$.
This gives $\pi_{8m+(2)}\KQ_\Z\iso \Z\oplus \Z/u_{2m}(?)$. Similarly, the part
\[
\pi_{8m+5+(3)}\KQ_{\Z}
   \;\to\; \pi_{8m+5+(3)}\KGL_\Z
   \;\to\; \pi_{8m+4+(2)}\KQ_\Z
   \;\to\; \pi_{8m+4+(3)}\KQ_\Z
\]
of the Wood sequence begins with the Klein four group, as explained right before \Cref{thm:inftypageweight2KQ}.  
The isomorphism $\pi_{8m+5+(3)}\KGL_\Z \cong H^{2,4m+2}$ (Theorem~\ref{thm:slice-spec-seq-KGLZ}) shows that the image of the Klein four group has exactly two elements. 
Since $\pi_{8m+4+(2)}\KQ_\Z$ contains no elements of order two, it follows that also this group is isomorphic to $\Z \oplus \Z/u_{2m}(?)$.
\end{proof}

The argument at the end of the proof of Theorem~\ref{thm:inftypageweight2KQ} shows that the image of the forgetful map 
$\pi_{4m+1+(3)}\KGL \;\longrightarrow\; \pi_{4m+(2)}\KQ$
is precisely the odd torsion summand. 
This observation determines the extensions from the $E_2$-page of the very effective slice spectral sequence for $\pi_{4m+(3)}\KQ_\Z$. 
Similar considerations then yield the complete coefficients of $\KQ_\Z$, i.e.,  all groups $\pi_{s+(w)}\KQ_\Z$, expressed in terms of the motivic cohomology of $\Z$.

\begin{theorem}\label{thm:coeff-KQZ}
    For $s<w$, we have 
    \[ \pi_{s+(w)}\KQ_\Z\cong 
    \begin{cases} 
        \Z & s\equiv 0 \mod 4 \\ 
        \Z/2 & s\equiv 1 \mod 4 \\
        0 & s\equiv 2,3 \mod 4
    \end{cases}\]
    For $s=w$, we have
    \[ \pi_{w+(w)}\KQ_\Z\cong 
    \begin{cases} 
        \Z\oplus \Z & w\equiv 0 \mod 4 \\ 
        \Z/2 & w\equiv 1 \mod 4 \\
        \Z & w\equiv 2 \mod 4 \\
        0 & w \equiv 3 \mod 4
    \end{cases}\]
     For $s=w+1$, we have
    \[ \pi_{w+1+(w)}\KQ_\Z\cong 
    \begin{cases} 
        (\Z/2)^3 & w\equiv 0 \mod 4 \\ 
        \Z/2 & w\equiv 1 \mod 4 \\
        0 & w\equiv 2 \mod 4 \\
        \Z & w \equiv 3 \mod 4
    \end{cases}\]
    In the case $s>w+1$, $\pi_{s+(w)}\KQ_\Z$ is given by
    \begin{center}
        \begin{tabular}{|l||c|c|c|c|}
        \hline
        $s$ & $w\equiv 0 \mod 4$ & $w\equiv 1 \mod 4$& $w\equiv 2\mod 4$ & $w\equiv 3\mod 4$ \\
        \hline
        \hline
        $8m+0$ & $\Z\oplus \Z/2$ & $\Z\oplus \Z/2$& $\Z \oplus A_{4m+4}$ & $\Z\oplus \Z$ \\
        \hline
        $8m+1$ & $(\Z/2)^3$ & $\Z/2 \oplus H^{2,4m+1}$ & $H^{1,4m}$ & $0$ \\
        \hline
        $8m+2$ & $\Z/2\oplus H^{2,4m+2}$ & $H^{1,4m+1}$ & $0$ & $0$ \\
        \hline
        $8m+3$ & $H^{1,4m+2}$ & $0$ & $0$ & $H^{2,4m+1}$ \\
        \hline
        $8m+4$ & $\Z$ & $\Z$ & $\Z\oplus A_{4m+2}$ &  $\Z\oplus \Z\oplus \Z/2$\\
        \hline
        $8m+5$ & $0$ & $H^{2,4m+3}$ & $\Z/2 \bullet H^{1,4m+2}$ & $(\Z/2)^2$ \\
        \hline
        $8m+6$ & $A_{4m+4}$ & $\Z$ & $\Z/2$ & $(\Z/2)^2$ \\
        \hline 
        $8m+7$ & $H^{1,4m+4}$ & $\Z/2$ & $\Z/2$ & $H^{2,4m+3}$\\
        \hline
         \end{tabular}
    \end{center}
    Here $A_s:=\ker (\pr \colon H^{2,s}\to \Z/2)$, and the extension for $\pi_{8m+5+(2)}\KQ$ is nontrivial.
\end{theorem}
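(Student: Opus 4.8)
The plan is to run the very effective slice spectral sequence \eqref{equation:vesssforKQ} weight by weight. By the $4+(4)$-periodicity $\Sigma^{4+(4)}\KQ\simeq \KQ$ of \Cref{thm:chn} it suffices to treat the four weights $w\in\{0,1,2,3\}$; the case $w=2$ is \Cref{thm:inftypageweight2KQ}, so the work is concentrated in $w\in\{0,1,3\}$. The first step is to identify the $E^2$-pages in these weights, just as in \Cref{prop:2ndpageweight2KQ} and \Cref{prop:2ndpageweight3KQ} (the latter already being the case $w=3$): one feeds the very effective slices of \Cref{thm:veff-KQ} into \eqref{eq:vsss}, evaluates the first differential $\vd^1\KQ$ on bigraded homotopy groups by means of \Cref{cor:vd-KQ}, \Cref{lem:vd-KQ-on-pi}, \Cref{lem:vd1-iso} and \Cref{lem:more-vd1-iso}, and reads off the resulting kernels and cokernels using the explicit groups $H^{\ast,\star}(\Z)$ and $h^{\ast,\star}(\Z)$ of \Cref{thm:HZ} and \Cref{lem:hZ}, together with \Cref{lem:pi0V} and \Cref{lem:pi1V} for the low homotopy of $\vs_0\KQ_\Z$. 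After this step every column of the $E^2$-page carries at most two nonzero groups.

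The second step is the collapse $E^2=E^\infty$ in weights $0$, $1$ and $3$. The tool is naturality of the very effective slice spectral sequence along the maps of the Wood cofiber sequence
\[ \Sigma^{(1)}\KQ \xrightarrow{\eta\smash\KQ} \KQ \xrightarrow{\Forg}\KGL \xrightarrow{\Sigma^{1+(1)}\Hyper\circ\beta} \Sigma^{1+(1)}\KQ \]
of \Cref{thm:chn}, which links the three remaining $\KQ$-weights to the weights $0,1$ of $\KGL_\Z$, whose slice spectral sequence collapses at $E^2$ by \Cref{thm:slice-spec-seq-KGLZ}. A class that could support a higher differential either lifts through $\vs_n(\Hyper)$ or maps nontrivially under $\vs_n(\Forg)$, and \Cref{prop:veff-hyper}, \Cref{prop:veff-forg} plus the collapse for $\KGL_\Z$ force that differential to vanish; the remaining classes lie along a curve of slope $\le 1$ (asymptotically $\tfrac12$) and hence support no higher differential for degree reasons, exactly as in the proof of \Cref{thm:slice-spec-seq-KGLZ}. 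The genuinely subtle columns are those of the type $s\equiv 5\bmod 8$ in weight $2$, already handled in the discussion preceding \Cref{thm:inftypageweight2KQ}, and --- via the Wood sequence once more --- the corresponding columns in weights $1$ and $3$; for these one invokes the localization cofiber sequence $i_!i^!\KQ_\Z\to\KQ_\Z\to j_\ast j^\ast\KQ_\Z$ for $i\colon\Spec(\FF_2)\hookrightarrow\Spec(\Z)$, $j\colon\Spec(\Z[1/2])\hookrightarrow\Spec(\Z)$, the vanishing of $\KQ_{\FF_2}$ in the relevant degrees from \Cref{ex:kq-finite-fields}, and the known groups $\pi_{\ast+\bideg}\KQ_{\Z[1/2]}$ of Berrick--Karoubi \cite{berrick-karoubi} and \cite{zbMATH05840069} to bound the residual ambiguity.

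The third step resolves the extension problems on $E^\infty$. Columns with a single nonzero group need nothing. For the remaining columns one again reads off the Wood cofiber sequence relating weight $w$ to weight $w+1$: the connecting maps run into or out of the explicitly known $K_\ast(\Z)$ of \Cref{thm:slice-spec-seq-KGLZ}, so comparing orders and $2$-torsion identifies the isomorphism type (this is how the split extensions $(\Z/2)^3$, $\Z\oplus\Z\oplus\Z/2$ and $\Z\oplus\Z$ appear, as well as $\pi_{8m+(2)}\KQ_\Z\cong\Z\oplus\Z/u_{2m}(?)$). The odd-torsion summands $A_s=\ker(\pr\colon H^{2,s}(\Z)\to\Z/2)$ are recognized as the image of the forgetful map $\pi_{4m+1+(3)}\KGL\to\pi_{4m+(2)}\KQ$, as noted just after \Cref{thm:inftypageweight2KQ}, and this in turn forces the analogous extensions in weights $0$ and $3$; the single non-split extension, that of $\pi_{8m+5+(2)}\KQ$, is detected exactly as in \Cref{lem:pi1V}, by producing through the Wood sequence an element of the required order from the known $2$-torsion of the relevant algebraic $K$-group of $\Z$. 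The main obstacle is precisely this last step: running the many interlocking Wood and localization long exact sequences consistently across all four weights while separating $2$-primary from odd-primary information, and in particular confirming that the lone extension $\pi_{8m+5+(2)}\KQ$ is genuinely nontrivial rather than an accident of orders.
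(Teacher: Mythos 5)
Your proposal follows the paper's own proof essentially verbatim: reduce to weights $0$–$3$ by $(4,4)$-periodicity, take \Cref{thm:inftypageweight2KQ} as the weight-$2$ input, identify the remaining $\tilde{E}^2$-pages as in \Cref{prop:2ndpageweight2KQ,prop:2ndpageweight3KQ}, force collapse via naturality along the Wood cofiber sequence against the collapsed $\KGL_\Z$ spectral sequence (with the $s\equiv 5\bmod 8$ columns handled by the $\FF_2$/$\Z[1/2]$ localization sequence and Berrick--Karoubi), and settle extensions by comparing orders and $2$-torsion through the Wood sequences, including identifying the odd summands $A_s$ via the hyperbolic/forgetful maps and detecting the lone nonsplit extension from the known cyclic structure of the relevant $K$-group of $\Z$. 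This matches the paper's argument in both structure and key inputs, so there is nothing substantive to add.
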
 

\begin{proof}
As mentioned, it suffices to treat the weights $w \in \{0,1,2,3\}$.  
The case of weight $2$ from \Cref{thm:inftypageweight2KQ} is the starting point.  
The analog of \Cref{prop:2ndpageweight2KQ} for weight $1$ provides an $\tilde{E}^2$-page in which only the columns for $\pi_{4m+(1)}\KQ$ and $\pi_{8m+1+(1)}\KQ$ can have more than one nonzero entry.  
In the latter case, these are $h^{0,4m} = \pi_{8m+1+(1)}\vs_{4m+1}\KQ$ and $H^{2,4m+1} = \pi_{8m+1+(1)}\vs_{4m+2}\KQ$.  
To prove that both groups consist of permanent cycles, 
consider the following portion of the Wood cofiber sequence
\begin{equation}\label{eq:wood-8m+1-weight1}
  \pi_{8m+2+(2)}\KGL \to \pi_{8m+1+(1)}\KQ \to \pi_{8m+1+(2)}\KQ \to \pi_{8m+1+(2)}\KGL
\end{equation}
The first homomorphism in \eqref{eq:wood-8m+1-weight1}, induced by the hyperbolic map, hits $H^{2,4m+1}$ bijectively by \Cref{prop:veff-hyper}.  
Hence its cokernel can be at most $h^{0,4m} \cong \Z/2$.  
By \Cref{prop:veff-forg}, the last homomorphism in \eqref{eq:wood-8m+1-weight1}, induced by the forgetful map, is multiplication by $2$ on $H^{1,4m}$, a finite nontrivial group of even order for $m>0$.  
Thus its kernel is at least $\Z/2$.  
It follows that the kernel is precisely $\Z/2$, and all elements of that column are permanent cycles.  
The two-term extension for $\pi_{8m+1+(1)}\KQ$ must split, as one group has odd order and the other has two elements.  

For degree reasons, all elements on columns for $\pi_{4m+(1)}\KQ$ must be permanent cycles, and the previous argument shows that the column for $\pi_{8m+(1)}\KQ$ is not hit.  
The single entry in the column for $\pi_{8m+5+(1)}\KQ$ has an odd number of elements and therefore maps trivially to the column for $\pi_{8m+4+(1)}\KQ$.  
Thus collapsing at the second page occurs for weight $1$ as well.  
One can then read off the group $\pi_{s+(1)}\KQ$ for $s \not\equiv 0,1,4 \mod 8$ directly.  
To determine the extension for $\pi_{4m+(1)}\KQ$ we use the Wood cofiber sequence.  
The cokernel of the last homomorphism in \eqref{eq:wood-8m+1-weight1} appears at the beginning of the exact sequence
\[
  0 \to \Z/2 \to \pi_{8m+(1)}\KQ \to \pi_{4m+(2)}\KQ \to \pi_{4m+(2)}\KGL \to \dotsm
\]
Similarly to the argument above, the kernel of the forgetful homomorphism $\pi_{4m+(2)}\KQ \to \pi_{4m+(2)}\KGL$ is isomorphic to $\Z$, thereby providing a splitting $\pi_{8m+(1)}\KQ \cong \Z \oplus \Z/2$.  
The forgetful homomorphism $\pi_{8m+5+(2)}\KQ \to \pi_{8m+5+(2)}\KGL$ is an isomorphism, since the extension for the latter group displayed in \Cref{thm:slice-spec-seq-KGLZ} is nontrivial.  
Hence $\pi_{8m+4+(1)}\KQ \cong \Z$ for all $m \geq 0$.  
This completes the description of weight $1$.  

Passage to weights $0$ and $3$ proceeds in an analogous fashion.  
For example, the extension for $\pi_{8m+(3)}\KQ$ is determined from the portion
\[
  \pi_{8m+1+(3)}\KGL \to \pi_{8m+(2)}\KQ \to \pi_{8m+(3)}\KQ \to \pi_{8m+(3)}\KGL
\]
of the Wood cofiber sequence, whereas the extension for $\pi_{8m+4+(3)}\KQ$ is determined from the portion
\[
  \pi_{8m+5+(4)}\KQ \to \pi_{8m+5+(4)}\KGL \to \pi_{8m+4+(3)}\KQ \to \pi_{8m+4+(4)}\KQ \to \pi_{8m+4+(4)}\KGL
\]
of the Wood cofiber sequence.  
Details are left to the reader.
\end{proof}
    
Inverting $\eta$ simplifies the ring structure on $\pi_{\ast+\bideg}\KQ_\Z$ considerably.

\begin{theorem}\label{thm:coeff-KW}
The coefficients of $\KW_\Z$ are given by 
\[
  \pi_{s+(w)}\KW_\Z \cong
  \begin{cases} 
    \Z & s \equiv 0 \mod{4} \\ 
    \Z/2 & s \equiv 1 \mod{4} \\ 
    0 & \text{otherwise} 
  \end{cases}
\]
The ring structure admits the presentation
\[
  \pi_{\ast+\bideg}\KW_\Z = \Z[\alpha^{\pm 1}, \eta^{\pm 1}, \xi]/(2\xi,\xi^2)
\]
Here $\alpha \in \pi_{4+(4)}\KW_\Z$ is a generator, and $\xi \in \pi_{1+(0)}\KW_\Z$ is the nonzero element.
\end{theorem}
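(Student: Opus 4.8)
\emph{The plan} is to deduce the theorem from \Cref{thm:coeff-KQZ} together with the identification $\KW_\Z\simeq \KQ_\Z[\eta^{-1}]$ of \Cref{prop.KQetaisKW}. Since $\eta$-inversion is a filtered colimit, for all $s,w\in\Z$ one has
\[
\pi_{s+(w)}\KW_\Z \iso \colim\bigl(\pi_{s+(w)}\KQ_\Z\xrightarrow{\eta}\pi_{s+(w+1)}\KQ_\Z\xrightarrow{\eta}\cdots\bigr),
\]
and in particular $\pi_{s+(w)}\KW_\Z$ is independent of $w$; it remains to evaluate this colimit.

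\emph{First} I would show that the transition maps are already isomorphisms in the range $s<w$. Applying $\pi_{\ast,\ast}$ to the Wood cofiber sequence of \Cref{thm:chn} produces an exact sequence
\[
\pi_{s+1+(w+1)}\KGL_\Z\longrightarrow \pi_{s+(w)}\KQ_\Z\xrightarrow{\eta}\pi_{s+(w+1)}\KQ_\Z\xrightarrow{\Forg}\pi_{s+(w+1)}\KGL_\Z .
\]
By \Cref{thm:slice-spec-seq-KGLZ} the outer terms are $K_{s-w}(\Z)$ and $K_{s-w-1}(\Z)$, which vanish for $s<w$ since negative algebraic $K$-groups of the regular scheme $\Spec(\Z)$ are trivial. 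Hence $\eta\colon \pi_{s+(w)}\KQ_\Z\to\pi_{s+(w+1)}\KQ_\Z$ is an isomorphism whenever $s<w$, so the colimit above stabilizes: $\pi_{s+(w)}\KW_\Z\iso \pi_{s+(w')}\KQ_\Z$ for any $w'>s$. The $s<w$ part of \Cref{thm:coeff-KQZ} then gives $\pi_{s+(w)}\KW_\Z\iso\Z,\Z/2,0,0$ according as $s\equiv 0,1,2,3\bmod 4$.

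\emph{For the ring structure}, note that $\KW_\Z$ inherits an $E_\infty$-ring structure from $\KQ_\Z$ since inverting $\eta$ preserves it; thus $\pi_{\ast+\bideg}\KW_\Z$ is a bigraded commutative ring. By construction $\eta\in\pi_{0+(1)}\KW_\Z$ is invertible, and the Karoubi–Bott element $\alpha$ of \Cref{thm:chn} maps to an invertible class $\alpha\in\pi_{4+(4)}\KW_\Z$ with $\alpha$-multiplication an equivalence. Since multiplication by these units induces isomorphisms $\pi_{s+(w)}\KW_\Z\iso\pi_{s+(w+1)}\KW_\Z$ and $\pi_{s+(w)}\KW_\Z\iso\pi_{s+4+(w+4)}\KW_\Z$, the additive computation shows that $\eta$ generates $\pi_{0+(1)}\KW_\Z\iso\Z$ and $\alpha$ generates $\pi_{4+(4)}\KW_\Z\iso\Z$. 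Letting $\xi\in\pi_{1+(0)}\KW_\Z\iso\Z/2$ be the nonzero class, one has $2\xi=0$ and $\xi^2\in\pi_{2+(0)}\KW_\Z=0$, so $\xi^2=0$. This produces a graded ring homomorphism
\[
\Z[\alpha^{\pm 1},\eta^{\pm 1},\xi]/(2\xi,\xi^2)\longrightarrow \pi_{\ast+\bideg}\KW_\Z ,
\]
and in bidegree $s+(w)$ the left-hand side carries a single monomial $\alpha^{a}\eta^{b}\xi^{c}$, present precisely when $s\equiv 0,1\bmod 4$ (with $c\equiv s\bmod 2$, $a=(s-c)/4$, $b=w-4a$), contributing $\Z$ for $c=0$ and $\Z/2$ for $c=1$. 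Comparing with the additive answer, and using that $\alpha,\eta$ are units and $\xi\neq 0$, this homomorphism is an isomorphism in every bidegree, giving the asserted presentation.

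\emph{The step needing the most care} is the first one: identifying the $\eta$-colimit with a single group of \Cref{thm:coeff-KQZ}. It is important that this is done at the level of $\KQ_\Z$ and not through an $\eta$-inverted very effective slice spectral sequence of $\KQ_\Z$, which does not converge to $\KW_\Z$ (the groups $\Z/2$ occurring for $s\equiv 1\bmod 4$ lie at the top of the very effective filtration). The Wood cofiber sequence together with the vanishing of negative $K$-theory of $\Z$ circumvents this; the remaining verifications — that $\alpha,\eta,\xi$ and the relations $2\xi=\xi^2=0$ exhaust the ring — amount to a routine monomial count against the additive answer.
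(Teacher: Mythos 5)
Your proof is correct, but it takes a genuinely different route from the paper. The paper computes $\pi_{\ast+\bideg}\KW_\Z$ directly from the slice spectral sequence of $\KW_\Z$: using \Cref{thm:slices-KW} it identifies the $E^2$-page (zero columns for $s\equiv 2,3\bmod 4$, a single $\Z/2\iso h^{1,2}(\Z)/\tau h^{1,1}(\Z)$ for $s\equiv 1$, and an infinite $h^{n,n}(\Z)$-tower for $s\equiv 0$), kills the remaining differentials by naturality along $\Z\to\Q$, and resolves the infinite extension and the ring structure by comparison along $\Z\to\R$ (the $s\equiv 0$ columns assembling to the Witt group $W(\Z)\iso\Z$). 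You instead deduce everything from \Cref{thm:coeff-KQZ} via $\KW\simeq\KQ[\eta^{-1}]$ (\Cref{prop.KQetaisKW}): compactness of motivic spheres gives the $\eta$-colimit description, the Wood sequence together with the vanishing $\pi_{s+(w)}\KGL_\Z=K_{s-w}(\Z)=0$ for $s<w$ (\Cref{thm:slice-spec-seq-KGLZ}) shows the colimit stabilizes at the below-diagonal values of \Cref{thm:coeff-KQZ}, and the presentation is then forced formally by invertibility of $\eta$ and $\alpha$ plus $\xi\neq 0$, $\xi^2\in\pi_{2+(0)}=0$. Since \Cref{thm:coeff-KQZ} precedes \Cref{thm:coeff-KW} and does not use it, there is no circularity; what your argument buys is that no new spectral-sequence analysis or comparison with $\R$ is needed (in particular the extension problem in the paper's $s\equiv 0$ columns never arises), at the cost of invoking the full strength of \Cref{thm:coeff-KQZ}, whereas the paper's argument only needs \Cref{thm:slices-KW} and \Cref{lem:hZ} and so stands independently of the weight-by-weight $\KQ_\Z$ computation. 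Two small points to patch: the claim that $\eta$ and $\alpha$ \emph{generate} their (infinite cyclic) homotopy groups needs the observation that the unit generates $\pi_{0+(0)}\KW_\Z\iso\Z$ — which is automatic, since in any ring whose additive group is infinite cyclic the unit must be a generator — and the passage from $\epsilon$-graded commutativity of $\pi_{\ast+\bideg}\KW_\Z$ to a map out of a genuinely commutative polynomial ring requires noting that $2\xi=0$ and that the relevant powers of $\langle -1\rangle$ occurring in the commutation of $\alpha$ and $\eta$ are even; both are routine and the paper glosses over comparable verifications.
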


\begin{proof}
Theorem~\ref{thm:slices-KW} shows that the second page of the slice spectral sequence for $\KQ_\Z$ has zero columns when $s \equiv 2,3 \mod{4}$.  
Furthermore, the single nonzero entry in a column with $s \equiv 1 \mod{4}$ is
\[
  \Z/2 \cong h^{1,2}(\Z) / \tau h^{1,1}(\Z)
\]
The columns for $s \equiv 0 \mod{4}$ consist of an infinite tower 
\[
  h^{0,0}(\Z), \; h^{1,1}(\Z), \; \dotsc, \; h^{n,n}(\Z), \; \dotsc
\]
Passage along the ring inclusion $\Z \to \Q$ and naturality of the slice filtration imply that all potentially nonzero slice differentials originating in a column for $s \equiv 1 \mod{4}$ vanish.  
Hence the slice spectral sequence for $\KW_\Z$ collapses at the second page.  

Comparison along the ring inclusion $\Z \to \R$ shows that the resulting extension from the columns with $s \equiv 0 \mod{4}$ yields the symmetric Witt group of $\Z$, namely the group of integers.  
The ring structure follows by comparison with the real numbers, together with the periodicities of $\KW$ and the computationally determined relations $2\xi = 0$ and $\xi^2 = 0$.
\end{proof}

\printbibliography

\end{document}